\theoremstyle{definition}
\newtheorem{thm}{Theorem}[section]
\newtheorem{lem}[thm]{Lemma}
\newtheorem{prop}[thm]{Proposition}
\newtheorem{cor}[thm]{Corollary}
\newtheorem{defi}[thm]{Definition}
\newtheorem{rem}[thm]{Remark}
\numberwithin{equation}{section}
\newcommand{\Arg}{\operatorname{Arg}}
\newcommand{\dom}{\operatorname{dom}}
\newcommand{\ran}{\operatorname{ran}}
\newcommand{\sgn}{\operatorname{sgn}}
\renewcommand{\Im}{\operatorname{Im}}
\newcommand{\bnd}{{\rm bnd}}
\newcommand{\reg}{{\rm reg}}
\renewcommand{\max}{{\rm max}}
\title[The $H^\infty$-functional calculus for bisectorial  Clifford operators]{The $H^\infty$-functional calculus for bisectorial \\ Clifford operators}
\author[Francesco Mantovani]{F. Mantovani}
\address{(FM) Politecnico di Milano, Dipartimento di Matematica, Via E. Bonardi, 9, 20133 Milano, Italy}
\email{francesco.mantovani@polimi.it}
\author[Peter Schlosser]{P. Schlosser}
\address{(PS) Politecnico di Milano, Dipartimento di Matematica, Via E. Bonardi, 9, 20133 Milano, Italy}
\email{pschlosser@math.tugraz.at}
\begin{document}

\begin{abstract}
The aim of this article is to introduce the $H^\infty$-functional calculus for unbounded bisectorial operators in a Clifford module over the algebra $\mathbb{R}_n$. This work is based on the universality property of the $S$-functional calculus, which shows its applicability to fully Clifford operators. While recent studies have focused on bounded operators or unbounded paravector operators, we now investigate unbounded fully Clifford operators and define polynomially growing functions of them. We first generate the $\omega$-functional calculus for functions that exhibit an appropriate decay at zero and at infinity. We then extend to functions with a finite value at zero and at infinity. Finally, using a subsequent regularization procedure, we can define the $H^\infty$- functional calculus for the class of regularizable functions, which in particular include functions with polynomial growth at infinity and, if $T$ is injective, also functions with polynomial growth at zero.
\end{abstract}

\maketitle

AMS Classification: 47A10, 47A60. \medskip

Keywords: Clifford operators, $S$-spectrum, Bisectorial operators, $H^\infty$-functional calculus. \medskip

\textbf{Acknowledgements:} Peter Schlosser was funded by the Austrian Science Fund (FWF) under Grant No. J 4685-N and by the European Union--NextGenerationEU.

\section{Introduction}

In complex Banach spaces, the $H^\infty$-functional calculus is an extension of the Riesz-Dunford functional calculus \cite{RD}, also called holomorphic functional calculus,
\begin{equation}\label{Eq_Riesz_Dunford}
f(T)=\frac{1}{2\pi i}\int_{\partial U}(z-T)^{-1}f(z)dz,
\end{equation}
to functions which do not decay fast enough for the integral \eqref{Eq_Riesz_Dunford} to converge. The idea of the corresponding regularization procedure has been introduced by A. McIntosh in \cite{McI1} and further explored in various studies as \cite{MC10,MC97,MC06,MC98}. It is extensively used in the investigation of evolution equations, particularly in maximal regularity and fractional powers of differential operators. Additionally, there exists a close relation to Kato's square root problem and pseudo-differential operators. For a comprehensive exposition of this calculus, see the books \cite{Haase,HYTONBOOK1,HYTONBOOK2} and the references therein. \medskip

For quaternionic and Clifford-paravector operators on the other hand, the $H^\infty$-functional calculus has already been considered in works such as \cite{ACQS2016,CGdiffusion2018}, with additional discussions in \cite{JONAMEM,JONADIRECT}. These operators are relevant in various scientific and technological contexts. \medskip

A significant motivation for our investigation is to extend this functional calculus on the one hand to unbounded fully Clifford operators, as suggested in \cite{ADVCGKS}, and on the other hand to bisectorial operators in order to allow spectrum also on the negative real axis. One particular application is the gradient operator, considered as a scalar Clifford operator. While the quaternionic case corresponds to the three dimensional problem
\begin{equation*}
\nabla=i\frac{\partial}{\partial x}+j\frac{\partial}{\partial y}+k\frac{\partial}{\partial z},
\end{equation*}
the Clifford case allows the generalization to $n$ dimensions
\begin{equation*}
\nabla=e_1\frac{\partial}{\partial x_1}+\dots+e_n\frac{\partial}{\partial x_n}.
\end{equation*}
See also \cite{CMS24}, where this gradient operator is investigated with different boundary conditions on bounded and unbounded domains. Also the spectral theorem has recently been extended from quaternionic operators in \cite{ACK}, to fully Clifford operators in \cite{ColKim}. \medskip

The $H^\infty$-functional calculi also has a broader context  in the recently introduced fine structures on the $S$-spectrum. These are function spaces of nonholomorphic functions derived from the Fueter-Sce extension theorem, which connects slice hyperholomorphic and axially monogenic functions, the two main notions of holomorphicity in the Clifford setting. The connection is established through powers of the Laplace operator in a higher dimension, see \cite{Fueter,TaoQian1,Sce} and also the translation \cite{ColSabStrupSce}. These function spaces and the related functional calculi are introduced and studied in the recent works \cite{CDPS1,Fivedim,Polyf1,Polyf2} and the respective $H^\infty$-version are investigated in \cite{MILANJPETER,MPS23}. \medskip

Unlike complex holomorphic function theory, the noncommutative setting of the Clifford algebra allows multiple notions of hyperholomorphicity for vector fields. Consequently, different spectral theories emerge, each based on the concept of spectrum that relies on distinct Cauchy kernels. The spectral theory based on the $S$-spectrum was inspired by quaternionic quantum mechanics (see \cite{BF}), is associated with slice hyperholomorphicity, and began its development in 2006. A comprehensive introduction can be found in \cite{CGK}, with further explorations done in \cite{ACS2016,AlpayColSab2020,ColomboSabadiniStruppa2011}. Applications on fractional powers of operators are investigated in \cite{CGdiffusion2018,CG18,FJBOOK} and some results from classical interpolation theory, see \cite{BERG_INTER,BRUDNYI_INTER,Ale_INTER,TRIEBEL}, have been recently extended into this setting \cite{COLSCH}. In order to fully appreciate the spectral theory on the $S$-spectrum, we recall that it applies to sets of noncommuting operators, for example with the identification $(T_1,...,T_{2^n})\leftrightarrow T=\sum_AT_Ae_A$ it can be seen as a theory for several operators. \medskip

At this point, we also want to mention the spectral theory on the monogenic spectrum initiated in \cite{JM}. It is based on monogenic functions (functions in the kernel of the Dirac operator) and their associated Cauchy formula (see \cite{DSS}). This theory is well developed in the books \cite{JBOOK,TAOBOOK}, where the $H^\infty$-functional calculus in the monogenic setting is extensively considered. \medskip

The major difference between classical complex spectral theory and spectral theory on the $S$-spectrum is that, for bounded linear operators $T:V\rightarrow V$, acting in a Clifford module $V$, the series expansion of the resolvent operator is expressed as
\begin{equation}\label{Eq_Resolvent_sum}
\sum\nolimits_{n=0}^\infty T^ns^{-n-1}=(T^2-2s_0T+|s|^2)^{-1}(\overline{s}-T),\qquad|s|>\Vert T\Vert,
\end{equation}
where $s=s_0+s_1e_1+\dots+s_ne_n$ is a paravector. It is important to point out that the sum of the series
\eqref{Eq_Resolvent_sum} is independent of the fact that the operators $T_A$ in $T=\sum_AT_Ae_A$ commute among themselves. The reason why the value of the series \eqref{Eq_Resolvent_sum} does not simplify to the classical resolvent operator $(s-T)^{-1}$, is due to the noncommutativity $sT\neq Ts$. The explicit value of the sum \eqref{Eq_Resolvent_sum} now motivates that, even for unbounded $T$, the spectrum has to be associated with the invertibility of the operator $T^2-2s_0T+|s|^2$. This leads us to the definition of the $S$-spectrum $\sigma_S(T)$ and the $S$-resolvent set $\rho_S(T)$ in Definition~\ref{defi_S_spectrum}, see \cite{ColomboSabadiniStruppa2011,CGK} for the original definition. For every $s\in\rho_S(T)$ we then define the {\it left $S$-resolvent operator}
\begin{equation*}
S_L^{-1}(s,T):=(T^2-2s_0T+|s|^2)^{-1}\overline{s}-T(T^2-2s_0T+|s|^2)^{-1},
\end{equation*}
where, compared to \eqref{Eq_Resolvent_sum}, the operator $T$ is shifted to the left of $(T^2-2s_0T+|s|^2)^{-1}$ in order for $S_L^{-1}(s,T)$ to be everywhere defined. As a generalization of \eqref{Eq_Riesz_Dunford}, for bounded linear Clifford operators $T$ and for left slice hyperholomorphic functions $f$, the $S$-functional calculus is then defined as
\begin{equation}\label{Eq_S_functional_calculus}
f(T):=\frac{1}{2\pi}\int_{\partial U\cap\mathbb{C}_J}S_L^{-1}(s,T)ds_Jf(s).
\end{equation}
Here, $J\in\mathbb{S}$ is an arbitrary imaginary unit from \eqref{Eq_S} and $U$ is an open set such that it contains the $S$-spectrum of $T$ and $f$ is holomorphic in a neighbourhood of $\overline{U}$. Moreover, the path integral is understood in the sense \eqref{Eq_Path_integral}. A similar definition holds for right slice hyperholomorphic functions, involving the right $S$-resolvent operator
\begin{equation*}
S_R^{-1}(s,T)=(\overline{s}-T)(T^2-2s_0T+|s|^2)^{-1}.
\end{equation*}
Although all the results of the Sections~\ref{sec_Omega}~\&~\ref{sec_Extended} can be extended to right slice hyperholomorphic functions, it is still unclear how to generalize the $H^\infty$-functional calculus in Section~\ref{sec_Hinfty} to right slice  hyperholomorphic functions, see also Remark~\ref{rem_Hinfty_right}. For this reason we avoid considering right slice hyperholomorphic functions in this article at all, to make the presentation of the results clearer. \medskip

The aim of this article is now to develop a functional calculus for unbounded bisectorial operators of angle $\omega$, see Definition~\ref{defi_Bisectorial_operators}, based on the integral formula \eqref{Eq_S_functional_calculus}. Roughly speaking, these are closed operators for which the $S$-spectrum is contained in the double sector of angle $\omega$ and where the $S$-resolvent operator behaves as $\Vert S_L^{-1}(s,T)\Vert\lesssim\frac{1}{|s|}$ at zero and at infinity. This extension is a delicate issue and requires a three step procedure. \medskip

In Section~\ref{sec_Omega}, a version of the $S$-functional calculus \eqref{Eq_S_functional_calculus}, the so called $\omega$-functional calculus, is used to generate $f(T)$ for functions that suitably decay at zero and at infinity, i.e. all left slice hyperholomorphic functions in a double sector $D_\theta$ for which
\begin{equation*}
\int_0^\infty|f(te^{J\varphi})|\frac{dt}{t}<\infty,
\end{equation*}
for every $J\in\mathbb{S}$ and every angle $\phi\in(-\theta,\theta)\cup(\pi-\theta,\pi+\theta)$. Although the original operator $T$ is unbounded, the decay of the function $f$ is responsible for the resulting operator $f(T)$ in Definition \ref{defi_Omega} again being everywhere defined and bounded. Moreover, in Theorem~\ref{thm_Product_omega}, the important product rule is proven, i.e that for a left hyperholomorphic function $f$ and an intrinsic function $g$ there holds
\begin{equation*}
(gf)(T)=g(T)f(T).
\end{equation*}
An essential result, which makes this functional calculus a good choice, is that if we apply polynomials $p$ (or rational functions $\frac{p}{q}$) we get the expected result. More precisely, Corollary~\ref{cor_Composition_omega}, Proposition~\ref{prop_Product_omega_pf} and Theorem~\ref{thm_Rational_omega} prove that
\begin{equation*}
(p\circ g)(T)=p[g(T)],\qquad(pf)(T)=p[T]f(T),\qquad\text{and}\qquad\Big(\frac{p}{q}\Big)(T)=p[T]q[T]^{-1},
\end{equation*}
where the square brackets $p[T]$ is understood as formally replacing the variable $s$ in the polynomial $p(s)$ by the operator $T$, see also Definition~\ref{defi_Polynomial}. In the final part of this section, in Theorem~\ref{thm_Commutation_omega} and Corollary~\ref{cor_Commutation_omega_bounded} the problem of the commutation of $g(T)$ with other operators is investigated. \medskip

In Section~\ref{sec_Extended}, the $\omega$-functional calculus is extended to functions which admit finite limits at zero and at infinity, i.e. to functions of the form $f(s)=f_\infty+(1+s^2)^{-1}(f_0-f_\infty)+\widetilde{f}(s)$, where $f_0,f_\infty\in\mathbb{R}_n$ and $\widetilde{f}$ is a function which satisfies the assumptions of the $\omega$-functional calculus in Section~\ref{sec_Omega}. The extended $\omega$-functional calculus is then defined as the bounded operator
\begin{equation*}
f(T):=f_\infty+(1+T^2)^{-1}(f_0-f_\infty)+\widetilde{f}(T).
\end{equation*}
All the above mentioned results of the $\omega$-functional calculus in Section~\ref{sec_Omega} are in an adequate way also proven for this extended functional calculus. Moreover, the very important spectral mapping theorem, Theorem~\ref{thm_Spectral_mapping}, gives a close connection between $\sigma_S(g(T))$ and $g(\sigma_S(T))$. While for bounded operators $T\in\mathcal{B}(V)$, this connection is simply $\sigma_S(g(T))=g(\sigma_S(T))$, see \cite[Theorem 4.2.1]{CGK}, the limit points of $g$ at $0$ and at $\infty$ do not fit into this elegant equality in the case of bisectorial operators. \medskip

In Section~\ref{sec_Hinfty}, the extended $\omega$-functional calculus is finally generalized to the $H^\infty$-functional calculus of regularizable functions. These are all left slice hyperholomorphic functions $f$, for which there exists some intrinsic function $e$ which decays fast enough at $0$ and at $\infty$, such that for $e$ and for $ef$, the extended $\omega$-functional calculus of Definition~\ref{defi_Extended} is applicable. Moreover, $e(T)$ is assumed to be injective. For those functions the $H^\infty$-functional calculus is defined as
\begin{equation*}
f(T):=e(T)^{-1}(ef)(T),
\end{equation*}
and clearly gives an unbounded operator because $e(T)^{-1}$ is not everywhere defined. In particular, functions which are polynomially growing at infinity are regularizable, and if $T$ is injective also polynomial growth at zero is allowed, see Proposition~\ref{prop_Polynomial_functions}. Also here, the basic properties like product rule in Theorem~\ref{thm_Product_Hinfty}, the connection to the polynomial functional calculus in Corollary~\ref{cor_Composition_Hinfty}, Theorem~\ref{thm_Rational_Hinfty} and Proposition~\ref{prop_Product_Hinfty_pf}, as well as the commutation relations in Corollary~\ref{cor_Commutation_Hinfty} are proven for the $H^\infty$-functional calculus. Note that many identities which hold with equality for the $\omega$- and the extended functional calculus, are only operator inclusions for the $H\infty$-functional calculus. This is due to the fact that $f(T)$ is unbounded here, and the operator domain sometimes shrink under certain manipulations. A big missing part is the extension of the spectral mapping theorem to $H^\infty$-functional calculus, where it is not known whether this is possible.

\section{Preliminaries on Clifford Algebras and Clifford modules}

In this section we will fix the algebraic and functional analytic setting of this paper. In Section~\ref{sec_Slice_hyperholomorphic_functions} we introduce the notion of slice hyperholomorphic functions with values in the Clifford algebra $\mathbb{R}_n$. This algebra will also be the foundation on which the Banach modules and the $S$-spectrum will be build on in Section~\ref{sec_Clifford_modules}. \medskip

\subsection{Slice hyperholomorphic functions}\label{sec_Slice_hyperholomorphic_functions}

Let $\mathbb{R}_n$ be the real {\it Clifford algebra} over $n$ {\it imaginary units} $e_1,\dots,e_n$ which satisfy the relations
\begin{equation*}
e_i^2=-1\qquad\text{and}\qquad e_ie_j=-e_je_i,\qquad i\neq j\in\{1,\dots,n\}.
\end{equation*}
More precisely, $\mathbb{R}_n$ is given by
\begin{equation*}
\mathbb{R}_n:=\Big\{\sum\nolimits_{A\in\mathcal{A}}s_Ae_A\;\Big|\;s_A\in\mathbb{R},\,A\in\mathcal{A}\Big\},
\end{equation*}
using the index set
\begin{equation*}
\mathcal{A}:=\big\{(i_1,\dots,i_r)\;\big|\;r\in\{0,\dots,n\},\,1\leq i_1<\dots<i_r\leq n\big\},
\end{equation*}
and the {\it basis vectors} $e_A:=e_{i_1}\dots e_{i_r}$. Note that for $A=\emptyset$ the empty product of imaginary units is the real number $e_\emptyset:=1$. Furthermore, we define for every Clifford number $s\in\mathbb{R}_n$ its {\it conjugate} and its {\it absolute value} by
\begin{equation}\label{Eq_Conjugate_Norm}
\overline{s}:=\sum\nolimits_{A\in\mathcal{A}}(-1)^{\frac{|A|(|A|+1)}{2}}s_Ae_A\qquad\text{and}\qquad|s|^2:=\sum\nolimits_{A\in\mathcal{A}}s_A^2.
\end{equation}
An important subset of the Clifford numbers are the so called {\it paravectors}
\begin{equation*}
\mathbb{R}^{n+1}:=\big\{s_0+s_1e_1+\dots+s_ne_n\;\big|\;s_0,s_1,\dots,s_n\in\mathbb{R}\big\}.
\end{equation*}
For any paravector $s\in\mathbb{R}^{n+1}$, we define $\Im(s):=s_1e_1+\dots+s_ne_n$, and the conjugate and the modulus in \eqref{Eq_Conjugate_Norm} reduce to
\begin{equation*}
\overline{s}=s_0-s_1e_1-\dots-s_ne_n\qquad\text{and}\qquad|s|^2=s_0^2+s_1^2+\dots+s_n^2.
\end{equation*}
The sphere of purely imaginary paravectors with modulus $1$ is defined by
\begin{equation}\label{Eq_S}
\mathbb{S}:=\big\{s\in\mathbb{R}^{n+1}\;\big|\;s_0=0,\,|s|=1\big\}.
\end{equation}
Any element $J\in\mathbb{S}$ satisfies $J^2=-1$ and hence the corresponding hyperplane
\begin{equation*}
\mathbb{C}_J:=\big\{x+Jy\;\big|\;x,y\in\mathbb{R}\big\}
\end{equation*}
is an isomorphic copy of the complex numbers. Moreover, for every paravector $s\in\mathbb{R}^{n+1}$ we consider the corresponding {\it $(n-1)$--sphere}
\begin{equation*}
[s]:=\big\{x_0+J|\Im(s)|\;\big|\;J\in\mathbb{S}\big\}.
\end{equation*}
A subset $U\subseteq\mathbb{R}^{n+1}$ is called {\it axially symmetric}, if $[s]\subseteq U$ for every $s\in U$. \medskip

Next, we introduce the notion of slice hyperholomorphic functions $f:U\rightarrow\mathbb{R}_n$, defined on an axially symmetric open set $U\subseteq\mathbb{R}^{n+1}$.

\begin{defi}[Slice hyperholomorphic functions]\label{defi_Slice_hyperholomorphic_functions}
Let $U\subseteq\mathbb{R}^{n+1}$ be open, axially symmetric and consider
\begin{equation*}
\mathcal{U}:=\big\{(x,y)\in\mathbb{R}^2\;\big|\;x+\mathbb{S}y\subseteq U\big\}.
\end{equation*}
A function $f:U\rightarrow\mathbb{R}_n$ is called {\it left} (resp. {\it right}) {\it slice hyperholomorphic}, if there exist continuously differentiable functions $f_0,f_1:\mathcal{U}\rightarrow\mathbb{R}_n$, such that for every $(x,y)\in\mathcal{U}$:

\begin{enumerate}
\item[i)] The function $f$ admits for every $J\in\mathbb{S}$ the representation
\begin{equation}\label{Eq_Holomorphic_decomposition}
f(x+Jy)=f_0(x,y)+Jf_1(x,y),\quad\Big(\text{resp.}\;f(x+Jy)=f_0(x,y)+f_1(x,y)J\Big).
\end{equation}

\item[ii)] The functions $f_0,f_1$ satisfy the {\it compatibility conditions}
\begin{equation}\label{Eq_Symmetry_condition}
f_0(x,-y)=f_0(x,y)\qquad\text{and}\qquad f_1(x,-y)=-f_1(x,y).
\end{equation}

\item[iii)] The functions $f_0,f_1$ satisfy the {\it Cauchy-Riemann equations}
\begin{equation*}
\frac{\partial}{\partial x}f_0(x,y)=\frac{\partial}{\partial y}f_1(x,y)\qquad\text{and}\qquad\frac{\partial}{\partial y}f_0(x,y)=-\frac{\partial}{\partial x}f_1(x,y).
\end{equation*}
\end{enumerate}

The class of left (resp. right) slice hyperholomorphic functions on $U$ is denoted by $\mathcal{SH}_L(U)$ (resp. $\mathcal{SH}_R(U)$). In the special case that $f_0$ and $f_1$ are real valued, we call $f$ {\it intrinsic} and the space of intrinsic functions is denoted by $\mathcal{N}(U)$.
\end{defi}

\begin{rem}
A slightly different definition of slice hyperholomorphic functions (equivalent if $U\cap\mathbb{R}\neq\emptyset$), also referred to as slice monogenic functions, is presented in \cite{ColomboSabadiniStruppa2011}, along with the Cauchy formula and its associated properties. From a historical perspective, the quaternionic function theory was originally developed for slice monogenic functions. However, the definition used in this article is the one from \cite{CGK} and is inspired by the Fueter-Sce mapping theorem. It has in particular proven to be very useful in relation to operator theory.
\end{rem}

For slice hyperholomorphic functions we now introduce path integrals. Since it is sufficient to consider paths embedded in only one complex plane $\mathbb{C}_J$, the idea is to reduce it to a classical complex path integral.

\begin{defi}
Let $U\subseteq\mathbb{R}^{n+1}$ be open, axially symmetric and $g\in\mathcal{SH}_R(U)$, $f\in\mathcal{SH}_L(U)$. For $J\in\mathbb{S}$ and a continuously differentiable curve $\gamma:(a,b)\rightarrow U\cap\mathbb{C}_J$, we define the integral
\begin{equation}\label{Eq_Path_integral}
\int_\gamma g(s)ds_Jf(s):=\int_a^bg(\gamma(t))\frac{\gamma'(t)}{J}f(\gamma(t))dt.
\end{equation}
In the case that $a,b$ are $\infty$ or $\gamma(a)$, $\gamma(b)$ lie on the boundary $\partial U\cap\mathbb{C}_J$, the functions $g,f$ need to satisfy certain decay properties in order for the integral to exist.
\end{defi}

Let us now consider for every $s,p\in\mathbb{R}^{n+1}$ with $s\notin[p]$, the {\it left} and the {\it right Cauchy kernel}
\begin{equation}\label{Eq_Cauchy_kernel}
S_L^{-1}(s,p):=Q_s(p)^{-1}(\overline{s}-p)\qquad\text{and}\qquad S_R^{-1}(s,p):=(\overline{s}-p)Q_s(p)^{-1},
\end{equation}
with the polynomial function $Q_s(p):=p^2-2s_0p+|s|^2$. Then it turns out that

\begin{itemize}
\item[$\circ$] $S_L^{-1}(s,p)$ is right slice hyperholomorphic in $s$ and left slice hyperholomorphic in $s$;
\item[$\circ$] $S_R^{-1}(s,p)$ is left slice hyperholomorphic is $s$ and right slice hyperholomorphic in $p$.
\end{itemize}

With these functions we now recall from \cite[Lemma 2.1.27 \& Theorem 2.1.32]{CGK} the following version of the Cauchy integral formula in the Clifford algebra setting.

\begin{thm}[Cauchy integral formula]
Let $U\subseteq\mathbb{R}^{n+1}$ be open, axially symmetric and $f\in\mathcal{SH}_L(U)$ (resp. $f\in\mathcal{SH}_R(U)$). Then for every $p\in U$ there holds
\begin{subequations}
\begin{align}
f(p)&=\frac{1}{2\pi}\int_{\partial U'\cap\mathbb{C}_J}S_L^{-1}(s,p)ds_Jf(s), \label{Eq_Cauchy_integral_formula_left} \\
\bigg(\text{resp.}\quad f(p)&=\frac{1}{2\pi}\int_{\partial U'\cap\mathbb{C}_J}f(s)ds_JS_R^{-1}(s,p)\bigg). \label{Eq_Cauchy_integral_formula_right}
\end{align}
\end{subequations}
In these integrals $J\in\mathbb{S}$ is arbitrary and $U'$ is any subset with $\overline{U'}\subseteq U$ and where the boundary $\partial U'\cap\mathbb{C}_J$ consists of finitely many continuously differentiable curves.
\end{thm}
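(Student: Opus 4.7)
The plan is to reduce the general formula to the classical complex Cauchy integral formula on a single slice $\mathbb{C}_J$, using the representation (or structure) formula implied by conditions (i)--(iii) of Definition~\ref{defi_Slice_hyperholomorphic_functions}. The argument splits naturally into two geometric cases: $p$ in the chosen slice $\mathbb{C}_J$, and $p$ in an arbitrary slice $\mathbb{C}_I$ with $I\neq J$. I will describe the left slice hyperholomorphic case; the right one follows by a symmetric argument, with the order of $f(s)$ and $ds_J$ reversed.

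First I would extract the representation formula from the definition of slice hyperholomorphicity: taking $I=J$ and $I=-J$ in \eqref{Eq_Holomorphic_decomposition} and invoking the compatibility conditions \eqref{Eq_Symmetry_condition}, one has $f(x+Jy)=f_0(x,y)+Jf_1(x,y)$ and $f(x-Jy)=f_0(x,y)-Jf_1(x,y)$. Solving this linear system for $f_0,f_1$ and substituting back into \eqref{Eq_Holomorphic_decomposition} yields the identity
\begin{equation*}
f(x+Iy)=\tfrac{1}{2}(1-IJ)\,f(x+Jy)+\tfrac{1}{2}(1+IJ)\,f(x-Jy),\qquad I\in\mathbb{S}.
\end{equation*}
Since the kernel $p\mapsto S_L^{-1}(s,p)$ is itself left slice hyperholomorphic in $p$ (as recalled after \eqref{Eq_Cauchy_kernel}), the same identity applies to it.

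Second, for $p\in U\cap\mathbb{C}_J$ I would exploit the factorization $Q_s(p)=(p-s)(p-\overline{s})$, which is valid whenever $s,p\in\mathbb{C}_J$ because everything commutes in that plane, to collapse the left Cauchy kernel to $S_L^{-1}(s,p)=(s-p)^{-1}$. Identifying $\mathbb{C}_J$ with $\mathbb{C}$ via $x+Jy\leftrightarrow x+iy$, condition (iii) says that $f$ restricted to $U\cap\mathbb{C}_J$ satisfies $(\partial_x+J\partial_y)f=0$ and is therefore holomorphic with $\mathbb{R}_n$-values. Applying the classical Cauchy integral formula to both $f(x+Jy)$ and $f(x-Jy)$, and absorbing the classical prefactor $(2\pi J)^{-1}$ into the slice differential $ds_J=ds/J$ from \eqref{Eq_Path_integral}, establishes the stated formula on the single slice $\mathbb{C}_J$.

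Third, for arbitrary $p=x+Iy\in U$, I would combine the two previous steps: plug the representation formula for $S_L^{-1}(s,x+Iy)$ into the integral over $\partial U'\cap\mathbb{C}_J$, split by linearity into two pieces which by step two reproduce $\tfrac{1}{2}(1-IJ)f(x+Jy)$ and $\tfrac{1}{2}(1+IJ)f(x-Jy)$, and recognise the sum as $f(x+Iy)=f(p)$ via the representation formula for $f$ itself. The main technical obstacle I anticipate is verifying that $p\mapsto S_L^{-1}(s,p)$ genuinely fits the hypotheses of the representation formula, namely that for fixed $s$ with $s\notin[p]$ the kernel admits a decomposition $g_0(x,y)+Ig_1(x,y)$ with $g_0,g_1$ satisfying both the compatibility and the Cauchy--Riemann conditions. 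Once this structural fact and the single-slice collapse $Q_s(p)=(p-s)(p-\overline{s})$ are in place, the remaining manipulations reduce to real-coefficient algebraic rewrites, since $s_0$ and $|s|^2$ are central in $\mathbb{R}_n$.
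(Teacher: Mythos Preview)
The paper does not actually prove this theorem: it is stated with the preamble ``we recall from [Lemma 2.1.27 \& Theorem 2.1.32]{CGK}'' and no proof is supplied. Your proposal is therefore not being compared to an in-paper argument but to the standard literature proof, and it matches that argument closely: first establish the representation (structure) formula from the compatibility conditions, then reduce the same-slice case $p\in\mathbb{C}_J$ to the classical Cauchy formula via $S_L^{-1}(s,p)=(s-p)^{-1}$, and finally extend to arbitrary $p=x+Iy$ by applying the representation formula to the kernel and pulling the constants $\tfrac{1}{2}(1\pm IJ)$ out of the integral. This is exactly the route taken in \cite{CGK}, so your sketch is correct and standard; the only point worth tightening is the implicit assumption $[p]\subseteq U'$ (the theorem statement as written in the paper says $p\in U$, but the Cauchy formula only reproduces $f(p)$ when the sphere $[p]$ lies inside the integration contour, i.e.\ when $p\in U'$).
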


\subsection{Clifford modules}\label{sec_Clifford_modules}

For a real Banach space $V_\mathbb{R}$ with norm $\Vert\cdot\Vert_\mathbb{R}$, we define the corresponding {\it Clifford module} by
\begin{equation*}
V:=\Big\{\sum\nolimits_{A\in\mathcal{A}}v_A\otimes e_A\;\Big|\;v_A\in V_\mathbb{R},\,A\in\mathcal{A}\Big\},
\end{equation*}
and equip it with the norm
\begin{equation*}
\Vert v\Vert^2:=\sum\nolimits_{A\in\mathcal{A}}\Vert v_A\Vert_\mathbb{R}^2,\qquad v\in V.
\end{equation*}
For any vector $v=\sum_{A\in\mathcal{A}}v_A\otimes e_A\in V$ and any Clifford number $s=\sum_{B\in\mathcal{A}}s_Be_B\in\mathbb{R}_n$, we establish the left and the right scalar multiplication
\begin{align*}
sv:=&\sum\nolimits_{A,B\in\mathcal{A}}(s_Bv_A)\otimes(e_Be_A), && \textit{(left-multiplication)} \\
vs:=&\sum\nolimits_{A,B\in\mathcal{A}}(v_As_B)\otimes(e_Ae_B). && \textit{(right-multiplication)}
\end{align*}
From \cite[Lemma 2.1]{CMS24} we recall the well known properties of these products
\begin{subequations}
\begin{align}
\Vert sv\Vert&\leq 2^{\frac{n}{2}}|s|\Vert v\Vert\qquad\text{and}\qquad\Vert vs\Vert\leq 2^{\frac{n}{2}}|s|\Vert v\Vert,\qquad\text{if }s\in\mathbb{R}_n, \label{Eq_Norm_estimate_Rn} \\
\Vert sv\Vert&=\Vert vs\Vert=|s|\Vert v\Vert,\hspace{4.83cm}\text{if }s\in\mathbb{R}^{n+1}. \label{Eq_Norm_estimate_paravector}
\end{align}
\end{subequations}
Let us now introduce operators between those Clifford modules, starting with closed operators.

\begin{defi}[Closed operators]
Let $V$ be a Clifford module. A right-linear operator $T:\dom(T)\rightarrow V$ with right-linear domain $\dom(T)\subseteq V$ is called {\it closed}, if for every sequence $(v_n)_{n\in\mathbb{N}}\in\dom(T)$ and any $v,w\in V$, there holds:
\begin{equation*}
\text{If }\lim_{n\rightarrow\infty}v_n=v\text{ and }\lim_{n\rightarrow\infty}Tv_n=w\text{, then }v\in\dom(T)\text{ and }Tv=w.
\end{equation*}
We will denote the set of closed right-linear operators by $\mathcal{K}(V)$.
\end{defi}

For closed operators $T\in\mathcal{K}(V)$ and Clifford numbers $s\in\mathbb{R}_n$, we define
\begin{subequations}
\begin{align}
(sT)(v):=&s(Tv),\qquad v\in\dom(sT):=\dom(T), \label{Eq_Operator_left_multiplication} \\
(Ts)(v):=&T(sv),\qquad v\in\dom(Ts):=\big\{v\in V\;\big|\;sv\in\dom(T)\big\}. \label{Eq_Operator_right_multiplication}
\end{align}
\end{subequations}
Next we consider bounded operators, a particular subclass of closed operators.

\begin{defi}[Bounded operators]
Let $V$ be a Clifford module. An everywhere defined right-linear operator $B:V\rightarrow V$ is called {\it bounded}, if its {\it operator norm}
\begin{equation*}
\Vert B\Vert:=\sup\limits_{0\neq v\in V}\frac{\Vert Bv\Vert}{\Vert v\Vert}<\infty
\end{equation*}
is finite. We will denote the set of all bounded right-linear operators by $\mathcal{B}(V)$.
\end{defi}

Due to \eqref{Eq_Norm_estimate_Rn} and \eqref{Eq_Norm_estimate_paravector}, the operator norms of the products \eqref{Eq_Operator_left_multiplication} and \eqref{Eq_Operator_right_multiplication}, satisfy
\begin{align*}
\Vert sB\Vert&\leq 2^{\frac{n}{2}}|s|\Vert B\Vert\qquad\text{and}\qquad\Vert Bs\Vert\leq 2^{\frac{n}{2}}|s|\Vert B\Vert,\qquad\text{if }s\in\mathbb{R}_n, \\
\Vert sB\Vert&=\Vert Bs\Vert=|s|\Vert B\Vert,\hspace{4.95cm}\text{if }s\in\mathbb{R}^{n+1}.
\end{align*}
We also recall the notion of {\it operator inclusion}: For $T,S\in\mathcal{K}(V)$ we say $T\subseteq S$ if and only if
\begin{equation}\label{Eq_Operator_inclusion}
\dom(T)\subseteq\dom(S)\qquad\text{and}\qquad Tv=Sv,\qquad v\in\dom(T).
\end{equation}
In complex Banach spaces the spectrum is connected to the bounded invertibility of the operator $A-\lambda$. However, in the noncommutative Clifford setting it is due to the explicit value of the series expansion \eqref{Eq_Resolvent_sum}, and the discussion afterwords, that it is natural to connect the spectrum with the bounded invertibility of the operator
\begin{equation*}
Q_s[T]:=T^2-2s_0T+|s|^2,\qquad\text{with }\dom(Q_s[T])=\dom(T^2).
\end{equation*}
This suggests the following definition of the $S$-spectrum.

\begin{defi}[$S$-spectrum]\label{defi_S_spectrum}
For every $T\in\mathcal{K}(V)$ we define the {\it $S$-resolvent set} and the {\it $S$-spectrum} by
\begin{equation*}
\rho_S(T):=\big\{s\in\mathbb{R}^{n+1}\;\big|\;Q_s[T]^{-1}\in\mathcal{B}(V)\big\}\qquad\text{and}\qquad\sigma_S(T):=\mathbb{R}^{n+1}\setminus\rho_S(T).
\end{equation*}
Motivated by \eqref{Eq_Cauchy_kernel}, we define for every $s\in\rho_S(T)$ the {\it left} and the {\it right $S$-resolvent operator}
\begin{equation}\label{Eq_S_resolvent}
S_L^{-1}(s,T):=Q_s[T]^{-1}\overline{s}-TQ_s[T]^{-1}\qquad\text{and}\qquad S_R^{-1}(s,T):=(\overline{s}-T)Q_s[T]^{-1}.
\end{equation}
\end{defi}

In the same way as in the quaternionic setting, one proves also here that $\rho_S(T)$ is axially symmetric open, and $\sigma_S(T)$ is axially symmetric closed, see \cite[Theorem 3.1.6]{FJBOOK}.

\begin{rem}
Note, that although $T$ is a closed operator, this in general does not imply that $Q_s[T]$ is closed as well. However, since for every $s\in\rho_S(T)$ we have $Q_s[T]^{-1}\in\mathcal{B}(V)$, which implies that $Q_s[T]^{-1}$ and consequently $Q_s[T]$ is closed. This means that all $s\in\mathbb{R}^{n+1}$ for which $Q_s[T]$ is not closed, are necessarily contained in $\sigma_S(T)$.
\end{rem}

\begin{rem}
In \eqref{Eq_Cauchy_kernel} there clearly commutes $Q_s(p)^{-1}p=pQ_s(p)^{-1}$. However, if we replace $p\rightarrow T$, the operators $Q_s[T]^{-1}T\neq TQ_s[T]^{-1}$ are no longer the same, since the left hand side is defined on $\dom(T)$ and the right hand side on all of $V$. Hence, in order to have $S_L^{-1}(s,T)$ everywhere defined, we interchange $Q_s(p)^{-1}$ and $p$ in \eqref{Eq_Cauchy_kernel} before we replace $p\rightarrow T$.
\end{rem}

\begin{rem}
The main differences between classical spectral theory and the $S$-spectrum in terms of holomorphicity is that the mapping $\rho_S(T)\ni s\mapsto Q_s[T]^{-1}$ is not slice hyperholomorphic in the sense of Definition~\ref{defi_Slice_hyperholomorphic_functions}. This operator $Q_s[T]$ is only used for the definition of the $S$-spectrum. The $S$-resolvent operators $S_L^{-1}(s,T)$ and $S_R^{-1}(s,T)$ in \eqref{Eq_S_resolvent} are then the ones which are holomorphic and can be used for the $S$-functional calculus, see \eqref{Eq_S_functional_calculus}.
\end{rem}

The advantage of the bounded $S$-functional calculus \eqref{Eq_S_functional_calculus}, is that the integration path $\partial U\cap\mathbb{C}_J$ is compact and there are no problems regarding the convergence of the integral. However, for unbounded operators $T\in\mathcal{K}(V)$, the respective $S$-spectrum is potentially unbounded as well. So, integration paths which surround $\sigma_S(T)$ are not compact any more, and we have to ensure the convergence of the integral by a certain decay of the integrand. As a result, this cannot be done for any unbounded operator, but requires additional assumptions on the one hand on the location of the spectrum (to allow certain integration paths), but also on the decay of the resolvent operator (to make the integral converge). In this article we will concentrate on the class of bisectorial operators, which will be defined in the following. For every $\omega\in(0,\frac{\pi}{2})$, we consider the {\it double sector} \medskip

\begin{minipage}{0.34\textwidth}
\begin{center}
\begin{tikzpicture}
\fill[black!30] (0,0)--(1.63,0.76) arc (25:-25:1.8)--(0,0)--(-1.63,-0.76) arc (205:155:1.8);
\draw (-1.63,0.76)--(1.63,-0.76);
\draw (-1.63,-0.76)--(1.63,0.76);
\draw[->] (-2.1,0)--(2.1,0);
\draw[->] (0,-0.7)--(0,0.7);
\draw (1,0) arc (0:25:1) (0.8,-0.07) node[anchor=south] {\small{$\omega$}};
\draw (-1.3,0) node[anchor=south] {\small{$D_\omega$}};
\end{tikzpicture}
\end{center}
\end{minipage}
\begin{minipage}{0.65\textwidth}
\begin{equation}\label{Eq_Domega}
D_\omega:=\big\{s\in\mathbb{R}^{n+1}\setminus\{0\}\;\big|\;\Arg(s)\in I_\omega\big\},
\end{equation}
using the union of intervals
\begin{equation*}
I_\omega:=(-\omega,\omega)\cup(\pi-\omega,\pi+\omega).
\end{equation*}
\end{minipage}

\medskip Here, $\Arg(s)$ is the usual argument of $s$ treated as a complex number in $\mathbb{C}_J$.

\begin{defi}[Bisectorial operators]\label{defi_Bisectorial_operators}
An operator $T\in\mathcal{K}(V)$ is called {\it bisectorial of angle} $\omega\in(0,\frac{\pi}{2})$, if its $S$-spectrum is contained in the closed double sector
\begin{equation*}
\sigma_S(T)\subseteq\overline{D_\omega},
\end{equation*}
and for every $\varphi\in(\omega,\frac{\pi}{2})$ there exists $C_\varphi\geq 0$, such that the left $S$-resolvent \eqref{Eq_S_resolvent} satisfies
\begin{equation}\label{Eq_SL_estimate}
\Vert S_L^{-1}(s,T)\Vert\leq\frac{C_\varphi}{|s|},\qquad s\in\mathbb{R}^{n+1}\setminus(D_\varphi\cup\{0\}).
\end{equation}
\end{defi}

We will now show that the estimate \eqref{Eq_SL_estimate} on the left $S$-resolvent operator implies a similar estimate on the right $S$-resolvent operator.

\begin{lem}
Let $T\in\mathcal{K}(V)$ be bisectorial of angle $\omega\in(0,\frac{\pi}{2})$. Then for every $\varphi\in(\omega,\frac{\pi}{2})$, and with the constant $C_\varphi$ from \eqref{Eq_SL_estimate}, there holds
\begin{equation}\label{Eq_SR_estimate}
\Vert S_R^{-1}(s,T)\Vert\leq\frac{2C_\varphi}{|s|},\qquad s\in\mathbb{R}^{n+1}\setminus(D_\varphi\cup\{0\}),
\end{equation}
\end{lem}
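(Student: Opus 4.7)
The plan is to exploit the symmetry under conjugation $s\leftrightarrow\overline{s}$ together with the key algebraic identity $Q_{\overline{s}}[T]=Q_s[T]$, in order to extract from the single bound \eqref{Eq_SL_estimate} the two separate ingredients needed to control $S_R^{-1}(s,T)$. Writing $\overline{s}=s_0-\Im(s)$, where the real scalar $s_0$ commutes with every operator, a direct rearrangement in \eqref{Eq_S_resolvent} produces the three identities
\begin{align*}
S_L^{-1}(s,T)&=(s_0-T)Q_s[T]^{-1}-Q_s[T]^{-1}\Im(s),\\
S_L^{-1}(\overline{s},T)&=(s_0-T)Q_s[T]^{-1}+Q_s[T]^{-1}\Im(s),\\
S_R^{-1}(s,T)&=(s_0-T)Q_s[T]^{-1}-\Im(s)Q_s[T]^{-1}.
\end{align*}

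Next, I would observe that since $D_\varphi$ and $\rho_S(T)$ are both axially symmetric and $|\overline{s}|=|s|$, the assumption $s\notin D_\varphi$ also places $\overline{s}$ in $\rho_S(T)\setminus D_\varphi$, so that \eqref{Eq_SL_estimate} applies simultaneously to $s$ and $\overline{s}$. Taking sum and difference of the first two identities above and dividing by $2$ then yields the two auxiliary bounds
\begin{equation*}
\|(s_0-T)Q_s[T]^{-1}\|\leq \frac{C_\varphi}{|s|}\qquad\text{and}\qquad \|Q_s[T]^{-1}\Im(s)\|\leq \frac{C_\varphi}{|s|}.
\end{equation*}

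To finish, I need to transfer the second bound from the right-multiplication version to the left-multiplication version that actually appears in $S_R^{-1}(s,T)$. This is where the paravector identity \eqref{Eq_Norm_estimate_paravector} enters: left multiplication by the pure imaginary paravector $\Im(s)$ acts on $V$ as an isometric dilation, $\|\Im(s)v\|=|\Im(s)|\|v\|$ for every $v\in V$, and its inverse $-\Im(s)/|\Im(s)|^2$ is again a paravector. A change of variables $w=\Im(s)v$ therefore gives
\begin{equation*}
\|\Im(s)Q_s[T]^{-1}\|=|\Im(s)|\|Q_s[T]^{-1}\|=\|Q_s[T]^{-1}\Im(s)\|\leq\frac{C_\varphi}{|s|},
\end{equation*}
the degenerate case $\Im(s)=0$ being trivial since then $S_R^{-1}(s,T)=S_L^{-1}(s,T)$. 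Combining this with the first auxiliary bound via the triangle inequality applied to the decomposition of $S_R^{-1}(s,T)$ above yields the claim $\|S_R^{-1}(s,T)\|\leq 2C_\varphi/|s|$.

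The main conceptual point, rather than a genuine obstacle, is the equality $\|\Im(s)Q_s[T]^{-1}\|=\|Q_s[T]^{-1}\Im(s)\|$: despite the noncommutativity of the fully Clifford setting, these two operator norms coincide because $\Im(s)$ is a paravector and so acts on $V$ as an isometry up to the scalar factor $|\Im(s)|$. This is precisely where the hypothesis that $s$ (and thus its imaginary part) is a paravector, rather than an arbitrary Clifford number, is essential.
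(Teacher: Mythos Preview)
Your proof is correct and follows essentially the same strategy as the paper's: both exploit $Q_{\overline{s}}[T]=Q_s[T]$ to express $S_R^{-1}(s,T)$ in terms of $S_L^{-1}(s,T)$ and $S_L^{-1}(\overline{s},T)$, then use the paravector norm identity \eqref{Eq_Norm_estimate_paravector} to handle the one place where left and right multiplication by the imaginary part differ. The only cosmetic difference is that the paper first picks $J\in\mathbb{S}$ with $s\in\mathbb{C}_J$ and writes the decomposition as $S_R^{-1}(s,T)=\tfrac{1}{2}\big(S_L^{-1}(s,T)+S_L^{-1}(\overline{s},T)\big)+J\big(S_L^{-1}(s,T)-S_L^{-1}(\overline{s},T)\big)\tfrac{1}{2J}$, estimating the conjugation $J(\,\cdot\,)\tfrac{1}{2J}$ directly via $|J|=1$, whereas you keep $\Im(s)$ intact and argue $\|\Im(s)Q_s[T]^{-1}\|=|\Im(s)|\,\|Q_s[T]^{-1}\|=\|Q_s[T]^{-1}\Im(s)\|$; these are the same computation.
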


\begin{proof}
For every $s\in\mathbb{R}^{n+1}\setminus(D_\varphi\cup\{0\})$ let us choose $J\in\mathbb{S}$ such that $s\in\mathbb{C}_J$. Then we can decompose the right $S$-resolvent in terms of the left $S$-resolvent in the way
\begin{align*}
S_R^{-1}(s,T)&=(\overline{s}-T)Q_s[T]^{-1}=Q_s[T]^{-1}\frac{s+\overline{s}}{2}-TQ_s[T]^{-1}-JQ_s[T]^{-1}\frac{s-\overline{s}}{2J} \\
&=\frac{1}{2}\Big(Q_s[T]^{-1}(s+\overline{s})-2TQ_s[T]^{-1}\Big)+J\Big(Q_s[T]^{-1}(\overline{s}-s)+(T-T)Q_s[T]^{-1}\Big)\frac{1}{2J} \\
&=\frac{1}{2}\big(S_L^{-1}(s,T)+S_L^{-1}(\overline{s},T)\big)+J\big(S_L^{-1}(s,T)-S_L^{-1}(\overline{s},T)\big)\frac{1}{2J},
\end{align*}
where in the second equation we used that $\frac{s+\overline{s}}{2},\frac{s-\overline{s}}{2J}\in\mathbb{R}$ are real and hence commute with $Q_s[T]^{-1}$. In this form we can estimate
\begin{equation*}
\Vert S_R^{-1}(s,T)\Vert\leq\frac{1}{2}\Big(\frac{C_\varphi}{|s|}+\frac{C_\varphi}{|\overline{s}|}\Big)+\frac{1}{2}\Big(\frac{C_\varphi}{|s|}+\frac{C_\varphi}{|\overline{s}|}\Big)=\frac{2C_\varphi}{|s|}. \qedhere
\end{equation*}
\end{proof}

\section{The $\omega$-functional calculus}\label{sec_Omega}

The aim of this section is to generalize the bounded functional calculus \eqref{Eq_S_functional_calculus} to the unbounded bisectorial operators $T$ from Definition~\ref{defi_Bisectorial_operators}. Since we have to integrate around the unbounded $S$-spectrum of $T$ we need certain decay assumptions on the involved function $f$ in order to make the integral converge.

\begin{defi}\label{defi_SH_0}
For every $\theta\in(0,\frac{\pi}{2})$ let $D_\theta$ be the double sector from \eqref{Eq_Domega}. Then we define the function spaces
\begin{align*}
\text{i)}\;\;&\mathcal{SH}_L^0(D_\theta):=\bigg\{f\in\mathcal{SH}_L(D_\theta)\;\bigg|\;f\text{ is bounded},\;\int_0^\infty|f(te^{J\phi})|\frac{dt}{t}<\infty,\;J\in\mathbb{S},\phi\in I_\theta\bigg\}, \\
\text{ii)}\;\;&\mathcal{N}^0(D_\theta):=\bigg\{g\in\mathcal{N}(D_\theta)\;\bigg|\;g\text{ is bounded},\;\int_0^\infty|g(te^{J\phi})|\frac{dt}{t}<\infty,\;J\in\mathbb{S},\phi\in I_\theta\bigg\}.
\end{align*}
\end{defi}

The next lemma proves how functions in the space $\mathcal{SH}_L^0(D_\theta)$ behave at zero and at infinity. The basic idea is taken from \cite[Lemma 9.4]{H18}.

\begin{lem}\label{lem_f_convergence}
Let $\theta\in(0,\frac{\pi}{2})$ and $f\in\mathcal{SH}_L^0(D_\theta)$. Then for every $\varphi\in(0,\theta)$, there holds
\begin{equation}\label{Eq_f_convergence}
\lim\limits_{t\rightarrow 0^+}\sup\limits_{J\in\mathbb{S},\phi\in I_\varphi}|f(te^{J\phi})|=0\qquad\text{and}\qquad
\lim\limits_{t\rightarrow\infty}\sup\limits_{J\in\mathbb{S},\phi\in I_\varphi}|f(te^{J\phi})|=0.
\end{equation}
\end{lem}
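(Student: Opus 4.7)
The plan is to reduce \eqref{Eq_f_convergence} to a classical subharmonic estimate on a single complex plane. Fix $J_0\in\mathbb{S}$. From \eqref{Eq_Holomorphic_decomposition} together with the compatibility conditions \eqref{Eq_Symmetry_condition} one obtains
\[
f_0(x,y)=\tfrac{1}{2}\bigl(f(x+J_0y)+f(x-J_0y)\bigr),\qquad J_0f_1(x,y)=\tfrac{1}{2}\bigl(f(x+J_0y)-f(x-J_0y)\bigr),
\]
and \eqref{Eq_Norm_estimate_Rn} then yields a constant $C$ depending only on $n$ with
\[
\sup_{J\in\mathbb{S}}|f(x+Jy)|\le C\bigl(|f(x+J_0y)|+|f(x-J_0y)|\bigr).
\]
Since $I_\varphi$ is invariant under $\phi\mapsto-\phi$ modulo $2\pi$, taking $\sup_{\phi\in I_\varphi}$ collapses the double supremum in \eqref{Eq_f_convergence} to $\sup_{\phi\in I_\varphi}|f(te^{J_0\phi})|$, so it suffices to work on the single plane $\mathbb{C}_{J_0}$.

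On $\mathbb{C}_{J_0}$, the Cauchy-Riemann equations (iii) of Definition~\ref{defi_Slice_hyperholomorphic_functions} force each real coefficient of the $\mathbb{R}_n$-valued map $(x,y)\mapsto f(x+J_0y)$ to be harmonic on $D_\theta\cap\mathbb{C}_{J_0}$. Set $\delta:=\min(\theta-\varphi,\pi/6)$, fix $\phi\in I_\varphi$ and write $s_0:=te^{J_0\phi}$; then the open disk $B(s_0,t\sin\delta)$ is contained in $D_\theta\cap\mathbb{C}_{J_0}$, and applying the mean value property to each real coefficient gives a constant $C$ (depending only on $n$) with
\[
|f(s_0)|\le\frac{C}{t^2\sin^2\delta}\int_{B(s_0,t\sin\delta)}|f(s)|\,dA(s).
\]
Switching to polar coordinates $s=re^{J_0\psi}$, the disk is contained in the annular sector $\{t/2\le r\le 2t\}\cap\{\psi\in I_{\varphi+\delta}\}$, where $I_{\varphi+\delta}\subset I_\theta$ is independent of $t$ and $\phi$. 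Since $r\,dr=r^2\,\frac{dr}{r}\le 4t^2\,\frac{dr}{r}$ throughout this region, we deduce
\[
\sup_{\phi\in I_\varphi}|f(te^{J_0\phi})|\le\frac{C}{\sin^2\delta}\int_{I_{\varphi+\delta}}\int_{t/2}^{2t}|f(re^{J_0\psi})|\,\frac{dr}{r}\,d\psi.
\]

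For each fixed $\psi\in I_{\varphi+\delta}$ the inner integral $\int_{t/2}^{2t}|f(re^{J_0\psi})|\,\frac{dr}{r}$ tends to zero both as $t\to 0^+$ and as $t\to\infty$, being a shrinking piece of the convergent integral guaranteed by the integrability hypothesis in Definition~\ref{defi_SH_0}; moreover, the boundedness of $f$ in the same definition forces the uniform pointwise bound $\|f\|_\infty\log 4$ on this inner integral. Dominated convergence on the compact angular set $I_{\varphi+\delta}$ then drives the right-hand side of the previous display to zero in both regimes, and combined with the reduction in the first paragraph this proves \eqref{Eq_f_convergence}.

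The main obstacle is the simultaneous uniformity in $J\in\mathbb{S}$ and $\phi\in I_\varphi$. The slice hyperholomorphic structure disposes of the $J$-supremum by reducing all norms to evaluations on the single plane $\mathbb{C}_{J_0}$, and the $\phi$-uniformity is absorbed into the mean value estimate, whose right-hand side depends on $\phi$ only through an integral over $I_{\varphi+\delta}$ that is already handled by dominated convergence.
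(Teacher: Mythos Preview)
Your proof is correct and takes a genuinely different route from the paper's. The paper pulls back $f$ via the exponential $s\mapsto e^s$ to a horizontal strip, applies the slice Cauchy integral formula with the kernel $S_L^{-1}$ along a rectangle, and then carefully estimates that kernel to obtain a $t$-independent integrable majorant for dominated convergence. By contrast, you first use the representation formula to collapse the $J$-supremum onto a single plane $\mathbb{C}_{J_0}$, observe that the Cauchy--Riemann system makes each real component of $f$ harmonic there, and then invoke the area mean-value inequality on a disk of radius proportional to $|s|$; the polar change of variables turns this into an angular integral of tails of $\int_0^\infty|f(re^{J_0\psi})|\frac{dr}{r}$, and the boundedness hypothesis supplies the constant majorant $\|f\|_\infty\log 4$ for dominated convergence over the compact angular set.

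Your approach is more elementary in that it bypasses the $S$-resolvent kernel machinery entirely and uses only classical harmonic-function theory on one slice; it also makes the role of the boundedness assumption on $f$ completely transparent (it provides the dominating function). The paper's approach stays within the slice-hyperholomorphic formalism throughout and produces an explicit kernel estimate, which is more in keeping with the surrounding machinery, but is heavier for this particular lemma. One very minor remark: with $\delta=\min(\theta-\varphi,\pi/6)$ the open disk may have its closure touching $\partial D_\theta$ when $|\phi|=\varphi$; this causes no difficulty since the sub-mean-value inequality $|u(s_0)|\le\frac{1}{\pi\rho^2}\int_{B(s_0,\rho)}|u|\,dA$ holds as soon as the open ball lies in the domain (by taking a monotone limit over smaller radii), but you could equally take $\delta=\tfrac12\min(\theta-\varphi,\pi/6)$ to avoid the issue altogether.
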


\begin{proof}
Note, that since $p\mapsto e^p$ is intrinsic on $\mathbb{R}^{n+1}$, the function $s\mapsto f(e^s)$ is left slice hyperholomorphic for $s\in\mathbb{R}^{n+1}$ with $|\Im(s)|<\theta$. Choosing now some $\psi\in(\varphi,\theta)$, the Cauchy integral formula \eqref{Eq_Cauchy_integral_formula_left} provides for any $t>0$, $\phi\in(-\varphi,\varphi)$ and $J\in\mathbb{S}$ the representation
\begin{equation}\label{Eq_f_convergence_1}
f(te^{J\phi})=f(e^{\ln(t)+J\phi})=\frac{1}{2\pi}\int_{\gamma_+\oplus\kappa_-\oplus\gamma_-\oplus\kappa_+}S_L^{-1}(s,\ln(t)+J\phi)ds_If(e^s),
\end{equation}
where the path integral is in a complex plane $\mathbb{C}_I$, $I\in\mathbb{S}$, along the curves \medskip

\begin{minipage}{0.44\textwidth}
\begin{center}
\begin{tikzpicture}
\fill[black!15] (-2.7,1)--(2.7,1)--(2.7,-1)--(-2.7,-1);
\draw (-2.7,1)--(2.7,1);
\draw (-2.7,-1)--(2.7,-1);
\draw[thick] (-2,0.8)--(2,0.8)--(2,-0.8)--(-2,-0.8)--(-2,0.8);
\draw[dashed] (-2.7,0.6)--(2.7,0.6);
\draw[dashed] (-2.7,-0.6)--(2.7,-0.6);
\draw (-2.7,1)--(2.7,1);
\draw (-2.7,-1)--(2.7,-1);
\draw[->] (-2.9,0)--(3,0);
\draw[->] (0,-1.2)--(0,1.2);
\draw[thick,->] (1,0.8)--(0.9,0.8);
\draw[thick,->] (-1,-0.8)--(-0.9,-0.8);
\draw[thick,->] (2,-0.3)--(2,-0.2);
\draw[thick,->] (-2,0.3)--(-2,0.2);
\draw (2,-0.3) node[anchor=west] {$\kappa_+$};
\draw (-1.97,0.2) node[anchor=east] {$\kappa_-$};
\draw (0.9,0.85) node[anchor=south] {$\gamma_+$};
\draw (-0.9,-0.8) node[anchor=north] {$\gamma_-$};
\fill[black] (-0.7,0.3) circle (0.05cm);
\fill[black] (-0.7,-0.3) circle (0.05cm);
\draw (-0.6,0.27)--(-0.3,0.2);
\draw (-0.62,-0.24)--(-0.3,0.1);
\draw (-0.37,0.15) node[anchor=west] {\tiny{$\ln(t)\pm I\phi$}};
\draw (2.05,-0.05) node[anchor=south east] {\tiny{$R$}};
\draw (-2.05,-0.05) node[anchor=south west] {\tiny{-$R$}};
\draw (2.7,0.6) node[anchor=west] {\tiny{$\varphi$}};
\draw (2.7,-0.6) node[anchor=west] {\tiny{-$\varphi$}};
\draw (2,0.8) node[anchor=west] {\tiny{$\psi$}};
\draw (2,-0.8) node[anchor=west] {\tiny{-$\psi$}};
\draw (2.7,1) node[anchor=west] {\tiny{$\theta$}};
\draw (2.7,-1) node[anchor=west] {\tiny{-$\theta$}};
\end{tikzpicture}
\end{center}
\end{minipage}
\begin{minipage}{0.55\textwidth}
\begin{align*}
\gamma_+(x)&:=-x+I\psi,\hspace{0.85cm}x\in(-R,R), \\
\gamma_-(x)&:=x-I\psi,\hspace{1.15cm}x\in(-R,R), \\
\kappa_+(\alpha)&:=R+I\alpha,\hspace{1.07cm}\alpha\in(-\psi,\psi), \\
\kappa_-(\alpha)&:=-R-I\alpha,\qquad\alpha\in(-\psi,\psi).
\end{align*}
\end{minipage}

\medskip Since $f$ is bounded, the integrals along the paths $\kappa_\pm$ vanish as $R\rightarrow\infty$ due to
\begin{align*}
\lim\limits_{R\rightarrow\infty}\bigg|\int_{\kappa_\pm}S_L^{-1}(s,\ln(t)+J\phi)ds_Jf(e^s)\bigg|&=\lim\limits_{R\rightarrow\infty}\bigg|\int_{-\psi}^\psi S_L^{-1}(\pm R\pm I\alpha,\ln(t)+J\phi)f(e^{\pm R\pm I\alpha})d\alpha\bigg| \\
&\leq\lim\limits_{R\rightarrow\infty}\frac{2\psi(|\ln(t)+J\phi|+|R+I\alpha|)}{(|\ln(t)+J\phi|-|R+I\alpha|)^2}\Vert f\Vert_\infty=0,
\end{align*}
where we used that the $S$-resolvent can for every $s\in\mathbb{C}_I$, $p\in\mathbb{C}_J$ be estimated by
\begin{subequations}
\begin{align}
|S_L^{-1}(s,p)|=\big|(p^2-2s_0p+|s|^2)^{-1}(\overline{s}-p)\big|&=\frac{|p-\overline{s}|}{|p-s_J||p-\overline{s_J}|} \label{Eq_SL_estimate_1} \\
&\leq\frac{|p|+|s|}{|p-s_J||p-\overline{s_J}|} \label{Eq_SL_estimate_2} \\
&\leq\frac{|p|+|s|}{(|p|-|s|)^2}, \label{Eq_SL_estimate_3}
\end{align}
\end{subequations}
where $s_J\in[s]\cap\mathbb{C}_J$ is the rotation of $s$ into $\mathbb{C}_J$. Hence in the limit $R\rightarrow\infty$, \eqref{Eq_f_convergence_1} reduces to
\begin{align}
f(te^{J\phi})&=\frac{1}{2\pi}\lim\limits_{R\rightarrow\infty}\int_{\gamma_+\oplus\gamma_-}S_L^{-1}(s,\ln(t)+J\phi)ds_If(e^s) \notag \\
&=\frac{1}{2\pi}\lim\limits_{R\rightarrow\infty}\sum\limits_\pm\int_{-R}^RS_L^{-1}\big(\mp x\pm I\psi,\ln(t)+J\phi\big)(-I)f\big(e^{\mp x\pm I\psi}\big)dx \notag \\
&=\frac{1}{2\pi}\lim\limits_{R\rightarrow\infty}\sum\limits_\pm\int_{e^{-R}}^{e^R}S_L^{-1}\big(\ln(\tau)\pm I\psi,\ln(t)+J\phi\big)(-I)f\big(\tau e^{\pm I\psi}\big)\frac{d\tau}{\tau}, \label{Eq_f_convergence_2}
\end{align}
where in the last line we substituted $\tau=e^{\mp x}$. Using again \eqref{Eq_SL_estimate_1}, we can also estimate the $S$-resolvent operator of this integrand by
\begin{align*}
\big|S_L^{-1}\big(\ln(\tau)\pm I\psi,\ln(t)+J\phi\big)\big|&=\frac{|\ln(t)+J\phi-\ln(\tau)\mp I\psi|}{|\ln(t)+J\phi-\ln(\tau)\mp J\psi||\ln(t)+J\phi-\ln(\tau)\pm J\psi|} \\
&\leq\frac{|\ln(\frac{t}{\tau})|+|\phi|+\psi}{|\ln(\frac{t}{\tau})+J(\phi\mp\psi)||\ln(\frac{t}{\tau})+J(\phi\pm\psi)|} \\
&\leq\frac{|\ln(\frac{t}{\tau})|+\varphi+\psi}{|\ln(\frac{t}{\tau})+J(\psi-\varphi)|^2},
\end{align*}
where in the last line we used $-\varphi<\phi<\varphi<\psi$. This allows us to estimate \eqref{Eq_f_convergence_2} by
\begin{align*}
|f(te^{J\phi})|&\leq\frac{1}{2\pi}\lim\limits_{R\rightarrow\infty}\sum\limits_\pm\int_{e^{-R}}^{e^R}\frac{|\ln(\frac{t}{\tau})|+\varphi+\psi}{|\ln(\frac{t}{\tau})+J(\psi-\varphi)|^2}|f(\tau e^{\pm I\psi})|\frac{d\tau}{\tau} \\
&=\frac{1}{2\pi}\lim\limits_{R\rightarrow\infty}\int_{e^{-R}}^{e^R}\frac{|\ln(\frac{t}{\tau})|+\varphi+\psi}{\ln(\frac{t}{\tau})^2+(\psi-\varphi)^2}\big(|f(\tau e^{I\psi})|+|f(\tau e^{-I\psi})|\big)\frac{d\tau}{\tau} \\
&=\frac{1}{2\pi}\int_0^\infty\frac{|\ln(\frac{t}{\tau})|+\varphi+\psi}{\ln(\frac{t}{\tau})^2+(\psi-\varphi)^2}\big(|f(\tau e^{I\psi})|+|f(\tau e^{-I\psi})|\big)\frac{d\tau}{\tau}.
\end{align*}
Since the right hand side of this inequality is independent of $J\in\mathbb{S}$ and $\phi\in(-\varphi,\varphi)$, it also holds for the respective supremum. Since moreover the first part of this integrand admits the $t$-independent upper bound
\begin{equation*}
\frac{|\ln(\frac{t}{\tau})|+\varphi+\psi}{\ln(\frac{t}{\tau})^2+(\psi-\varphi)^2}\leq\frac{1}{2(\psi-\varphi)}+\frac{\varphi+\psi}{(\psi-\varphi)^2},\qquad t>0,
\end{equation*}
and since the function $f\in\mathcal{SH}_L^0(D_\theta)$ satisfies the integrability conditions of Definition~\ref{defi_SH_0}, we can use the dominated convergence theorem to carry the limits $t\rightarrow 0^+$ and $t\rightarrow\infty$ inside the integral, and get
\begin{equation*}
\lim\limits_{t\rightarrow 0^+}\sup\limits_{J\in\mathbb{S},\phi\in(-\varphi,\varphi)}|f(te^{J\phi})|\leq\frac{1}{2\pi}\int_0^\infty\lim\limits_{t\rightarrow 0^+}\frac{|\ln(\frac{t}{\tau})|+\varphi+\psi}{\ln(\frac{t}{\tau})^2+(\psi-\varphi)^2}\big(|f(\tau e^{I\psi})|+|f(\tau e^{-I\psi})|\big)\frac{d\tau}{\tau}=0,
\end{equation*}
as well as
\begin{equation*}
\lim\limits_{t\rightarrow\infty}\sup\limits_{J\in\mathbb{S},\phi\in(-\varphi,\varphi)}|f(te^{J\phi})|\leq\frac{1}{2\pi}\int_0^\infty\lim\limits_{t\rightarrow\infty}\frac{|\ln(\frac{t}{\tau})|+\varphi+\psi}{\ln(\frac{t}{\tau})^2+(\psi-\varphi)^2}\big(|f(\tau e^{I\psi})|+|f(\tau e^{-I\psi})|\big)\frac{d\tau}{\tau}=0.
\end{equation*}
Hence we have proven the limits where the supremum is taken over $\phi\in(-\varphi,\varphi)$. Analogously one proves the convergence where the supremum is taken over $\phi\in(\pi-\varphi,\pi+\varphi)$, i.e.
\begin{equation*}
\lim\limits_{t\rightarrow 0^+}\sup\limits_{J\in\mathbb{S},\phi\in(\pi-\varphi,\pi+\varphi)}|f(te^{J\phi})|=0\qquad\text{and}\qquad\lim\limits_{t\rightarrow\infty}\sup\limits_{J\in\mathbb{S},\phi\in(\pi-\varphi,\pi+\varphi)}|f(te^{J\phi})|=0.
\end{equation*}
Combining these results then gives the stated convergences \eqref{Eq_f_convergence}.
\end{proof}

\begin{lem}\label{lem_Integral_vanish}
Let $\theta\in(0,\frac{\pi}{2})$, $f\in\mathcal{SH}_L^0(D_\theta)$. Consider some open interval $I\subseteq\overline{I}\subseteq I_\theta$, and a right slice hyperholomorphic function $g:\big\{s\in\mathbb{R}^{n+1}\setminus\{0\}\;\big|\;\Arg(s)\in I\big\}\rightarrow\mathbb{R}_n$. Moreover, for every $r>0$ we consider the curve
\begin{equation*}
\sigma_r(\phi):=re^{J\phi},\qquad\phi\in I.
\end{equation*}
Then the following two statements hold: \medskip

\begin{enumerate}
\item[i)] If $\sup\limits_{\phi\in I}|g(re^{J\phi})|=\mathcal{O}(\frac{1}{r})$ as $r\rightarrow 0^+$, then $\lim\limits_{r\rightarrow 0^+}\int\limits_{\sigma_r}g(s)ds_Jf(s)=0$.
\item[ii)] If $\sup\limits_{\phi\in I}|g(re^{J\phi})|=\mathcal{O}(\frac{1}{r})$ as $r\rightarrow\infty$, then $\lim\limits_{r\rightarrow\infty}\int\limits_{\sigma_r}g(s)ds_Jf(s)=0$.
\end{enumerate}
\end{lem}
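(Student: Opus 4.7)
The plan is to reduce both statements to a direct parametric estimate of the path integral, followed by an application of Lemma~\ref{lem_f_convergence}. Starting from the definition \eqref{Eq_Path_integral} and computing $\sigma_r'(\phi)/J=re^{J\phi}$, which is a paravector of modulus $r$, the integral rewrites as
\begin{equation*}
\int_{\sigma_r}g(s)\,ds_Jf(s)=\int_I g(re^{J\phi})\,re^{J\phi}\,f(re^{J\phi})\,d\phi.
\end{equation*}
Invoking the Clifford multiplication bounds \eqref{Eq_Norm_estimate_Rn} and \eqref{Eq_Norm_estimate_paravector} (the middle factor $re^{J\phi}$ being a paravector, so multiplying by it is isometric) together with the triangle inequality, a pointwise estimate then yields
\begin{equation*}
\bigg|\int_{\sigma_r}g(s)\,ds_Jf(s)\bigg|\leq 2^{\frac{n}{2}}\,r\int_I|g(re^{J\phi})|\,|f(re^{J\phi})|\,d\phi.
\end{equation*}

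For case i), the hypothesis $\sup_{\phi\in I}|g(re^{J\phi})|=\mathcal{O}(1/r)$ as $r\to 0^+$ supplies constants $M,r_0>0$ with $|g(re^{J\phi})|\leq M/r$ for all $r\in(0,r_0)$ and $\phi\in I$, which cancels the factor $r$ and gives
\begin{equation*}
\bigg|\int_{\sigma_r}g(s)\,ds_Jf(s)\bigg|\leq 2^{\frac{n}{2}}M\int_I|f(re^{J\phi})|\,d\phi,\qquad r\in(0,r_0).
\end{equation*}
To finish, I would use that $\overline{I}$ is a compact subset of the open set $I_\theta$, so there exists $\varphi\in(0,\theta)$ with $\overline{I}\subseteq I_\varphi$. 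Lemma~\ref{lem_f_convergence} then yields $\sup_{\phi\in I}|f(re^{J\phi})|\leq\sup_{K\in\mathbb{S},\,\phi\in I_\varphi}|f(re^{K\phi})|\to 0$ as $r\to 0^+$, and since $|I|<\infty$ the right hand side of the displayed inequality tends to zero. Case ii) is handled verbatim by replacing $r\to 0^+$ with $r\to\infty$ and using the other limit in Lemma~\ref{lem_f_convergence}.

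There is no real obstacle in this proof; it is a clean repackaging of the decay of $f$ established in Lemma~\ref{lem_f_convergence}. The one step requiring a little care is the shrinking from $I_\theta$ to some $I_\varphi$ with $\varphi<\theta$, which is precisely the point where the strict containment $\overline{I}\subseteq I_\theta$ is essential, since Lemma~\ref{lem_f_convergence} only delivers uniform decay on sectors with a strictly smaller aperture.
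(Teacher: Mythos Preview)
Your proof is correct and follows essentially the same approach as the paper's own argument: parametrize the arc, apply the Clifford norm bound to extract the factor $2^{n/2}r$, use the $\mathcal{O}(1/r)$ hypothesis on $g$ to cancel $r$, and invoke Lemma~\ref{lem_f_convergence} for the remaining supremum of $|f|$. If anything, you are slightly more explicit than the paper in justifying the applicability of Lemma~\ref{lem_f_convergence}, since you spell out the compactness argument $\overline{I}\subseteq I_\varphi$ for some $\varphi<\theta$, whereas the paper simply appeals to that lemma directly.
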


\begin{proof}
i)\;\;By assumption there exists some $C_g\geq 0$ and some $r_0>0$, such that
\begin{equation*}
\sup\nolimits_{\phi\in I}|g(re^{J\phi})|\leq\frac{C_g}{r},\qquad r\leq r_0.
\end{equation*}
Hence, from Lemma~\ref{lem_f_convergence} we conclude the convergence
\begin{equation*}
\Big|\int_{\sigma_r}g(s)ds_Jf(s)\Big|=\Big|\int_Ig(re^{J\phi})\frac{re^{J\phi}J}{J}f(re^{J\phi})d\phi\Big|\leq 2^{\frac{n}{2}}|I|\frac{C_g}{r}r\sup\limits_{\phi\in I}|f(re^{J\phi})|\overset{r\rightarrow 0^+}{\longrightarrow}0.
\end{equation*}
ii)\;\;By assumption there exists some $C_g\geq 0$ and some $r_0>0$, such that
\begin{equation*}
\sup\nolimits_{\phi\in I}|g(re^{J\phi})|\leq\frac{C_g}{r},\qquad r\geq r_0.
\end{equation*}
Hence, from Lemma~\ref{lem_f_convergence} we conclude the convergence 
\begin{equation*}
\Big|\int_{\sigma_r}g(s)ds_Jf(s)\Big|=\Big|\int_Ig(re^{J\phi})\frac{re^{J\phi}J}{J}f(re^{J\phi})d\phi\Big|\leq 2^{\frac{n}{2}}|I|\frac{C_g}{r}r\sup\limits_{\phi\in I}|f(re^{J\phi})|\overset{r\rightarrow\infty}{\longrightarrow}0. \qedhere
\end{equation*}
\end{proof}

The next result ensures that the functional calculus in Definition~\ref{defi_Omega} is well defined for the classes of functions in Definition~\ref{defi_SH_0}.

\begin{thm}\label{thm_Independence}
Let $T\in\mathcal{K}(V)$ be bisectorial of angle $\omega\in(0,\frac{\pi}{2})$. Then for every $f\in\mathcal{SH}_L^0(D_\theta)$, $\theta\in(\omega,\frac{\pi}{2})$, the integral
\begin{equation}\label{Eq_Independence}
\int_{\partial D_\varphi\cap\mathbb{C}_J}S_L^{-1}(s,T)ds_Jf(s)
\end{equation}
defines a bounded Clifford operator and it neither depends on the angle $\varphi\in(\omega,\theta)$, nor on the imaginary unit $J\in\mathbb{S}$.
\end{thm}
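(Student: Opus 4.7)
The contour $\partial D_\varphi\cap\mathbb{C}_J$ is a union of four rays, each of the form $\sigma_\phi(t)=te^{J\phi}$ with $t\in(0,\infty)$ and $\phi\in\{\pm\varphi,\pi\pm\varphi\}$. On any such ray $|\sigma_\phi'(t)|=1$, and the bisectorial bound \eqref{Eq_SL_estimate} together with the norm estimate \eqref{Eq_Norm_estimate_Rn} yield
\begin{equation*}
\left\|\int_{\sigma_\phi}S_L^{-1}(s,T)\,ds_J\,f(s)\right\|\le 2^{n/2}C_\varphi\int_0^\infty\frac{|f(te^{J\phi})|}{t}\,dt<\infty,
\end{equation*}
where finiteness is Definition~\ref{defi_SH_0}. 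Summing the four rays shows that \eqref{Eq_Independence} defines a bounded right-linear operator on $V$.

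\textbf{Step 2 (Independence of $\varphi$).} Fix $J\in\mathbb{S}$ and $\omega<\varphi_1<\varphi_2<\theta$. I deform the contour from $\partial D_{\varphi_1}\cap\mathbb{C}_J$ to $\partial D_{\varphi_2}\cap\mathbb{C}_J$ through the four open wedges making up $(D_{\varphi_2}\setminus\overline{D_{\varphi_1}})\cap\mathbb{C}_J$. On each such wedge, $s\mapsto S_L^{-1}(s,T)$ is right slice hyperholomorphic and $f$ is left slice hyperholomorphic, so the form $S_L^{-1}(s,T)\,ds_J\,f(s)$ is closed inside $\mathbb{C}_J$. Truncating each wedge to $r\le|s|\le R$ and applying the Cauchy theorem in $\mathbb{C}_J$, the boundary of the truncated wedge consists of two straight segments (on the rays of $\partial D_{\varphi_i}$) together with two circular arcs at radii $r$ and $R$. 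The vanishing of the arc integrals in the limits $r\to 0^+$ and $R\to\infty$ then follows from Lemma~\ref{lem_Integral_vanish} applied to $g(s):=S_L^{-1}(s,T)$, whose norm satisfies the required $\mathcal{O}(1/|s|)$ decay on each wedge by \eqref{Eq_SL_estimate} (applied with any $\varphi'\in(\omega,\varphi_1)$).

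\textbf{Step 3 (Independence of $J$).} On a ray $s=te^{J\phi}$ the operator $Q_s[T]=T^2-2(t\cos\phi)T+t^2$ depends only on $t$ and $\phi$; setting $A_{t,\phi}:=Q_s[T]^{-1}$ and $B_{t,\phi}:=A_{t,\phi}(t\cos\phi)-TA_{t,\phi}$, both $J$-independent, one has
\begin{equation*}
S_L^{-1}(te^{J\phi},T)=B_{t,\phi}-A_{t,\phi}\,J\,t\sin\phi,\qquad \frac{e^{J\phi}}{J}=\sin\phi-J\cos\phi.
\end{equation*}
Using the slice representation $f(te^{J\phi})=f_0(t\cos\phi,t\sin\phi)+Jf_1(t\cos\phi,t\sin\phi)$, I would expand the integrand, apply it to an arbitrary $v\in V$, and sum the contributions of each conjugate pair of rays $\phi=\pm\varphi$ (and separately $\phi=\pi\pm\varphi$) with their proper opposite orientations on $\partial D_\varphi$. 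Because of the compatibility relations \eqref{Eq_Symmetry_condition}, namely $f_0(x,-y)=f_0(x,y)$ and $f_1(x,-y)=-f_1(x,y)$, all terms in the resulting integrand that still carry an explicit factor $J$ cancel pairwise, and what survives is built only from $A_{t,\phi}$, $B_{t,\phi}$, and left-multiplication by $f_0$ and $f_1$. This expression is manifestly $J$-independent, proving the claim.

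\textbf{Main obstacle.} The delicate step is Step~3. Since $A_{t,\phi}$ and $B_{t,\phi}$ are merely right-linear, they do not commute with left-multiplication by $J\in\mathbb{R}_n$, so each occurrence of $J$ must be tracked through the expansion; the cancellation depends crucially on pairing conjugate points through \eqref{Eq_Symmetry_condition}. Steps~1 and~2 are standard adaptations of the contour-deformation and decay arguments from the complex theory.
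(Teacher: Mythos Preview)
Your Steps 1 and 2 match the paper's proof essentially verbatim. Step 3, however, is a genuinely different route: the paper proves $J$-independence by a double-integral ``change of plane'' argument—representing $f(s)$ on $\partial D_{\varphi_2}\cap\mathbb{C}_J$ via a Cauchy integral in $\mathbb{C}_I$, representing $S_L^{-1}(p,T)$ on the $\mathbb{C}_I$-contour via a Cauchy integral back in $\mathbb{C}_J$, applying Fubini, and then carefully estimating away the remainder terms as $\varepsilon\to0^+$. Your approach is more elementary and direct: you decompose the integrand along each ray as $(A+BJ)(\alpha+J\beta)=A\alpha-B\beta+AJ\beta+BJ\alpha$, observe that the operators $A,B$ (built from $Q_s[T]^{-1}$ and $T$, with $s_0,|s|^2$ real) are $J$-independent, and then note that under the conjugation $t\mapsto -t$ the compatibility conditions \eqref{Eq_Symmetry_condition} force $\alpha$ even, $\beta$ odd (while $A$ is even, $B$ odd), so the two $J$-carrying terms are odd and integrate to zero. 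The surviving expression $\int(A\alpha-B\beta)\,dt$ is manifestly $J$-free even though $\alpha,\beta$ are Clifford-valued. This is exactly the mechanism the paper exploits later in proving \eqref{Eq_Representation_intrinsic} for \emph{intrinsic} $g$ (equations \eqref{Eq_Representation_intrinsic_1}--\eqref{Eq_Representation_intrinsic_3}), but there the reality of $g_0,g_1$ is used for a different purpose (commuting $J$ through to compare with $S_R^{-1}$); you correctly recognise that the parity cancellation itself needs only \eqref{Eq_Symmetry_condition}, not intrinsicness. Your argument is shorter and avoids the absolute-convergence and limiting estimates of the paper's double-integral computation; the paper's method, on the other hand, is a contour technique that does not rely on the particular algebraic form of $S_L^{-1}(s,T)$ and would transfer to kernels lacking such a clean $A+BJ$ splitting.
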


\begin{proof}
From the estimate \eqref{Eq_SL_estimate} and the definition of the space $\mathcal{SH}_L^0(S_\theta)$ in Definition~\ref{defi_SH_0}, there follows for every $J\in\mathbb{S}$ and $\varphi\in(\omega,\theta)$ the absolute convergence of the integral
\begin{equation*}
\int_0^\infty\bigg\Vert S_L^{-1}(te^{J\varphi},T)\frac{e^{J\varphi}}{J}f(te^{J\varphi})\bigg\Vert dt\leq 2^{\frac{n}{2}}C_\varphi\int_0^\infty\frac{1}{t}|f(te^{J\varphi})|dt<\infty.
\end{equation*}
Since the integral \eqref{Eq_Independence} consists of four such integrals, one with $\varphi$ and three with $\varphi$ replaced by $-\varphi$, $\pi-\varphi$ and $\pi+\varphi$ respectively. This shows that \eqref{Eq_Independence} leads to an everywhere defined bounded operator with operator norm
\begin{equation*}
\bigg\Vert\int_{\partial D_\varphi\cap\mathbb{C}_J}S_L^{-1}(s,T)ds_Jf(s)\bigg\Vert\leq 2^{\frac{n}{2}}C_\varphi\sum\limits_{\phi\in\{\varphi,-\varphi,\pi-\varphi,\pi+\varphi\}}\int_0^\infty|f(te^{J\phi})|\frac{dt}{t}.
\end{equation*}
For the $\varphi$-independence, let $\varphi_1<\varphi_2 \in(\omega,\theta)$. For every $0<\varepsilon<R$ we consider the paths \medskip

\begin{minipage}{0.39\textwidth}
\begin{center}
\begin{tikzpicture}[scale=0.8]
\fill[black!15] (0,0)--(1.25,2.17) arc (60:-60:2.5)--(0,0)--(-1.25,-2.17) arc(240:120:2.5);
\draw (-1.25,2.17)--(1.25,-2.17);
\draw (-1.25,-2.17)--(1.25,2.17);
\fill[black!30] (0,0)--(2.27,1.06) arc (25:-25:2.5)--(0,0)--(-2.27,-1.06) arc (205:155:2.5);
\draw (-2.27,1.06)--(2.27,-1.06);
\draw (-2.27,-1.06)--(2.27,1.06);
\draw[->] (-2.7,0)--(2.9,0);
\draw[->] (0,-2.2)--(0,2.3) node[anchor=north east] {\large{$\mathbb{C}_J$}};
\draw (1,0) arc (0:35:1) (0.75,-0.09) node[anchor=south] {\tiny{$\varphi_1$}};
\draw (1.4,0) arc (0:50:1.4) (1.15,0.06) node[anchor=south] {\tiny{$\varphi_2$}};
\draw[thick] (1.48,1.76)--(0.45,0.54)--(0.58,0.41)--(1.88,1.32) arc (35:50:2.3);
\draw[thick] (1.48,-1.76)--(0.45,-0.54)--(0.58,-0.41)--(1.88,-1.32) arc (-35:-50:2.3);
\draw[thick] (-1.48,1.76)--(-0.45,0.54)--(-0.58,0.41)--(-1.88,1.32) arc (145:130:2.3);
\draw[thick] (-1.48,-1.76)--(-0.45,-0.54)--(-0.58,-0.41)--(-1.88,-1.32) arc (215:230:2.3);
\draw[thick,->] (0.58,0.41)--(0.45,0.54);
\draw[thick,->] (0.45,-0.54)--(0.58,-0.41);
\draw[thick,->] (-0.45,0.54)--(-0.58,0.41);
\draw[thick,->] (-0.58,-0.41)--(-0.45,-0.54);
\draw (0.6,0.56) node[anchor=north east] {\tiny{$\sigma$}};
\draw (0.6,-0.55) node[anchor=south east] {\tiny{$\sigma$}};
\draw (-0.6,0.55) node[anchor=north west] {\tiny{$\sigma$}};
\draw (-0.6,-0.58) node[anchor=south west] {\tiny{$\sigma$}};
\draw[thick,->] (1.85,1.3)--(1.28,0.89);
\draw[thick,->] (0.58,-0.41)--(1.28,-0.89);
\draw[thick,->] (1.45,1.73)--(1,1.2);
\draw[thick,->] (0.45,-0.54)--(1,-1.2);
\draw[thick,->] (-1.85,-1.3)--(-1.28,-0.89);
\draw[thick,->] (-0.58,0.41)--(-1.28,0.89);
\draw[thick,->] (-1.45,-1.73)--(-1,-1.2);
\draw[thick,->] (-0.45,0.54)--(-1,1.2);
\draw (1.2,0.7) node[anchor=west] {\tiny{$\gamma_{1,+}$}};
\draw (1,-0.7) node[anchor=west] {\tiny{$\gamma_{1,+}$}};
\draw (-0.9,0.7) node[anchor=east] {\tiny{$\gamma_{1,-}$}};
\draw (-1,-0.75) node[anchor=east] {\tiny{$\gamma_{1,-}$}};
\draw (1.1,1.25) node[anchor=east] {\tiny{$\gamma_{2,+}$}};
\draw (1.1,-1.25) node[anchor=east] {\tiny{$\gamma_{2,+}$}};
\draw (-1,1.2) node[anchor=west] {\tiny{$\gamma_{2,-}$}};
\draw (-1.3,-1.5) node[anchor=west] {\tiny{$\gamma_{2,-}$}};
\draw[thick,->] (1.68,1.57)--(1.625,1.625);
\draw[thick,->] (1.725,-1.525)--(1.78,-1.47);
\draw[thick,->] (-1.725,1.525)--(-1.78,1.47);
\draw[thick,->] (-1.68,-1.57)--(-1.625,-1.625);
\draw (1.68,1.57) node[anchor=west] {\tiny{$\tau$}};
\draw (1.64,-1.59) node[anchor=west] {\tiny{$\tau$}};
\draw (-1.64,1.59) node[anchor=east] {\tiny{$\tau$}};
\draw (-1.68,-1.57) node[anchor=east] {\tiny{$\tau$}};
\end{tikzpicture}
\end{center}
\end{minipage}
\begin{minipage}{0.6\textwidth}
\begin{align*}
\sigma(\varphi):=&\varepsilon e^{J\varphi},\hspace{2.52cm}\varphi\in I_{\varphi_2}\setminus\overline{I_{\varphi_1}}, \\
\tau(\varphi):=&Re^{J\varphi},\hspace{2.4cm}\varphi\in I_{\varphi_2}\setminus\overline{I_{\varphi_1}}, \\
\gamma_{k,\pm}(t):=&\pm|t|e^{-\sgn(t)J\varphi_k},\qquad t\in(-R,R)\setminus[-\varepsilon,\varepsilon], \\
\gamma_k:=&\gamma_{k,+}\oplus\gamma_{k,-},\hspace{1.45cm} k\in\{1,2\}.
\end{align*}
\end{minipage}

\medskip Since there is no spectrum in the interior of the path $\gamma_2\ominus\sigma\ominus\gamma_1\oplus\tau$, the following integrand is holomorphic and there vanishes the integral
\begin{equation*}
\int_{\gamma_2\ominus\sigma\ominus\gamma_1\oplus\tau}S_L^{-1}(s,T)ds_Jf(s)=0,
\end{equation*}
where the symbol $\oplus$ (resp. $\ominus$) means the sum (resp. difference) of integrals along the respective paths. However, since $\Vert S_L^{-1}(s,T)\Vert\leq\frac{C_\varphi}{|s|}$ by Definition~\ref{defi_Bisectorial_operators}, the integrals along $\sigma$ and $\tau$ vanish in the limits $\varepsilon\rightarrow 0^+$ and $R\rightarrow\infty$ by Lemma~\ref{lem_Integral_vanish}. Hence we end up with
\begin{equation*}
\lim\limits_{\varepsilon\rightarrow 0^+}\lim\limits_{R\rightarrow\infty}\int_{\gamma_2\ominus\gamma_1}S_L^{-1}(s,T)ds_Jf(s)=0,
\end{equation*}
which can be rewritten as the independence of the angle
\begin{equation*}
\int_{\partial D_{\varphi_1}\cap\mathbb{C}_J}S_L^{-1}(s,T)ds_Jf(s)=\int_{\partial D_{\varphi_2}\cap\mathbb{C}_J}S_L^{-1}(s,T)ds_Jf(s).
\end{equation*}
For the independence of the imaginary unit, let $I,J\in\mathbb{S}$ and $\varphi_1<\varphi_2<\varphi_3\in(\omega,\theta)$. For every $\varepsilon>0$ we define the paths \medskip

\begin{minipage}{0.39\textwidth}
\begin{center}
\begin{tikzpicture}[scale=0.8]
\fill[black!15] (0,0)--(0.65,2.41) arc (75:-75:2.5)--(0,0)--(-0.65,-2.41) arc (255:105:2.5);
\draw (-0.65,-2.41)--(0.65,2.41);
\draw (-0.65,2.41)--(0.65,-2.41);
\fill[black!30] (0,0)--(2.27,1.06) arc (25:-25:2.5)--(0,0)--(-2.27,-1.06) arc (205:155:2.5);
\draw (-2.27,-1.06)--(2.27,1.06);
\draw (-2.27,1.06)--(2.27,-1.06);
\draw[->] (-2.7,0)--(2.9,0);
\draw[->] (0,-2.4)--(0,2.5);
\draw (0.2,2.3) node[anchor=east] {\large{$\mathbb{C}_I$}};
\draw[thick] (1.06,2.27)--(0.3,0.64) arc (65:35:0.71)--(2.05,1.43);
\draw[thick] (-1.06,2.27)--(-0.3,0.64) arc (115:145:0.71)--(-2.05,1.43);
\draw[thick] (1.06,-2.27)--(0.3,-0.64) arc (-65:-35:0.71)--(2.05,-1.43);
\draw[thick] (-1.06,-2.27)--(-0.3,-0.64) arc (245:215:0.71)--(-2.05,-1.43);
\draw[thick,->] (0.47,0.53)--(0.46,0.54);
\draw[thick,->] (0.45,-0.55)--(0.46,-0.54);
\draw[thick,->] (1.85,1.3)--(1.28,0.89);
\draw[thick,->] (0.58,-0.41)--(1.28,-0.89);
\draw[thick,->] (0.96,2.05)--(0.66,1.41);
\draw[thick,->] (0.3,-0.64)--(0.66,-1.41);
\draw[thick,->] (-0.47,-0.53)--(-0.46,-0.54);
\draw[thick,->] (-0.45,0.55)--(-0.46,0.54);
\draw[thick,->] (-1.85,-1.3)--(-1.28,-0.89);
\draw[thick,->] (-0.58,0.41)--(-1.28,0.89);
\draw[thick,->] (-0.96,-2.05)--(-0.66,-1.41);
\draw[thick,->] (-0.3,0.64)--(-0.66,1.41);
\draw (2,1.43) node[anchor=west] {\tiny{$\gamma_{1,+}$}};
\draw (2,-1.5) node[anchor=west] {\tiny{$\gamma_{1,+}$}};
\draw (-1.9,1.5) node[anchor=east] {\tiny{$\gamma_{1,-}$}};
\draw (-1.9,-1.5) node[anchor=east] {\tiny{$\gamma_{1,-}$}};
\draw (1.45,2) node[anchor=west] {\tiny{$\gamma_{2,+}$}};
\draw (1.48,-2.053) node[anchor=west] {\tiny{$\gamma_{2,+}$}};
\draw (-1.4,2.05) node[anchor=east] {\tiny{$\gamma_{2,-}$}};
\draw (-1.45,-2) node[anchor=east] {\tiny{$\gamma_{2,-}$}};
\draw (1.15,2.15) node[anchor=south] {\tiny{$\gamma_{3,+}$}};
\draw (1.15,-2.25) node[anchor=north] {\tiny{$\gamma_{3,+}$}};
\draw (-1.06,2.2) node[anchor=south] {\tiny{$\gamma_{3,-}$}};
\draw (-1.06,-2.2) node[anchor=north] {\tiny{$\gamma_{3,-}$}};
\draw (0.6,0.6) node[anchor=north east] {\tiny{$\sigma$}};
\draw (0.7,-0.65) node[anchor=south east] {\tiny{$\sigma$}};
\draw (-0.65,0.6) node[anchor=north west] {\tiny{$\sigma$}};
\draw (-0.6,-0.65) node[anchor=south west] {\tiny{$\sigma$}};
\draw[dashed] (1.61,-1.92)--(0.46,-0.54);
\draw[dashed] (0.46,0.54)--(1.61,1.92);
\draw[dashed] (-1.61,1.92)--(-0.46,0.54);
\draw[dashed] (-0.46,-0.54)--(-1.61,-1.92);
\fill[black] (1.09,1.2) circle (0.07cm) node[anchor=south] {\small{$[s]$}};
\fill[black] (1.09,-1.2) circle (0.07cm) node[anchor=north] {\small{$[s]$}};
\end{tikzpicture}
\end{center}
\end{minipage}
\begin{minipage}{0.6\textwidth}
\begin{align*}
\sigma(\varphi)&:=\varepsilon e^{I\varphi},\hspace{2.36cm} \varphi\in I_{\varphi_3}\setminus\overline{I_{\varphi_1}}, \\
\gamma_{1,\pm}(t)&:=\pm|t|e^{-\sgn(t)I\varphi_1},\hspace{0.8cm} t\in\mathbb{R}\setminus[-\varepsilon,\varepsilon], \\
\gamma_{2,\pm}(t)&:=\pm|t|e^{-\sgn(t)J\varphi_2},\qquad t\in\mathbb{R}\setminus[-\varepsilon,\varepsilon], \\
\gamma_{3,\pm}(t)&:=\pm|t|e^{-\sgn(t)I\varphi_3},\hspace{0.81cm} t\in\mathbb{R}\setminus[-\varepsilon,\varepsilon], \\
\gamma_k&:=\gamma_{k,+}\oplus\gamma_{k,-},\hspace{1.28cm} k\in\{1,2,3\}.
\end{align*}

\medskip Note, that $\sigma,\gamma_1,\gamma_3$ are curves in $\mathbb{C}_I$, while $\gamma_2$ is in $\mathbb{C}_J$.
\end{minipage}

\medskip Then the Cauchy integral formula \eqref{Eq_Cauchy_integral_formula_left} gives
\begin{equation}\label{Eq_Independence_5}
f(s)=\int_{\gamma_3\ominus\sigma\ominus\gamma_1}S_L^{-1}(p,s)dp_If(p),\qquad s\in\ran(\gamma_2).
\end{equation}
Note, that the integral which closes the path at infinity vanishes because of the estimate $|S_L^{-1}(p,s)|\leq\frac{|p|+|s|}{(|p|-|s|)^2}$ from \eqref{Eq_SL_estimate_3} and the convergence in Lemma~\ref{lem_Integral_vanish}~ii). Next, we additionally consider the curves \medskip

\begin{minipage}{0.34\textwidth}
\begin{center}
\begin{tikzpicture}[scale=0.8]
\fill[black!30] (0,0)--(1.99,-0.93) arc (-25:25:2.2)--(0,0)--(-1.99,-0.93) arc (205:155:2.2);
\draw (-1.99,-0.93)--(1.99,0.93);
\draw (-1.99,0.93)--(1.99,-0.93);
\draw[->] (-2.3,0)--(2.6,0);
\draw[->] (0,-2.3)--(0,2.4);
\draw (0.15,2.55) node[anchor=north east] {\large{$\mathbb{C}_J$}};
\fill[black] (0.66,1.4) circle (0.07cm) node[anchor=east] {\small{$[p]$}};
\fill[black] (0.66,-1.4) circle (0.07cm) node[anchor=east] {\small{$[p]$}};
\draw[dashed] (0.93,1.99)--(0.42,0.91) arc (65:35:1)--(1.8,1.26);
\draw[dashed] (0.93,-1.99)--(0.42,-0.91) arc (-65:-35:1)--(1.8,-1.26);
\draw[dashed] (-0.93,-1.99)--(-0.42,-0.91) arc (245:215:1)--(-1.8,-1.26);
\draw[dashed] (-0.93,1.99)--(-0.42,0.91) arc (115:145:1)--(-1.8,1.26);
\draw (1.6,1.3) node[anchor=west] {\tiny{$\gamma_{1,+}$}};
\draw (1.65,-1.35) node[anchor=west] {\tiny{$\gamma_{1,+}$}};
\draw (-1.5,1.35) node[anchor=east] {\tiny{$\gamma_{1,-}$}};
\draw (-1.5,-1.35) node[anchor=east] {\tiny{$\gamma_{1,-}$}};
\draw (1,1.9) node[anchor=south] {\tiny{$\gamma_{3,+}$}};
\draw (0.95,-1.9) node[anchor=north] {\tiny{$\gamma_{3,+}$}};
\draw (-0.95,1.9) node[anchor=south] {\tiny{$\gamma_{3,-}$}};
\draw (-0.95,-1.9) node[anchor=north] {\tiny{$\gamma_{3,-}$}};
\draw[thick] (1.41,1.69)--(0.32,0.38) arc (50:130:0.5)--(-1.41,1.69);
\draw[thick] (1.41,-1.69)--(0.32,-0.38) arc (310:230:0.5)--(-1.41,-1.69);
\draw[thick,->] (0.64,-0.77)--(0.98,-1.17);
\draw[thick,->] (1.29,1.53)--(0.91,1.08);
\draw[thick,->] (-0.64,0.77)--(-1.09,1.3);
\draw[thick,->] (-1.29,-1.53)--(-0.91,-1.08);
\draw[thick,->] (0.51,0.61)--(0.45,0.54);
\draw[thick,->] (0.45,-0.54)--(0.51,-0.61);
\draw[thick,->] (-0.51,-0.61)--(-0.45,-0.54);
\draw[thick,->] (-0.45,0.54)--(-0.51,0.61);
\draw[thick,->] (-0.2,0.455)--(-0.21,0.45);
\draw[thick,->] (0.2,-0.455)--(0.21,-0.45);
\draw (1.15,1.8) node[anchor=west] {\tiny{$\gamma_{2,+}$}};
\draw (1.18,-1.85) node[anchor=west] {\tiny{$\gamma_{2,+}$}};
\draw (-1.1,1.85) node[anchor=east] {\tiny{$\gamma_{2,-}$}};
\draw (-1,-1.8) node[anchor=east] {\tiny{$\gamma_{2,-}$}};
\draw (-0.13,0.48) node[anchor=north] {\tiny{$\tau$}};
\draw (0.18,-0.52) node[anchor=south] {\tiny{$\tau$}};
\draw (0.65,0.75) node[anchor=east] {\tiny{$\kappa_+$}};
\draw (0.65,-0.75) node[anchor=east] {\tiny{$\kappa_+$}};
\draw (-0.65,0.7) node[anchor=west] {\tiny{$\kappa_-$}};
\draw (-0.65,-0.8) node[anchor=west] {\tiny{$\kappa_-$}};
\end{tikzpicture}
\end{center}
\end{minipage}
\begin{minipage}{0.65\textwidth}
\begin{align*}
\tau(\varphi)&:=\frac{\varepsilon}{2}e^{J\varphi},\hspace{1.85cm}\varphi\in\Big(-\frac{\pi}{2},\frac{3\pi}{2}\Big)\setminus\overline{I_{\varphi_2}}, \\
\kappa_\pm(t)&:=\pm|t|e^{-\sgn(t)J\varphi_2},\quad t\in(-\varepsilon,\varepsilon)\setminus\Big[-\frac{\varepsilon}{2},\frac{\varepsilon}{2}\Big], \\
\kappa&:=\kappa_+\oplus\kappa_-.
\end{align*}
\end{minipage}

\medskip If we use the fact that $S_L^{-1}(\,\cdot\,,T)$ is a right slice hyperholomorphic in $\mathbb{R}^{n+1}\setminus D_\omega$, the Cauchy integral formula \eqref{Eq_Cauchy_integral_formula_right} gives
\begin{align}
S_L^{-1}(p,T)&=\frac{1}{2\pi}\int_{\ominus\gamma_2\ominus\kappa\ominus\tau}S_L^{-1}(s,T)ds_JS_R^{-1}(s,p) \notag \\
&=\frac{1}{2\pi}\int_{\gamma_2\oplus\kappa\oplus\tau}S_L^{-1}(s,T)ds_JS_L^{-1}(p,s),\qquad p\in\ran(\gamma_3), \label{Eq_Independence_2}
\end{align}
where in the second equation we used the relation $S_L^{-1}(p,s)=-S_R^{-1}(s,p)$ between the left and the right Cauchy kernels. Note, that the integral, which closes the path at infinity, vanishes due to the asymptotics behaviour of the integrand
\begin{equation*}
\Vert S_L^{-1}(s,T)\Vert|S_L^{-1}(p,s)|\leq\frac{C_{\varphi_1}}{|s|}\frac{|p|+|s|}{(|p|-|s|)^2}=\mathcal{O}\Big(\frac{1}{|s|^2}\Big),\qquad\text{as }|s|\rightarrow\infty,
\end{equation*}
which follows from the estimates \eqref{Eq_SL_estimate} and \eqref{Eq_SL_estimate_3}. Analogously, we obtain the formula
\begin{equation}\label{Eq_Independence_3}
0=\frac{1}{2\pi}\int_{\gamma_2\oplus\kappa\oplus\tau}S_L^{-1}(s,T)ds_JS_L^{-1}(p,s),\qquad p\in\ran(\gamma_1),
\end{equation}
where the difference with respect to \eqref{Eq_Independence_2} is, that the left hand side vanishes since every $p\in\ran(\gamma_1)$ is outside the integration path $\gamma_2\oplus\kappa\oplus\tau$. Analogously to \eqref{Eq_Independence_5} there also holds
\begin{equation}\label{Eq_Independence_6}
0=\int_{\gamma_3\ominus\sigma\ominus\gamma_1}S_L^{-1}(p,s)dp_If(p),\qquad s\in\ran(\kappa\oplus\tau),
\end{equation}
where the left hand side is zero instead of $f(s)$, since every $s\in\ran(\kappa\oplus\tau)$ is outside the integration path $\gamma_3\ominus\sigma\ominus\gamma_1$. In the upcoming calculation, let us now use \eqref{Eq_Independence_5} in the first equation, \eqref{Eq_Independence_2} and \eqref{Eq_Independence_3} in the third equation, and \eqref{Eq_Independence_6} in the fifth equation. Doing so, this gives the formula
\begin{align}
\int_{\gamma_2}S_L^{-1}(s,T)ds_Jf(s)&=\frac{1}{2\pi}\int_{\gamma_2}S_L^{-1}(s,T)ds_J\bigg(\int_{\gamma_3\ominus\sigma\ominus\gamma_1}S_L^{-1}(p,s)dp_If(p)\bigg) \notag \\
&=\frac{1}{2\pi}\int_{\gamma_3\ominus\sigma\ominus\gamma_1}\bigg(\int_{\gamma_2}S_L^{-1}(s,T)ds_JS_L^{-1}(p,s)\bigg)dp_If(p) \notag \\
&=\int_{\gamma_3}\bigg(S_L^{-1}(p,T)-\frac{1}{2\pi}\int_{\kappa\oplus\tau}S_L^{-1}(s,T)ds_JS_L^{-1}(p,s)\bigg)dp_If(p) \notag \\
&\quad+\frac{1}{2\pi}\int_{\gamma_1} \bigg(\int_{\kappa\oplus\tau}S_L^{-1}(s,T)ds_JS_L^{-1}(p,s)\bigg)dp_If(p) \notag \\
&\quad-\frac{1}{2\pi}\int_\sigma\bigg(\int_{\gamma_2}S_L^{-1}(s,T)ds_JS_L^{-1}(p,s)\bigg)dp_If(p) \notag \\
&\hspace{-2.5cm}=\int_{\gamma_3}S_L^{-1}(p,T)dp_If(p)+\frac{1}{2\pi}\int_{\kappa\oplus\tau}S_L^{-1}(s,T)ds_J\bigg(\int_{\gamma_1\ominus\gamma_3}S_L^{-1}(p,s)dp_If(p)\bigg) \notag \\
&\hspace{-2.5cm}\quad-\frac{1}{2\pi}\int_\sigma\bigg(\int_{\gamma_2}S_L^{-1}(s,T)ds_JS_L^{-1}(p,s)\bigg)dp_If(p) \notag \\
&\hspace{-2.5cm}=\int_{\gamma_3}S_L^{-1}(p,T)dp_If(p)-\frac{1}{2\pi}\int_\sigma\bigg(\int_{\kappa\oplus\tau\oplus\gamma_2}S_L^{-1}(s,T)ds_JS_L^{-1}(p,s)\bigg)dp_If(p). \label{Eq_Independence_7}
\end{align}
In the above computations we were allowed to interchange the order of integration, since it is easy to check that all the double integrals are absolute convergent. To conclude the proof it remains to show that the last integral in \eqref{Eq_Independence_7} vanishes in the limit $\varepsilon\rightarrow 0^+$. To do so, we use \eqref{Eq_SL_estimate} and \eqref{Eq_SL_estimate_2} to estimate the integrand by
\begin{equation*}
\Vert S_L^{-1}(s,T)\Vert|S_L^{-1}(p,s)|\leq\frac{C_{\varphi_1}(|p|+|s|)}{|s||s-p_J||s-\overline{p_J}|}.
\end{equation*}
With this inequality, we can now estimate all three parts of the paths $\kappa\oplus\tau\oplus\gamma_2$ in the double integral separately. For the $\tau$-part, we get
\begin{align*}
\Big\Vert&\int_\sigma\Big(\int_\tau S_L^{-1}(s,T)ds_JS_L^{-1}(p,s)\Big)dp_If(p)\Big\Vert \\
&\leq 2^{\frac{n}{2}}\int_{I_{\varphi_3}\setminus\overline{I_{\varphi_1}}}\int_{(-\frac{\pi}{2},\frac{3\pi}{2})\setminus\overline{I_{\varphi_2}}}\Big\Vert S_L^{-1}\Big(\frac{\varepsilon}{2}e^{J\phi},T\Big)\Big\Vert\Big|S_L^{-1}\Big(\varepsilon e^{I\varphi},\frac{\varepsilon}{2}e^{J\phi}\Big)\Big||f(\varepsilon e^{I\varphi})|\frac{\varepsilon^2}{2}d\phi d\varphi \\
&\leq 2^{\frac{n}{2}-1}\varepsilon^2\int_{I_{\varphi_3}\setminus\overline{I_{\varphi_1}}}\int_{(-\frac{\pi}{2},\frac{3\pi}{2})\setminus\overline{I_{\varphi_2}}}\frac{C_{\varphi_1}(\frac{\varepsilon}{2}+\varepsilon)}{\frac{\varepsilon}{2}|\frac{\varepsilon}{2}e^{J\phi}-\varepsilon e^{J\varphi}||\frac{\varepsilon}{2}e^{J\phi}-\varepsilon e^{-J\varphi}|}|f(\varepsilon e^{I\varphi})|d\phi d\varphi \\
&\leq 2^{\frac{n}{2}-1}3C_{\varphi_1}\int_{I_{\varphi_3}\setminus\overline{I_{\varphi_1}}}\int_{(-\frac{\pi}{2},\frac{3\pi}{2})\setminus\overline{I_{\varphi_2}}}\frac{1}{|\frac{1}{2}e^{J\phi}-e^{J\varphi}||\frac{1}{2}e^{J\phi}-e^{-J\varphi}|}d\phi d\varphi\sup\limits_{\varphi\in I_{\varphi_3}}|f(\varepsilon e^{I\varphi})|\overset{\varepsilon\rightarrow 0^+}{\longrightarrow}0,
\end{align*}
where this upper bound converges to zero because of Lemma~\ref{lem_f_convergence} and since the double integral is finite because it does not admit any singularities. On the other hand, also using \eqref{Eq_SL_estimate} and \eqref{Eq_SL_estimate_2}, the $\kappa\oplus\gamma_2$-part of the double integral in \eqref{Eq_Independence_7} can be estimated by
\begin{align*}
&\Big\Vert\int_\sigma\Big(\int_{\kappa\oplus\gamma_2}S_L^{-1}(s,T)ds_JS_L^{-1}(p,s)\Big)dp_If(p)\Big\Vert \\
&\leq 2^{\frac{n}{2}}\sum\limits_\pm\int_{I_{\varphi_3}\setminus\overline{I_{\varphi_1}}}\int_{\mathbb{R}\setminus[-\frac{\varepsilon}{2},\frac{\varepsilon}{2}]}\big\Vert S_L^{-1}\big(\pm|t|e^{-\sgn(t)J\varphi_2},T\big)\big\Vert \\
&\hspace{4.6cm}\times\big|S_L^{-1}\big(\varepsilon e^{I\varphi},\pm|t|e^{-\sgn(t)J\varphi_2}\big)\big||f(\varepsilon e^{I\varphi})|\varepsilon dtd\varphi \\
&\leq 2^{\frac{n}{2}}\varepsilon\sum\limits_\pm\int_{I_{\varphi_3}\setminus\overline{I_{\varphi_1}}}\int_{\mathbb{R}\setminus[-\frac{\varepsilon}{2},\frac{\varepsilon}{2}]}\frac{C_{\varphi_1}(\varepsilon+|t|)}{|t|\big|\varepsilon e^{J\varphi}\mp|t|e^{-\sgn(t)J\varphi_2}\big|\big|\varepsilon e^{-J\varphi}\mp|t|e^{-\sgn(t)J\varphi_2}\big|}|f(\varepsilon e^{I\varphi})|dtd\varphi \\
&=2^{\frac{n}{2}}\varepsilon\sum\limits_\pm\int_{I_{\varphi_3}\setminus\overline{I_{\varphi_1}}}\int_{\mathbb{R}\setminus[-\frac{\varepsilon}{2},\frac{\varepsilon}{2}]}\frac{C_{\varphi_1}(\varepsilon+|t|)}{|t||\varepsilon e^{J\varphi}-te^{\pm J\varphi_2}||\varepsilon e^{-J\varphi}-te^{\pm J\varphi_2}|}|f(\varepsilon e^{I\varphi})|dtd\varphi \\
&\leq 2^{\frac{n}{2}}C_{\varphi_1}\sum\limits_\pm\int_{I_{\varphi_3}\setminus\overline{I_{\varphi_1}}}\int_{\mathbb{R}\setminus[-\frac{1}{2},\frac{1}{2}]}\frac{1+|\tau|}{|\tau||e^{J\varphi}-\tau e^{\pm J\varphi_2}||e^{-J\varphi}-\tau e^{\pm J\varphi_2}|}d\tau d\varphi\sup\limits_{\varphi\in I_{\varphi_3}}|f(\varepsilon e^{I\varphi})|\overset{\varepsilon\rightarrow 0^+}{\longrightarrow}0,
\end{align*}
where this upper bound converges to zero because of Lemma~\ref{lem_f_convergence} and since the double integral is finite because the only singularities are located at $\tau=1$ and $\varphi=\pm\varphi_2$, and are of order one which is integrable in the two-dimensional integral. Hence, performing the limit $\varepsilon\rightarrow 0^+$ in \eqref{Eq_Independence_7} gives the independence of the imaginary unit
\begin{equation*}
\int_{\partial D_{\varphi_2}\cap\mathbb{C}_J}S_L^{-1}(s,T)ds_Jf(s)=\int_{\partial D_{\varphi_3}\cap\mathbb{C}_I}S_L^{-1}(p,T)dp_If(p)=\int_{\partial D_{\varphi_2}\cap\mathbb{C}_I}S_L^{-1}(p,T)dp_If(p),
\end{equation*}
where in the second equation we replaced $\varphi_3$ by $\varphi_2$, which is allowed by the already proven independence of the angle.
\end{proof}

Theorem~\ref{thm_Independence} now proves the well definedness of the following $\omega$-functional calculus.

\begin{defi}[$\omega$-functional calculus]\label{defi_Omega}
Let $T\in\mathcal{K}(V)$ be bisectorial of angle $\omega\in(0,\frac{\pi}{2})$. Then for every $f\in\mathcal{SH}_L^0(D_\theta)$ we define the {\it $\omega$-functional calculus} \medskip

\begin{minipage}{0.4\textwidth}
\begin{center}
\begin{tikzpicture}[scale=0.8]
\fill[black!15] (0,0)--(1.56,1.56) arc (45:-45:2.2)--(0,0)--(-1.56,-1.56) arc (225:135:2.2);
\fill[black!30] (0,0)--(2,0.93) arc (25:-25:2.2)--(0,0)--(-2,-0.93) arc (205:155:2.2);
\draw (2,0.93)--(-2,-0.93);
\draw (2,-0.93)--(-2,0.93);
\draw (1.56,1.56)--(-1.56,-1.56);
\draw (1.56,-1.56)--(-1.56,1.56);
\draw (0.9,0) arc (0:25:0.9) (0.67,-0.1) node[anchor=south] {\tiny{$\omega$}};
\draw (1.3,0) arc (0:35:1.3) (1.05,-0.05) node[anchor=south] {\tiny{$\varphi$}};
\draw (1.7,0) arc (0:45:1.7) (1.45,0.05) node[anchor=south] {\tiny{$\theta$}};
\draw[thick] (1.8,1.26)--(-1.8,-1.26);
\draw[thick] (1.8,-1.26)--(-1.8,1.26);
\draw[thick,->] (1.8,1.26)--(1.47,1.03);
\draw[thick,->] (0,0)--(1.47,-1.03);
\draw[thick,->] (-1.8,-1.26)--(-1.47,-1.03);
\draw[thick,->] (0,0)--(-1.47,1.03);
\draw[->] (-2.4,0)--(2.6,0);
\draw[->] (0,-1.5)--(0,1.5) node[anchor=north east] {\large{$\mathbb{C}_J$}};
\end{tikzpicture}
\end{center}
\end{minipage}
\begin{minipage}{0.59\textwidth}
\begin{equation}\label{Eq_Omega}
f(T):=\frac{1}{2\pi}\int_{\partial D_\varphi\cap\mathbb{C}_J}S_L^{-1}(s,T)ds_Jf(s),
\end{equation}
where $\varphi\in(\omega,\theta)$ and $J\in\mathbb{S}$ are arbitrary. \medskip

By Theorem~\ref{thm_Independence}, $f(T)$ is independent of the chosen $\varphi$ and $J$ and leads to a bounded operator $f(T)\in\mathcal{B}(V)$.
\end{minipage}
\end{defi}

For every $f,g\in\mathcal{SH}_L^0(D_\theta)$ and $a\in\mathbb{R}_n$, there clearly holds
\begin{equation}\label{Eq_Linearity_omega}
(f+g)(T)=f(T)+g(T)\qquad\text{and}\qquad(fa)(T)=f(T)a.
\end{equation}

\begin{rem}\label{rem_Omega_0rho}
In the special case that $0\in\rho_S(T)$, there is also $U_r(0)\subseteq\rho_S(T)$ for some $r>0$. as. Then the estimate $\Vert S_L^{-1}(s,T)\Vert\leq\frac{C_\varphi}{|s|}$ and Lemma~\ref{lem_Integral_vanish} allow to rewrite the $\omega$-functional calculus \eqref{Eq_Omega} for every $\rho\in(0,r)$ as \medskip

\begin{minipage}{0.35\textwidth}
\begin{center}
\begin{tikzpicture}[scale=0.8]
\fill[black!15] (0,0)--(1.56,1.56) arc (45:-45:2.2)--(0,0)--(-1.56,-1.56) arc (225:135:2.2);
\fill[black!30] (0.73,0.34)--(2,0.93) arc (25:-25:2.2)--(0.73,-0.34) arc (-25:25:0.8);
\fill[black!30] (-0.73,0.34)--(-2,0.93) arc (155:205:2.2)--(-0.73,-0.34) arc (205:155:0.8);
\draw (2,0.93)--(0.73,0.34) arc (25:-25:0.8)--(2,-0.93);
\draw (-2,0.93)--(-0.73,0.34) arc (155:205:0.8)--(-2,-0.93);
\draw (1.56,1.56)--(-1.56,-1.56);
\draw (1.56,-1.56)--(-1.56,1.56);
\draw[thick] (1.8,1.26)--(0.49,0.34) arc (35:-35:0.6)--(1.8,-1.26);
\draw[thick] (-1.8,1.26)--(-0.49,0.34) arc (145:215:0.6)--(-1.8,-1.26);
\draw[thick,->] (1.8,1.26)--(1.47,1.03);
\draw[thick,->] (1.39,-0.98)--(1.47,-1.03);
\draw[thick,->] (-1.8,-1.26)--(-1.47,-1.03);
\draw[thick,->] (-1.39,0.98)--(-1.47,1.03);
\draw[thick,->] (-0.555,0.22)--(-0.54,0.25);
\draw[thick,->] (0.555,-0.22)--(0.54,-0.25);
\draw[->] (0,0)--(-0.54,-0.22);
\draw[->] (0,0)--(-0.79,-0.14);
\draw (-0.7,-0.2) node[anchor=east] {\tiny{$r$}};
\draw (-0.95,-0.33) node[anchor=west] {\tiny{$\rho$}};
\draw[->] (-2.4,0)--(2.6,0);
\draw[->] (0,-1.56)--(0,1.56);
\draw (0,1.3) node[anchor=east] {\large{$\mathbb{C}_J$}};
\draw (-1.6,-0.08) node[anchor=south] {\small{$\sigma_S(T)$}};
\draw (1.2,0) arc (0:25:1.2) (1,-0.05) node[anchor=south] {\tiny{$\omega$}};
\draw (1.6,0) arc (0:35:1.6) (1.35,-0.03) node[anchor=south] {\tiny{$\varphi$}};
\draw (2,0) arc (0:45:2) (1.75,0.05) node[anchor=south] {\tiny{$\theta$}};
\end{tikzpicture}
\end{center}
\end{minipage}
\begin{minipage}{0.64\textwidth}
\begin{equation}\label{Eq_Omega_0rho}
f(T):=\frac{1}{2\pi}\int_{\partial(D_\varphi\setminus U_\rho(0))\cap\mathbb{C}_J}S_L^{-1}(s,T)ds_Jf(s).
\end{equation}
\end{minipage}

\medskip However, the functional calculus \eqref{Eq_Omega_0rho} is defined for a larger class of functions, namely for every bounded $f\in\mathcal{SH}_L(D_\theta)$, such that for some $R>0$ there is
\begin{equation}\label{Eq_Integrability_at_infinity}
\int_R^\infty|f(te^{J\phi})|\frac{dt}{t}<\infty,\qquad J\in\mathbb{S},\,\phi\in I_\theta.
\end{equation}
All the stated results of this section are then also valid for this functional calculus accordingly.
\end{rem}

The next proposition shows that for intrinsic functions $g\in\mathcal{N}^0(D_\theta)$, the functional calculus \eqref{Eq_Omega} can also be defined using the right $S$-resolvent operator \eqref{Eq_S_resolvent}.

\begin{prop}
Let $T\in\mathcal{K}(V)$ be bisectorial of angle $\omega\in(0,\frac{\pi}{2})$. Then for $g\in\mathcal{N}^0(D_\theta)$, $\theta\in(\omega,\frac{\pi}{2})$, the functional calculus $g(T)$ in \eqref{Eq_Omega} can be written as
\begin{equation}\label{Eq_Representation_intrinsic}
g(T)=\frac{1}{2\pi}\int_{\partial D_\varphi\cap\mathbb{C}_J}g(s)ds_JS_R^{-1}(s,T),
\end{equation}
where $\varphi\in(\omega,\theta)$ and $J\in\mathbb{S}$ are arbitrary.
\end{prop}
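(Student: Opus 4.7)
The plan begins by verifying that the right-hand side of \eqref{Eq_Representation_intrinsic} defines a bounded operator independent of $\varphi\in(\omega,\theta)$ and $J\in\mathbb{S}$. This parallels the proof of Theorem~\ref{thm_Independence}, using the right $S$-resolvent estimate \eqref{Eq_SR_estimate} in place of \eqref{Eq_SL_estimate}: absolute convergence follows from $\Vert S_R^{-1}(s,T)\Vert\leq 2C_\varphi/|s|$ together with the integrability condition of $\mathcal{N}^0(D_\theta)$, while the independence of contour and imaginary unit follow from the same Cauchy-theorem and vanishing-arc arguments, invoking Lemmas~\ref{lem_f_convergence} and \ref{lem_Integral_vanish}.

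For the equality, I would use a double-integral argument. Fix $\omega<\varphi_1<\varphi<\theta$ and, since both representations are independent of the chosen imaginary unit, use the same $J$ in both. Since $g$ is intrinsic it is also right slice hyperholomorphic on $D_\theta$, so applying the right Cauchy integral formula to $g$ on the bounded truncation $D_\varphi\cap\{r<|\cdot|<R\}$ and then letting $r\to 0^+$, $R\to\infty$---in which limits the contributions from $\partial B_r(0)$ and $\partial B_R(0)$ vanish thanks to Lemma~\ref{lem_f_convergence} and the decay of the Cauchy kernel---yields
\begin{equation*}
g(s)=\frac{1}{2\pi}\int_{\partial D_\varphi\cap\mathbb{C}_J}g(u)\,du_J\,S_R^{-1}(u,s),\qquad s\in\partial D_{\varphi_1},
\end{equation*}
where one notes that $\partial D_{\varphi_1}$ lies in the interior of $D_\varphi$ since $\varphi_1<\varphi$, so the Cauchy formula applies with the evaluation point $s$ in the enclosed region.

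Substituting this into $g(T)=\frac{1}{2\pi}\int_{\partial D_{\varphi_1}\cap\mathbb{C}_J}S_L^{-1}(s,T)\,ds_J\,g(s)$ and applying Fubini, whose applicability is ensured by the absolute convergence of the resulting double integral (using $\Vert S_L^{-1}(s,T)\Vert\leq C_{\varphi_1}/|s|$, the standard decay of $|S_R^{-1}(u,s)|$ when $s$ and $u$ lie on distinct rays emanating from the origin, and the $\mathcal{N}^0$-integrability of $g$), I obtain
\begin{equation*}
g(T)=\frac{1}{(2\pi)^2}\int_{\partial D_\varphi\cap\mathbb{C}_J}\bigg[\int_{\partial D_{\varphi_1}\cap\mathbb{C}_J}S_L^{-1}(s,T)\,ds_J\,S_R^{-1}(u,s)\bigg]g(u)\,du_J.
\end{equation*}
The intrinsic character of $g$ is crucial here: it ensures that $g(u)\,du_J\in\mathbb{C}_J$ commutes with the Clifford factors $ds_J$ and $S_R^{-1}(u,s)$ (which also lie in $\mathbb{C}_J$), so that $g(u)\,du_J$ can be pulled out to the right of the inner integral.

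The main obstacle, and technically the most delicate step, is to identify the inner bracket as $2\pi\,S_R^{-1}(u,T)$ for each $u\in\partial D_\varphi$; combined with the previous display this yields \eqref{Eq_Representation_intrinsic}. This is a Cauchy-type identity for the right $S$-resolvent itself. Since $s\mapsto S_R^{-1}(u,s)$ is slice hyperholomorphic on $\mathbb{R}^{n+1}\setminus[u]$, the point $u$ lies outside $\overline{D_{\varphi_1}}$, and $\sigma_S(T)\subseteq\overline{D_\omega}\subset D_{\varphi_1}$, one may perform a contour-deformation argument analogous to the end of the proof of Theorem~\ref{thm_Independence}, combined with the Cauchy-kernel relation $S_L^{-1}(p,s)=-S_R^{-1}(s,p)$ used there, to reduce the inner integral to the value $(\overline{u}-T)\,Q_u[T]^{-1}=S_R^{-1}(u,T)$.
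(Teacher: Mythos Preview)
Your approach is genuinely different from the paper's, but it contains a real gap at the Fubini step. On the contour $\partial D_{\varphi_1}\cap\mathbb{C}_J$ the left resolvent only satisfies $\Vert S_L^{-1}(s,T)\Vert\leq C_{\varphi_1}/|s|$, while for $s,u\in\mathbb{C}_J$ one has $|S_R^{-1}(u,s)|=1/|s-u|$, which tends to the nonzero constant $1/|u|$ as $s\to 0$. Hence the modulus of the double integrand behaves like $|g(u)|/(|u|\,|s|)$ near $s=0$, and integrating in $|ds_J|=dt$ gives a divergent $\int_0 dt/t$; the double integral is not absolutely convergent and Fubini is not justified. Worse, the inner bracket you write down,
\[
\int_{\partial D_{\varphi_1}\cap\mathbb{C}_J}S_L^{-1}(s,T)\,ds_J\,S_R^{-1}(u,s),
\]
does not converge as an improper integral for the same reason (the integrand is $\sim 1/|s|$ near the origin with no cancellation between the four rays, since the limiting value $\bar u/|u|^2$ is the same on all of them). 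So the identification of the bracket with $2\pi\,S_R^{-1}(u,T)$ is not merely sketchy---the object is undefined. The analogous step in Theorem~\ref{thm_Independence} works only because the additional factor $f\in\mathcal{SH}_L^0$ supplies the missing decay at the origin.

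The paper's proof avoids all of this with an elementary algebraic argument: parametrize $\partial D_\varphi\cap\mathbb{C}_J$ by $\gamma_\pm(t)=x(t)+Jy(t)$, decompose $S_L^{-1}(\gamma_\pm(t),T)=A_\pm(t)+B_\pm(t)J$ with $A_\pm,B_\pm$ operator-valued and $J$-free, and decompose $\frac{\gamma_\pm'(t)}{J}g(\gamma_\pm(t))=\alpha_\pm(t)+J\beta_\pm(t)$. Because $g$ is intrinsic, $\alpha_\pm,\beta_\pm$ are \emph{real}-valued, and the even/odd symmetry in $t$ reduces the left-resolvent integral to $\frac{1}{\pi}\sum_\pm\int_0^\infty(A_\pm\alpha_\pm-B_\pm\beta_\pm)\,dt$. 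Writing $S_R^{-1}(\gamma_\pm(t),T)=A_\pm(t)+JB_\pm(t)$ and $g(\gamma_\pm(t))\frac{\gamma_\pm'(t)}{J}=\alpha_\pm(t)+\beta_\pm(t)J$ (again using that $\alpha_\pm,\beta_\pm$ are real) yields the identical expression for the right-resolvent integral, proving \eqref{Eq_Representation_intrinsic} directly.
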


\begin{proof}
First, we decompose the paths $\gamma_\pm(t):=\pm|t|e^{-\sgn(t)J\varphi}$, $t\in\mathbb{R}$, which parametrizes the boundary $\partial D_\varphi\cap\mathbb{C}_J$, into
\begin{equation*}
\gamma_\pm(t)=\underbrace{\pm|t|\cos(\varphi)}_{=:x(t)}\underbrace{\mp t\sin(\varphi)}_{=:y(t)}J=x(t)+y(t)J,\qquad t\in\mathbb{R},
\end{equation*}
where we keep in mind that $x(t)$ and $y(t)$ depend on the choice of the sign $\pm$. With these functions, we can now  write the left $S$-resolvent operator \eqref{Eq_S_resolvent} as
\begin{align}
S_L^{-1}(\gamma_\pm(t),T)&=Q_{\gamma_\pm(t)}[T]^{-1}\overline{\gamma_\pm(t)}-TQ_{\gamma_\pm(t)}[T]^{-1} \notag \\
&=\underbrace{(x(t)-T)Q_{\gamma_\pm(t)}[T]^{-1}}_{=:A_\pm(t)}\underbrace{-y(t)Q_{\gamma_\pm(t)}[T]^{-1}}_{=:B_\pm(t)}J=A_\pm(t)+B_\pm(t)J. \label{Eq_Representation_intrinsic_1}
\end{align}
Moreover, we can also decompose the function $g$ according to \eqref{Eq_Holomorphic_decomposition} into
\begin{align}
\frac{\gamma_\pm'(t)}{J}g(\gamma_\pm(t))&=\frac{x'(t)+y'(t)J}{J}\big(g_0(x(t),y(t))+Jg_1(x(t),y(t))\big) \notag \\
&=\underbrace{x'(t)g_1(x(t),y(t))+y'(t)g_0(x(t),y(t))}_{=:\alpha_\pm(t)} \notag \\
&\quad+J\underbrace{\big(y'(t)g_1(x(t),y(t))-x'(t)g_0(x(t),y(t))\big)}_{=:\beta_\pm(t)}=\alpha_\pm(t)+J\beta_\pm(t).  \label{Eq_Representation_intrinsic_2}
\end{align}
With the two decompositions \eqref{Eq_Representation_intrinsic_1} and \eqref{Eq_Representation_intrinsic_2}, we can now write
\begin{align}
g(T)&=\frac{1}{2\pi}\sum_\pm\int_\mathbb{R}S_L^{-1}(\gamma_\pm(t),T)\frac{\gamma_\pm'(t)}{J}g(\gamma_\pm(t))dt \notag \\
&=\frac{1}{2\pi}\sum_\pm\int_\mathbb{R}\big(A_\pm(t)+B_\pm(t)J\big)\big(\alpha_\pm(t)+J\beta_\pm(t)\big)dt \notag \\
&=\frac{1}{\pi}\sum_\pm\int_0^\infty\big(A_\pm(t)\alpha_\pm(t)-B_\pm(t)\beta_\pm(t)\big)dt, \label{Eq_Representation_intrinsic_3}
\end{align}
where in the last line we used the properties $A_\pm(-t)=A_\pm(t)$ and $B_\pm(-t)=-B_\pm(t)$, but also $\alpha_\pm(-t)=\alpha_\pm(t)$ and $\beta_\pm(-t)=-\beta_\pm(t)$, which is a consequence of the compatibility condition \eqref{Eq_Symmetry_condition}. Next, with the same operators $A_\pm(t)$ and $B_\pm(t)$, we can also write the right $S$-resolvent operator \eqref{Eq_S_resolvent} in the form
\begin{align*}
S_R^{-1}(\gamma_\pm(t),T)&=\big(\overline{\gamma_\pm(t)}-T\big)Q_{\gamma_\pm(t)}[T]^{-1} \\
&=(x_\pm(t)-Jy_\pm(t)-T)Q_{\gamma_\pm(t)}[T]^{-1}=A_\pm(t)+JB_\pm(t).
\end{align*}
Moreover, since $g$ is intrinsic, the functions $g_0,g_1$ in \eqref{Eq_Representation_intrinsic_2} are real, which means that $\alpha_\pm,\beta_\pm$ are real valued as well. This allows us to write
\begin{equation*}
g(\gamma_\pm(t))\frac{\gamma_\pm'(t)}{J}=\alpha_\pm(t)+\beta_\pm(t)J.
\end{equation*}
Thus in the same way as in \eqref{Eq_Representation_intrinsic_3}, we also get
\begin{align}
\frac{1}{2\pi}\int_{\partial D_\varphi\cap\mathbb{C}_J}g(s)ds_JS_R^{-1}(s,T)&=\frac{1}{2\pi}\sum_\pm\int_\mathbb{R}g(\gamma_\pm(t))\frac{\gamma_\pm'(t)}{J}S_R^{-1}(\gamma_\pm(t),T)dt \notag \\
&=\frac{1}{\pi}\sum_\pm\int_0^\infty\big(A_\pm(t)\alpha_\pm(t)-B_\pm(t)\beta_\pm(t)\big)dt. \label{Eq_Representation_intrinsic_4}
\end{align}
Since the right hand sides of \eqref{Eq_Representation_intrinsic_3} and \eqref{Eq_Representation_intrinsic_4} coincide, the representation \eqref{Eq_Omega} is proven.
\end{proof}

The next proposition shows that for certain subclasses of functions, the $\omega$-functional calculus \eqref{Eq_Omega} coincides with the well established functional calculus for unbounded operators, see e.g. \cite[Definition~3.4.2]{FJBOOK} or \cite[Theorem 4.12]{ColSab2006}.

\begin{prop}\label{prop_Unbounded_omega}
Let $T\in\mathcal{K}(V)$ be bisectorial of angle $\omega\in(0,\frac{\pi}{2})$ and $f\in\mathcal{SH}_L(\mathbb{R}^{n+1}\setminus K)$ for some axially symmetric compact set $K\subseteq\mathbb{R}^{n+1}\setminus\overline{D_\omega}$. Moreover, we assume that $f(0)=0$ and $\lim_{|s|\rightarrow\infty}f(s)=0$. Then there exists some $\theta\in(\omega,\frac{\pi}{2})$ such that $f\in\mathcal{SH}_L^0(D_\theta)$ and the $\omega$-functional calculus \eqref{Eq_Omega} can be written as
\begin{equation}\label{Eq_Unbounded_omega}
f(T)=\frac{-1}{2\pi}\int_{\partial U\cap\mathbb{C}_J}S_L^{-1}(s,T)ds_Jf(s),
\end{equation}
where $J\in\mathbb{S}$ is arbitrary and $U$ is some axially symmetric open set with $K\subseteq U\subseteq\overline{U}\subseteq\rho_S(T)$.
\end{prop}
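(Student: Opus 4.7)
My plan is to split the proof into two parts: first verifying that $f$ belongs to $\mathcal{SH}_L^0(D_\theta)$ for an appropriate $\theta$, which lets us apply the $\omega$-functional calculus as defined in \eqref{Eq_Omega}; then deforming the sector contour $\partial D_\varphi\cap\mathbb{C}_J$ into the contour $\partial U\cap\mathbb{C}_J$ surrounding $K$ via a Cauchy-type argument, gaining the minus sign from the reversal of orientation.

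For the first step, since $K$ is compact and disjoint from the closed set $\overline{D_\omega}$, their distance is strictly positive, so we can enlarge $\omega$ slightly and pick some $\theta\in(\omega,\frac{\pi}{2})$ with $\overline{D_\theta}\cap K=\emptyset$. Then $D_\theta\subseteq\mathbb{R}^{n+1}\setminus K$ and $f$ is slice hyperholomorphic on $D_\theta$. Boundedness of $f$ on $D_\theta$ follows from continuity on the closure together with the two limit conditions. For the integrability at $0$, note that $f$ extends analytically to a neighbourhood of $0$ (since $0\notin K$, as $0\in\overline{D_\omega}$) with $f(0)=0$, so $f(s)=\mathcal{O}(|s|)$ near the origin and $|f(te^{J\phi})|/t$ stays bounded as $t\to0^+$. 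For integrability at infinity, since $f$ is slice hyperholomorphic on the unbounded component of $\mathbb{R}^{n+1}\setminus K$ and vanishes at infinity, the slice-hyperholomorphic expansion at $\infty$ in powers of $s^{-1}$ has vanishing constant term, giving $f(s)=\mathcal{O}(1/|s|)$ as $|s|\to\infty$; hence $|f(te^{J\phi})|/t=\mathcal{O}(1/t^2)$ is integrable at infinity. This shows $f\in\mathcal{SH}_L^0(D_\theta)$.

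For the second step, choose $\varphi\in(\omega,\theta)$ small enough that $\overline{D_\varphi}\cap K=\emptyset$, and also choose $U$ axially symmetric open so that $K\subseteq U\subseteq\overline{U}\subseteq\rho_S(T)\cap(\mathbb{R}^{n+1}\setminus\overline{D_\varphi})$, which is possible by compactness of $K$ together with the hypothesis $K\subseteq\mathbb{R}^{n+1}\setminus\overline{D_\omega}$ and $\sigma_S(T)\subseteq\overline{D_\omega}$. For a large radius $R$ consider the axially symmetric region $\Omega_R:=B_R\setminus(\overline{D_\varphi}\cup\overline{U})$ intersected with $\mathbb{C}_J$. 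On this region the integrand $s\mapsto S_L^{-1}(s,T)\,ds_J\,f(s)$ is holomorphic in the slice-hyperholomorphic sense: $S_L^{-1}(\cdot,T)$ is right slice hyperholomorphic on $\rho_S(T)$, and $f$ is left slice hyperholomorphic on $\mathbb{R}^{n+1}\setminus K$. Applying Cauchy's theorem for slice hyperholomorphic functions to the boundary $\partial\Omega_R\cap\mathbb{C}_J$, which consists of $\partial D_\varphi\cap\mathbb{C}_J\cap B_R$ and $\partial U\cap\mathbb{C}_J$ (with opposite induced orientation) plus the two circular arcs on $\partial B_R$, yields the identity
\begin{equation*}
\int_{(\partial D_\varphi\cap\mathbb{C}_J)\cap B_R}S_L^{-1}(s,T)ds_Jf(s)=-\int_{\partial U\cap\mathbb{C}_J}S_L^{-1}(s,T)ds_Jf(s)+\int_{\partial B_R\cap\mathbb{C}_J\cap\Omega_R}S_L^{-1}(s,T)ds_Jf(s).
\end{equation*}
The minus sign encodes that $\partial U$ is traversed oppositely to its own boundary orientation.

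The main obstacle is controlling the arcs on $\partial B_R$ as $R\to\infty$. Here the bisectoriality estimate \eqref{Eq_SL_estimate} provides $\Vert S_L^{-1}(s,T)\Vert\leq C_\varphi/|s|$, and the decay $f(s)=\mathcal{O}(1/|s|)$ from the first step gives an integrand of order $\mathcal{O}(1/R^2)$ on arcs of length $\mathcal{O}(R)$; this is precisely the content of Lemma~\ref{lem_Integral_vanish}~ii) applied to $g(s)=S_L^{-1}(s,T)$ (and the right-linear version reading off of $f$), so the boundary-at-infinity contribution vanishes. Dividing by $2\pi$ and passing to the limit $R\to\infty$ leaves
\begin{equation*}
f(T)=\frac{1}{2\pi}\int_{\partial D_\varphi\cap\mathbb{C}_J}S_L^{-1}(s,T)ds_Jf(s)=\frac{-1}{2\pi}\int_{\partial U\cap\mathbb{C}_J}S_L^{-1}(s,T)ds_Jf(s),
\end{equation*}
which is \eqref{Eq_Unbounded_omega}. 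Independence of $J$ in the new integral follows from the usual argument on slice hyperholomorphic path integrals, analogous to the imaginary-unit-independence shown in Theorem~\ref{thm_Independence}.
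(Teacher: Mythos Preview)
Your approach is essentially the same as the paper's, and the first step (showing $f\in\mathcal{SH}_L^0(D_\theta)$) matches closely. However, there is a gap in the second step: your region $\Omega_R=B_R\setminus(\overline{D_\varphi}\cup\overline{U})$ has the origin on its boundary, and since $0$ may lie in $\sigma_S(T)$ (bisectoriality only guarantees $\sigma_S(T)\subseteq\overline{D_\omega}$, which contains $0$), the resolvent $S_L^{-1}(\,\cdot\,,T)$ need not be slice hyperholomorphic in any neighbourhood of $0$. Cauchy's theorem therefore does not apply directly to $\overline{\Omega_R}$.

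The paper handles this by truncating near the origin as well: it integrates along the sector rays only for $t\in(\varepsilon,R)$, closes the contour with both an outer arc $\tau$ of radius $R$ and an inner arc $\sigma$ of radius $\varepsilon$ (both lying in the complement of $\overline{D_\varphi}$), applies Cauchy on a region bounded away from $0$, and then lets $\varepsilon\to0^+$ and $R\to\infty$. The inner-arc contribution vanishes by Lemma~\ref{lem_Integral_vanish}~i), using $\Vert S_L^{-1}(s,T)\Vert\leq C_\varphi/|s|$ together with $f(s)=\mathcal{O}(|s|)$, exactly parallel to your treatment of the outer arc via Lemma~\ref{lem_Integral_vanish}~ii). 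With this additional excision at the origin your argument goes through and coincides with the paper's.
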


\begin{proof}
Since the compact set $K$ is located outside the closed double sector $\overline{D_\omega}$, there exists some $\theta\in(\omega,\frac{\pi}{2})$, such that $K\subseteq\mathbb{R}^{n+1}\setminus\overline{D_\theta}$. This means that $f$ can be restricted to a function $f\in\mathcal{SH}_L(D_\theta)$. Moreover, since $f(0)=0$ and $f$ is holomorphic in a neighbourhood around zero, also $\frac{1}{s}f(s)$ is holomorphic at zero and hence $f(s)=\mathcal{O}(s)$ as $s\rightarrow 0$. Analogously, since $\lim_{|s|\rightarrow\infty}f(s)=0$ and $f$ is holomorphic outside a sufficient large ball, there is $sf(s)$ bounded at infinity and hence $f(s)=\mathcal{O}(\frac{1}{|s|})$ as $s\rightarrow\infty$. With the above considerations we conclude that $f$ is bounded on $D_\theta$ and that
\begin{equation*}
\int_0^\infty|f(te^{J\phi})|\frac{dt}{t}<\infty,\qquad J\in\mathbb{S},\,\phi\in I_\theta,
\end{equation*}
which means that $f\in\mathcal{SH}_L^0(D_\theta)$. In order to show the equality \eqref{Eq_Unbounded_omega}, we write $f(T)$ as
\begin{align*}
f(T)&=\frac{1}{2\pi}\lim\limits_{\varepsilon\rightarrow 0^+}\lim\limits_{R\rightarrow\infty}\int_\gamma S_L^{-1}(s,T)ds_Jf(s) \\
&=\frac{1}{2\pi}\lim\limits_{\varepsilon\rightarrow 0^+}\lim\limits_{R\rightarrow\infty}\int_{\sigma\oplus\gamma\ominus\tau}S_L^{-1}(s,T)ds_Jf(s).
\end{align*}
Choosing $\varphi\in(\omega,\frac{\pi}{2})$ close enough to $\omega$, $\varepsilon>0$ small enough, and $R>0$ large enough, then the path $\sigma\oplus\gamma\ominus\tau$ surrounds the compact set $K$ in the negative sense. \medskip

\begin{minipage}{0.34\textwidth}
\begin{center}
\begin{tikzpicture}[scale=0.8]
\fill[black!30] (0,0)--(1.99,-0.93) arc (-25:25:2.2)--(0,0)--(-1.99,-0.93) arc (205:155:2.2);
\draw[thick,fill=black!15] (0,1.2) ellipse (0.8cm and 0.3cm);
\draw[thick,fill=black!15] (0,-1.2) ellipse (0.8cm and 0.3cm);
\draw (-0.15,1.2) node[anchor=west] {$K$};
\draw[thick] (1.64,1.15)--(0.41,0.29) arc (35:145:0.5)--(-1.64,1.15) arc (145:35:2);
\draw[thick] (-1.64,-1.15)--(-0.41,-0.29) arc (215:325:0.5)--(1.64,-1.15) arc (325:215:2);
\draw (-1.99,-0.93)--(1.99,0.93);
\draw (-1.99,0.93)--(1.99,-0.93);
\draw[->] (-2.3,0)--(2.6,0);
\draw[->] (0,-2.3)--(0,2.4);
\draw[thick,->] (1.31,0.92)--(1.15,0.8);
\draw[thick,->] (1.15,-0.8)--(1.31,-0.92);
\draw[thick,->] (-1.15,0.8)--(-1.31,0.92);
\draw[thick,->] (-1.31,-0.92)--(-1.15,-0.8);
\draw (0.6,0.95) node[anchor=west] {\tiny{$\gamma_+$}};
\draw (0.6,-0.95) node[anchor=west] {\tiny{$\gamma_+$}};
\draw (-0.6,0.95) node[anchor=east] {\tiny{$\gamma_-$}};
\draw (-0.55,-1) node[anchor=east] {\tiny{$\gamma_-$}};
\draw[thick,->] (-0.2,0.455)--(-0.21,0.45);
\draw[thick,->] (0.2,-0.455)--(0.21,-0.45);
\draw (-0.2,0.45) node[anchor=south] {\tiny{$\sigma$}};
\draw (0.3,-0.45) node[anchor=north] {\tiny{$\sigma$}};
\draw[thick,->] (-0.17,1.99)--(-0.27,1.98);
\draw[thick,->] (0.17,-1.99)--(0.27,-1.98);
\draw (-0.2,1.95) node[anchor=north] {\tiny{$\tau$}};
\draw (0.25,-2) node[anchor=south] {\tiny{$\tau$}};
\draw (-1.6,-0.1) node[anchor=south] {\small{$\sigma_S(T)$}};
\draw (0.9,0) arc (0:25:0.9);
\draw (1.25,0) arc (0:35:1.25);
\draw (0.7,-0.1) node[anchor=south] {\tiny{$\omega$}};
\draw (1.05,-0.05) node[anchor=south] {\tiny{$\varphi$}};
\end{tikzpicture}
\end{center}
\end{minipage}
\begin{minipage}{0.65\textwidth}
\begin{align*}
\tau(\phi)&:=Re^{J\phi},\hspace{2.1cm}\phi\in\Big(-\frac{\pi}{2},\frac{3\pi}{2}\Big)\setminus\overline{I_\varphi}, \\
\sigma(\phi)&:=\varepsilon e^{J\phi},\hspace{2.21cm}\phi\in\Big(-\frac{\pi}{2},\frac{3\pi}{2}\Big)\setminus\overline{I_\varphi}, \\
\gamma_\pm(t)&:=\pm|t|e^{-\sgn(t)J\varphi},\qquad t\in(-R,R)\setminus[-\varepsilon,\varepsilon], \\
\gamma&:=\gamma_+\oplus\gamma_-.
\end{align*}
\end{minipage}

\medskip Since the integrand is slice hyperholomorphic on $\mathbb{R}^{n+1}\setminus(\overline{D_\omega}\cup K)$, we can change the integration path $\sigma\oplus\gamma\ominus\tau$ to the (negative oriented) boundary of $U\cap\mathbb{C}_J$. Changing also the orientation then gives the stated
\begin{equation*}
f(T)=\frac{1}{2\pi}\int_{\ominus\partial U\cap\mathbb{C}_J}S_L^{-1}(s,T)ds_Jf(s)=\frac{-1}{2\pi}\int_{\partial U\cap\mathbb{C}_J}S_L^{-1}(s,T)ds_Jf(s). \qedhere
\end{equation*}
\end{proof}

\begin{prop}\label{prop_Kernel_omega}
Let $T\in\mathcal{K}(V)$ be bisectorial of angle $\omega\in(0,\frac{\pi}{2})$, $g\in\mathcal{N}^0(D_\theta)$, $\theta\in(\omega,\frac{\pi}{2})$. Then there holds
\begin{equation*}
\ker(T)\subseteq\ker(g(T)).
\end{equation*}
\end{prop}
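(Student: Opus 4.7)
Let $v\in\ker(T)$. The strategy is to use the intrinsic representation of $g(T)$ via the right $S$-resolvent (just established in \eqref{Eq_Representation_intrinsic}) to reduce $g(T)v$ to a scalar contour integral in $\mathbb{C}_J$, then show that integral vanishes by Cauchy's theorem.

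First I would evaluate $S_R^{-1}(s,T)v$. Since $Tv=0$, we have $v\in\dom(T^2)$ with $T^2v=0$, so $Q_s[T]v=|s|^2v$. Because $|s|^2>0$ is real, it commutes both with left multiplication by $\overline{s}$ and with $T$ (by right-$\mathbb{R}_n$-linearity of $T$), which gives $Q_s[T]^{-1}v=|s|^{-2}v$ and
\begin{equation*}
S_R^{-1}(s,T)v=(\overline{s}-T)(|s|^{-2}v)=|s|^{-2}\overline{s}v-|s|^{-2}Tv=s^{-1}v.
\end{equation*}
Substituting into \eqref{Eq_Representation_intrinsic}, and noting that for $s\in\mathbb{C}_J$ the quantities $g(s)$, $s^{-1}$ and $ds_J$ all lie in the commutative plane $\mathbb{C}_J$ (the first because $g$ is intrinsic), I can pull $v$ out:
\begin{equation*}
g(T)v=\left(\frac{1}{2\pi}\int_{\partial D_\varphi\cap\mathbb{C}_J}g(s)\,s^{-1}\,ds_J\right)v.
\end{equation*}

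The remaining task is to show this scalar integral in $\mathbb{C}_J$ vanishes. The integrand $s\mapsto g(s)s^{-1}$ is classical-holomorphic on $\mathbb{C}_J\cap D_\theta$. For $0<\varepsilon<R$, I would close the contour by considering the positively oriented boundary of $(D_\varphi\cap B_R(0))\setminus\overline{B_\varepsilon(0)}$ in $\mathbb{C}_J$, which consists of the four ray pieces of $\partial D_\varphi\cap\mathbb{C}_J$ truncated to $\varepsilon\le|s|\le R$ (inheriting the orientation of Definition~\ref{defi_Omega}) together with two inner arcs at radius $\varepsilon$ and two outer arcs at radius $R$. Cauchy's theorem makes the total integral zero. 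By Lemma~\ref{lem_f_convergence}, $\sup_{\phi\in I_\varphi}|g(re^{J\phi})|\to 0$ both as $r\to 0^+$ and as $r\to\infty$, so each arc integral is bounded by a constant times that supremum (with $r=\varepsilon$ or $r=R$) and vanishes in the respective limit. Passing to the limits $\varepsilon\to 0^+$ and $R\to\infty$ therefore gives $\int_{\partial D_\varphi\cap\mathbb{C}_J}g(s)s^{-1}\,ds_J=0$, hence $g(T)v=0$.

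The only point of care I anticipate is the orientation bookkeeping: one must verify that the four ray pieces in the closed Cauchy contour carry precisely the orientation prescribed in Definition~\ref{defi_Omega}. This reduces to a direct geometric inspection confirming that the bounded region $(D_\varphi\cap B_R)\setminus\overline{B_\varepsilon}$ lies consistently to the left of the prescribed direction of travel along $\partial D_\varphi\cap\mathbb{C}_J$; all remaining estimates are routine applications of Lemma~\ref{lem_f_convergence}.
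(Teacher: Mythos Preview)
Your proposal is correct and follows essentially the same route as the paper: reduce $S_R^{-1}(s,T)v$ to $s^{-1}v$ via $Q_s[T]v=|s|^2v$, plug into \eqref{Eq_Representation_intrinsic}, and then close the contour in $\mathbb{C}_J$ to see the scalar integral $\int_{\partial D_\varphi\cap\mathbb{C}_J}g(s)s^{-1}\,ds_J$ vanishes. The only cosmetic difference is that the paper packages the arc estimates by invoking Lemma~\ref{lem_Integral_vanish} (with the role of ``$g$'' there played by $s\mapsto 1/s$ and ``$f$'' by $g$), whereas you appeal directly to Lemma~\ref{lem_f_convergence}; these amount to the same computation.
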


\begin{proof}
Let $v\in\ker(T)$. Then for every $s\in\rho_S(T)\setminus\{0\}$ we have $Q_s[T]v=|s|^2v$, consequently $Q_s[T]^{-1}v=\frac{1}{|s|^2}v$ and hence
\begin{equation*}
S_R^{-1}(s,T)v=(\overline{s}-T)Q_s[T]^{-1}v=\frac{1}{|s|^2}(\overline{s}-T)v=\frac{\overline{s}}{|s|^2}v=\frac{1}{s}v.
\end{equation*}
Hence, the representation \eqref{Eq_Representation_intrinsic} becomes
\begin{equation*}
g(T)v=\frac{1}{2\pi}\int_{\partial D_\varphi\cap\mathbb{C}_J}\frac{g(s)}{s}ds_Jv.
\end{equation*}
Using the curves \medskip

\begin{minipage}{0.39\textwidth}
\begin{center}
\begin{tikzpicture}[scale=0.8]
\fill[black!15] (0,0)--(1.29,1.53) arc (50:-50:2)--(0,0)--(-1.29,-1.53) arc (230:130:2);
\draw (1.29,1.53)--(-1.29,-1.53);
\draw (1.29,-1.53)--(-1.29,1.53);
\draw[->] (-2.1,0)--(2.3,0);
\draw[->] (0,-1.53)--(0,1.55);
\draw (0.1,1.35) node[anchor=east] {$\mathbb{C}_J$};
\draw[thick] (1.47,1.03)--(0.41,0.29) arc (35:-35:0.5)--(1.47,-1.03) arc (-35:35:1.8);
\draw[thick] (-1.47,-1.03)--(-0.41,-0.29) arc (215:145:0.5)--(-1.47,1.03) arc (145:215:1.8);
\draw[thick,->] (1.35,0.95)--(1.15,0.8);
\draw[thick,->] (0.58,-0.41)--(1.28,-0.89);
\draw[thick,->] (-1.43,-1)--(-1.23,-0.86);
\draw[thick,->] (-0.58,0.41)--(-1.28,0.89);
\draw (1.2,0.8) node[anchor=south] {\tiny{$\gamma_+$}};
\draw (1.15,-0.8) node[anchor=north] {\tiny{$\gamma_+$}};
\draw (-1.15,0.8) node[anchor=south] {\tiny{$\gamma_-$}};
\draw (-1.15,-0.88) node[anchor=north] {\tiny{$\gamma_-$}};
\draw[thick,->] (0.47,0.19)--(0.468,0.2);
\draw[thick,->] (-0.47,-0.19)--(-0.468,-0.2);
\draw (0.55,0.1) node[anchor=east] {\tiny{$\sigma$}};
\draw (-0.5,-0.13) node[anchor=west] {\tiny{$\sigma$}};
\draw[thick,->] (1.77,0.31)--(1.76,0.37);
\draw[thick,->] (-1.77,-0.31)--(-1.76,-0.37);
\draw (1.7,0.35) node[anchor=west] {\tiny{$\tau$}};
\draw (-1.65,-0.45) node[anchor=east] {\tiny{$\tau$}};
\draw (1.2,0) node[anchor=south] {$D_\theta$};
\end{tikzpicture}
\end{center}
\end{minipage}
\begin{minipage}{0.6\textwidth}
\begin{align*}
\sigma(\phi)&:=\varepsilon e^{J\phi},\hspace{2.22cm} \phi\in I_\varphi, \\
\tau(\phi)&:=Re^{J\phi},\hspace{2.11cm} \phi\in I_\varphi, \\
\gamma_\pm(t)&:=\pm|t|e^{-\sgn(t)J\varphi},\qquad t\in(-R,R)\setminus[-\varepsilon,\varepsilon], \\
\gamma&:=\gamma_+\oplus\gamma_-,
\end{align*}
\end{minipage}

\medskip we can now rewrite this integral as
\begin{equation*}
g(T)v=\frac{1}{2\pi}\lim\limits_{\varepsilon\rightarrow 0^+}\lim\limits_{R\rightarrow\infty}\int_\gamma\frac{g(s)}{s}ds_Jv=\frac{1}{2\pi}\lim\limits_{\varepsilon\rightarrow 0^+}\lim\limits_{R\rightarrow\infty}\int_{\gamma\ominus\sigma\oplus\tau}\frac{g(s)}{s}ds_Jv,
\end{equation*}
where in the second equation we used the fact that the integrals along $\sigma$ and $\tau$ vanish in the limit $\varepsilon\rightarrow 0^+$ and $R\rightarrow\infty$ due to Lemma~\ref{lem_Integral_vanish}. The only singularity $s=0$ of the integrand $\frac{g(s)}{s}$ now lies outside the integration path $\gamma\ominus\sigma\oplus\tau$, which means that
\begin{equation*}
\int_{\gamma\ominus\sigma\oplus\tau}\frac{g(s)}{s}ds_Jv=0
\end{equation*}
vanishes by the Cauchy theorem, and we conclude $g(T)v=0$.
\end{proof}

The next lemma is preparatory for the product rule in Theorem~\ref{thm_Product_omega}.

\begin{lem}
Let $g\in\mathcal{N}^0(D_\theta)$, $\theta\in(0,\frac{\pi}{2})$ and $B\in\mathcal{B}(V)$. Then for every $\varphi\in(0,\theta)$ and $J\in\mathbb{S}$, we have
\begin{equation}\label{Eq_Product_identity}
\frac{1}{2\pi}\int_{\partial D_\varphi\cap\mathbb{C}_J}g(s)ds_J(\overline{s}B-Bp)Q_s(p)^{-1}=\begin{cases} Bg(p), & p\in D_\varphi, \\ 0, & p\in\mathbb{R}^{n+1}\setminus\overline{D_\varphi}. \end{cases}
\end{equation}
\end{lem}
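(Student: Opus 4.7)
The plan is to reduce this identity to the Cauchy integral formula for intrinsic slice hyperholomorphic functions by algebraically isolating the operator $B$. Since $Q_s(p)=p^2-2s_0p+|s|^2$ is a polynomial in $p$ with real coefficients $-2s_0,|s|^2\in\mathbb{R}$, the Clifford scalars $p$ and $Q_s(p)$ commute, so $pQ_s(p)^{-1}=Q_s(p)^{-1}p$ as scalar multiplications on $V$. This allows the decomposition
\begin{equation*}
(\overline{s}B-Bp)Q_s(p)^{-1}=BS_R^{-1}(s,p)+[\overline{s},B]\,Q_s(p)^{-1},
\end{equation*}
where $[\overline{s},B]=\overline{s}B-B\overline{s}$ is the operator commutator and $S_R^{-1}(s,p)=(\overline{s}-p)Q_s(p)^{-1}$ is the right $S$-resolvent kernel. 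Absolute convergence of the two resulting integrals follows from Lemma~\ref{lem_f_convergence} together with the bounds $|S_R^{-1}(s,p)|=\mathcal{O}(1/|s|)$ and $|Q_s(p)^{-1}|=\mathcal{O}(1/|s|^2)$ for $|s|\to\infty$.

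For the main integral $\frac{1}{2\pi}\int g(s)\,ds_J\,BS_R^{-1}(s,p)$, I would apply the Cauchy integral formula for the intrinsic (hence right slice hyperholomorphic) function $g$, after closing the contour $\partial D_\varphi\cap\mathbb{C}_J$ at $0$ and $\infty$ by means of Lemma~\ref{lem_Integral_vanish}. Since $S_R^{-1}(s,p)$ is right slice hyperholomorphic in $s$, and since $B$ can be brought past the resulting scalar Cauchy integral by linearity and continuity, this piece produces $Bg(p)$ for $p\in D_\varphi$ and $0$ for $p\in\mathbb{R}^{n+1}\setminus\overline{D_\varphi}$, matching the two cases of the lemma statement.

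The main obstacle is to show that the commutator remainder $\frac{1}{2\pi}\int g(s)\,ds_J\,[\overline{s},B]Q_s(p)^{-1}$ vanishes in both cases. By $\mathbb{R}$-linearity in $B$, I would reduce to the case $B=L_c\circ T$ with $c\in\mathbb{R}_n$ and $T$ commuting with all left Clifford multiplications on $V$; this reduces the full identity to the scalar Clifford statement
\begin{equation*}
\frac{1}{2\pi}\int g(s)\,ds_J\,(\overline{s}c-cp)Q_s(p)^{-1}=\begin{cases} cg(p), & p\in D_\varphi,\\ 0, & p\in\mathbb{R}^{n+1}\setminus\overline{D_\varphi}.\end{cases}
\end{equation*}
Using the decomposition $g(s)=g_0(s_0,s_1)+Jg_1(s_0,s_1)$ with real $g_0,g_1$ satisfying the Cauchy--Riemann equations from Definition~\ref{defi_Slice_hyperholomorphic_functions}, the $\mathbb{C}_J$-imaginary parts of $g(s)\,ds_J\,\overline{s}$ and $g(s)\,ds_J\,p$, when multiplied by $Q_s(p)^{-1}$, combine into an exact differential on $\mathbb{C}_J$. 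Stokes' theorem on the closed contour, with boundary contributions vanishing by Lemma~\ref{lem_f_convergence}, then yields the required cancellation between this commutator remainder and the discrepancy from moving $c$ past the scalar-valued Cauchy integral in the first piece. The careful bookkeeping of this Stokes-type cancellation, weighted by the Clifford factor $Q_s(p)^{-1}$, is the technical heart of the proof.
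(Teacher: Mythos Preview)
Your decomposition $(\overline{s}B-Bp)Q_s(p)^{-1}=BS_R^{-1}(s,p)+[\overline{s},B]Q_s(p)^{-1}$ is algebraically correct, but the two resulting integrals do not individually have the values you claim. In the main term $\frac{1}{2\pi}\int g(s)\,ds_J\,BS_R^{-1}(s,p)$, the operator $B$ sits between the left Clifford multiplication $g(s)\,ds_J$ and the right multiplication $S_R^{-1}(s,p)$; since $B$ is only right-linear, you cannot pull it past $g(s)\,ds_J$ to obtain $B\cdot\frac{1}{2\pi}\int g(s)\,ds_J\,S_R^{-1}(s,p)=Bg(p)$. A quick check with $B=L_{e_1}$, $J=e_2$ shows that inserting $e_1$ in the middle changes the integrand by a factor $e^{-2J\phi}$ and does not reproduce $e_1g(p)$. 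Correspondingly, the commutator integral does not vanish; rather, it must exactly compensate the discrepancy. You seem to notice this only in your final paragraph, which contradicts your earlier assertion that the first piece alone yields $Bg(p)$.

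The Stokes-type argument you sketch for this cancellation is not concrete: you do not identify which $1$-form is exact, nor how the Clifford weight $Q_s(p)^{-1}$ (which depends on both $s_0$ and $|s|^2$) interacts with the Cauchy--Riemann structure of $g$. This cancellation is precisely the nontrivial content of the bounded-contour identity, and your outline does not supply it. By contrast, the paper's proof is essentially a one-liner: it invokes \cite[Lemma~4.1.2]{CGK}, which already establishes $\frac{1}{2\pi}\int_{\partial U}g(s)\,ds_J\,(\overline{s}B-Bp)Q_s(p)^{-1}=Bg(p)$ for bounded axially symmetric $U$ containing $[p]$, and then shows via the estimate $\|(\overline{s}B-Bp)Q_s(p)^{-1}\|\leq\frac{(|s|+|p|)\|B\|}{(|s|-|p|)^2}$ and Lemma~\ref{lem_Integral_vanish} that the closing arcs at $0$ and $\infty$ vanish. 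The hard algebraic work you are attempting is already packaged in that reference.
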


\begin{proof}
Consider first the case $p\in D_\varphi$. For any $0<\varepsilon<|p|<R$ let us consider the curves \medskip

\begin{minipage}{0.39\textwidth}
\begin{center}
\begin{tikzpicture}[scale=0.8]
\fill[black!15] (0,0)--(1.29,1.53) arc (50:-50:2)--(0,0)--(-1.29,-1.53) arc (230:130:2);
\draw (1.29,1.53)--(-1.29,-1.53);
\draw (1.29,-1.53)--(-1.29,1.53);
\draw[->] (-2.1,0)--(2.3,0);
\draw[->] (0,-1.53)--(0,1.55);
\draw (0.1,1.35) node[anchor=east] {$\mathbb{C}_J$};
\draw[thick] (1.47,1.03)--(0.41,0.29) arc (35:-35:0.5)--(1.47,-1.03) arc (-35:35:1.8);
\draw[thick] (-1.47,-1.03)--(-0.41,-0.29) arc (215:145:0.5)--(-1.47,1.03) arc (145:215:1.8);
\draw[thick,->] (1.35,0.95)--(1.15,0.8);
\draw[thick,->] (0.58,-0.41)--(1.28,-0.89);
\draw[thick,->] (-1.43,-1)--(-1.23,-0.86);
\draw[thick,->] (-0.58,0.41)--(-1.28,0.89);
\draw (1.2,0.8) node[anchor=south] {\tiny{$\gamma_+$}};
\draw (1.15,-0.8) node[anchor=north] {\tiny{$\gamma_+$}};
\draw (-1.15,0.8) node[anchor=south] {\tiny{$\gamma_-$}};
\draw (-1.2,-0.88) node[anchor=north] {\tiny{$\gamma_-$}};
\draw[thick,->] (0.47,0.19)--(0.468,0.2);
\draw[thick,->] (-0.47,-0.19)--(-0.468,-0.2);
\draw (0.55,0.1) node[anchor=east] {\tiny{$\sigma$}};
\draw (-0.5,-0.13) node[anchor=west] {\tiny{$\sigma$}};
\draw[thick,->] (1.77,0.31)--(1.76,0.37);
\draw[thick,->] (-1.77,-0.31)--(-1.76,-0.37);
\draw (1.7,0.35) node[anchor=west] {\tiny{$\tau$}};
\draw (-1.65,-0.45) node[anchor=east] {\tiny{$\tau$}};
\draw (1.2,0) node[anchor=south] {$D_\theta$};
\fill[black] (-0.9,0.4) circle (0.07cm);
\fill[black] (-0.9,-0.4) circle (0.07cm);
\draw (-1,0.35)--(-1.2,0.3);
\draw (-0.95,-0.3)--(-1.2,0.1);
\draw(-1.1,0.25) node[anchor=east] {\tiny{$[p]$}};
\end{tikzpicture}
\end{center}
\end{minipage}
\begin{minipage}{0.6\textwidth}
\begin{align*}
\gamma_\pm(t)&:=\pm|t|e^{-\sgn(t)J\varphi},\qquad t\in(-R,R)\setminus[-\varepsilon,\varepsilon], \\
\sigma(\phi)&:=\varepsilon e^{J\phi},\hspace{2.22cm} \phi\in I_\varphi, \\
\tau(\phi)&:=Re^{J\phi},\hspace{2.11cm} \phi\in I_\varphi, \\
\gamma&:=\gamma_+\oplus\gamma_-.
\end{align*}
\end{minipage}

\medskip In this setting we know by \cite[Lemma 4.1.2]{CGK} that
\begin{equation}\label{Eq_Product_identity_1}
Bg(p)=\frac{1}{2\pi}\int_{\gamma\oplus\sigma\ominus\tau}g(s)ds_J(\overline{s}B-Bp)Q_s(p)^{-1}.
\end{equation}
Since the second part of the integrand behaves asymptotically as
\begin{equation*}
\big\Vert(\overline{s}B-Bp)Q_s(p)^{-1}\big\Vert\leq\frac{(|s|+|p|)\Vert B\Vert}{|Q_s(p)|}\leq\frac{(|s|+|p|)\Vert B\Vert}{(|s|-|p|)^2}=\begin{cases} \mathcal{O}(\frac{1}{|s|}), & \text{as }|s|\rightarrow\infty, \\ \mathcal{O}(1), & \text{as }|s|\rightarrow 0, \end{cases}
\end{equation*}
Lemma~\ref{lem_Integral_vanish}, which also holds for operator valued functions, shows that the integrals along the paths $\sigma$ and $\tau$ vanish in the limit $\varepsilon\rightarrow 0^+$ and $R\rightarrow\infty$, respectively. Hence \eqref{Eq_Product_identity_1} turns into
\begin{align*}
Bg(p)&=\lim\limits_{R\rightarrow\infty}\lim\limits_{\varepsilon\rightarrow 0^+}\frac{1}{2\pi}\int_\gamma g(s)ds_J(\overline{s}B-Bp)Q_s(p)^{-1} \\
&=\frac{1}{2\pi}\int_{\partial D_\varphi\cap\mathbb{C}_J}g(s)ds_J(\overline{s}B-Bp)Q_s(p)^{-1}.
\end{align*}
In the second case $p\in\mathbb{R}^{n+1}\setminus\overline{D_\varphi}$ the integral \eqref{Eq_Product_identity_1} becomes
\begin{equation*}
0=\frac{1}{2\pi}\int_{\gamma\oplus\sigma\ominus\tau}g(s)ds_J(\overline{s}B-Bp)Q_s(p)^{-1},
\end{equation*}
because the point $p$ lies outside the path $\gamma\oplus\sigma\ominus\tau$. The same calculation as above then gives
\begin{equation*}
0=\frac{1}{2\pi}\int_{\partial D_\varphi\cap\mathbb{C}_J}g(s)ds_J(\overline{s}B-Bp)Q_s(p)^{-1}. \qedhere
\end{equation*}
\end{proof}

One of the most important properties of the $\omega$-functional calculus, also in view of the upcoming generalization to a $H^\infty$-functional calculus in Section~\ref{sec_Hinfty}, is the following product rule.

\begin{thm}\label{thm_Product_omega}
Let $T\in\mathcal{K}(V)$ be bisectorial of angle $\omega\in(0,\frac{\pi}{2})$. Then, for $g\in\mathcal{N}^0(D_\theta)$ and $f\in\mathcal{SH}_L^0(D_\theta)$, $\theta\in(\omega,\frac{\pi}{2})$, we have $gf\in\mathcal{SH}_L^0(D_\theta)$ and there holds the product rule
\begin{equation}\label{Eq_Product_omega}
(gf)(T)=g(T)f(T).
\end{equation}
\end{thm}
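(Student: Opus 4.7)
The plan is as follows. First, I would verify $gf \in \mathcal{SH}_L^0(D_\theta)$: the product of an intrinsic function with a left slice hyperholomorphic function is again left slice hyperholomorphic on the same set, as one checks directly from the decomposition \eqref{Eq_Holomorphic_decomposition} and the symmetry \eqref{Eq_Symmetry_condition}; boundedness of $gf$ follows from $\Vert g\Vert_\infty<\infty$; and the integrability condition of Definition~\ref{defi_SH_0} transfers to $gf$ through the pointwise bound $|(gf)(s)|\leq\Vert g\Vert_\infty|f(s)|$.

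For the product rule itself, the plan is a two-contour argument. Choose any $\omega<\varphi_2<\varphi_1<\theta$ and $J\in\mathbb{S}$; represent $g(T)$ via its intrinsic right-resolvent form \eqref{Eq_Representation_intrinsic} on the outer contour $\partial D_{\varphi_1}\cap\mathbb{C}_J$ and $f(T)$ via \eqref{Eq_Omega} on the inner contour $\partial D_{\varphi_2}\cap\mathbb{C}_J$, so that
\begin{equation*}
g(T)f(T)=\frac{1}{(2\pi)^2}\int_{\partial D_{\varphi_1}\cap\mathbb{C}_J}\int_{\partial D_{\varphi_2}\cap\mathbb{C}_J}g(s)\,ds_J\,S_R^{-1}(s,T)S_L^{-1}(p,T)\,dp_J\,f(p).
\end{equation*}
The integrand is absolutely integrable by \eqref{Eq_SL_estimate}, \eqref{Eq_SR_estimate}, the integrability in Definition~\ref{defi_SH_0}, and the fact that $|Q_s(p)^{-1}|$ stays uniformly bounded on pairs coming from contours at separated angles, so Fubini is available throughout.

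The decisive step is the $S$-resolvent equation
\begin{equation*}
S_R^{-1}(s,T)\,S_L^{-1}(p,T)=\bigl[S_R^{-1}(s,T)p-\overline{s}\,S_R^{-1}(s,T)\bigr]Q_s(p)^{-1}+\bigl[\overline{s}\,S_L^{-1}(p,T)-S_L^{-1}(p,T)p\bigr]Q_s(p)^{-1},
\end{equation*}
which splits the double integral into two terms. In the second term I would swap Fubini so that $s$ becomes the inner variable; the inner integral is then exactly of the form handled by the preparatory identity \eqref{Eq_Product_identity} with $B=S_L^{-1}(p,T)$, and since the inner contour lies inside $D_{\varphi_1}$ it evaluates to $S_L^{-1}(p,T)g(p)$. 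Using that $g(p)\in\mathbb{C}_J$ commutes with the scalar $dp_J$, this term collapses to $\frac{1}{2\pi}\int_{\partial D_{\varphi_2}\cap\mathbb{C}_J}S_L^{-1}(p,T)\,dp_J\,(gf)(p)=(gf)(T)$.

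It remains to show the first term vanishes, which is the heart of the proof. Keeping $s$ outer and pulling the bounded operator $S_R^{-1}(s,T)$ and the scalar $\overline{s}$ through the $p$-integration, this reduces to establishing $\int_{\partial D_{\varphi_2}\cap\mathbb{C}_J}p^kQ_s(p)^{-1}\,dp_J\,f(p)=0$ for $k\in\{0,1\}$ and each fixed $s\in\partial D_{\varphi_1}$. For such $s$ the zeros of $Q_s(p)$ lie on the outer contour, so $p^kQ_s(p)^{-1}$ is complex holomorphic on both lobes of $D_{\varphi_2}\cap\mathbb{C}_J$; paired with the left slice hyperholomorphic $f$, the integrand is a closed $\mathbb{R}_n$-valued $1$-form on each lobe. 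The decay $|p^kQ_s(p)^{-1}|=\mathcal{O}(|p|^{k-2})$ at infinity combined with $|f(p)|\to 0$ from Lemma~\ref{lem_f_convergence} lets Lemma~\ref{lem_Integral_vanish} close the contour on each lobe, and Cauchy's theorem then delivers zero. The main obstacle is exactly this interplay of geometry: the strict separation $\varphi_2<\varphi_1$ is what simultaneously places the pole set $[s]$ outside the inner contour for the Cauchy argument, and places the inner contour inside $D_{\varphi_1}$ for the preparatory identity to apply.
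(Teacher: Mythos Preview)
Your proposal is correct and follows essentially the same route as the paper: represent $g(T)$ via the right-resolvent form \eqref{Eq_Representation_intrinsic} on the outer contour $\partial D_{\varphi_1}$ and $f(T)$ via \eqref{Eq_Omega} on the inner contour $\partial D_{\varphi_2}$, apply the $S$-resolvent equation, kill the $S_R^{-1}$-terms by showing the inner $p$-integrals $\int p^kQ_s(p)^{-1}dp_Jf(p)$ vanish via Cauchy and Lemma~\ref{lem_Integral_vanish}, and collapse the $S_L^{-1}$-terms via \eqref{Eq_Product_identity} after Fubini.

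One small inaccuracy: your claim that $|Q_s(p)^{-1}|$ stays \emph{uniformly bounded} on pairs from separated contours is false, since as $|s|,|p|\to 0$ along their rays one has $|Q_s(p)|\to 0$. What the angular separation actually gives is $|Q_s(p)|\geq c\max(|s|^2,|p|^2)$; combining this with \eqref{Eq_SL_estimate}, the boundedness of $f,g$, and their integrability against $\frac{dt}{t}$ does yield absolute convergence of each post-splitting term (the paper also glosses over this, merely asserting the double integrals are absolutely convergent). Your verification of $gf\in\mathcal{SH}_L^0(D_\theta)$ via $|gf|\leq\Vert g\Vert_\infty|f|$ is in fact simpler than the paper's H\"older argument.
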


\begin{proof}
Since $g\in\mathcal{N}^0(D_\theta)$ and $f\in\mathcal{SH}_L^0(D_\theta)$, we know that $gf$ is left slice hyperholomorphic and also bounded. Using the Hölder inequality with respect to the measure $\frac{dt}{t}$, we obtain
\begin{align*}
\int_0^\infty\big|g(te^{J\phi})f(te^{J\phi})\big|\frac{dt}{t}&\leq\bigg(\int_\mathbb{R}|g(te^{J\phi})|^2\frac{dt}{t}\bigg)^{\frac{1}{2}}\bigg(\int_\mathbb{R}|f(te^{J\varphi})|^2\frac{dt}{t}\bigg)^{\frac{1}{2}} \\
&\leq\sqrt{g_\max\,f_\max}\bigg(\int_\mathbb{R}|g(te^{J\phi})|\frac{dt}{t}\bigg)^{\frac{1}{2}}\bigg(\int_\mathbb{R}|f(te^{J\phi})|\frac{dt}{t}\bigg)^{\frac{1}{2}}<\infty,
\end{align*}
where in the last line we used that
\begin{equation*}
g_\max:=\sup\limits_{t>0}|g(te^{J\phi})|<\infty\qquad\text{and}\qquad f_\max:=\sup\limits_{t>0}|f(te^{J\phi})|<\infty
\end{equation*}
are finite by Lemma~\ref{lem_f_convergence}. This estimate shows that indeed $gf\in\mathcal{SH}_L^0(D_\theta)$. For the proof of the actual product rule \eqref{Eq_Product_omega}, let $\varphi_2<\varphi_1\in(\omega,\theta)$ and $J\in\mathbb{S}$. Since $g\in\mathcal{N}^0(D_\theta)$, we can use the representation \eqref{Eq_Representation_intrinsic} for intrinsic functions, to write the product as
\begin{align}
g(T)f(T)&=\frac{1}{4\pi^2}\int_{\partial D_{\varphi_1}\cap\mathbb{C}_J}g(s)ds_JS_R^{-1}(s,T)\int_{\partial D_{\varphi_2}\cap\mathbb{C}_J}S_L^{-1}(p,T)dp_Jf(p) \notag \\
&=\frac{1}{4\pi^2}\int_{\partial D_{\varphi_1}\cap\mathbb{C}_J}g(s)ds_J\int_{\partial D_{\varphi_2}\cap\mathbb{C}_J}\big(S_R^{-1}(s,T)p-S_L^{-1}(p,T)p-\overline{s}S_R^{-1}(s,T) \notag \\
&\hspace{7cm}+\overline{s}S_L^{-1}(p,T)\big)Q_s(p)^{-1}dp_Jf(p), \label{Eq_Product_omega_1}
\end{align}
where in the last line we used the $S$-resolvent identity \cite[Theorem 5.1.7]{FJBOOK}, namely
\begin{equation}\label{Eq_Product_omega_2}
S_R^{-1}(s,T)S_L^{-1}(p,T)=\big(S_R^{-1}(s,T)p-S_L^{-1}(p,T)p-\overline{s}S_R^{-1}(s,T)+\overline{s}S_L^{-1}(p,T)\big)Q_s(p)^{-1}.
\end{equation}
Since $\varphi_1>\varphi_2$ and $s\in\partial D_{\varphi_1}\cap\mathbb{C}_J$, we know that the mappings $p\mapsto pQ_s(p)^{-1}$ and $p\mapsto Q_s(p)^{-1}$ are intrinsic on $D_{\varphi_1}$. Hence, by the Cauchy formula there vanish the integrals
\begin{equation*}
\int_{\partial D_{\varphi_2}\cap\mathbb{C}_J}pQ_s(p)^{-1}dp_Jf(p)=0\qquad\text{and}\qquad\int_{\partial D_{\varphi_2}\cap\mathbb{C}_J}Q_s(p)^{-1}dp_Jf(p)=0,
\end{equation*}
where the path which closes $\partial D_{\varphi_2}\cap\mathbb{C}_J$ at $0$ and $\infty$ vanishes by Lemma~\ref{lem_Integral_vanish} and the asymptotics
\begin{align*}
|pQ_s(p)^{-1}|\leq\frac{|p|}{(|p|-|s|)^2}=\begin{cases} \mathcal{O}\big(\frac{1}{|p|}\big), & \text{as }|p|\rightarrow\infty, \\ \mathcal{O}(|p|), & \text{as }|p|\rightarrow 0, \end{cases} \\
|Q_s(p)^{-1}|\leq\frac{1}{(|p|-|s|)^2}=\begin{cases} \mathcal{O}\big(\frac{1}{|p|^2}\big), & \text{as }|p|\rightarrow\infty, \\ \mathcal{O}(1), & \text{as }|p|\rightarrow 0. \end{cases}
\end{align*}
This reduces \eqref{Eq_Product_omega_1} to
\begin{equation*}
g(T)f(T)=\frac{1}{4\pi^2}\int_{\partial D_{\varphi_1}\cap\mathbb{C}_J}g(s)ds_J\int_{\partial D_{\varphi_2}\cap\mathbb{C}_J}\big(\overline{s}S_L^{-1}(p,T)-S_L^{-1}(p,T)p\big)Q_s(p)^{-1}dp_Jf(p).
\end{equation*}
Since every $p\in\partial D_{\varphi_2}\cap\mathbb{C}_J$ is inside the integration path $\partial D_{\varphi_1}\cap\mathbb{C}_J$, the integral identity \eqref{Eq_Product_identity}, with the bounded operator $B=S_L^{-1}(p,T)$, further reduces this integral to
\begin{align*}
g(T)f(T)&=\frac{1}{2\pi}\int_{\partial D_{\varphi_2}\cap\mathbb{C}_J}S_L^{-1}(p,T)g(p)dp_Jf(p) \\
&=\frac{1}{2\pi}\int_{\partial D_{\varphi_2}\cap\mathbb{C}_J}S_L^{-1}(p,T)dp_Jg(p)f(p)=(gf)(T). \qedhere
\end{align*}
\end{proof}

Next we want to introduce the so called {\it polynomial functional calculus}, a calculus which simply replaces the argument $s$ of a polynomial by some operator $T$. In Proposition~\ref{prop_Product_omega_pf}, Theorem~\ref{thm_Rational_omega} and Corollary~\ref{cor_Composition_omega}, we then show that this polynomial functional calculus is well compatible with the $\omega$-functional calculus \eqref{Eq_Omega}.

\begin{defi}[Polynomial functional calculus]\label{defi_Polynomial}
Let $T\in\mathcal{K}(V)$ and consider a polynomial $p(s)=p_0+p_1s+\dots+p_ns^n$ with real coefficients $p_0,\dots,p_n\in\mathbb{R}$ and $p_n\neq 0$. Then we define the {\it polynomial functional calculus} as the operator
\begin{equation}\label{Eq_Polynomial}
p[T]:=p_0+p_1T+\dots+p_nT^n,\qquad\text{with }\dom(p[T]):=\dom(T^n).
\end{equation}
\end{defi}

Note, that the $\omega$-functional calculus is indicated by round brackets $f(T)$ and the polynomial calculus by square brackets $p[T]$.

\begin{rem}
We point out the polynomials of the type $p(s)=p_0+p_1s+\dots+p_ns^n$ are intrinsic functions when the variable $s\in\mathbb{R}^{n+1}$ is a paravector. So in the definition of the polynomial functional calculus \eqref{Eq_Polynomial}, the natural replacement of $s$ would be by an operator $T$ that is a paravector operator $T=T_0+T_1e_1+\dots+T_ne_n$. However, in view of the universality property of the $S$-functional calculus , see \cite{ADVCGKS}, we can replace $s$ by any fully Clifford operator $T=\sum_AT_Ae_A$ and still obtain the compatibility with the $\omega$-functional calculus.
\end{rem}

We start considering the polynomial functional calculus with some result which is basically a corollary of the product rule in Theorem~\ref{thm_Product_omega}.

\begin{cor}\label{cor_Composition_omega}
Let $T\in\mathcal{K}(V)$ be bisectorial of angle $\omega\in(0,\frac{\pi}{2})$, $g\in\mathcal{N}^0(D_\theta)$, $\theta\in(\omega,\frac{\pi}{2})$, and $p$ an intrinsic polynomial with $p_0=0$. Then $p\circ g\in\mathcal{SH}_L^0(D_\theta)$, and
\begin{equation*}
(p\circ g)(T)=p[g(T)].
\end{equation*}
\end{cor}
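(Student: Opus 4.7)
The plan is to reduce the statement to the product rule of Theorem~\ref{thm_Product_omega} and the linearity of the $\omega$-functional calculus. Write the left polynomial as $p(s)=sp_1+s^2p_2+\dots+s^np_n$ (no constant term since $p_0=0$), so that
\begin{equation*}
p\circ g=\sum_{k=1}^n g^k\,p_k.
\end{equation*}

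First I verify that $p\circ g\in\mathcal{SH}_L^0(D_\theta)$. Since $g$ is intrinsic, $g(x+Jy)=g_0(x,y)+Jg_1(x,y)$ with $g_0,g_1$ real-valued, hence each power $g^k$ is again intrinsic (the real-valued coefficient functions of $g^k$ can be computed from $g_0,g_1$ in the usual complex-like manner and automatically satisfy the Cauchy--Riemann and compatibility conditions). Boundedness of $g^k$ follows from $|g^k|\le g_{\max}^k$ with $g_{\max}:=\sup_{D_\theta}|g|<\infty$ (cf.\ Lemma~\ref{lem_f_convergence}), and the integrability
\begin{equation*}
\int_0^\infty|g(te^{J\phi})^k|\,\frac{dt}{t}\le g_{\max}^{k-1}\int_0^\infty|g(te^{J\phi})|\,\frac{dt}{t}<\infty
\end{equation*}
shows $g^k\in\mathcal{N}^0(D_\theta)\subseteq\mathcal{SH}_L^0(D_\theta)$. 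Right-multiplication by the Clifford constant $p_k$ preserves left slice hyperholomorphicity and only contributes a factor $2^{n/2}|p_k|$ to all norm estimates via \eqref{Eq_Norm_estimate_Rn}, so $g^kp_k\in\mathcal{SH}_L^0(D_\theta)$ and hence the finite sum $p\circ g$ lies in $\mathcal{SH}_L^0(D_\theta)$ as well.

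Next I compute $(p\circ g)(T)$. Using the linearity \eqref{Eq_Linearity_omega},
\begin{equation*}
(p\circ g)(T)=\sum_{k=1}^n(g^k)(T)\,p_k,
\end{equation*}
so it remains to identify $(g^k)(T)$ with $g(T)^k$. This is done by induction on $k$, where the base case $k=1$ is trivial. For the step, note that $g^{k-1}\in\mathcal{N}^0(D_\theta)$ and $g\in\mathcal{SH}_L^0(D_\theta)$ by the previous paragraph, so Theorem~\ref{thm_Product_omega} applies and yields
\begin{equation*}
(g^k)(T)=(g^{k-1}\cdot g)(T)=(g^{k-1})(T)\,g(T)=g(T)^{k-1}\,g(T)=g(T)^k.
\end{equation*}
Since $g(T)\in\mathcal{B}(V)$ by Definition~\ref{defi_Omega}, the powers $g(T)^k$ are everywhere defined and bounded, so there are no domain subtleties.

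Putting everything together,
\begin{equation*}
(p\circ g)(T)=\sum_{k=1}^n g(T)^k\,p_k=p[g(T)],
\end{equation*}
using the definition \eqref{Eq_Polynomial} of the polynomial functional calculus (with $p_0=0$ and $\dom(p[g(T)])=\dom(g(T)^n)=V$). There is no genuine obstacle here; the only thing to watch is the correct quoting of Theorem~\ref{thm_Product_omega}, which requires the left factor to be intrinsic, a hypothesis that is exactly why we needed $g\in\mathcal{N}^0(D_\theta)$ (not merely $\mathcal{SH}_L^0(D_\theta)$) in the statement.
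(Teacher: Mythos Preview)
Your proof is correct and follows essentially the same approach as the paper: both reduce to the product rule of Theorem~\ref{thm_Product_omega} to obtain $(g^k)(T)=g(T)^k$ and then apply the linearity \eqref{Eq_Linearity_omega}. Your version is slightly more explicit in verifying $g^k\in\mathcal{N}^0(D_\theta)$ via the direct estimate $|g^k|\le g_{\max}^{k-1}|g|$, whereas the paper simply invokes Theorem~\ref{thm_Product_omega} for this as well; note also that boundedness of $g$ is already part of the definition of $\mathcal{N}^0(D_\theta)$, so the reference to Lemma~\ref{lem_f_convergence} there is not needed.
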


\begin{proof}
Since there is $g\in\mathcal{N}^0(D_\theta)$, there is also $g^k\in\mathcal{N}^0(D_\theta)$ for every $k\geq 1$ by Theorem~\ref{thm_Product_omega}, and hence also
\begin{equation*}
(p\circ g)(s)=p(g(s))=p_1g(s)+\dots+p_ng(s)^n\in\mathcal{N}^0(D_\theta).
\end{equation*}
The product rule \eqref{Eq_Product_omega} and the linearity \eqref{Eq_Linearity_omega} then give
\begin{equation*}
(p\circ g)(T)=(p_1g+\dots+p_ng^n)(T)=p_1g(T)+\dots+p_ng(T)^n=p[g(T)]. \qedhere
\end{equation*}
\end{proof}

\begin{prop}\label{prop_Product_omega_pf}
Let $T\in\mathcal{K}(V)$ be bisectorial of angle $\omega\in(0,\frac{\pi}{2})$, $f\in\mathcal{SH}_L^0(D_\theta)$, $\theta\in(\omega,\frac{\pi}{2})$, and $p$ an intrinsic polynomial of degree $n\in\mathbb{N}_0$. If $pf\in\mathcal{SH}_L^0(D_\theta)$, then there is $\ran(f(T))\subseteq\dom(T^n)$ and
\begin{equation}\label{Eq_Product_omega_pf}
(pf)(T)=p[T]f(T).
\end{equation}
\end{prop}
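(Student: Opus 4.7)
The plan is to reduce by linearity to the monomial case and then induct on the degree. By the linearity \eqref{Eq_Linearity_omega} of the $\omega$-calculus together with the analogous linearity of \eqref{Eq_Polynomial}, and the observation that each monomial $s^{k}f$ with $0\leq k\leq n$ still lies in $\mathcal{SH}_{L}^{0}(D_{\theta})$ (boundedness follows from $|f(s)|\lesssim|s|^{-n}$ at infinity, a consequence of $pf$ being bounded with leading term $s^{n}p_{n}$; integrability of $|s^{k}f|/t$ at the endpoints is obtained by comparing $t^{k-1}|f|$ against $|f|/t$ near $0$ and against $t^{n-1}|f|$ near infinity, the latter being integrable since $pf\in\mathcal{SH}_{L}^{0}$), the proposition reduces to the following \emph{one-step} statement: whenever $g\in\mathcal{SH}_{L}^{0}(D_{\theta})$ and $sg\in\mathcal{SH}_{L}^{0}(D_{\theta})$, one has $\ran(g(T))\subseteq\dom(T)$ together with $Tg(T)=(sg)(T)$. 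Iterating this step with $g=s^{k-1}f$ for $k=1,\dots,n$ then yields $\ran(f(T))\subseteq\dom(T^{n})$ and $T^{k}f(T)=(s^{k}f)(T)$, and summing over $k$ gives \eqref{Eq_Product_omega_pf}.

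The algebraic heart of the one-step statement is the resolvent relation
\[
TS_{L}^{-1}(s,T)v=S_{L}^{-1}(s,T)(sv)-v,\qquad v\in V,
\]
which I will derive directly from $Q_{s}[T]\,Q_{s}[T]^{-1}=I$: expanding gives $T^{2}Q_{s}[T]^{-1}v=v+2s_{0}TQ_{s}[T]^{-1}v-|s|^{2}Q_{s}[T]^{-1}v$, after which the paravector identities $\overline{s}-2s_{0}=-s$ and $\overline{s}s=|s|^{2}$ reassemble the right-hand side into the stated form. Inserting this relation into the definition \eqref{Eq_Omega} of $g(T)$ and commuting the scalar factor $s\in\mathbb{C}_{J}$ through $ds_{J}=ds/J$ (which is legitimate since $s$, $ds$ and $J$ all lie in the commutative field $\mathbb{C}_{J}$), the candidate identity $Tg(T)=(sg)(T)$ reduces to the two assertions
\[
\text{(i)}\quad T\text{ may be moved inside }\tfrac{1}{2\pi}\!\int_{\partial D_{\varphi}\cap\mathbb{C}_{J}}S_{L}^{-1}(s,T)\,ds_{J}\,g(s)v,\qquad\text{(ii)}\quad\int_{\partial D_{\varphi}\cap\mathbb{C}_{J}}ds_{J}\,g(s)=0.
\]

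Both of these points are precisely where the strengthened hypothesis on $sg$ will be used. For (ii) I apply Cauchy's theorem separately on each of the two sectors $D_{\varphi}^{\pm}\cap\mathbb{C}_{J}$, closing the boundary with a small arc of radius $\varepsilon\to 0^{+}$ and a large arc of radius $R\to\infty$: the small-arc contribution vanishes by the uniform bound $|g|\to 0$ at the origin furnished by Lemma~\ref{lem_f_convergence} applied to $g$, whereas the large-arc contribution requires $|g(s)|=o(1/|s|)$ as $|s|\to\infty$, which is \emph{exactly} the content of Lemma~\ref{lem_f_convergence} applied to $sg\in\mathcal{SH}_{L}^{0}(D_\theta)$. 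For (i) I invoke closedness of $T$ combined with the absolute Bochner-integrability of the $T$-modified integrand: the resolvent identity together with \eqref{Eq_SL_estimate} yields $\|TS_{L}^{-1}(s,T)\|\leq C_{\varphi}+1$ on $\partial D_{\varphi}\cap\mathbb{C}_{J}$, while the parametrization $s=te^{\pm J\varphi}$ shows $\int_{0}^{\infty}|g(te^{\pm J\varphi})|\,dt=\int_{0}^{\infty}|(sg)(te^{\pm J\varphi})|\,dt/t<\infty$ by the hypothesis on $sg$. The main obstacle is precisely this decay at infinity: without the extra hypothesis $sg\in\mathcal{SH}_{L}^{0}$ the large arc would not close, the boundary integral in (ii) would not vanish, and the $T$-modified integrand in (i) would not be absolutely integrable, so the assumption $pf\in\mathcal{SH}_{L}^{0}(D_{\theta})$ is essential throughout.
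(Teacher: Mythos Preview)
Your proof is correct and follows essentially the same approach as the paper: reduction to monomials, induction via the resolvent identity $S_L^{-1}(s,T)s=TS_L^{-1}(s,T)+1$, vanishing of $\int_{\partial D_\varphi\cap\mathbb{C}_J}ds_J\,g(s)$ by slice hyperholomorphicity, and Hille's theorem to pull $T$ through the integral. The only organizational difference is that you verify $s^kf\in\mathcal{SH}_L^0(D_\theta)$ for all $0\le k\le n$ up front (via the lower bound $|p(s)|\gtrsim|s|^n$ at infinity), while the paper checks $s^nf\in\mathcal{SH}_L^0$ inside the induction step using $|s|^n\le C_n(1+|s|^{n+1})$; your more explicit justification of the arc estimates and of the Bochner integrability of $TS_L^{-1}(s,T)g(s)$ is a welcome elaboration of points the paper handles tersely.
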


\begin{proof}
It is enough to prove \eqref{Eq_Product_omega_pf} for monomials $p(s)=s^n$, the case of general polynomials then follows by linearity. We will use induction with respect to $n\in\mathbb{N}_0$. Since the induction start $n=0$ is trivial, we only have to prove the induction step $n\rightarrow n+1$. First, since $|s|^n\leq C_n(1+|s|^{n+1})$, for some $C_n\geq 0$, it follows from $f\in\mathcal{SH}_L^0(D_\theta)$ and $s^{n+1}f\in\mathcal{SH}_L^0(D_\theta)$, that also $s^nf(s)\in\mathcal{SH}_L^0(D_\theta)$. This means, we are allowed to use the induction assumption $\ran(f(T))\subseteq\dom(T^n)$ and $(s^nf)(T)=T^nf(T)$. Next, using the $S$-resolvent identity
\begin{equation}\label{Eq_Product_omega_pf_1}
S_L^{-1}(s,T)s=TS_L^{-1}(s,T)+1,
\end{equation}
which is a direct consequence of the definition of $S_L^{-1}(s,T)$ in \eqref{Eq_S_resolvent}, we can rewrite the $\omega$-functional calculus of $(s^{n+1}f)(T)$ as
\begin{align*}
(s^{n+1}f)(T)&=\frac{1}{2\pi}\int_{\partial D_\varphi\cap\mathbb{C}_J}S_L^{-1}(s,T)ds_Js^{n+1}f(s) \\
&=\frac{1}{2\pi}\int_{\partial D_\varphi\cap\mathbb{C}_J}\big(TS_L^{-1}(s,T)+1\big)ds_Js^nf(s) \\
&=\frac{1}{2\pi}\int_{\partial D_\varphi\cap\mathbb{C}_J}TS_L^{-1}(s,T)ds_Js^nf(s).
\end{align*}
Note, that in the last line we used that due to the slice hyperholomorphicity of $s^nf$ on $D_\theta$, there vanishes the integral
\begin{equation*}
\int_{\partial D_\varphi\cap\mathbb{C}_J}1ds_Js^nf(s)=0.
\end{equation*}
Since $T\in\mathcal{K}(V)$ is a closed operator, it then follows from Hille's theorem \cite{HP57}, that there holds $\ran((s^nf)(T))=\ran\big(\int_{\partial D_\varphi\cap\mathbb{C}_J}S_L^{-1}(s,T)ds_Js^nf(s)\big)\subseteq\dom(T)$, as well as
\begin{equation*}
(s^{n+1}f)(T)=\frac{1}{2\pi}T\int_{\partial(D_\varphi\cap\mathbb{C}_J)}S_L^{-1}(s,T)ds_Js^nf(s)=T(s^nf)(T).
\end{equation*}
Combining this identity with the induction assumption $\ran(f(T))\subseteq\dom(T^n)$ and $(s^nf)(T)=T^nf(T)$, then gives $\ran(f(T))\subseteq\dom(T^{n+1})$, and
\begin{equation*}
(s^{n+1}f)(T)=TT^nf(T)=T^{n+1}f(T). \qedhere
\end{equation*}
\end{proof}

\begin{thm}\label{thm_Rational_omega}
Let $T\in\mathcal{K}(V)$ be bisectorial of angle $\omega\in(0,\frac{\pi}{2})$ and $p,q$ two intrinsic polynomials, such that

\begin{enumerate}
\item[i)] $\deg(q)\geq\deg(p)+1$,
\item[ii)] $p$ admits a zero at the origin,
\item[iii)] $q$ does not admit any zeros in $\overline{D_\omega}$.
\end{enumerate}

Then $q[T]$ is bijective, there exists some $\theta\in(\omega,\pi)$ such that $\frac{p}{q}\in\mathcal{N}^0(D_\theta)$, and there holds
\begin{equation*}
\Big(\frac{p}{q}\Big)(T)=p[T]q[T]^{-1}.
\end{equation*}
\end{thm}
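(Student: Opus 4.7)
The plan is to handle the three assertions in turn; the bijectivity of $q[T]$ and the membership $p/q\in\mathcal{N}^0(D_\theta)$ are routine, while the operator identity requires a contour manipulation that is the main obstacle.

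\textbf{Bijectivity of $q[T]$.} I factor $q$ over $\mathbb{R}$ as a product of real linear factors $s-r_i$ and real irreducible quadratics $Q_{\lambda_j}(s)$ arising from the real roots and the conjugate pairs of complex roots of $q$. Hypothesis (iii) places every $r_i$ and every $\lambda_j$ outside $\overline{D_\omega}\supseteq\sigma_S(T)$, hence in $\rho_S(T)$. By the definition of $\rho_S$, each $Q_{\lambda_j}[T]$ has a bounded inverse, and $(T-r_i)^2=Q_{r_i}[T]$ being boundedly invertible forces $T-r_i$ itself to be so. Since these factors are polynomials in $T$ and mutually commute on their common domain, the product $q[T]$ is bijective with $q[T]^{-1}\in\mathcal{B}(V)$.

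\textbf{$p/q\in\mathcal{N}^0(D_\theta)$.} Pick $\theta\in(\omega,\tfrac{\pi}{2})$ small enough that $\overline{D_\theta}$ avoids the (finite) set of zeros of $q$. On $D_\theta$ the function $p/q$ is slice hyperholomorphic and intrinsic, and hypotheses (i), (ii) give $|(p/q)(s)|=\mathcal{O}(|s|)$ as $s\to 0$ (since $p(0)=0$ and $q(0)\neq 0$) and $\mathcal{O}(|s|^{-1})$ as $|s|\to\infty$; both bounds yield boundedness on $D_\theta$ together with $\int_0^\infty|(p/q)(te^{J\phi})|\,dt/t<\infty$.

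\textbf{The operator identity.} By Proposition~\ref{prop_Unbounded_omega} combined with the intrinsic right-resolvent representation~\eqref{Eq_Representation_intrinsic}, deformed to a compact contour around the zeros of $q$ (using that $(p/q)(s)S_R^{-1}(s,T)$ is slice hyperholomorphic in $s$ on $\rho_S(T)$ minus those zeros), we may write
\begin{equation*}
(p/q)(T)=-\frac{1}{2\pi}\int_{\partial U\cap\mathbb{C}_J}(p/q)(s)\,ds_J\,S_R^{-1}(s,T),
\end{equation*}
where $U$ is any axially symmetric bounded open set enclosing the zeros of $q$ with $\overline{U}\subseteq\rho_S(T)$. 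I will prove $(p/q)(T)\,q[T]\,v=p[T]\,v$ on $\dom(q[T])=\dom(T^n)$; the conclusion $(p/q)(T)=p[T]\,q[T]^{-1}$ then holds on all of $V$ by substituting $v=q[T]^{-1}w$. Iterating the right-resolvent analogue of~\eqref{Eq_Product_omega_pf_1}, namely $s\,S_R^{-1}(s,T)=S_R^{-1}(s,T)\,T+I$, yields
\begin{equation*}
q(s)\,S_R^{-1}(s,T)v=S_R^{-1}(s,T)\,q[T]v+\tilde P(s;T)v,\qquad v\in\dom(T^n),
\end{equation*}
with $\tilde P(s;T):=\sum_{j=0}^{n-1}s^j\phi_j(T)$ and $\phi_j(t):=\sum_{k=j+1}^{n}a_kt^{k-1-j}$, writing $q(t)=\sum_{k=0}^n a_kt^k$. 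Substituting splits $(p/q)(T)q[T]v$ into one integral against $p(s)\,ds_J\,S_R^{-1}(s,T)v$ (which vanishes by Cauchy's theorem, since $p(s)S_R^{-1}(s,T)v$ is slice hyperholomorphic in $s$ on all of $\overline{U}\subseteq\rho_S(T)$) plus one against $(p/q)(s)\,ds_J\,\tilde P(s;T)v$. Because the vectors $\phi_j(T)v$ do not depend on $s$ and scalars in $\mathbb{C}_J$ commute, the latter collapses to $\frac{1}{2\pi}\sum_{j=0}^{n-1}J_j\phi_j(T)v$, with $J_j:=\int_{\partial U\cap\mathbb{C}_J}s^j(p/q)(s)\,ds_J$.

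The main obstacle is to identify $\sum_j J_j\phi_j(T)=2\pi p[T]$. This reduces to the scalar Cauchy-type identity
\begin{equation*}
\int_{\partial U\cap\mathbb{C}_J}\frac{q(t)-q(s)}{t-s}\cdot\frac{p(s)}{q(s)}\,ds_J=2\pi p(t),\qquad t\in\mathbb{R}^{n+1}\setminus\overline{U},
\end{equation*}
which I prove by splitting the integrand as $q(t)\cdot\frac{p(s)/q(s)}{t-s}-\frac{p(s)}{t-s}$: the second integral vanishes because its only candidate pole $s=t$ lies outside $U$, while the first evaluates to $q(t)\cdot 2\pi\cdot p(t)/q(t)=2\pi p(t)$ by the classical Cauchy/residue calculus in $\mathbb{C}_J$ (which, for simple zeros of $q$, is Lagrange interpolation $p(t)/q(t)=\sum_ip(r_i)/(q'(r_i)(t-r_i))$, with the obvious extension for multiple roots). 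Using the symmetric expansion $(q(t)-q(s))/(t-s)=\sum_j s^j\phi_j(t)$, the left-hand side equals $\sum_j\phi_j(t)J_j$, so $\sum_j\phi_j(t)J_j=2\pi p(t)$; matching polynomial coefficients in the real indeterminate $t$ forces $J_j\in\mathbb{R}$ and $\sum_j J_j\phi_j\equiv 2\pi p$ in $\mathbb{R}[t]$. Substituting $t\mapsto T$ gives $\sum_j J_j\phi_j(T)=2\pi p[T]$ as polynomial operators on $\dom(T^{n-1})\supseteq\dom(T^n)$, closing the identity $(p/q)(T)q[T]v=p[T]v$.
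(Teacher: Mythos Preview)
Your proof is correct, but it follows a genuinely different route from the paper's argument.

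The paper first establishes the special case $Tq[T]^{-1}=\big(\tfrac{s}{q}\big)(T)$ by quoting an integral formula for $Tq[T]^{-1}$ from \cite[Lemma~3.11]{MPS23} and deforming the compact contour back to $\partial D_\varphi$; it then writes $p(s)=s\,\widetilde{p}(s)$ and invokes Proposition~\ref{prop_Product_omega_pf} (the polynomial product rule for the $\omega$-calculus, which it has just proved) to obtain $(\tfrac{p}{q})(T)=\widetilde{p}[T]\,Tq[T]^{-1}=p[T]q[T]^{-1}$. Your argument bypasses both Proposition~\ref{prop_Product_omega_pf} and the external reference entirely: starting from the right-resolvent representation~\eqref{Eq_Representation_intrinsic}, you deform to a compact contour around the zeros of $q$, expand $S_R^{-1}(s,T)q[T]$ via the iterated resolvent identity, and reduce the question to the scalar residue computation $\sum_j J_j\phi_j(t)=2\pi p(t)$, which is essentially partial fractions / Lagrange interpolation for $p/q$.

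The paper's proof is shorter and more modular, precisely because it reuses Proposition~\ref{prop_Product_omega_pf} and the cited lemma. Your argument is more self-contained and elementary in spirit, but requires more bookkeeping with noncommutativity (checking that all scalars live in $\mathbb{C}_J$ so they commute past $ds_J$, and that the $J_j$ come out real by the conjugate-root symmetry of $q$, so that $\sum_j J_j\phi_j[T]$ is a bona fide real-coefficient polynomial operator). One small remark: your invocation of ``Proposition~\ref{prop_Unbounded_omega} combined with~\eqref{Eq_Representation_intrinsic}'' is really a reproof of Proposition~\ref{prop_Unbounded_omega} starting from the right-resolvent representation rather than a literal combination; the deformation argument is identical, so this is fine, but worth stating more precisely.
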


\begin{proof}
It is already proven in \cite[Lemma 3.11]{MPS23}, that the operator $q[T]$ is bijective and that for any bounded, open, axially symmetric $U\subseteq\rho_S(T)$, which contains all zeros of $q$, there is
\begin{equation*}
q[T]^{-1}=\frac{-1}{2\pi}\int_{\partial U\cap\mathbb{C}_J}S_L^{-1}(s,T)ds_J\frac{1}{q(s)}.
\end{equation*}
Moreover, it can be seen in the proof of \cite[Lemma 3.11]{MPS23} that $\ran\big(q[T]^{-1}\big)\subseteq\dom(T)$ and
\begin{equation}\label{Eq_Rational_omega_2}
Tq[T]^{-1}=\frac{-1}{2\pi}\int_{\partial U\cap\mathbb{C}_J}S_L^{-1}(s,T)ds_J\frac{s}{q(s)}.
\end{equation}
Let us now choose some particular set $U$, namely the one where $\partial U\cap\mathbb{C}_J$ is parametrized by

\begin{minipage}{0.34\textwidth}
\begin{center}
\begin{tikzpicture}[scale=0.8]
\fill[black!30] (0,0)--(1.99,-0.93) arc (-25:25:2.2)--(0,0)--(-1.99,-0.93) arc (205:155:2.2);
\draw[thick,fill=black!15] (1.64,1.15)--(0.41,0.29) arc (35:145:0.5)--(-1.64,1.15) arc (145:35:2);
\draw[thick,fill=black!15] (-1.64,-1.15)--(-0.41,-0.29) arc (215:325:0.5)--(1.64,-1.15) arc (325:215:2);
\draw (-1.99,-0.93)--(1.99,0.93);
\draw (-1.99,0.93)--(1.99,-0.93);
\draw[->] (-2.3,0)--(2.6,0);
\draw[->] (0,-2.3)--(0,2.4);
\draw[thick,->] (1.31,0.92)--(1.15,0.8);
\draw[thick,->] (1.15,-0.8)--(1.31,-0.92);
\draw[thick,->] (-1.15,0.8)--(-1.31,0.92);
\draw[thick,->] (-1.31,-0.92)--(-1.15,-0.8);
\draw (0.6,0.95) node[anchor=west] {\tiny{$\gamma_+$}};
\draw (0.6,-0.95) node[anchor=west] {\tiny{$\gamma_+$}};
\draw (-0.6,0.95) node[anchor=east] {\tiny{$\gamma_-$}};
\draw (-0.55,-1) node[anchor=east] {\tiny{$\gamma_-$}};
\draw[thick,->] (-0.2,0.455)--(-0.21,0.45);
\draw[thick,->] (0.2,-0.455)--(0.21,-0.45);
\draw (-0.2,0.45) node[anchor=south] {\tiny{$\sigma$}};
\draw (0.3,-0.45) node[anchor=north] {\tiny{$\sigma$}};
\draw[thick,->] (-0.17,1.99)--(-0.27,1.98);
\draw[thick,->] (0.17,-1.99)--(0.27,-1.98);
\draw (-0.2,1.95) node[anchor=north] {\tiny{$\tau$}};
\draw (0.25,-2) node[anchor=south] {\tiny{$\tau$}};
\draw (0,1.3) node[anchor=west] {$U$};
\draw (-1.6,-0.1) node[anchor=south] {\small{$\sigma_S(T)$}};
\draw (0.9,0) arc (0:25:0.9);
\draw (1.25,0) arc (0:35:1.25);
\draw (0.7,-0.05) node[anchor=south] {\tiny{$\omega$}};
\draw (1.05,0) node[anchor=south] {\tiny{$\varphi$}};
\draw[->] (0,0)--(0.35,0.35);
\draw[->] (0.86,1.23)--(1.15,1.64);
\draw (0.33,0.3) node[anchor=east] {\tiny{$\varepsilon$}};
\draw (1.15,1.6) node[anchor=east] {\tiny{$R$}};
\draw (0,2.3) node[anchor=east] {$\mathbb{C}_J$};
\end{tikzpicture}
\end{center}
\end{minipage}
\begin{minipage}{0.65\textwidth}
\begin{align*}
\gamma_\pm(t)&:=\pm|t|e^{-\sgn(t)J\varphi},\qquad t\in(-R,R)\setminus[-\varepsilon,\varepsilon], \\
\tau(\phi)&:=Re^{J\phi},\hspace{2.1cm}\phi\in\Big(-\frac{\pi}{2},\frac{3\pi}{2}\Big)\setminus\overline{I_\varphi}, \\
\sigma(\phi)&:=\varepsilon e^{J\phi},\hspace{2.22cm}\phi\in\Big(-\frac{\pi}{2},\frac{3\pi}{2}\Big)\setminus\overline{I_\varphi}, \\
\gamma&:=\gamma_+\oplus\gamma_-.
\end{align*}
\end{minipage}

Note that it is possible to choose $\varphi\in(\omega,\frac{\pi}{2})$ and $0<\varepsilon<R$ such that $U$ contains all the zeros of $q$. With this particular set $U$, the integral \eqref{Eq_Rational_omega_2} becomes
\begin{equation*}
Tq[T]^{-1}=\frac{-1}{2\pi}\int_{\tau\ominus\gamma\ominus\sigma}S_L^{-1}(s,T)ds_J\frac{s}{q(s)}.
\end{equation*}
Since $q$ does not have any zeros at the origin, we obtain the asymptotics
\begin{equation*}
\frac{s}{q(s)}=\begin{cases} \mathcal{O}(\frac{1}{|s|^{\deg(q)-1}}), & \text{as }|s|\rightarrow\infty, \\ \mathcal{O}(|s|), & \text{as }|s|\rightarrow 0^+. \end{cases}
\end{equation*}
This in particular means that $\frac{s}{q(s)}\in\mathcal{N}^0(D_\varphi)$. When we now perform the limits $\varepsilon\rightarrow 0^+$ and $R\rightarrow\infty$, the respective integrals along $\sigma$ and $\tau$ vanish by Lemma~\ref{lem_Integral_vanish}, due to the estimate $\Vert S_L^{-1}(s,T)\Vert\leq\frac{C_\varphi}{|s|}$ from \eqref{Eq_SL_estimate}. Hence we obtain the $\omega$-functional calculus
\begin{equation}\label{Eq_Rational_omega_3}
Tq[T]^{-1}=\frac{-1}{2\pi}\lim\limits_{\varepsilon\rightarrow 0^+}\lim\limits_{R\rightarrow\infty}\int_{\ominus\gamma}S_L^{-1}(s,T)ds_J\frac{s}{q(s)}=\Big(\frac{s}{q}\Big)(T).
\end{equation}
Finally, since $p$ admits a zero at $s=0$, we can write it as $p(s)=s\widetilde{p}(s)$, for another polynomial $\widetilde{p}(s)$. By the assumptions on $p$ and $q$, we now know that
\begin{equation*}
\widetilde{p}(s)\frac{s}{q(s)}=\frac{p(s)}{q(s)}=\begin{cases} \mathcal{O}(|s|), & \text{as }|s|\rightarrow 0^+, \\ \mathcal{O}(\frac{1}{|s|}), & \text{as }|s|\rightarrow\infty, \end{cases}
\end{equation*}
which in particular means that $\widetilde{p}\,\frac{s}{q}\in\mathcal{N}^0(D_\theta)$. From Proposition~\ref{prop_Product_omega_pf} there then follows that $\ran((\frac{s}{q})(T))\subseteq\dom(T^{\deg(\widetilde{p})})=\dom(T^{\deg(p)-1})$, and
\begin{equation*}
\Big(\frac{p}{q}\Big)(T)=\Big(\widetilde{p}\,\frac{s}{q}\Big)(T)=\widetilde{p}[T]\Big(\frac{s}{q}\Big)(T).
\end{equation*}
Using also \eqref{Eq_Rational_omega_3}, gives $\ran(Tq[T]^{-1})\subseteq\dom(T^{\deg(p)-1})$, i.e. $\ran(q[T]^{-1})\subseteq\dom(T^{\deg(p)})$, and
\begin{equation*}
\Big(\frac{p}{q}\Big)(T)=\widetilde{p}[T]Tq[T]^{-1}=p[T]q[T]^{-1}. \qedhere
\end{equation*}
\end{proof}

In the final part of this section we want to investigate under which circumstances operators commute with the $\omega$-functional calculus \eqref{Eq_Omega}. At this point we recall the notion of operator inclusion in \eqref{Eq_Operator_inclusion}.

\begin{thm}\label{thm_Commutation_omega}
Let $T\in\mathcal{K}(V)$ be bisectorial of angle $\omega\in(0,\frac{\pi}{2})$ and $A\in\mathcal{K}(V)$, such that
\begin{equation}\label{Eq_Commutation_omega_assumption}
AQ_s[T]^{-1}\supseteq Q_s[T]^{-1}A\qquad\text{and}\qquad ATQ_s[T]^{-1}\supseteq TQ_s[T]^{-1}A,
\end{equation}
for every $s\in\mathbb{R}^{n+1}\setminus\overline{D_\omega}$. Then for every $g\in\mathcal{N}^0(D_\theta)$, $\theta\in(\omega,\frac{\pi}{2})$, there commutes
\begin{equation}\label{Eq_Commutation_omega}
Ag(T)\supseteq g(T)A.
\end{equation}
\end{thm}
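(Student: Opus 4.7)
The strategy is to exploit the fact that $g$ is intrinsic to rewrite $g(T)$ in a form where only the operators $Q_s[T]^{-1}$ and $TQ_s[T]^{-1}$ (combined with \emph{real}-valued scalars) appear. This is what makes the commutation hypotheses applicable: since $A$ is only right-linear, it does not commute with left multiplication by an arbitrary Clifford number, but it does commute with real scalars, and the intrinsic decomposition ensures nothing else intervenes.

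First, I would invoke the real-scalar representation derived in the proof of the proposition on the intrinsic form of the $\omega$-functional calculus (see \eqref{Eq_Representation_intrinsic_3}). Parametrizing $\partial D_\varphi \cap \mathbb{C}_J$ by $\gamma_\pm(t) = \pm|t|e^{-\sgn(t)J\varphi} = x(t) + y(t)J$, one obtains
\begin{equation*}
g(T) = \frac{1}{\pi}\sum_\pm \int_0^\infty \bigl(A_\pm(t)\alpha_\pm(t) - B_\pm(t)\beta_\pm(t)\bigr)\,dt,
\end{equation*}
where $A_\pm(t) = (x(t)-T)Q_{\gamma_\pm(t)}[T]^{-1}$ and $B_\pm(t) = -y(t)Q_{\gamma_\pm(t)}[T]^{-1}$ are bounded operators, and the scalars $\alpha_\pm(t),\beta_\pm(t)$ are \emph{real} precisely because $g$ is intrinsic.

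Next, fix $v\in\dom(A)$. The hypothesis $AQ_s[T]^{-1}\supseteq Q_s[T]^{-1}A$ applied at $s=\gamma_\pm(t)$ gives $Q_{\gamma_\pm(t)}[T]^{-1}v\in\dom(A)$ with $A\,Q_{\gamma_\pm(t)}[T]^{-1}v = Q_{\gamma_\pm(t)}[T]^{-1}Av$; likewise the second hypothesis yields $TQ_{\gamma_\pm(t)}[T]^{-1}v\in\dom(A)$ with $A\,TQ_{\gamma_\pm(t)}[T]^{-1}v = TQ_{\gamma_\pm(t)}[T]^{-1}Av$. Since $x(t),y(t),\alpha_\pm(t),\beta_\pm(t)$ are real and right-linearity of $A$ forces $A(\lambda w)=\lambda A(w)$ for every $\lambda\in\mathbb{R}$, a direct computation gives $A_\pm(t)v,B_\pm(t)v\in\dom(A)$ and
\begin{equation*}
A\bigl(A_\pm(t)\alpha_\pm(t)v - B_\pm(t)\beta_\pm(t)v\bigr) = A_\pm(t)\alpha_\pm(t)Av - B_\pm(t)\beta_\pm(t)Av
\end{equation*}
for every $t>0$. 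Note that the right-hand side is exactly the integrand representing $g(T)Av$ in the formula above with $v$ replaced by $Av$.

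Finally, since $A$ is closed, I would apply Hille's theorem to interchange $A$ with the integral. The required Bochner integrability of $t\mapsto A\bigl(A_\pm(t)\alpha_\pm(t)v - B_\pm(t)\beta_\pm(t)v\bigr)$ is immediate, because (by the identity above) this function has the same $L^1$-norm controlled by the bound already used to establish Theorem~\ref{thm_Independence} for the vector $Av$ in place of $v$. The conclusion is $Ag(T)v = g(T)Av$ for every $v\in\dom(A)$, i.e.\ the desired inclusion $Ag(T)\supseteq g(T)A$. The main conceptual obstacle---that right-linearity alone does not commute $A$ with arbitrary left Clifford multiplication---is circumvented by the intrinsic representation, which reduces the whole integrand to combinations of $Q_s[T]^{-1}$, $TQ_s[T]^{-1}$, and real scalars.
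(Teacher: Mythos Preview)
Your proposal is correct and follows essentially the same approach as the paper's proof: both use the intrinsic representation \eqref{Eq_Representation_intrinsic_3} to express $g(T)$ via the operators $A_\pm(t),B_\pm(t)$ and real scalars $\alpha_\pm(t),\beta_\pm(t)$, verify the pointwise commutation $AA_\pm(t)\supseteq A_\pm(t)A$ and $AB_\pm(t)\supseteq B_\pm(t)A$ from the hypotheses, and then invoke Hille's theorem to pass $A$ through the integral. Your write-up is in fact slightly more explicit than the paper's about why the reality of the scalars matters (right-linearity of $A$) and about the integrability hypothesis needed for Hille's theorem.
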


\begin{proof}
According to \eqref{Eq_Representation_intrinsic_3}, we can write
\begin{equation*}
g(T)=\frac{1}{\pi}\sum_\pm\int_0^\infty\big(A_\pm(t)\alpha_\pm(t)-B_\pm(t)\beta_\pm(t)\big)dt,
\end{equation*}
using the real valued functions $\alpha_\pm,\beta_\pm$ from \eqref{Eq_Representation_intrinsic_2} and the operators \eqref{Eq_Representation_intrinsic_1}
\begin{equation*}
A_\pm(t)=(x_\pm(t)-T)Q_{\gamma_\pm(t)}[T]^{-1}\qquad\text{and}\qquad B_\pm(t)=-y_\pm(t)Q_{\gamma_\pm(t)}[T]^{-1}.
\end{equation*}
Due to the assumption \eqref{Eq_Commutation_omega_assumption}, there hold the commutations relations
\begin{equation*}
AA_\pm(t)\supseteq A_\pm(t)A\qquad\text{and}\qquad AB_\pm(t)\supseteq B_\pm(t)A,\qquad t>0.
\end{equation*}
Using these relations in \eqref{Eq_Commutation_omega}, we obtain
\begin{equation*}
g(T)A\subseteq\frac{1}{\pi}\sum_\pm\int_0^\infty A\big(A_\pm(t)\alpha_\pm(t)-B_\pm(t)\beta_\pm(t)\big)dt.
\end{equation*}
Since $A$ is closed, Hille's theorem gives $\ran\big(\int_0^\infty\big(A_\pm(t)\alpha_\pm(t)-B_\pm(t)\beta_\pm(t)\big)dt\big)\subseteq\dom(A)$, and
\begin{equation*}
g(T)A\subseteq\frac{1}{\pi}\sum_\pm A\int_0^\infty\big(A_\pm(t)\alpha_\pm(t)-B_\pm(t)\beta_\pm(t)\big)dt=Ag(T). \qedhere
\end{equation*}
\end{proof}

\begin{cor}\label{cor_Commutation_omega_bounded}
Let $T\in\mathcal{K}(V)$ be bisectorial of angle $\omega\in(0,\frac{\pi}{2})$, $g\in\mathcal{N}^0(D_\theta)$, $\theta\in(\omega,\frac{\pi}{2})$. \medskip

\begin{enumerate}
\item[i)] For every $B\in\mathcal{B}(V)$ with $TB\supseteq BT$, we have
\begin{equation}\label{Eq_Commutation_omega_bounded}
Bg(T)=g(T)B.
\end{equation}

\item[ii)] For every intrinsic polynomial $p$, we get
\begin{equation*}
p[T]g(T)\supseteq g(T)p[T].
\end{equation*}

\item[iii)] For every $f\in\mathcal{N}^0(D_\theta)$, we obtain
\begin{equation*}
g(T)f(T)=f(T)g(T).
\end{equation*}
\end{enumerate}
\end{cor}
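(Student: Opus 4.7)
The plan is to derive each of the three assertions from Theorem~\ref{thm_Commutation_omega}, supplemented in part~(iii) by the product rule of Theorem~\ref{thm_Product_omega}. The common core task will be, for the appropriate operator $A$ and every $s\in\mathbb{R}^{n+1}\setminus\overline{D_\omega}$, to verify the two inclusions
\begin{equation*}
A Q_s[T]^{-1} \supseteq Q_s[T]^{-1} A \qquad \text{and} \qquad A T Q_s[T]^{-1} \supseteq T Q_s[T]^{-1} A
\end{equation*}
required by that theorem.

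For part~(i) I would take $A=B$. Iterating $TB\supseteq BT$ on $\dom(T^2)$ gives $T^2B\supseteq BT^2$, hence $Q_s[T]B\supseteq BQ_s[T]$; for any $v\in V$, setting $u:=Q_s[T]^{-1}v$ yields $Q_s[T](Bu)=BQ_s[T]u=Bv$, so $Bu=Q_s[T]^{-1}Bv$, establishing the first hypothesis (in fact as an equality). The second then follows because $Q_s[T]^{-1}v\in\dom(T)$ makes $TB\supseteq BT$ directly applicable. Theorem~\ref{thm_Commutation_omega} gives $Bg(T)\supseteq g(T)B$, and boundedness of both sides upgrades this to an equality.

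For part~(ii), real-linearity reduces the claim to $T^ng(T)\supseteq g(T)T^n$ for every $n\in\mathbb{N}_0$. An induction on $n$ — using that for $v\in\dom(T^{n+1})$ one has $T^nv\in\dom(T)$, so the base case applied to $T^nv$ forces $g(T)v\in\dom(T^{n+1})$ — reduces the problem to the base case $Tg(T)\supseteq g(T)T$. I expect this to be the main obstacle of the proof: applying Theorem~\ref{thm_Commutation_omega} with $A=T$ requires $TQ_s[T]^{-1}\supseteq Q_s[T]^{-1}T$, and one wants to apply $T$ to both sides of the identity $Q_s[T]u=v$ where $u=Q_s[T]^{-1}v$, yet a priori $u$ lies only in $\dom(T^2)$, not in $\dom(T^3)$. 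The trick will be to rearrange $Q_s[T]u=v$ as $T^2u=v+2s_0Tu-|s|^2u$ and observe that for $v\in\dom(T)$ every summand on the right lies in $\dom(T)$, so in fact $u\in\dom(T^3)$; the polynomial identity $TQ_s[T]u=Q_s[T]Tu$ is then legitimate and gives $Tu=Q_s[T]^{-1}Tv$. The second resolvent commutation follows immediately from the first, since $Q_s[T]^{-1}$ maps into $\dom(T^2)$.

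For part~(iii) the shortest route bypasses Theorem~\ref{thm_Commutation_omega} altogether and uses the product rule. Writing $f(x+Jy)=f_0(x,y)+Jf_1(x,y)$ and $g(x+Jy)=g_0(x,y)+Jg_1(x,y)$ with real-valued components, a direct expansion yields
\begin{equation*}
f(s)g(s)=f_0g_0-f_1g_1+J(f_0g_1+f_1g_0)=g(s)f(s)
\end{equation*}
at every $s\in D_\theta$, so $fg=gf$ pointwise. Since $f,g\in\mathcal{N}^0(D_\theta)\subseteq\mathcal{SH}_L^0(D_\theta)$, Theorem~\ref{thm_Product_omega} applies in either order and gives $f(T)g(T)=(fg)(T)=(gf)(T)=g(T)f(T)$.
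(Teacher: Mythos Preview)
Your proof is correct. Parts~(i) and~(ii) follow essentially the paper's approach: verify the two resolvent-commutation hypotheses of Theorem~\ref{thm_Commutation_omega} for $A=B$ and $A=T$ respectively, then invoke that theorem. Your domain bootstrap in~(ii) --- rewriting $T^2u=v+2s_0Tu-|s|^2u$ to force $u\in\dom(T^3)$ whenever $v\in\dom(T)$ --- is exactly the content that the paper compresses into the chain $TQ_s[T]^{-1}\supseteq Q_s[T]^{-1}Q_s[T]TQ_s[T]^{-1}=Q_s[T]^{-1}TQ_s[T]Q_s[T]^{-1}=Q_s[T]^{-1}T$; the middle equality there tacitly uses that $TQ_s[T]=Q_s[T]T$ holds on $\dom(T^3)$, and the identification of the resulting domain with $\dom(T)$ is precisely your bootstrap. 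So you have made explicit what the paper leaves implicit.

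Part~(iii) is where the two arguments genuinely diverge. The paper reuses~(i) and~(ii): from $Tg(T)\supseteq g(T)T$ one sees that $B:=g(T)\in\mathcal{B}(V)$ satisfies the hypothesis of~(i), whence $g(T)f(T)=f(T)g(T)$. Your route via the product rule is independent of~(ii) and arguably more transparent --- it exploits directly that intrinsic functions have real components and hence commute pointwise, so Theorem~\ref{thm_Product_omega} applied in both orders gives the result. The paper's approach has the minor virtue of showing that~(iii) is really a formal consequence of~(i) and~(ii), while yours isolates the algebraic reason (commutativity of $\mathcal{N}^0(D_\theta)$) and avoids any further operator-theoretic work.
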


\begin{proof}
i)\;\;For every $s\in\mathbb{R}^{n+1}\setminus\overline{D_\omega}$ we have the commutation relation
\begin{equation*}
Q_s[T]B=(T^2-2s_0T+|s|^2)B\supseteq B(T^2-2s_0T+|s|^2)=BQ_s[T].
\end{equation*}
Using also $Q_s[T]^{-1}Q_s[T]\subseteq 1$ and $Q_s[T]Q_s[T]^{-1}=1$, we get
\begin{equation*}
BQ_s[T]^{-1}\supseteq Q_s[T]^{-1}Q_s[T]BQ_s[T]^{-1}\supseteq Q_s[T]^{-1}BQ_s[T]Q_s[T]^{-1}=Q_s[T]^{-1}B.
\end{equation*}
Moreover, since $TB\supseteq BT$ implies $TB=BT$ on $\dom(T)$, and since $\ran(Q_s[T]^{-1})\subseteq\dom(T)$, we get the additional commutation relation
\begin{equation}\label{Eq_Commutation_omega_bounded_1}
BTQ_s[T]^{-1}=TBQ_s[T]^{-1}\supseteq TQ_s[T]^{-1}B.
\end{equation}
Hence the assumptions \eqref{Eq_Commutation_omega_assumption} of Theorem~\ref{thm_Commutation_omega} are satisfied, and we get $Bg(T)\supseteq g(T)B$. However, since the right hand side of \eqref{Eq_Commutation_omega_bounded_1} is everywhere defined, we get the equality \eqref{Eq_Commutation_omega_bounded}. \medskip

ii)\;\;For every $s\in\mathbb{R}^{n+1}$, there is $TQ_s[T]=Q_s[T]T$. Using also $Q_s[T]^{-1}Q_s[T]\subseteq 1$ and $Q_s[T]Q_s[T]^{-1}=1$, we obtain  the commutation relation
\begin{equation*}
TQ_s[T]^{-1}\supseteq Q_s[T]^{-1}Q_s[T]TQ_s[T]^{-1}=Q_s[T]^{-1}TQ_s[T]Q_s[T]^{-1}=Q_s[T]^{-1}T.
\end{equation*}
Consequently, we also get
\begin{equation*}
TTQ_s[T]^{-1}\supseteq TQ_s[T]^{-1}T,
\end{equation*}
and hence the assumptions of Theorem~\ref{thm_Commutation_omega} are satisfied for $A=T$ and we conclude
\begin{equation*}
Tg(T)\supseteq g(T)T.
\end{equation*}
The commutation $p[T]g(T)\supseteq g(T)p[T]$ then follows inductively. \medskip

iii)\;\;In ii) we have already shown that $Tg(T)\supseteq g(T)T$. Since $g(T)$ is a bounded operator, it then follows from i), that $g(T)f(T)=g(T)f(T)$.
\end{proof}

\section{Extended $\omega$-functional calculus}
\label{sec_Extended}

One main purpose of the $\omega$-functional calculus in Section~\ref{sec_Omega} is, to serve as a functional calculus for the regularizers in the $H^\infty$-functional calculus in Section~\ref{sec_Hinfty}. However, since $\mathcal{SH}_L^0(D_\theta)$ only includes functions which vanish at zero and at infinity, this class is too small, in particular for non-injective operators $T$. This is why in this section we will also give meaning to $f(T)$ for functions which have finite values at zero and at infinity, see Definition~\ref{defi_SH_bnd}. Another reason why the extended $\omega$-functional makes sense, is that it gives a larger class of functions for which the operator $f(T)$ is still bounded. See also \cite{CGdiffusion2018}, where this method was already used.

\begin{defi}\label{defi_SH_bnd}
For every $\theta\in(0,\frac{\pi}{2})$ let $D_\theta$ be the double sector from \eqref{Eq_Domega}. Then we define the function spaces
\begin{align*}
\text{i)}\;\;&\mathcal{SH}_L^\bnd(D_\theta):=\Big\{f(s)=f_\infty+\frac{1}{1+s^2}(f_0-f_\infty)+\widetilde{f}(s)\;\Big|\;f_0,f_\infty\in\mathbb{R}_n,\,\widetilde{f}\in\mathcal{SH}_L^0(D_\theta)\Big\}, \\
\text{ii)}\;\;&\mathcal{N}^\bnd(D_\theta):=\Big\{g(s)=g_\infty+\frac{1}{1+s^2}(g_0-g_\infty)+\widetilde{g}(s)\;\Big|\;g_0,g_\infty\in\mathbb{R},\,\widetilde{g}\in\mathcal{N}^0(D_\theta)\Big\}.
\end{align*}
\end{defi}

Note, that for every $f\in\mathcal{SH}_L^\bnd(D_\theta)$ the components $f_0,f_\infty\in\mathbb{R}_n$, $\widetilde{f}\in\mathcal{SH}_L^0(D_\theta)$ are unique. Moreover, by Lemma~\ref{lem_f_convergence}, the function $f$ clearly admits for every $\varphi\in(0,\theta)$ the limits
\begin{equation*}
\lim\limits_{t\rightarrow 0^+}\sup\limits_{J\in\mathbb{S},\phi\in I_\varphi}|f(te^{J\phi})-f_0|=0\qquad\text{and}\qquad\lim\limits_{t\rightarrow\infty}\sup\limits_{J\in\mathbb{S},\phi\in I_\varphi}|f(te^{J\phi})-f_\infty|=0.
\end{equation*}

The next lemma gives a useful characterization of the spaces $\mathcal{SH}_L^\bnd(D_\theta)$ and $\mathcal{N}^\bnd(D_\theta)$.

\begin{lem}\label{lem_Characterization_SHbnd}
Let $\theta\in(0,\frac{\pi}{2})$ and $f\in\mathcal{SH}_L(D_\theta)$ (resp. $f\in\mathcal{N}(D_\theta)$) be bounded. Then there is $f\in\mathcal{SH}_L^\bnd(D_\theta)$ (resp. $f\in\mathcal{N}^\bnd(D_\theta)$) if and only if there exists $0<r<R$ and $f_0,f_\infty\in\mathbb{R}_n$ (resp. $f_0,f_\infty\in\mathbb{R}$) such that for every $J\in\mathbb{S}$ and $\phi\in I_\theta$ there holds
\begin{equation}\label{Eq_Characterization_SHbnd}
\int_0^r|f(te^{J\phi})-f_0|\frac{dt}{t}<\infty\qquad\text{and}\qquad\int_R^\infty|f(te^{J\phi})-f_\infty|\frac{dt}{t}<\infty.
\end{equation}
\end{lem}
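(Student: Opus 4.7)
The plan is to prove both directions of the equivalence by working explicitly with the decomposition $f(s) = f_\infty + (1+s^2)^{-1}(f_0 - f_\infty) + \widetilde{f}(s)$ and analyzing each summand separately, using that the rational factor $(1+s^2)^{-1}$ is a bounded slice hyperholomorphic function on $D_\theta$ whose asymptotic behavior at $0$ and at $\infty$ is explicit.

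For the forward implication, I start from $f \in \mathcal{SH}_L^\bnd(D_\theta)$ and subtract $f_0$ or $f_\infty$ to obtain the two identities
\begin{equation*}
f(s) - f_0 = (f_0 - f_\infty)\frac{s^2}{1+s^2} + \widetilde{f}(s), \qquad f(s) - f_\infty = \frac{f_0 - f_\infty}{1+s^2} + \widetilde{f}(s).
\end{equation*}
Along a ray $s = te^{J\phi}$, the rational terms decay like $t^2$ near zero and like $t^{-2}$ near infinity, so each is integrable against $dt/t$ on $(0,r]$ and on $[R,\infty)$ respectively; the $\widetilde{f}$-contribution is integrable on all of $(0,\infty)$ by its membership in $\mathcal{SH}_L^0(D_\theta)$. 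This yields both bounds in \eqref{Eq_Characterization_SHbnd}.

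For the backward implication, I define $\widetilde{f}(s) := f(s) - f_\infty - (1+s^2)^{-1}(f_0 - f_\infty)$ and verify the three conditions defining $\mathcal{SH}_L^0(D_\theta)$ (the intrinsic variant being identical once one notes that $f_0, f_\infty \in \mathbb{R}$ keeps $\widetilde{f}$ intrinsic). Slice hyperholomorphy is inherited by linearity, provided the poles of $(1+s^2)^{-1}$ avoid $D_\theta$; this is the case because these poles sit on $\mathbb{S} = \{s : \Arg(s) = \pm\pi/2\}$, which is disjoint from $D_\theta$ for $\theta < \pi/2$. Boundedness of $\widetilde{f}$ follows from boundedness of $f$ together with boundedness of $(1+s^2)^{-1}$ on $D_\theta$, the key estimate being addressed below. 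Finally, the integral $\int_0^\infty |\widetilde{f}(te^{J\phi})|\, dt/t$ splits as $\int_0^r + \int_r^R + \int_R^\infty$: the middle piece is finite by boundedness of $\widetilde{f}$, while on the two tails I reuse the two identities from the forward direction to bound $\widetilde{f}$ by $|f - f_0|$ (resp.\ $|f - f_\infty|$), which is integrable by hypothesis \eqref{Eq_Characterization_SHbnd}, plus a rational remainder of order $t^2$ (resp.\ $t^{-2}$), which is integrable against $dt/t$ on the corresponding region.

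The main obstacle is the uniform boundedness of $(1+s^2)^{-1}$ on $D_\theta$. For $s = te^{J\phi}$ with $\phi \in I_\theta$, a direct computation gives
\begin{equation*}
|1+s^2|^2 = 1 + 2t^2 \cos(2\phi) + t^4,
\end{equation*}
and since $\phi \in I_\theta$ forces $\cos(2\phi) \geq \cos(2\theta) > -1$, elementary minimization in $t > 0$ shows that this quantity is bounded below by $\min\{1, \sin^2 2\theta\} > 0$, a positive constant depending only on $\theta$. This is the only place where the assumption $\theta < \pi/2$ is decisive; everything else is essentially bookkeeping on the decomposition, together with the integrability estimates supplied either by the definition of $\mathcal{SH}_L^0(D_\theta)$ or by the hypothesis \eqref{Eq_Characterization_SHbnd}.
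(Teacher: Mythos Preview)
Your proof is correct and follows essentially the same approach as the paper: both directions use the identities obtained by subtracting $f_0$ or $f_\infty$ from the defining decomposition, then split $\int_0^\infty$ into $(0,r)\cup(r,R)\cup(R,\infty)$ and estimate each piece. Two minor remarks: the first displayed identity has a sign slip (it should read $\frac{s^2}{1+s^2}(f_\infty-f_0)$, with the Clifford coefficient on the right to stay in $\mathcal{SH}_L$), and your explicit verification that $(1+s^2)^{-1}$ is bounded on $D_\theta$ is a detail the paper uses without proof, so your argument is in fact slightly more complete.
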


\begin{proof}
For the first implication assume that $f\in\mathcal{SH}_L^\bnd(D_\theta)$, i.e., we can write
\begin{equation*}
f(s)=f_\infty+\frac{1}{1+s^2}(f_0-f_\infty)+\widetilde{f}(s),
\end{equation*}
for some $f_0,f_n\in\mathbb{R}_n$ and $\widetilde{f}\in\mathcal{SH}_L^0(D_\theta)$. Rewriting this equation, gives
\begin{equation}\label{Eq_Characterization_SHbnd_1}
f(s)-f_0=\frac{s^2}{1+s^2}(f_\infty-f_0)+\widetilde{f}(s)\qquad\text{and}\qquad f(s)-f_\infty=\frac{1}{1+s^2}(f_0-f_\infty)+\widetilde{f}(s).
\end{equation}
Since $\widetilde{f}\in\mathcal{SH}_L^0(D_\theta)$ we know that $\int_0^\infty|\widetilde{f}(te^{J\phi})|\frac{dt}{t}<\infty$, and hence the representations \eqref{Eq_Characterization_SHbnd_1} show that also the integrals \eqref{Eq_Characterization_SHbnd} are finite. \medskip

For the inverse implication, we assume that there exists $f_0,f_\infty\in\mathbb{R}_n$, such that the integrals \eqref{Eq_Characterization_SHbnd} are finite. Let us define
\begin{equation*}
\widetilde{f}(z):=f(z)-f_\infty-\frac{1}{1+z^2}(f_0-f_\infty).
\end{equation*}
This function then satisfies
\begin{align*}
\int_0^r|\widetilde{f}(te^{J\phi})|\frac{dt}{t}&=\int_0^r|f(te^{J\phi})-f_0|\frac{dt}{t}+|f_0-f_\infty|\int_0^r\frac{t^2}{|1+t^2e^{2J\phi}|}\frac{dt}{t}<\infty, \\
\int_r^R|\widetilde{f}(te^{J\phi})|\frac{dt}{t}&<\infty, \\
\int_R^\infty|\widetilde{f}(te^{J\phi})|\frac{dt}{t}&=\int_R^\infty|f(te^{J\phi})-f_\infty|\frac{dt}{t}+|f_0-f_\infty|\int_R^\infty\frac{2}{|1+t^2e^{2J\phi}|}\frac{dt}{t}<\infty,
\end{align*}
which means that $\widetilde{f}\in\mathcal{SH}_L^0(D_\theta)$. Hence we can write $f(z)=f_\infty+\frac{1}{1+z^2}(f_0-f_\infty)+\widetilde{f}(z)$ for $f_0,f_\infty\in\mathbb{R}_n$ and $\widetilde{f}\in\mathcal{SH}_L^0(D_\theta)$, which means that $f\in\mathcal{SH}_L^\bnd(D_\theta)$.
\end{proof}

\begin{defi}[Extended $\omega$-functional calculus]\label{defi_Extended}
Let $T\in\mathcal{K}(V)$ be bisectorial of angle $\omega\in(0,\frac{\pi}{2})$. Then for every $f\in\mathcal{SH}_L^\bnd(D_\theta)$ we define the {\it extended $\omega$-functional calculus}
\begin{equation}\label{Eq_Extended}
f(T):=f_\infty+(1+T^2)^{-1}(f_0-f_\infty)+\widetilde{f}(T),
\end{equation}
where $f_0,f_\infty\in\mathbb{R}_n$, $\widetilde{f}\in\mathcal{SH}_L^0(D_\theta)$ are the coefficients from Definition~\ref{defi_SH_bnd}. Moreover, $\widetilde{f}(T)$ is understood as the $\omega$-functional calculus in Definition~\ref{defi_Omega}, and the inverse $(1+T^2)^{-1}\in\mathcal{B}(V)$ is bounded since $Q_J[T]=T^2+1$ is boundedly invertible because $\mathbb{S}\subseteq\rho_S(T)$ by Definition~\ref{defi_Bisectorial_operators}.
\end{defi}

For every $f,g\in\mathcal{SH}_L^\bnd(D_\theta)$ and $a\in\mathbb{R}_n$ there holds
\begin{equation*}
(f+g)(T)=f(T)+g(T)\qquad\text{and}\qquad(fa)(T)=f(T)a.
\end{equation*}

\begin{rem}\label{rem_Extended_0rho}
In the special case that $0\in\rho_S(T)$, we already mentioned in Remark~\ref{rem_Omega_0rho} that we are able to extend the $\omega$-functional calculus to functions where only the integrability at $\infty$ matters. Consequently, if $0\in\rho_S(T)$, we can also extend the extended $\omega$-functional calculus to functions $f$ for which there exists some $f_\infty\in\mathbb{R}_n$, such that $\widehat{f}(s):=f(s)-f_\infty$ satisfies the integrability condition \eqref{Eq_Integrability_at_infinity}. Then $\widehat{f}(T)$ is understood as in \eqref{Eq_Omega_0rho}, and we can define
\begin{equation}\label{Eq_Extended_0rho}
f(T):=f_\infty+\widehat{f}(T).
\end{equation}
\end{rem}

The next proposition shows that for certain subclasses of functions, the extended $\omega$-functional calculus coincides with the well established functional calculus for unbounded operators, see e.g. \cite[Definition 3.4.2]{FJBOOK} or \cite[Theorem 4.12]{ColSab2006}.

\begin{prop}
Let $T\in\mathcal{K}(V)$ be bisectorial of angle $\omega\in(0,\frac{\pi}{2})$ and $f\in\mathcal{SH}_L(\mathbb{R}^{n+1}\setminus K)$ for some axially symmetric compact $K\subseteq\mathbb{R}^{n+1}\setminus\overline{D_\omega}$. Moreover, assume that $\lim_{|s|\rightarrow\infty}f(s)=f_\infty$, for some $f_\infty\in\mathbb{R}_n$. Then there exists some $\theta\in(\omega,\frac{\pi}{2})$ such that $f\in\mathcal{SH}_L^\bnd(D_\theta)$ and the extended $\omega$-functional calculus \eqref{Eq_Extended} can be written as
\begin{equation}\label{Eq_Unbounded_extended}
f(T)=f_\infty-\frac{1}{2\pi}\int_{\partial U\cap\mathbb{C}_J}S_L^{-1}(s,T)ds_Jf(s),
\end{equation}
where $J\in\mathbb{S}$ is arbitrary and $U$ is some axially symmetric open set with $K\subseteq U\subseteq\overline{U}\subseteq\rho_S(T)$.
\end{prop}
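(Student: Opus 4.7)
Since $K$ is compact and disjoint from $\overline{D_\omega}$, I choose $\theta\in(\omega,\tfrac{\pi}{2})$ with $K\cap\overline{D_\theta}=\emptyset$, so that $f|_{D_\theta}\in\mathcal{SH}_L(D_\theta)$. Setting $f_0:=f(0)$, slice hyperholomorphicity of $f$ at the origin yields $f(s)-f_0=\mathcal{O}(|s|)$, while the assumed limit at infinity combined with slice hyperholomorphicity of $f$ outside a large ball yields (by a Laurent-type expansion) $f(s)-f_\infty=\mathcal{O}(1/|s|)$. Hence $|f(te^{J\phi})-f_0|/t$ is bounded near $t=0$ and $|f(te^{J\phi})-f_\infty|/t=\mathcal{O}(1/t^2)$ near $t=\infty$, so Lemma~\ref{lem_Characterization_SHbnd} places $f$ in $\mathcal{SH}_L^\bnd(D_\theta)$ with the constants $f_0$ and $f_\infty$. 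Definition~\ref{defi_Extended} then gives
\begin{equation*}
f(T) = f_\infty + (1+T^2)^{-1}(f_0 - f_\infty) + \widetilde{f}(T), \qquad \widetilde{f}(s) := f(s) - f_\infty - \frac{f_0 - f_\infty}{1+s^2}.
\end{equation*}

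The function $\widetilde{f}$ is slice hyperholomorphic on $\mathbb{R}^{n+1}\setminus(K\cup\mathbb{S})$, vanishes at the origin, and tends to $0$ at infinity. The set $K\cup\mathbb{S}$ is axially symmetric and compact, and remains outside $\overline{D_\omega}$ since elements of $\mathbb{S}$ have argument $\pm\pi/2$, so Proposition~\ref{prop_Unbounded_omega} applies to $\widetilde{f}$. Choosing disjoint axially symmetric open sets $U_0\supseteq K$ and $W_0\supseteq\mathbb{S}$ with $\overline{U_0\cup W_0}\subseteq\rho_S(T)$, I split the resulting integral as
\begin{equation*}
\widetilde{f}(T) = -\frac{1}{2\pi}\int_{\partial U_0\cap\mathbb{C}_J} S_L^{-1}(s,T)ds_J\widetilde{f}(s) - \frac{1}{2\pi}\int_{\partial W_0\cap\mathbb{C}_J} S_L^{-1}(s,T)ds_J\widetilde{f}(s).
\end{equation*}
Because $\overline{U_0}\cap\mathbb{S}=\emptyset$, both $S_L^{-1}(\cdot,T)$ and $(1+s^2)^{-1}$ are slice hyperholomorphic on $\overline{U_0}$, so the Cauchy theorem eliminates the contributions of $f_\infty$ and $(f_0-f_\infty)/(1+s^2)$ from the $U_0$-integral, reducing it to the integral of $S_L^{-1}(s,T)f(s)$. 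Because $\overline{W_0}\cap K=\emptyset$, the function $f(s)-f_\infty$ is slice hyperholomorphic on $\overline{W_0}$ and its contribution vanishes, leaving only the $(1+s^2)^{-1}$ term.

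The crux is therefore to show
\begin{equation*}
\int_{\partial W_0\cap\mathbb{C}_J}S_L^{-1}(s,T)ds_J\frac{1}{1+s^2} = -2\pi(1+T^2)^{-1}.
\end{equation*}
I deform $\partial W_0\cap\mathbb{C}_J$ to two positively oriented circles $C^\pm$ of small radius about $\pm J$ in $\mathbb{C}_J$, which is legitimate since the integrand is slice hyperholomorphic in $\rho_S(T)\setminus\mathbb{S}$. Writing $(1+s^2)^{-1}=(s-J)^{-1}(s+J)^{-1}$ and parametrizing $C^\pm$ by $\pm J+\varepsilon' e^{J\theta}$ reduces each contour integral to the slice-hyperholomorphic residue formula $\int_{|s-s_0|=\varepsilon'}h(s)(s-s_0)^{-1}ds_J=2\pi h(s_0)$. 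Plugging in the explicit values $S_L^{-1}(\pm J,T)=\mp(1+T^2)^{-1}J-T(1+T^2)^{-1}$ obtained from \eqref{Eq_S_resolvent} (using $s_0=0$, $|s|^2=1$, $\overline{\pm J}=\mp J$), the terms proportional to $T(1+T^2)^{-1}$ cancel between the two circles, producing the claimed $-2\pi(1+T^2)^{-1}$. Substituting back cancels the summand $(1+T^2)^{-1}(f_0-f_\infty)$ and yields the desired formula with $U=U_0$. A final Cauchy-theorem deformation, exploiting that $S_L^{-1}(\cdot,T)f(\cdot)$ is slice hyperholomorphic on $\rho_S(T)\setminus K$, shows that the resulting integral is the same for any axially symmetric $U$ with $K\subseteq U\subseteq\overline{U}\subseteq\rho_S(T)$. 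The main technical obstacle is the residue computation, which requires careful bookkeeping of the noncommutative Clifford multiplication together with the $ds_J$ convention.
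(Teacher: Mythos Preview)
Your proposal is correct and follows essentially the same route as the paper: define $\widetilde{f}(s)=f(s)-f_\infty-(1+s^2)^{-1}(f_0-f_\infty)$, apply Proposition~\ref{prop_Unbounded_omega} to $\widetilde{f}$ on the compact set $K\cup\mathbb{S}$ (which equals the paper's $K\cup[J]$), and then identify the contribution of $(1+s^2)^{-1}$ with $(1+T^2)^{-1}$. The only substantive difference is that you compute the identity $-\frac{1}{2\pi}\int_{\partial W_0\cap\mathbb{C}_J}S_L^{-1}(s,T)\,ds_J\,(1+s^2)^{-1}=(1+T^2)^{-1}$ by a direct residue argument at $\pm J$, whereas the paper simply cites this from \cite[Equation~(54)]{MPS23}; your computation is correct and makes the argument self-contained.
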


\begin{proof}
Since the compact set $K$ is located outside the closed double sector $\overline{D_\omega}$, there exists some $\theta\in(\omega,\frac{\pi}{2})$, such that $K\subseteq\mathbb{R}^{n+1}\setminus\overline{D_\theta}$. This means that $f$ can be restricted to a function $f\in\mathcal{SH}_L(D_\theta)$. Since $f$ is slice hyperholomorphic on $\mathbb{R}^{n+1}\setminus K$, the value $f_0:=f(0)$ exists. Let us now define the set $\widetilde{K}:=K\cup[J]$, as well as the function
\begin{equation}\label{Eq_Unbounded_extended_2}
\widetilde{f}(s):=f(s)-f_\infty-\frac{1}{1+s^2}(f_0-f_\infty),\qquad s\in\mathbb{R}^{n+1}\setminus\widetilde{K}.
\end{equation}
Then $\widetilde{f}\in\mathcal{SH}_L(\mathbb{R}^{n+1}\setminus\widetilde{K})$ with values $\widetilde{f}(0)=0$ and $\lim_{|s|\rightarrow\infty}\widetilde{f}(s)=0$. In the same way as in Proposition~\ref{prop_Unbounded_omega}, we conclude that $\widetilde{f}\in\mathcal{SH}_L^0(D_\theta)$. Consequently there is $f\in\mathcal{SH}_L^\bnd(D_\theta)$. According to Definition~\ref{defi_Extended} and Proposition~\ref{prop_Unbounded_omega}, the extended $\omega$ functional calculus for $f$ is given by
\begin{equation}\label{Eq_Unbounded_extended_1}
f(T)=f_\infty+(1+T^2)^{-1}(f_0-f_\infty)-\frac{1}{2\pi}\int_{\partial\widetilde{U}\cap\mathbb{C}_J}S_L^{-1}(s,T)ds_J\widetilde{f}(s),
\end{equation}
where $\widetilde{U}$ is some axially symmetric open set with $\widetilde{K}\subseteq\widetilde{U}\subseteq\overline{\widetilde{U}}\subseteq\rho_S(T)$. Moreover, it follows from the hyperholomorphicity of the constant function $1$, that
\begin{equation*}
\int_{\partial\widetilde{U}\cap\mathbb{C}_J}S_L^{-1}(s,T)ds_J=0,
\end{equation*}
and from \cite[Equation (54)]{MPS23}, that
\begin{equation*}
\frac{-1}{2\pi}\int_{\partial\widetilde{U}\cap\mathbb{C}_J}S_L^{-1}(s,T)ds_J\frac{1}{1+s^2}=(1+T^2)^{-1}.
\end{equation*}
These two identities, together with the definition of $\widetilde{f}$ in \eqref{Eq_Unbounded_extended_2}, reduce \eqref{Eq_Unbounded_extended_1} to
\begin{equation*}
f(T)=f_\infty-\frac{1}{2\pi}\int_{\partial\widetilde{U}\cap\mathbb{C}_J}S_L^{-1}(s,T)ds_Jf(s).
\end{equation*}
Finally, since $f$ is slice hyperholomorphic on $\mathbb{R}^{n+1}\setminus K$, we can replace the set $\widetilde{U}$, which is a superset of $\widetilde{K}=K\cup[J]$, by any axially symmetric open set $U$ with $K\subseteq U\subseteq\overline{U}\subseteq\rho_S(T)$. So we have proven the formula \eqref{Eq_Unbounded_extended}.
\end{proof}

\begin{prop}\label{prop_Kernel_extended}
Let $T\in\mathcal{K}(V)$ be bisectorial of angle $\omega\in(0,\frac{\pi}{2})$ and $g\in\mathcal{N}^\bnd(D_\theta)$, $\theta\in(\omega,\frac{\pi}{2})$. Then, with $g_0\in\mathbb{R}$ from Definition~\ref{defi_SH_bnd}, there holds
\begin{equation*}
\ker(T)\subseteq\ker(g(T)-g_0).
\end{equation*}
\end{prop}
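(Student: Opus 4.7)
The plan is to reduce the claim to the already-proven kernel inclusion of Proposition~\ref{prop_Kernel_omega} by exploiting the decomposition $g(s)=g_\infty+\frac{1}{1+s^2}(g_0-g_\infty)+\widetilde{g}(s)$ from Definition~\ref{defi_SH_bnd}, with $\widetilde{g}\in\mathcal{N}^0(D_\theta)$, and the corresponding definition $g(T)=g_\infty+(1+T^2)^{-1}(g_0-g_\infty)+\widetilde{g}(T)$ from Definition~\ref{defi_Extended}.

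First, fix $v\in\ker(T)$. I would analyze each of the three summands separately on such a $v$. The term $g_\infty v$ is immediate. For the middle term, note that since $Tv=0$, we also have $T^2v=0$, hence $(1+T^2)v=v$; applying the bounded inverse $(1+T^2)^{-1}$ to both sides gives $(1+T^2)^{-1}v=v$, and therefore $(1+T^2)^{-1}(g_0-g_\infty)v=(g_0-g_\infty)v$. For the third term, Proposition~\ref{prop_Kernel_omega} applied to $\widetilde{g}\in\mathcal{N}^0(D_\theta)$ yields $\widetilde{g}(T)v=0$.

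Summing the three contributions gives
\begin{equation*}
g(T)v=g_\infty v+(g_0-g_\infty)v+0=g_0 v,
\end{equation*}
which is exactly $(g(T)-g_0)v=0$, i.e.\ $v\in\ker(g(T)-g_0)$, as required.

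There is really no obstacle here: once one unwinds Definition~\ref{defi_Extended}, everything hinges on the elementary observation that $(1+T^2)^{-1}$ fixes $\ker(T)$ pointwise, together with the already established result for $\mathcal{N}^0$. The only small care needed is to note that $g_0\in\mathbb{R}$ (since $g$ is intrinsic), so that the scalar subtraction $g(T)-g_0$ makes unambiguous sense as an operator on $V$, and that the commutations used implicitly between real scalars and $(1+T^2)^{-1}$ are trivial.
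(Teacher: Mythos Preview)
Your proof is correct and follows essentially the same approach as the paper's own proof: both unwind Definition~\ref{defi_Extended}, use $(1+T^2)^{-1}v=v$ for $v\in\ker(T)$, and invoke Proposition~\ref{prop_Kernel_omega} to kill the $\widetilde{g}(T)$ term.
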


\begin{proof}
Let $v\in\ker(T)$. Then $(1+T^2)v=v$ and hence $(1+T^2)^{-1}v=v$. Moreover, there is also $\widetilde{g}(T)v=0$ by Proposition~\ref{prop_Kernel_omega}. Using this in the definition \eqref{Eq_Extended} of $g(T)$, gives
\begin{equation*}
g(T)v=g_\infty v+(1+T^2)^{-1}(g_0-g_\infty)v+\widetilde{g}(T)v=g_\infty v+(g_0-g_\infty)v=g_0v. \qedhere
\end{equation*}
\end{proof}

Next, we generalize the product rule of Theorem~\ref{thm_Product_omega} to the extended $\omega$-functional calculus.

\begin{thm}\label{thm_Product_extended}
Let $T\in\mathcal{K}(V)$ be bisectorial of angle $\omega\in(0,\frac{\pi}{2})$. Then, for $g\in\mathcal{N}^\bnd(D_\theta)$ and $f\in\mathcal{SH}_L^\bnd(D_\theta)$, $\theta\in(\omega,\frac{\pi}{2})$, we have $gf\in\mathcal{SH}_L^\bnd(D_\theta)$ and there holds the product rule
\begin{equation}\label{Eq_Product_extended}
(gf)(T)=g(T)f(T).
\end{equation}
\end{thm}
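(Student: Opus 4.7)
The plan is to first verify $gf \in \mathcal{SH}_L^\bnd(D_\theta)$, and then to expand both $g(T)f(T)$ and $(gf)(T)$ via the decompositions of Definitions~\ref{defi_SH_bnd} and~\ref{defi_Extended}, matching the pieces term by term using the product rule of Theorem~\ref{thm_Product_omega} together with a regularization identity extracted from Proposition~\ref{prop_Product_omega_pf}.

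For the membership, I would set $(gf)_0 := g_0 f_0$, $(gf)_\infty := g_\infty f_\infty$ and invoke Lemma~\ref{lem_Characterization_SHbnd}. The splitting $(gf)(s) - g_0 f_0 = g(s)(f(s)-f_0) + (g(s)-g_0)f_0$ near zero, combined with the boundedness of $g$ and the finite integrals $\int_0^r |f(s)-f_0|\,\frac{dt}{t}$ and $\int_0^r |g(s)-g_0|\,\frac{dt}{t}$ provided by the same lemma applied to $g$ and $f$ individually, yields the required integrability; the mirror splitting with $f_\infty,g_\infty$ handles infinity.

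Expanding directly, the intrinsic component of the product reads
\[
\widetilde{gf}(s) = -\frac{s^2(g_0-g_\infty)(f_0-f_\infty)}{(1+s^2)^2} + g_\infty\widetilde{f} + \widetilde{g}\,f_\infty + \frac{(g_0-g_\infty)\widetilde{f}}{1+s^2} + \frac{\widetilde{g}(f_0-f_\infty)}{1+s^2} + \widetilde{g}\widetilde{f},
\]
each summand lying in $\mathcal{SH}_L^0(D_\theta)$. Translating to operators reduces the claim to four identities: (a) $(\widetilde{g}\widetilde{f})(T) = \widetilde{g}(T)\widetilde{f}(T)$ by Theorem~\ref{thm_Product_omega}; (b) $\bigl(\frac{h(s)}{1+s^2}\bigr)(T) = (1+T^2)^{-1}h(T)$ for every $h \in \mathcal{SH}_L^0(D_\theta)$; (c) $\bigl(\frac{s^2}{(1+s^2)^2}\bigr)(T) = T^2(1+T^2)^{-2}$; and (d) the commutation $\widetilde{g}(T)(1+T^2)^{-1} = (1+T^2)^{-1}\widetilde{g}(T)$ from Corollary~\ref{cor_Commutation_omega_bounded}. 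For (c) I would write $\frac{s^2}{(1+s^2)^2} = \bigl(\frac{s}{1+s^2}\bigr)^2$, note that $\frac{s}{1+s^2} \in \mathcal{N}^0(D_\theta)$ with operator value $T(1+T^2)^{-1}$ by Theorem~\ref{thm_Rational_omega}, and apply Theorem~\ref{thm_Product_omega} once more. Combined with the algebraic rewrite $(1+T^2)^{-2} - (1+T^2)^{-1} = -T^2(1+T^2)^{-2}$, matching the expansion of $g(T)f(T) - g_\infty f_\infty - (1+T^2)^{-1}(g_0 f_0 - g_\infty f_\infty)$ against $\widetilde{gf}(T)$ closes the proof.

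The main obstacle is identity (b), because $\frac{1}{1+s^2} \notin \mathcal{N}^0(D_\theta)$ (it does not vanish at zero), so Theorem~\ref{thm_Product_omega} cannot be applied directly to pair $\frac{1}{1+s^2}$ with $h$. The remedy is to use Proposition~\ref{prop_Product_omega_pf} in reverse: set $F := h/(1+s^2) \in \mathcal{SH}_L^0(D_\theta)$, observe that the polynomial $p(s) = 1+s^2$ has real coefficients with $pF = h \in \mathcal{SH}_L^0(D_\theta)$, apply the proposition to get $h(T) = (1+T^2)F(T)$, and left-multiply by the bounded operator $(1+T^2)^{-1}$. The remaining bookkeeping is delicate only in that the Clifford numbers $f_0, f_\infty \in \mathbb{R}_n$ must stay to the right of every operator factor, which is possible precisely because the real scalars $g_0, g_\infty$ commute freely---the same asymmetry that already makes Theorem~\ref{thm_Product_omega} valid.
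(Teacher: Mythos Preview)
Your proposal is correct and follows essentially the same approach as the paper: decompose $g$ and $f$, expand $gf$ and identify its $\mathcal{SH}_L^\bnd$-components, then reduce the operator identity to the product rule of Theorem~\ref{thm_Product_omega}, the regularization identity $(h/(1+s^2))(T)=(1+T^2)^{-1}h(T)$ obtained from Proposition~\ref{prop_Product_omega_pf}, the rational calculus for $s^2/(1+s^2)^2$, and the commutation of $\widetilde{g}(T)$ with $(1+T^2)^{-1}$. The only cosmetic differences are that the paper establishes $gf\in\mathcal{SH}_L^\bnd(D_\theta)$ by direct decomposition rather than via Lemma~\ref{lem_Characterization_SHbnd}, and computes $(s^2/(1+s^2)^2)(T)$ by a single application of Theorem~\ref{thm_Rational_omega} rather than by squaring.
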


\begin{proof}
Since $g\in\mathcal{N}^\bnd(D_\theta)$ and $f\in\mathcal{SH}_L^\bnd(D_\theta)$, there exists $g_0,g_\infty\in\mathbb{R}$, $\widetilde{g}\in\mathcal{N}^0(D_\theta)$ and $f_0,f_\infty\in\mathbb{R}_n$, $\widetilde{f}\in\mathcal{SH}_L^0(D_\theta)$, such that
\begin{equation*}
g(s)=g_\infty+\frac{g_0-g_\infty}{1+s^2}+\widetilde{g}(s)\qquad\text{and}\qquad f(s)=f_\infty+\frac{1}{1+s^2}(f_0-f_\infty)+\widetilde{f}(s).
\end{equation*}
Hence the product $gf$ admits the decomposition
\begin{align}
g(s)f(s)&=g_\infty f_\infty+\frac{1}{1+s^2}(g_0f_0-g_\infty f_\infty)+g_\infty\widetilde{f}(s)+\widetilde{g}(s)f_\infty+\widetilde{g}(s)\widetilde{f}(s) \notag \\
&\quad+\underbrace{\frac{1}{1+s^2}\widetilde{f}(s)}_{=:\widetilde{h}_1(s)}(g_0-g_\infty)+\underbrace{\frac{\widetilde{g}(s)}{1+s^2}}_{=:\widetilde{h}_2(s)}(f_0-f_\infty)-\underbrace{\frac{s^2}{(1+s^2)^2}}_{=:\widetilde{h}_3(s)}(g_0-g_\infty)(f_0-f_\infty). \label{Eq_Product_extended_1}
\end{align}
Since $\widetilde{g}\in\mathcal{N}^0(D_\theta),\widetilde{f}\in\mathcal{SH}_L^0(D_\theta)$, there is also $\widetilde{h}_1,\widetilde{h}_2,\widetilde{h}_3,\widetilde{g}\widetilde{f}\in\mathcal{SH}_L^0(D_\theta)$ by Theorem~\ref{thm_Product_omega}. Hence the representation \eqref{Eq_Product_extended_1} shows that $gf\in\mathcal{SH}_L^\bnd(D_\theta)$, with coefficients $h_0=g_0f_0\in\mathbb{R}_n$, $h_\infty=g_\infty f_\infty\in\mathbb{R}_n$, and
\begin{equation*}
\widetilde{h}=g_\infty\widetilde{f}+\widetilde{g}f_\infty+\widetilde{g}\widetilde{f}+\widetilde{h}_1(g_0-g_\infty)+\widetilde{h}_2(f_0-f_\infty)-\widetilde{h}_3(g_0-g_\infty)(f_0-f_\infty)\in\mathcal{SH}_L^0(D_\theta).
\end{equation*}
Hence, we can write the extended $\omega$-functional calculus of the product $gf$ as
\begin{equation}\label{Eq_Product_extended_2}
(gf)(T)=g_\infty f_\infty+(1+T^2)^{-1}(g_0f_0-g_\infty f_\infty)+\widetilde{h}(T).
\end{equation}
Let us now further investigate, how to rewrite $\widetilde{h}_1(T)$, $\widetilde{h}_2(T)$ and $\widetilde{h}_3(T)$. First, we note that $(1+s^2)\widetilde{h}_1=\widetilde{f}\in\mathcal{SH}_L^0(D_\theta)$ and hence by Proposition~\ref{prop_Product_omega_pf} there is $\ran(\widetilde{h}_1(T))\subseteq\dom(T^2)$ and
\begin{equation*}
(1+T^2)\widetilde{h}_1(T)=((1+s^2)\widetilde{h}_1)(T)=\widetilde{f}(T).
\end{equation*}
Since $1+T^2$ is boundedly invertible, there is $\widetilde{h}_1(T)=(1+T^2)^{-1}\widetilde{f}(T)$. Analogously, we also get $\widetilde{h}_2(T)=(1+T^2)^{-1}\widetilde{g}(T)$. Finally, by Theorem~\ref{thm_Rational_omega} there is $\widetilde{h}_3(T)=T^2(1+T^2)^{-2}$. Plugging these three operators into \eqref{Eq_Product_extended_2}, we obtain
\begin{align}
(gf)(T)&=g_\infty f_\infty+(1+T^2)^{-1}(g_0f_0-g_\infty f_\infty)+g_\infty\widetilde{f}(T)+\widetilde{g}(T)f_\infty+(\widetilde{g}\widetilde{f})(T) \notag \\
&\quad+\widetilde{h}_1(T)(g_0-g_\infty)+\widetilde{h}_2(T)(f_0-f_\infty)-\widetilde{h}_3(T)(g_0-g_\infty)(f_0-f_\infty) \notag \\
&=g_\infty f_\infty+(1+T^2)^{-1}(g_0f_0-g_\infty f_\infty)+g_\infty\widetilde{f}(T)+\widetilde{g}(T)f_\infty+\widetilde{g}(T)\widetilde{f}(T) \notag \\
&\quad+(1+T^2)^{-1}\widetilde{f}(T)(g_0-g_\infty)+(1+T^2)^{-1}\widetilde{g}(T)(f_0-f_\infty) \notag \\
&\quad-T^2(1+T^2)^{-2}(g_0-g_\infty)(f_0-f_\infty). \label{Eq_Product_extended_3}
\end{align}
On the other hand, we also have
\begin{align}
g&(T)f(T)=\big(g_\infty+(1+T^2)^{-1}(g_0-g_\infty)+\widetilde{g}(T)\big)\big(f_\infty+(1+T^2)^{-1}(f_0-f_\infty)+\widetilde{f}(T)\big) \notag \\
&=g_\infty f_\infty+(1+T^2)^{-2}(g_0-g_\infty)(f_0-f_\infty)+g_\infty\widetilde{f}(T)+\widetilde{g}(T)f_\infty+\widetilde{g}(T)\widetilde{f}(T) \notag \\
&\quad+(1+T^2)^{-1}\big(g_\infty(f_0-f_\infty)+(g_0-g_\infty)f_\infty+(g_0-g_\infty)\widetilde{f}(T)+\widetilde{g}(T)(f_0-f_\infty)\big), \label{Eq_Product_extended_4}
\end{align}
where in the second equation we used that $\widetilde{g}(T)$ and $(1+T^2)^{-1}$ commute by Corollary~\ref{cor_Commutation_omega_bounded}~(i). Since the right hand sides of \eqref{Eq_Product_extended_3} and \eqref{Eq_Product_extended_4} coincide, we deduce the product rule \eqref{Eq_Product_extended}.
\end{proof}

\begin{cor}\label{cor_Composition_extended}
Let $T\in\mathcal{K}(V)$ be bisectorial of angle $\omega\in(0,\frac{\pi}{2})$, $g\in\mathcal{N}^\bnd(D_\theta)$, $\theta\in(\omega,\frac{\pi}{2})$, and $p$ an intrinsic polynomial. Then there is $p\circ g\in\mathcal{N}^\bnd(D_\theta)$ and
\begin{equation*}
(p\circ g)(T)=p[g(T)].
\end{equation*}
\end{cor}

\begin{proof}
The proof is the same as the one of Corollary~\ref{cor_Composition_omega}, with the only difference that here we also allow the constant term $p_0$ in the polynomial. However, this term is clearly contained in the space $\mathcal{N}^\bnd(D_\theta)$.
\end{proof}

The next proposition shows the compatibility of the extended $\omega$-functional calculus \eqref{Eq_Extended} with the polynomial functional calculus.

\begin{prop}\label{prop_Product_extended_pf}
Let $T\in\mathcal{K}(V)$ be bisectorial of angle $\omega\in(0,\frac{\pi}{2})$, $p$ an intrinsic polynomial of degree $n\in\mathbb{N}_0$, and $f\in\mathcal{SH}_L^\bnd(D_\theta)$, $\theta\in(\omega,\frac{\pi}{2})$. If $pf\in\mathcal{SH}_L^\bnd(D_\theta)$, then there is $\ran(f(T))\subseteq\dom(T^n)$ and
\begin{equation}\label{Eq_Product_extended_pf}
(pf)(T)=p[T]f(T).
\end{equation}
\end{prop}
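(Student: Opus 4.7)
The plan is to induct on $n=\deg(p)$, following the blueprint of Proposition~\ref{prop_Product_omega_pf} but accounting for the rational regulator coming from Definition~\ref{defi_Extended}. The base case $n=0$ is trivial linearity, so we focus on the inductive step $n\ge 1$.

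First, since $pf\in\mathcal{SH}_L^\bnd(D_\theta)$ is bounded on $D_\theta$ while $|p(s)|\to\infty$ at infinity, the boundary value $f_\infty$ must vanish; writing $e(s):=1/(1+s^2)$, we obtain
\begin{equation*}
f = f_0\,e + \widetilde{f},\qquad \widetilde{f}\in\mathcal{SH}_L^0(D_\theta),\qquad f(T)=(1+T^2)^{-1}f_0+\widetilde{f}(T).
\end{equation*}
By linearity of the extended calculus and of $p[T]$, it suffices to prove the claim for $p(s)=s^k$. An auxiliary check via Lemma~\ref{lem_Characterization_SHbnd} shows that the hypothesis $s^n f\in\mathcal{SH}_L^\bnd(D_\theta)$ passes to every $s^k f$ with $0\le k\le n$: boundedness at $0$ and $\infty$ follows from that of $s^n f$ times a bounded factor $s^{k-n}$ or $s^k$, and the $dt/t$-integrability of $s^k f - (s^k f)_0$ near $0$ (respectively $-\,(s^k f)_\infty$ near $\infty$) is inherited from the corresponding integrability of $pf$ minus its limits. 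Thus the induction reduces to the identity $(s^{k+1}f)(T)=T(s^k f)(T)$ with $\ran((s^k f)(T))\subseteq\dom(T)$ for $0\le k\le n-1$.

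To prove this key step, one inserts the $S$-resolvent identity $S_L^{-1}(s,T)\,s = T\,S_L^{-1}(s,T)+1$ into the integral defining $\widetilde{(s^{k+1}f)}(T)$ from Definition~\ref{defi_Omega}. The term involving $T\,S_L^{-1}(s,T)$ can be pulled out of the integral by Hille's theorem (closedness of $T$), producing $T\widetilde{(s^k f)}(T)$. The residual constant-$1$ contribution no longer vanishes as in the proof of Proposition~\ref{prop_Product_omega_pf}, because $s^k f$ and $s^{k+1}f$ lie in $\mathcal{SH}_L^\bnd$ rather than $\mathcal{SH}_L^0$; instead, it must be evaluated and shown to equal the precise rational correction
\begin{equation*}
(s^{k+1}f)_\infty + (1+T^2)^{-1}\bigl[(s^{k+1}f)_0-(s^{k+1}f)_\infty\bigr] - T\Bigl((s^k f)_\infty+(1+T^2)^{-1}\bigl[(s^k f)_0-(s^k f)_\infty\bigr]\Bigr),
\end{equation*}
using Theorem~\ref{thm_Rational_omega} applied to $1/(1+s^2)$ and $s/(1+s^2)$ to identify $(1+T^2)^{-1}$ and $T(1+T^2)^{-1}$ as functional-calculus objects with explicit integral representations. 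Iterating this step $n$ times gives $\ran(f(T))\subseteq\dom(T^n)$ and $(s^n f)(T)=T^n f(T)$, and linearity finishes the proof for general $p$.

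The main obstacle is the bookkeeping of the boundary-value constants $(s^k f)_0$ and $(s^k f)_\infty$ between consecutive iterations and their matching with the rational $(1+T^2)^{-1}$-piece: the algebraic identities
\begin{equation*}
(s^{k+1}f)_0=0,\qquad (s^{k+1}f)_\infty = \lim_{|s|\to\infty} s\cdot(s^k f)(s),
\end{equation*}
combined with the explicit computations of $e(T), (se)(T), (s^2 e)(T)$ via Theorem~\ref{thm_Rational_omega}, are precisely what make the residual constant term produced by the $S$-resolvent identity cancel against the extended-calculus decomposition of $T(s^k f)(T)$. Once this matching is set up carefully, the rest is routine manipulation of integral representations.
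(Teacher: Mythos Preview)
Your overall strategy---induction on $n$, reduction to monomials $p(s)=s^n$, and the observation that $f_\infty=0$ forced by boundedness of $pf$---matches the paper's. The gap is in your description of how the inductive step actually works.

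You propose to insert the resolvent identity $S_L^{-1}(s,T)s=TS_L^{-1}(s,T)+1$ into the integral for $\widetilde{(s^{k+1}f)}(T)$ and claim this produces $T\widetilde{(s^kf)}(T)$ plus a ``residual constant-$1$ contribution'' that no longer vanishes. Both parts of this are off. First, $\widetilde{(s^{k+1}f)}(s)$ is not $s\cdot\widetilde{(s^kf)}(s)$: the tilde of a product is not the product of tildes, so the resolvent identity does not immediately hand you $T\widetilde{(s^kf)}(T)$. Second, the contour integral $\int_{\partial D_\varphi\cap\mathbb{C}_J}1\,ds_J\,g(s)$ still vanishes for any $g\in\mathcal{SH}_L^0(D_\theta)$ by the Cauchy theorem, exactly as in Proposition~\ref{prop_Product_omega_pf}; the corrections you need to track come from the rational pieces relating the two tilde-functions, not from the constant term.

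The paper sidesteps this bookkeeping. Instead of iterating $(s^{k+1}f)(T)=T(s^kf)(T)$ over all $k$, it proves only the single step $(sf)(T)=Tf(T)$ and then applies the inductive hypothesis to $h:=sf$ (which satisfies $h\in\mathcal{SH}_L^\bnd$ and $s^nh=s^{n+1}f\in\mathcal{SH}_L^\bnd$). For $n\ge 1$ the key simplification is that both limits of $sf$ vanish, so in fact $h\in\mathcal{SH}_L^0(D_\theta)$; then $s\widetilde{f}=h-\tfrac{s}{1+s^2}f_0\in\mathcal{SH}_L^0(D_\theta)$ and Proposition~\ref{prop_Product_omega_pf} applies verbatim to give $(s\widetilde f)(T)=T\widetilde f(T)$, after which adding back $T(1+T^2)^{-1}f_0$ via Theorem~\ref{thm_Rational_omega} yields $h(T)=Tf(T)$. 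The case $n=0$ (where $(sf)_\infty$ may be nonzero) is handled by a short separate computation. This organization eliminates precisely the matching of boundary constants that you flag as the main obstacle.
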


\begin{proof}
Note, that it is enough to prove \eqref{Eq_Product_extended_pf} for monomials $p(s)=s^n$. The case of general polynomials then follows by linearity. We will prove this fact via induction with respect to $n\in\mathbb{N}_0$. The induction start $n=0$ is trivial. For the induction step $n\rightarrow n+1$, we have $f\in\mathcal{SH}_L^\bnd(D_\theta)$ and $s^{n+1}f\in\mathcal{SH}_L^\bnd(D_\theta)$, which means that we can write
\begin{equation}\label{Eq_Product_extended_pf_5}
f(s)=f_\infty+\frac{1}{1+s^2}(f_0-f_\infty)+\widetilde{f}(s)\quad\text{and}\quad s^{n+1}f(s)=g_\infty+\frac{1}{1+s^2}(g_0-g_\infty)+\widetilde{g}(s).
\end{equation}
If we multiply the first equation with $s^{n+1}$ and compare both equations, we obtain
\begin{equation*}
s^{n+1}\Big(f_\infty+\frac{1}{1+s^2}(f_0-f_\infty)+\widetilde{f}(s)\Big)=g_\infty+\frac{1}{1+s^2}(g_0-g_\infty)+\widetilde{g}(s).
\end{equation*}
Since this equation has to be true for every $s\in D_\theta$, we conclude $g_0=0$ when we take the limit $|s|\rightarrow 0^+$. Moreover, in the limit $|s|\rightarrow\infty$ we get
\begin{equation*}
\lim\limits_{|s|\rightarrow\infty}s^{n+1}\Big(f_\infty+\frac{1}{1+s^2}(f_0-f_\infty)+\widetilde{f}(s)\Big)
=g_\infty.
\end{equation*}
The fact that this limit is finite, implies that
\begin{equation*}
\lim_{|s|\rightarrow\infty}\Big(f_\infty+\frac{1}{1+s^2}(f_0-f_\infty)+\widetilde{f}(s)\Big)=0,
\end{equation*}
i.e. $f_\infty=0$ by Lemma~\ref{lem_f_convergence}. The two values $g_0=f_\infty=0$ now reduce the equations \eqref{Eq_Product_extended_pf_5} to
\begin{equation}\label{Eq_Product_extended_pf_6}
f(s)=\frac{1}{1+s^2}f_0+\widetilde{f}(s)\quad\text{and}\quad s^{n+1}f(s)=g_\infty-\frac{1}{1+s^2}g_\infty+\widetilde{g}(s).
\end{equation}
Hence the corresponding extended $\omega$-functional calculi \eqref{Eq_Extended} are given by
\begin{equation}\label{Eq_Product_extended_pf_7}
\begin{split}
f(T)=(1+T^2)^{-1}f_0+\widetilde{f}(T)\quad\text{and}\quad(s^{n+1}f)(T)&=g_\infty-(1+T^2)^{-1}g_\infty+\widetilde{g}(T) \\
&=T^2(1+T^2)^{-1}g_\infty+\widetilde{g}(T).
\end{split}
\end{equation}
Next, we will show that for $h(s):=sf(s)\in\mathcal{SH}_L^\bnd(D_\theta)$ there holds $\ran(f(T))\subseteq\dom(T)$ with
\begin{equation}\label{Eq_Product_extended_pf_1}
h(T)=Tf(T).
\end{equation}
$\circ$\;\;If $n=0$, we can use the two equations \eqref{Eq_Product_extended_pf_6}, to write
\begin{equation*}
s\Big(\widetilde{f}(s)-\frac{s}{1+s^2}g_\infty\Big)=\widetilde{g}(s)-\frac{s}{1+s^2}f_0\in\mathcal{SH}_L^0(D_\theta).
\end{equation*}
Since also $\widetilde{f}(s)-\frac{s}{1+s^2}g_\infty\in\mathcal{SH}_L^0(D_\theta)$, we have $\ran(\widetilde{f}(T)-T(1+T^2)^{-1}g_\infty)\subseteq\dom(T)$ by Proposition~\ref{prop_Product_omega_pf} and
\begin{equation*}
T\big(\widetilde{f}(T)-T(1+T^2)^{-1}g_\infty\big)=\widetilde{g}(T)-T(1+T^2)^{-1}f_0,
\end{equation*}
where in the second equation we used the identity $(\frac{s}{1+s^2})(T)=T(1+T^2)^{-1}$ from the rational functional calculus in Theorem~\ref{thm_Rational_omega}. Hence we have $\ran(\widetilde{f}(T))\subseteq\dom(T)$ and using $f(T)$ in \eqref{Eq_Product_extended_pf_7} also $\ran(f(T))\subseteq\dom(T)$, with
\begin{equation*}
Tf(T)=T(1+T^2)^{-1}f_0+T\widetilde{f}(T)=\widetilde{g}(T)+T^2(1+T^2)^{-1}g_\infty=(sf)(T)=h(T).
\end{equation*}
$\circ$\;\;If $n\geq 1$ we can use \eqref{Eq_Product_extended_pf_6} to rewrite
\begin{align*}
h(s)&=\frac{1}{s^n}\Big(\frac{s^2}{1+s^2}g_\infty+\widetilde{g}(s)\Big)=\mathcal{O}\Big(\frac{1}{|s|^n}\Big),\quad\text{as }|s|\rightarrow\infty, \\
h(s)&=\frac{s}{1+s^2}f_0+s\widetilde{f}(s)=\mathcal{O}(|s|),\quad\text{as }|s|\rightarrow 0.
\end{align*}
These asymptotics prove that $h\in\mathcal{SH}_L^0(D_\theta)$, and hence by \eqref{Eq_Product_extended_pf_6} also
\begin{equation*}
s\widetilde{f}(s)=h(s)-\frac{s}{1+s^2}f_0\in\mathcal{SH}_L^0(D_\theta).
\end{equation*}
Then, by Proposition~\ref{prop_Product_omega_pf} there is $\ran(\widetilde{f}(T))\subseteq\dom(T)$, hence $\ran(f(T))\subseteq\dom(T)$ by the representation \eqref{Eq_Product_extended_pf_7}, and
\begin{equation*}
h(T)=T\widetilde{f}(T)+T(1+T^2)^{-1}f_0=Tf(T).
\end{equation*}
In both cases we now have verified that $\ran(f(T))\subseteq\dom(T)$ and the identity \eqref{Eq_Product_extended_pf_1}. Since we have also shown that $h\in\mathcal{SH}_L^\bnd(D_\theta)$ and $s^nh=s^{n+1}f\in\mathcal{SH}_L^\bnd(D_\theta)$, it follows from the induction assumption that $\ran(h(T))\subseteq\dom(T^n)$ and $T^nh(T)=(s^nh)(T)$. From this we finally conclude that $\ran(f(T))\subseteq\dom(T^{n+1})$ and
\begin{equation*}
T^{n+1}f(T)=T^nTf(T)=T^nh(T)=(s^nh)(T)=(s^{n+1}f)(T). \qedhere
\end{equation*}
\end{proof}

\begin{thm}\label{thm_Rational_extended}
Let $T\in\mathcal{K}(V)$ be bisectorial of angle $\omega\in(0,\frac{\pi}{2})$ and $p,q$ two intrinsic polynomials, such that

\begin{enumerate}
\item[i)] $\deg(q)\geq\deg(p)$,
\item[ii)] $q$ does not admit any zeros in $\overline{D_\omega}$.
\end{enumerate}

Then $q[T]$ is bijective, there exists some $\theta\in(\omega,\pi)$ such that $\frac{p}{q}\in\mathcal{N}^\bnd(D_\theta)$, and there holds
\begin{equation*}
\Big(\frac{p}{q}\Big)(T)=p[T]q[T]^{-1}.
\end{equation*}
\end{thm}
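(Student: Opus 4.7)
The plan is to reduce the statement to the product rule in Proposition~\ref{prop_Product_extended_pf} by realizing $q[T]^{-1}$ as the extended $\omega$-functional calculus of the intrinsic function $1/q$. I would first choose $\theta\in(\omega,\frac{\pi}{2})$ such that the compact, axially symmetric zero-set of $q$ is contained in $\mathbb{R}^{n+1}\setminus\overline{D_\theta}$, which is possible since by hypothesis $q$ has no zeros in $\overline{D_\omega}$. On $D_\theta$ the rational function $p/q$ is bounded, intrinsic, slice hyperholomorphic, and possesses a finite value $f_0=p(0)/q(0)$ at the origin together with a finite limit $f_\infty=\lim_{|s|\to\infty}p(s)/q(s)$ at infinity (zero when $\deg p<\deg q$, and the ratio of leading coefficients when $\deg p=\deg q$). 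A Taylor expansion at $0$ gives $|p(s)/q(s)-f_0|=\mathcal{O}(|s|)$ as $|s|\to 0^+$, and the degree condition gives $|p(s)/q(s)-f_\infty|=\mathcal{O}(|s|^{-1})$ as $|s|\to\infty$, so the two integrability conditions of Lemma~\ref{lem_Characterization_SHbnd} hold and $p/q\in\mathcal{N}^\bnd(D_\theta)$. The same reasoning with $p$ replaced by $1$ gives $1/q\in\mathcal{N}^\bnd(D_\theta)$ whenever $\deg q\geq 1$; the degenerate case $\deg q=0$ is trivial and can be handled directly. The bijectivity of $q[T]$ is already contained in \cite[Lemma 3.11]{MPS23}, as invoked in the proof of Theorem~\ref{thm_Rational_omega}.

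Next, the crucial step is to identify $(1/q)(T)$ with $q[T]^{-1}$. Applying Proposition~\ref{prop_Product_extended_pf} to the function $f=1/q\in\mathcal{SH}_L^\bnd(D_\theta)$ and the polynomial $q$, and using that the product $q\cdot(1/q)=1\in\mathcal{N}^\bnd(D_\theta)$ has extended $\omega$-functional calculus equal to the identity $I$ (this is immediate from Definition~\ref{defi_Extended} applied to the constant function, whose decomposition is $f_\infty=f_0=1$, $\widetilde{f}\equiv 0$), one obtains $\ran((1/q)(T))\subseteq\dom(T^{\deg q})=\dom(q[T])$ and
\begin{equation*}
q[T]\,(1/q)(T)=I.
\end{equation*}
Thus $(1/q)(T)$ is a bounded, everywhere-defined right inverse of the bijective operator $q[T]$, and by bijectivity it must coincide with $q[T]^{-1}$.

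To finish, I would apply Proposition~\ref{prop_Product_extended_pf} once more, this time with $f=1/q$ and the polynomial $p$ itself. Since $p\cdot(1/q)=p/q$ lies in $\mathcal{N}^\bnd(D_\theta)$ by the first paragraph and $\ran((1/q)(T))=\dom(T^{\deg q})\subseteq\dom(T^{\deg p})$, the proposition yields
\begin{equation*}
(p/q)(T)=p[T]\,(1/q)(T)=p[T]\,q[T]^{-1},
\end{equation*}
which is the claim. The only nontrivial step is the verification $p/q\in\mathcal{N}^\bnd(D_\theta)$ through Lemma~\ref{lem_Characterization_SHbnd}; after that, the argument is purely algebraic and avoids the residue-style direct contour manipulations needed in the proof of Theorem~\ref{thm_Rational_omega}, which is the main conceptual payoff of having first developed the extended $\omega$-functional calculus in Section~\ref{sec_Extended}.
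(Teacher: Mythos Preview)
Your proof is correct and takes a genuinely different route from the paper's. The paper proceeds by explicitly constructing the decomposition of $p/q$ in the form of Definition~\ref{defi_SH_bnd}: it sets $e_0=p_0/q_0$, $e_\infty=p_n/q_n$, and writes
\[
\frac{p(s)}{q(s)}=e_\infty+\frac{e_0-e_\infty}{1+s^2}+\frac{r(s)}{t(s)},
\]
with $r(s)=p(s)(1+s^2)-q(s)(e_\infty s^2+e_0)$ and $t(s)=q(s)(1+s^2)$; it then invokes Theorem~\ref{thm_Rational_omega} to identify $(r/t)(T)=r[T]t[T]^{-1}$ and finishes by an algebraic simplification. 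Your argument instead bypasses this explicit decomposition entirely: you use Lemma~\ref{lem_Characterization_SHbnd} to put both $p/q$ and $1/q$ into $\mathcal{N}^\bnd(D_\theta)$, and then apply Proposition~\ref{prop_Product_extended_pf} twice---once to obtain $(1/q)(T)=q[T]^{-1}$ via $q[T](1/q)(T)=1(T)=I$, and once more to get $(p/q)(T)=p[T](1/q)(T)$. This is cleaner and more conceptual; it shows that once Proposition~\ref{prop_Product_extended_pf} is available, Theorem~\ref{thm_Rational_omega} is no longer needed as an input to the extended calculus, whereas the paper's proof still routes through it. The trade-off is that the paper's explicit decomposition makes the membership $p/q\in\mathcal{N}^\bnd(D_\theta)$ self-contained without appealing to Lemma~\ref{lem_Characterization_SHbnd}.
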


\begin{proof}
If we write the polynomials $p$ and $q$ as
\begin{equation*}
p(s)=p_0+\dots+p_ns^n\qquad\text{and}\qquad q(s)=q_0+\dots+q_ns^n,
\end{equation*}
with $n=\deg(q)\geq\deg(p)$, i.e. it is $q_n\neq 0$, but possibly $p_n=0$. Since $q$ does not admit any zeros in $\overline{D_\omega}$, it in particular does not admit a zero in $s=0$, which means that $q_0\neq 0$. Hence we can choose $e_0:=\frac{p_0}{q_0}$ and $e_\infty:=\frac{p_n}{q_n}$, and rewrite the rational function $\frac{p(s)}{q(s)}$ as
\begin{equation*}
\frac{p(s)}{q(s)}=e_\infty+\frac{e_0-e_\infty}{1+s^2}+\frac{p(s)(1+s^2)-q(s)(e_\infty s^2+e_0)}{q(s)(1+s^2)}=e_\infty+\frac{e_0-e_\infty}{1+s^2}+\frac{r(s)}{t(s)},
\end{equation*}
using the polynomials
\begin{equation*}
r(s):=p(s)(1+s^2)-q(s)(e_\infty s^2+e_0)\qquad\text{and}\qquad t(s):=q(s)(1+s^2).
\end{equation*}
The $0$-th and $(n+2)$-nd coefficient of the polynomial $r$ are given by
\begin{equation*}
r_0=p_0-q_0e_0=0\qquad\text{and}\qquad r_{n+2}=p_n-q_ne_\infty=0.
\end{equation*}
Hence these polynomials satisfy $\deg(t)=n+2$ and $\deg(r)\leq n+1$. Since $r$ also admits a zero at the origin, there is $\frac{r}{t}\in\mathcal{N}^0(D_\theta)$ and consequently $\frac{p}{q}\in\mathcal{N}^\bnd(D_\theta)$. The extended $\omega$-functional calculus is then given by
\begin{equation*}
\Big(\frac{p}{q}\Big)(T)=e_\infty+(1+T^2)^{-1}(e_0-e_\infty)+\Big(\frac{r}{t}\Big)(T).
\end{equation*}
Using now also Theorem~\ref{thm_Rational_omega} to write $(\frac{r}{t})(T)=r[T]t[T]^{-1}$, we finally get
\begin{align*}
\Big(\frac{p}{q}\Big)(T)&=e_\infty+(1+T^2)^{-1}(e_0-e_\infty)+r[T]t[T]^{-1} \\
&=e_\infty+(1+T^2)^{-1}(e_0-e_\infty)+\big(p[T](1+T^2)-q[T](e_\infty T^2+e_0)\big)(1+T^2)^{-1}q[T]^{-1} \\
&=e_\infty+(1+T^2)^{-1}(e_0-e_\infty)+p[T]q[T]^{-1}-(e_\infty T^2+e_0)(1+T^2)^{-1} \\
&=p[T]q[T]^{-1}. \qedhere
\end{align*}
\end{proof}

Next, we investigate the commutation of operators with the extended $\omega$-functional calculus. The operator inclusions are always understood in the sense \eqref{Eq_Operator_inclusion}.

\begin{thm}
Let $T\in\mathcal{K}(V)$ be bisectorial of angle $\omega\in(0,\frac{\pi}{2})$ and $A\in\mathcal{K}(V)$, such that
\begin{equation}\label{Eq_Commutation_extended_assumption}
AQ_s[T]^{-1}\supseteq Q_s[T]^{-1}A\qquad\text{and}\qquad ATQ_s[T]^{-1}\supseteq TQ_s[T]^{-1}A,
\end{equation}
for every $s\in\mathbb{R}^{n+1}\setminus\overline{D_\omega}$. Then for every $g\in\mathcal{N}^\bnd(D_\theta)$, $\theta\in(0,\frac{\pi}{2})$, there commutes
\begin{equation*}
Ag(T)\supseteq g(T)A.
\end{equation*}
\end{thm}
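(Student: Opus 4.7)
The plan is to reduce the statement to a combination of the three building blocks of the extended $\omega$-functional calculus from Definition~\ref{defi_Extended}. By definition, we may write
\[
g(s) = g_\infty + \frac{1}{1+s^2}(g_0 - g_\infty) + \widetilde{g}(s),
\qquad g_0,g_\infty\in\mathbb{R},\ \widetilde{g}\in\mathcal{N}^0(D_\theta),
\]
so that
\[
g(T) = g_\infty + (1+T^2)^{-1}(g_0 - g_\infty) + \widetilde{g}(T).
\]
Since the commutation $Ag(T)\supseteq g(T)A$ is to be shown in the sense of \eqref{Eq_Operator_inclusion}, and since $g(T)\in\mathcal{B}(V)$ is everywhere defined, it suffices to verify that for every $v\in\dom(A)$ each of the three summands sends $v$ back into $\dom(A)$ and commutes with $A$ on $v$.

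First I would handle the three pieces separately. The constant term $g_\infty\in\mathbb{R}$ obviously commutes with $A$. For the term $(1+T^2)^{-1}(g_0-g_\infty)$ I would observe that any $J\in\mathbb{S}$ lies in $\mathbb{R}^{n+1}\setminus\overline{D_\omega}$ and satisfies $Q_J[T] = T^2 - 2\cdot 0\cdot T + 1 = 1 + T^2$, so hypothesis \eqref{Eq_Commutation_extended_assumption} applied at $s=J$ yields
\[
A(1+T^2)^{-1} \supseteq (1+T^2)^{-1}A,
\]
and the real scalar $g_0-g_\infty$ then commutes with $A$ trivially. For the third summand $\widetilde{g}(T)$, the hypothesis is exactly what Theorem~\ref{thm_Commutation_omega} requires, and that theorem directly gives $A\widetilde{g}(T)\supseteq\widetilde{g}(T)A$.

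Finally I would assemble the three pieces. For any $v\in\dom(A)$, the three vectors $g_\infty v$, $(1+T^2)^{-1}(g_0-g_\infty)v$ and $\widetilde{g}(T)v$ all lie in $\dom(A)$ by the three commutation statements just established, and
\[
A\bigl(g_\infty v\bigr) + A\bigl((1+T^2)^{-1}(g_0-g_\infty)v\bigr) + A\bigl(\widetilde{g}(T)v\bigr)
= g_\infty Av + (1+T^2)^{-1}(g_0-g_\infty)Av + \widetilde{g}(T)Av.
\]
Since $\dom(A)$ is a two-sided linear subspace, the sum $g(T)v$ lies in $\dom(A)$, and the right-hand side equals $g(T)Av$, which proves the inclusion. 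The only subtlety worth checking is that the real-valuedness of $g_0,g_\infty$ (guaranteed by $g\in\mathcal{N}^{\text{bnd}}(D_\theta)$ as opposed to $\mathcal{SH}_L^{\text{bnd}}(D_\theta)$) is what lets the Clifford scalars pass through $A$, and that $J\in\mathbb{S}$ indeed lies outside $\overline{D_\omega}$ so we are allowed to invoke \eqref{Eq_Commutation_extended_assumption} at $s=J$; both are immediate, so there is no real obstacle beyond careful bookkeeping of domains.
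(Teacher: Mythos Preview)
Your proof is correct and follows essentially the same approach as the paper: decompose $g(T)$ according to Definition~\ref{defi_Extended}, invoke Theorem~\ref{thm_Commutation_omega} for the $\widetilde{g}(T)$ part, use the hypothesis at $s=J\in\mathbb{S}$ to handle $(1+T^2)^{-1}=Q_J[T]^{-1}$, and combine. Your version is slightly more explicit about the domain bookkeeping and about why the real-valuedness of $g_0,g_\infty$ matters, but the argument is the same.
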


\begin{proof}
The extended $\omega$-functional calculus of $g\in\mathcal{N}^\bnd(D_\theta)$ is given by
\begin{equation}\label{Eq_Commutation_extended_1}
g(T)=g_\infty+(1+T^2)^{-1}(g_0-g_\infty)+\widetilde{g}(T),
\end{equation}
with values $g_0,g_\infty\in\mathbb{R}$ and $\widetilde{g}\in\mathcal{N}^0(D_\theta)$. By Theorem~\ref{thm_Commutation_omega}, we have
\begin{equation}\label{Eq_Commutation_extended_2}
A\widetilde{g}(T)\supseteq\widetilde{g}(T)A.
\end{equation}
For some arbitrary $J\in\mathbb{S}$ we have $J\in\mathbb{R}^{n+1}\setminus\overline{D_\omega}$, and hence we get by \eqref{Eq_Commutation_extended_assumption}
\begin{equation}\label{Eq_Commutation_extended_3}
A(1+T^2)^{-1}=AQ_J[T]^{-1}\supseteq Q_J[T]^{-1}A=(1+T^2)^{-1}A,
\end{equation}
Altogether, using \eqref{Eq_Commutation_extended_2} and \eqref{Eq_Commutation_extended_3} in \eqref{Eq_Commutation_extended_1}, gives $Ag(T)\supseteq g(T)A$.
\end{proof}

\begin{cor}\label{cor_Commutation_extended}
Let $T\in\mathcal{K}(V)$ be bisectorial of angle $\omega\in(0,\frac{\pi}{2})$, $g\in\mathcal{N}^\bnd(D_\theta)$, $\theta\in(\omega,\frac{\pi}{2})$.

\begin{enumerate}
\item[i)] For every $B\in\mathcal{B}(V)$ with $TB\supseteq BT$, we have
\begin{equation*}
Bg(T)=g(T)B.
\end{equation*}

\item[ii)] For every intrinsic polynomial $p$, we get
\begin{equation*}
p[T]g(T)\supseteq g(T)p[T].
\end{equation*}

\item[iii)] For every $f\in\mathcal{N}^\bnd(D_\theta)$ we obtain
\begin{equation*}
g(T)f(T)=f(T)g(T).
\end{equation*}
\end{enumerate}
\end{cor}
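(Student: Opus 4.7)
The plan is to reduce each of the three statements to the previous theorem (commutation for the extended $\omega$-functional calculus) by verifying its hypothesis \eqref{Eq_Commutation_extended_assumption} in each case. This is essentially the same scheme as in Corollary~\ref{cor_Commutation_omega_bounded}, but where we now invoke the extended version instead of Theorem~\ref{thm_Commutation_omega}, and use that $g(T)\in\mathcal{B}(V)$ for $g\in\mathcal{N}^\bnd(D_\theta)$.

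For part (i), I would first check that $B\in\mathcal{B}(V)$ with $TB\supseteq BT$ satisfies the assumption \eqref{Eq_Commutation_extended_assumption}. The commutation relation $BQ_s[T]^{-1}\supseteq Q_s[T]^{-1}B$ follows from $Q_s[T]B\supseteq BQ_s[T]$ (which is a direct consequence of $TB\supseteq BT$ applied twice) combined with $Q_s[T]^{-1}Q_s[T]\subseteq 1$ and $Q_s[T]Q_s[T]^{-1}=1$, exactly as in the proof of Corollary~\ref{cor_Commutation_omega_bounded}~(i). Similarly, since $TB=BT$ on $\dom(T)$ and $\ran(Q_s[T]^{-1})\subseteq\dom(T)$, we get $BTQ_s[T]^{-1}=TBQ_s[T]^{-1}\supseteq TQ_s[T]^{-1}B$. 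The previous theorem then yields $Bg(T)\supseteq g(T)B$. Equality follows because $g(T)\in\mathcal{B}(V)$ is everywhere defined, so both sides have domain $V$.

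For part (ii), I would first check the assumption \eqref{Eq_Commutation_extended_assumption} for $A=T$. The relation $TQ_s[T]^{-1}\supseteq Q_s[T]^{-1}T$ follows from $TQ_s[T]=Q_s[T]T$ together with $Q_s[T]^{-1}Q_s[T]\subseteq 1$ and $Q_s[T]Q_s[T]^{-1}=1$, just as in Corollary~\ref{cor_Commutation_omega_bounded}~(ii); then $TTQ_s[T]^{-1}\supseteq TQ_s[T]^{-1}T$ follows by composition. The previous theorem yields $Tg(T)\supseteq g(T)T$, and the inclusion $p[T]g(T)\supseteq g(T)p[T]$ follows by induction on $\deg(p)$ using linearity, with the boundedness of $g(T)$ ensuring that the intermediate range inclusions required in the induction step do hold.

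For part (iii), I would argue by the product rule in Theorem~\ref{thm_Product_extended} applied in both orders: $(gf)(T)=g(T)f(T)$ and $(fg)(T)=f(T)g(T)$. Since $g,f\in\mathcal{N}^\bnd(D_\theta)$ are intrinsic, their decompositions in Definition~\ref{defi_SH_bnd} have real scalars and real-valued $\widetilde g,\widetilde f$, so that $gf=fg$ pointwise on $D_\theta$; hence the two sides coincide. No obstacle is anticipated here.

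The only subtlety I foresee is in part (ii), namely verifying carefully that the operator inclusion $p[T]g(T)\supseteq g(T)p[T]$ passes through the induction step, since $p[T]$ is unbounded; however this is immediate because the previous theorem already delivers the range inclusion $\ran(g(T))\subseteq\dom(A)$ whenever $A$ satisfies \eqref{Eq_Commutation_extended_assumption} and we apply it iteratively to $A=T,T^2,\ldots,T^{\deg(p)}$. Thus the corollary follows entirely from the preceding theorem together with the product rule and the commutation arguments already used in Corollary~\ref{cor_Commutation_omega_bounded}.
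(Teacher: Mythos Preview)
Your argument is correct and essentially follows the paper, whose proof simply reads ``The proof is the same as the one of Corollary~\ref{cor_Commutation_omega_bounded}.'' Parts (i) and (ii) match that scheme exactly. One small remark on (ii): your sentence about the previous theorem delivering the range inclusion $\ran(g(T))\subseteq\dom(A)$ is not quite what the theorem gives (it only gives $g(T)\dom(A)\subseteq\dom(A)$), but this does not matter, since the induction $T^{k+1}g(T)=T(T^kg(T))\supseteq T(g(T)T^k)=(Tg(T))T^k\supseteq g(T)T^{k+1}$ works directly from $Tg(T)\supseteq g(T)T$ and the monotonicity of composition under operator inclusion.

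For (iii) you take a slightly different route than the paper: you invoke the product rule (Theorem~\ref{thm_Product_extended}) twice and use $gf=fg$ for intrinsic functions, whereas the argument of Corollary~\ref{cor_Commutation_omega_bounded}~(iii) bootstraps from (i) and (ii) by noting that $Tg(T)\supseteq g(T)T$ (from (ii)) makes $B:=g(T)$ eligible for (i), yielding $g(T)f(T)=f(T)g(T)$. Both approaches are valid; yours uses an additional (already established) tool, while the paper's stays internal to the commutation results.
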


\begin{proof}
The proof is the same as the one of Corollary~\ref{cor_Commutation_omega_bounded}.
\end{proof}

We conclude this section with the important spectral mapping theorem, Theorem~\ref{thm_Spectral_mapping}. To do so we will need some preparatory lemmata. The first one helps us to locate resolvent set inside the double sector $\overline{D_\omega}$.

\begin{lem}\label{lem_fT_bijective}
Let $T\in\mathcal{K}(V)$ be bisectorial of angle $\omega\in(0,\frac{\pi}{2})$, and $p\in\overline{D_\omega}\setminus\{0\}$. If there exists some $g\in\mathcal{N}^\bnd(D_\theta)$, $\theta\in(\omega,\frac{\pi}{2})$, with $g(p)=0$ and $g(T)$ bijective, then $p\in\rho_S(T)$.
\end{lem}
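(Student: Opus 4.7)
The plan is to show that $Q_p[T]^{-1}\in\mathcal{B}(V)$ by factoring out the zero of $g$ on $[p]$ and exploiting bijectivity of $g(T)$ through the product rule. The strategy is:

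\textbf{Step 1 (Factorization).} Since $g$ is intrinsic and $g(p)=0$, $g$ vanishes on the whole sphere $[p]$. I split into two cases. If $p\notin\mathbb{R}$, then on any plane $\mathbb{C}_J$ containing $p_J$, the function $g|_{\mathbb{C}_J}$ has zeros at $p_J$ and $\overline{p_J}$, while $Q_p(s)=(s-p_J)(s-\overline{p_J})$ has simple zeros at exactly those points. Hence $h(s):=g(s)/Q_p(s)$ has removable singularities on $\mathbb{C}_J\cap[p]$, extends holomorphically on $\mathbb{C}_J\cap D_\theta$, and therefore extends to an intrinsic slice hyperholomorphic function on $D_\theta$ via the representation formula. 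I then check via Lemma~\ref{lem_Characterization_SHbnd} that $h\in\mathcal{N}^\bnd(D_\theta)$: $h$ is bounded (away from $[p]$ it's the quotient of bounded by bounded-below, and on $[p]$ it has a continuous extension), $h(0)=g_0/|p|^2$ at $0$, $h(s)=\mathcal{O}(1/|s|^2)$ at $\infty$, and slice hyperholomorphicity at $0$ yields the required integrabilities. If instead $p\in\mathbb{R}\setminus\{0\}$, then $Q_p(s)=(s-p)^2$ and the simple zero of $g$ at $p$ only cancels one factor; I therefore set $\widetilde h(s):=g(s)/(s-p)$ and verify $\widetilde h\in\mathcal{N}^\bnd(D_\theta)$ by the same reasoning.

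\textbf{Step 2 (Product rule).} In the non-real case, since $g=Q_p\cdot h$ and both $g,h\in\mathcal{SH}_L^\bnd(D_\theta)$ (in fact in $\mathcal{N}^\bnd$), Proposition~\ref{prop_Product_extended_pf} applies to the real polynomial $Q_p$ and gives $\ran(h(T))\subseteq\dom(T^2)=\dom(Q_p[T])$ and
\begin{equation*}
g(T)=Q_p[T]\,h(T)\qquad\text{on all of }V.
\end{equation*}
In the real case, the analogous identity $g(T)=(T-p)\,\widetilde h(T)$ holds with $\ran(\widetilde h(T))\subseteq\dom(T)$.

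\textbf{Step 3 (Bounded two-sided inverse).} By Corollary~\ref{cor_Commutation_extended}(iii) the intrinsic operators $h(T)$ and $g(T)^{-1}$ commute (note $g(T)^{-1}\in\mathcal{B}(V)$ by the open mapping theorem), and by Corollary~\ref{cor_Commutation_extended}(ii) we have $Q_p[T]h(T)\supseteq h(T)Q_p[T]$. Set $R:=h(T)g(T)^{-1}\in\mathcal{B}(V)$. For every $v\in V$,
\begin{equation*}
Q_p[T]\,R\,v=Q_p[T]h(T)g(T)^{-1}v=g(T)g(T)^{-1}v=v,
\end{equation*}
and for every $u\in\dom(T^2)$,
\begin{equation*}
R\,Q_p[T]\,u=g(T)^{-1}h(T)Q_p[T]u=g(T)^{-1}Q_p[T]h(T)u=g(T)^{-1}g(T)u=u.
\end{equation*}
Hence $Q_p[T]^{-1}=R\in\mathcal{B}(V)$, so $p\in\rho_S(T)$. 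In the real case the same argument gives $(T-p)^{-1}=\widetilde h(T)g(T)^{-1}\in\mathcal{B}(V)$, and squaring yields $Q_p[T]^{-1}=((T-p)^{-1})^2\in\mathcal{B}(V)$.

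\textbf{Expected obstacle.} The only nontrivial technical point is Step 1: verifying that the quotient indeed extends to a function in $\mathcal{N}^\bnd(D_\theta)$, and handling the real case separately, where $Q_p$ has a double zero but $g$ need not. The rest of the proof is a careful bookkeeping of domains when combining the product rule with the commutation properties of the extended $\omega$-functional calculus.
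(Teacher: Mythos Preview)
Your proposal is correct and follows essentially the same approach as the paper: the same case split $p\notin\mathbb{R}$ versus $p\in\mathbb{R}\setminus\{0\}$, the same quotients $h=g/Q_p$ resp.\ $\widetilde h=g/(s-p)$, verification of membership in $\mathcal{N}^\bnd(D_\theta)$ via Lemma~\ref{lem_Characterization_SHbnd}, the product rule from Proposition~\ref{prop_Product_extended_pf} combined with the commutation in Corollary~\ref{cor_Commutation_extended}~ii), and the conclusion $Q_p[T]^{-1}=h(T)g(T)^{-1}\in\mathcal{B}(V)$. The only cosmetic difference is that you spell out both directions of the inverse in Step~3, whereas the paper argues more compactly from the inclusion $g(T)=Q_p[T]h(T)\supseteq h(T)Q_p[T]$; also your phrase ``slice hyperholomorphicity at $0$'' in Step~1 is imprecise (the origin is not in $D_\theta$), but the intended asymptotic check matches the paper's explicit computation.
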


\begin{proof}
Let us distinguish the cases $p\notin\mathbb{R}$ and $p\in\mathbb{R}\setminus\{0\}$. \medskip

$\circ$\;\;If $p\notin\mathbb{R}$, let us consider the function
\begin{equation*}
h(s):=\frac{g(s)}{Q_p(s)}=\frac{g(s)}{s^2-2p_0s+|p|^2},\qquad s\in D_\theta\setminus[p].
\end{equation*}
Since $g\in\mathcal{N}^\bnd(D_\theta)$, we can decompose $h$ into
\begin{equation*}
h(s)=\frac{1}{Q_p(s)}\Big(g_\infty+\frac{g_0-g_\infty}{1+s^2}\Big)+\frac{\widetilde{g}(s)}{Q_p(s)},\qquad s\in D_\theta\setminus[p],
\end{equation*}
for some $g_0,g_\infty\in\mathbb{R}$, $\widetilde{g}\in\mathcal{N}^0(D_\theta)$. The term $\frac{1}{Q_p(s)}$ now admits the asymptotics
\begin{equation*}
\frac{1}{Q_p(s)}=\begin{cases} \frac{1}{|p|^2}+\mathcal{O}(|s|), & \text{as }|s|\rightarrow 0^+, \\ \mathcal{O}(\frac{1}{|s|^2}), & \text{as }|s|\rightarrow\infty, \end{cases}
\end{equation*}
and consequently also
\begin{equation*}
\frac{1}{Q_p(s)}\Big(g_\infty+\frac{g_0-g_\infty}{1+s^2}\Big)=\begin{cases} \frac{g_0}{|p|^2}+\mathcal{O}(|s|), & \text{as }|s|\rightarrow 0^+, \\ \mathcal{O}(\frac{1}{|s|^2}), & \text{as }|s|\rightarrow\infty. \end{cases}
\end{equation*}
From these two asymptotics we then conclude for any $0<r<|p|<R$ the integrability
\begin{equation*}
\int_0^r\Big|h(te^{J\phi})-\frac{g_0}{|p|^2}\Big|\frac{dt}{t}<\infty\qquad\text{and}\qquad\int_R^\infty|h(te^{J\phi})|\frac{dt}{t}<\infty,
\end{equation*}
for every $J\in\mathbb{S}$, $\phi\in I_\theta$. Moreover, since $g(p)=0$ by assumption, and $g$ is intrinsic, there is also $g(s)=0$ for every $s\in[p]$. This means, that $h$ extends to an intrinsic function on all of $D_\theta$ (denoted again by $h$). From Lemma~\ref{lem_Characterization_SHbnd} it then follows that $h\in\mathcal{N}^\bnd(D_\theta)$. This in particular means that $h(T)$ is defined via the extended $\omega$-functional calculus and by Proposition~\ref{prop_Product_extended_pf} and the commutation relation in Corollary~\ref{cor_Commutation_extended} ii), we obtain $\ran(h(T))\subseteq\dom(T^2)$ and
\begin{equation*}
g(T)=(Q_ph)(T)=Q_p[T]h(T)\supseteq h(T)Q_p[T].
\end{equation*}
From this operator inclusion and the assumption that $g(T)$ is bijective, it then follows that $Q_p[T]$ is bijective with inverse $Q_p[T]^{-1}=h(T)g(T)^{-1}$. Moreover, since $g(T)$ is bounded and bijective, its inverse $g(T)^{-1}$ is closed and everywhere defined, hence $g(T)^{-1}\in\mathcal{B}(V)$ by the closed graph theorem. This means that also $Q_p[T]^{-1}\in\mathcal{B}(V)$ is bounded, i.e. $p\in\rho_S(T)$. \medskip

$\circ$\;\;If $p\in\mathbb{R}\setminus\{0\}$, let us consider the function
\begin{equation*}
h(s):=\frac{g(s)}{s-p},\qquad s\in D_\theta\setminus\{p\}.
\end{equation*}
In a similar way as above one shows that $h$ extends to an intrinsic function on all of $D_\theta$. By the product rule Proposition~\ref{prop_Product_extended_pf} and the commutation relation Corollary~\ref{cor_Commutation_extended}~ii), we obtain $\ran(h(T))\subseteq\dom(T)$ and
\begin{equation*}
g(T)=((s-p)h)(T)=(T-p)h(T)\supseteq h(T)(T-p).
\end{equation*}
From this operator inclusion and the assumption that $g(T)$ is bijective, it then follows that $T-p$ is bijective with inverse $(T-p)^{-1}=h(T)g(T)^{-1}$. Moreover, since $g(T)$ is bounded and bijective, its inverse $g(T)^{-1}$ is closed and everywhere defined, hence $g(T)^{-1}\in\mathcal{B}(V)$ by the closed graph theorem. This means that also $(T-p)^{-1}\in\mathcal{B}(V)$ is bounded. Consequently, $Q_p(T)=(T-p)^2$ is bijective and $Q_p[T]^{-1}=(T-p)^{-2}\in\mathcal{B}(V)$, i.e. $p\in\rho_S(T)$.
\end{proof}

\begin{lem}
Let $T\in\mathcal{K}(V)$ be bisectorial of angle $\omega\in(0,\frac{\pi}{2})$, such that $\sigma_S(T)\cap U_r(0)=\{0\}$, for some $r>0$. Then the operator
\begin{equation}\label{Eq_Spectral_projection}
E_0:=\frac{1}{2\pi}\int_{\partial U_\rho(0)\cap\mathbb{C}_J}S_L^{-1}(s,T)ds_J
\end{equation}
is independent of the choice $J\in\mathbb{S}$, $\rho\in(0,r)$, and for every $g\in\mathcal{N}^\bnd(D_\theta)$ there is
\begin{equation}\label{Eq_Spectral_projection_property}
g(T)E_0=g_0E_0,
\end{equation}
with $g_0\in\mathbb{R}$ from Definition~\ref{defi_SH_bnd}.
\end{lem}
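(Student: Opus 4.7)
The statement has three parts: independence of $\rho$, independence of $J$, and the identity $g(T)E_0=g_0E_0$.

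For independence of $\rho$, I would invoke Cauchy's theorem on the annulus $\{\rho_1<|s|<\rho_2\}\cap\mathbb{C}_J$ for any $0<\rho_1<\rho_2<r$. This annular region lies inside $\rho_S(T)$ by the hypothesis $\sigma_S(T)\cap U_r(0)=\{0\}$, and since $s\mapsto S_L^{-1}(s,T)$ is right slice hyperholomorphic on $\rho_S(T)$, the integrals along the inner and outer boundary circles agree.

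For independence of $J$, the key observation is that $Q_s[T]=T^2-2s_0T+|s|^2$ depends on $s$ only through $s_0$ and $|s|$. Parametrising $s=\rho e^{J\phi}$ with $s_0=\rho\cos\phi$ and $|s|=\rho$, the operator $R(\rho,\phi):=Q_s[T]^{-1}$ is independent of $J$ and satisfies $R(\rho,-\phi)=R(\rho,\phi)$. With $\overline{s}=\rho e^{-J\phi}$ and $ds_J=\rho e^{J\phi}\,d\phi$, a direct substitution gives
\begin{equation*}
S_L^{-1}(s,T)\,ds_J=\rho^2R(\rho,\phi)\,d\phi-TR(\rho,\phi)\,\rho(\cos\phi+J\sin\phi)\,d\phi,
\end{equation*}
and after integrating $\phi\in[-\pi,\pi]$ the $J\sin\phi$-term vanishes by the evenness of $R$ in $\phi$, leaving an expression independent of $J$.

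For the identity $g(T)E_0=g_0E_0$, the plan is to establish $\ran(E_0)\subseteq\ker(T)$ and then invoke Proposition~\ref{prop_Kernel_extended}, which provides $\ker(T)\subseteq\ker(g(T)-g_0)$. To show $TE_0=0$ I would use the resolvent identity $TS_L^{-1}(s,T)=S_L^{-1}(s,T)s-1$ from \eqref{Eq_Product_omega_pf_1} together with Hille's theorem (since $T$ is closed and $S_L^{-1}(s,T)V\subseteq\dom(T)$) to obtain
\begin{equation*}
TE_0=\frac{1}{2\pi}\int_{\partial U_\rho(0)\cap\mathbb{C}_J}S_L^{-1}(s,T)s\,ds_J,
\end{equation*}
the extra $-1$ contributing nothing since $\int_{\partial U_\rho(0)\cap\mathbb{C}_J}ds_J=0$ by direct parametrisation. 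The integrand $s\mapsto S_L^{-1}(s,T)s$ is right slice hyperholomorphic on $\rho_S(T)$ (product of a right slice hyperholomorphic function with the intrinsic function $s$), so the same Cauchy argument as in the first part makes this integral independent of $\rho\in(0,r)$. Finally, the bisectorial estimate $\Vert S_L^{-1}(s,T)\Vert\leq C_\varphi/|s|$ combined with \eqref{Eq_Norm_estimate_Rn} yields the uniform bound $\Vert S_L^{-1}(s,T)s\Vert\leq 2^{n/2}C_\varphi$, hence $\Vert TE_0\Vert\leq 2^{n/2}C_\varphi\rho$ for every $\rho\in(0,r)$; letting $\rho\rightarrow 0^+$ forces $TE_0=0$. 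Then $E_0v\in\ker(T)$ for every $v\in V$, and Proposition~\ref{prop_Kernel_extended} gives $g(T)E_0v=g_0E_0v$, which is the desired identity.

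The main obstacle is the step $TE_0=0$: it relies crucially on the bisectorial decay $\Vert S_L^{-1}(s,T)\Vert\leq C_\varphi/|s|$, which ensures that the Laurent pole of $S_L^{-1}(\cdot,T)$ at $0$ is simple. Without this growth restriction, higher-order Laurent coefficients at $0$ would survive and $TE_0$ would be a non-trivial Laurent residue rather than vanishing.
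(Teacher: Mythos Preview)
Your argument is correct and, for the identity $g(T)E_0=g_0E_0$, takes a markedly shorter route than the paper. The paper proceeds in two separate computations: first it establishes $\widetilde{g}(T)E_0=0$ via the $S$-resolvent identity \eqref{Eq_Product_omega_2}, reducing a double integral through explicit residue calculations and the product identity \eqref{Eq_Product_identity}; second it shows $T^2E_0=0$ (hence $(1+T^2)^{-1}E_0=E_0$) by the same shrinking-radius estimate you employ. These two pieces are then assembled into $g(T)E_0=g_\infty E_0+(g_0-g_\infty)E_0+0=g_0E_0$. You bypass the whole first computation by proving the sharper statement $TE_0=0$, which places $\ran(E_0)\subseteq\ker(T)$ and lets Proposition~\ref{prop_Kernel_extended} deliver the conclusion for every $g\in\mathcal{N}^\bnd(D_\theta)$ in one stroke. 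Your direct parity computation for the $J$-independence is also more self-contained than the paper's citation of \cite{ColSab2006}. One shared subtlety worth flagging: both you and the paper apply the bound $\Vert S_L^{-1}(s,T)\Vert\leq C_\varphi/|s|$ along the entire circle $\partial U_\rho(0)$, including the arcs inside $D_\varphi$ where \eqref{Eq_SL_estimate} is not directly asserted; your closing remark that the bisectorial decay forces the singularity at $0$ to be a simple pole is precisely the ingredient that extends the bound to the full punctured disk, and would deserve one explicit sentence in a polished write-up.
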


\begin{proof}
The proof that $E_0$ is independent of $\rho\in(0,r)$ and $J\in\mathbb{S}$ follows the same idea as in \cite[Theorem~3.12]{ColSab2006}. Since $g\in\mathcal{N}^\bnd(D_\theta)$, there exists $g_0,g_\infty\in\mathbb{R}$, $\widetilde{g}\in\mathcal{N}^0(D_\theta)$, such that
\begin{equation*}
g(s)=g_\infty+\frac{g_0-g_\infty}{1+s^2}+\widetilde{g}(s),\qquad s\in D_\theta.
\end{equation*}
In the {\it first step} we will calculate the product of $\widetilde{g}(T)E_0$. To do so, let us consider the curves

\begin{minipage}{0.44\textwidth}
\begin{center}
\begin{tikzpicture}
\fill[black!15] (0,0)--(1.56,1.56) arc (45:-45:2.2)--(0,0)--(-1.56,-1.56) arc (225:135:2.2);
\fill[black!30] (0.73,0.34)--(2,0.93) arc (25:-25:2.2)--(0.73,-0.34) arc (-25:25:0.8);
\fill[black!30] (-0.73,0.34)--(-2,0.93) arc (155:205:2.2)--(-0.73,-0.34) arc (205:155:0.8);
\draw (2,0.93)--(0.73,0.34) arc (25:-25:0.8)--(2,-0.93);
\draw (-2,0.93)--(-0.73,0.34) arc (155:205:0.8)--(-2,-0.93);
\draw (1.56,1.56)--(-1.56,-1.56);
\draw (1.56,-1.56)--(-1.56,1.56);
\draw[thick] (1.8,1.26)--(-1.8,-1.26);
\draw[thick] (1.8,-1.26)--(-1.8,1.26);
\draw[thick,->] (1.8,1.26)--(1.47,1.03);
\draw[thick,->] (0,0)--(1.47,-1.03);
\draw[thick,->] (-1.8,-1.26)--(-1.47,-1.03);
\draw[thick,->] (0,0)--(-1.47,1.03);
\draw (1.5,1) node[anchor=south] {$\gamma_+$};
\draw (1.47,-1.03) node[anchor=north] {$\gamma_+$};
\draw (-1.45,-1.11) node[anchor=north] {$\gamma_-$};
\draw (-1.5,1) node[anchor=south] {$\gamma_-$};
\draw[thick,->] (0.41,0.29)--(0.27,0.2);
\draw[thick,->] (0.27,-0.2)--(0.41,-0.29);
\draw[thick,->] (-0.27,0.2)--(-0.41,0.29);
\draw[thick,->] (-0.41,-0.29)--(-0.27,-0.2);
\draw (0,-0.03) node[anchor=west] {$\kappa_+$};
\draw (0.05,-0.04) node[anchor=east] {$\kappa_-$};
\draw[->] (-2.4,0)--(2.6,0);
\draw[->] (0,-1.5)--(0,1.5) node[anchor=east] {\large{$\mathbb{C}_J$}};
\fill[black] (0,0) circle (0.1cm);
\draw[thick] (0,0) circle (0.6cm);
\draw[thick,->] (0.21,0.56)--(0.18,0.57);
\draw[thick,->] (-0.21,-0.56)--(-0.18,-0.57);
\draw (0.3,0.5) node[anchor=south] {$\sigma$};
\draw (-0.3,-0.5) node[anchor=north] {$\sigma$};
\draw[thick,->] (0.555,0.22)--(0.54,0.25);
\draw[thick,->] (-0.555,-0.22)--(-0.54,-0.25);
\draw (0.54,0.15) node[anchor=west] {$\tau$};
\draw (-0.54,-0.2) node[anchor=east] {$\tau$};
\draw (1.7,0) node[anchor=south] {\small{$\sigma_S(T)$}};
\end{tikzpicture}
\end{center}
\end{minipage}
\begin{minipage}{0.54\textwidth}
\begin{align*}
\gamma_\pm(t)&:=\pm|t|e^{-\sgn(t)J\varphi},\qquad t\in\mathbb{R}\setminus[-\rho,\rho], \\
\kappa_\pm(t)&:=\pm|t|e^{-\sgn(t)J\varphi},\qquad t\in(-\rho,\rho), \\
\sigma(\phi)&:=\rho e^{J\phi},\hspace{2.2cm}\phi\in\Big(-\frac{\pi}{2},\frac{3\pi}{2}\Big)\setminus\overline{I_\varphi}, \\
\tau(\phi)&:=\rho e^{J\phi},\hspace{2.2cm}\phi\in I_\varphi, \\
\gamma&:=\gamma_+\oplus\gamma_-, \\
\kappa&:=\kappa_+\oplus\kappa_-.
\end{align*}
\end{minipage}

\medskip With the representation \eqref{Eq_Representation_intrinsic} and the resolvent identity \eqref{Eq_Product_omega_2}, we get
\begin{align}
\widetilde{g}(T)E_0&=\frac{1}{4\pi^2}\int_{\gamma\oplus\kappa}\widetilde{g}(s)ds_JS_R^{-1}(s,T)\int_{\sigma\oplus\tau}S_L^{-1}(p,T)dp_J \notag \\
&=\frac{1}{4\pi^2}\int_{\gamma\oplus\kappa}\widetilde{g}(s)ds_J\int_{\sigma\oplus\tau}\Big(S_R^{-1}(s,T)p-S_L^{-1}(p,T)p \notag \\
&\hspace{4.8cm}-\overline{s}S_R^{-1}(s,T)+\overline{s}S_L^{-1}(p,T)\Big)Q_s(p)^{-1}dp_J. \label{Eq_Spectral_projection_3}
\end{align}
Since both values $p,s\in\mathbb{C}_J$ are in the same complex plane, we can use the classical residue theorem to derive the explicit value of the integral
\begin{align}
\frac{1}{2\pi}\int_{\sigma\oplus\tau}pQ_s(p)^{-1}dp_J&=\frac{1}{2\pi}\int_{\sigma\oplus\tau}\frac{p}{(p-s)(p-\overline{s})}dp_J \notag \\
&=\begin{cases} \frac{s}{s-\overline{s}}+\frac{\overline{s}}{\overline{s}-s}, & \text{if }s\in\ran(\kappa), \\ 0, & \text{if }s\in\ran(\gamma), \end{cases}=\begin{cases} 1, & \text{if }s\in\ran(\kappa), \\ 0, & \text{if }s\in\ran(\gamma). \end{cases} \label{Eq_Spectral_projection_1}
\end{align}
In the same way, we also get
\begin{align}
\frac{1}{2\pi}\int_{\sigma\oplus\tau}Q_s(p)^{-1}dp_J&=\frac{1}{2\pi}\int_{\sigma\oplus\tau}\frac{1}{(p-s)(p-\overline{s})}dp_J \notag \\
&=\begin{cases} \frac{1}{s-\overline{s}}+\frac{1}{\overline{s}-s}, & \text{if }s\in\ran(\kappa), \\ 0, & \text{if }s\in\ran(\gamma), \end{cases}=\begin{cases} 0, & \text{if }s\in\ran(\kappa), \\ 0, & \text{if }s\in\ran(\gamma). \end{cases} \label{Eq_Spectral_projection_2}
\end{align}
Using now the integrals \eqref{Eq_Spectral_projection_1} and \eqref{Eq_Spectral_projection_2} in \eqref{Eq_Spectral_projection_3}, gives
\begin{align}
\widetilde{g}(T)E_0&=\frac{1}{2\pi}\int_\kappa\widetilde{g}(s)ds_JS_R^{-1}(s,T) \notag \\
&+\frac{1}{4\pi^2}\int_{\gamma\oplus\kappa}\widetilde{g}(s)ds_J\int_{\sigma\oplus\tau}\big(\overline{s}S_L^{-1}(p,T)-S_L^{-1}(p,T)p\big)Q_s(p)^{-1}dp_J. \label{Eq_Spectral_projection_4}
\end{align}
Next, we note that the identity \eqref{Eq_Product_identity} with $B=S_L^{-1}(p,T)$ in our setting becomes
\begin{equation*}
\frac{1}{2\pi}\int_{\gamma\oplus\kappa}\widetilde{g}(s)ds_J\big(\overline{s}S_L^{-1}(p,T)-S_L^{-1}(p,T)p\big)Q_s(p)^{-1}=\begin{cases} S_L^{-1}(p,T)\widetilde{g}(p), & \text{if }p\in\ran(\tau), \\ 0, & \text{if }p\in\ran(\sigma). \end{cases}
\end{equation*}
Using this in \eqref{Eq_Spectral_projection_4}, simplifies the right hand side to
\begin{equation}\label{Eq_Spectral_projection_5}
\widetilde{g}(T)E_0=\frac{1}{2\pi}\int_\kappa\widetilde{g}(s)ds_JS_R^{-1}(s,T)+\frac{1}{2\pi}\int_\tau S_L^{-1}(p,T)\widetilde{g}(p)dp_J.
\end{equation}
Since $\rho\in(0,R)$ is arbitrary and the left hand side is independent of $\rho$, we can take the limit $\rho\rightarrow 0^+$ of the right hand side. However, since $\Vert S_L^{-1}(p,T)\Vert\leq\frac{C_\varphi}{|p|}$ by Definition~\ref{defi_Bisectorial_operators}, the second integral vanishes by Lemma~\ref{lem_Integral_vanish}~i), which obviously also holds true for operator valued function $S_L^{-1}(p,T)$. Moreover, with the estimate \eqref{Eq_SR_estimate}, also the first integral in \eqref{Eq_Spectral_projection_5} vanishes, namely
\begin{align*}
\bigg|\int_\kappa\widetilde{g}(s)ds_JS_R^{-1}(s,T))\bigg|&\leq\sum\limits_{\phi\in\{-\varphi,\varphi,\pi-\varphi,\pi+\varphi\}}\bigg|\int_0^\rho\widetilde{g}(te^{J\phi})\frac{e^{J\phi}}{J}S_R^{-1}(te^{J\phi},T)dt\bigg| \\
&\leq 2C_\varphi\sum\limits_{\phi\in\{-\varphi,\varphi,\pi-\varphi,\pi+\varphi\}}\int_0^\rho|\widetilde{g}(te^{J\phi})|\frac{dt}{t}\overset{\rho\rightarrow 0^+}{\longrightarrow}0.
\end{align*}
Altogether, we found that the right hand side of \eqref{Eq_Spectral_projection_5} vanishes in the limit $\rho\rightarrow 0^+$, i.e.
\begin{equation}\label{Eq_Spectral_projection_6}
\widetilde{g}(T)E_0=0.
\end{equation}
In the {\it second step} we consider the product $(1+T^2)^{-1}E_0$. Starting with $T^2E_0$, we use the resolvent identity \eqref{Eq_Product_omega_pf_1} to rewrite
\begin{equation*}
\int_{\sigma\oplus\tau}S_L^{-1}(p,T)p^2dp_J=\int_{\sigma\oplus\tau}\big(TS_L^{-1}(p,T)+1\big)pdp_J=\int_{\sigma\oplus\tau}TS_L^{-1}(p,T)pdp_J.
\end{equation*}
By Hille's theorem this means that $\ran\big(\int_{\sigma\oplus\tau}S_L^{-1}(p,T)pdp_J\big)\subseteq\dom(T)$, and
\begin{equation*}
\int_{\sigma\oplus\tau}S_L^{-1}(p,T)p^2dp_J=T\int_{\sigma\oplus\tau}S_L^{-1}(p,T)pdp_J.
\end{equation*}
The same computation applied once more, shows $\ran\big(\int_{\sigma\oplus\tau}S_L^{-1}(p,T)dp_J\big)\subseteq\dom(T^2)$, and
\begin{equation*}
\int_{\sigma\oplus\tau}S_L^{-1}(p,T)p^2dp_J=T^2\int_{\sigma\oplus\tau}S_L^{-1}(p,T)dp_J=T^2E_0.
\end{equation*}
However, since $0<\rho<r$ is arbitrary, and only the left hand side depends on $\rho$, we can take the limit $\rho\rightarrow 0^+$ and obtain
\begin{equation*}
\Vert T^2E_0\Vert=\lim\limits_{\rho\rightarrow 0^+}\bigg\Vert\int_0^{2\pi}S_L^{-1}(\rho e^{J\phi},T)(\rho e^{J\phi})^2\rho e^{J\phi}d\phi\bigg\Vert\leq\lim\limits_{\rho\rightarrow 0^+}\int_0^{2\pi}\frac{C_\varphi}{\rho}\rho^3d\phi=0.
\end{equation*}
This proves that $T^2E_0=0$, and since $1+T^2$ is invertible, also
\begin{equation}\label{Eq_Spectral_projection_7}
(1+T^2)^{-1}E_0=E_0.
\end{equation}
From the identities \eqref{Eq_Spectral_projection_6} and \eqref{Eq_Spectral_projection_7} we now conclude
\begin{equation*}
g(T)E_0=g_\infty E_0+(g_0-g_\infty)(1+T^2)^{-1}E_0+\widetilde{g}(T)E_0=g_\infty E_0+(g_0-g_\infty)E_0=g_0E_0. \qedhere
\end{equation*}
\end{proof}

\begin{thm}[Spectral mapping theorem]\label{thm_Spectral_mapping}
Let $T\in\mathcal{K}(V)$ be bisectorial of angle $\omega\in(0,\frac{\pi}{2})$, and $g\in\mathcal{N}^\bnd(D_\theta)$, $\theta\in(\omega,\frac{\pi}{2})$. with the values $g_0,g_\infty\in\mathbb{R}$ from Definition~\ref{defi_SH_bnd}. If we extend the function $g$ with the value $g(0):=g_0$, then we have
\begin{equation}\label{Eq_Spectral_mapping}
\sigma_S(g(T))\supseteq g(\sigma_S(T))\qquad\text{and}\qquad\sigma_S(g(T))\subseteq g(\sigma_S(T))\cup\{g_\infty\}.
\end{equation}
\end{thm}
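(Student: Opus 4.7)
I would prove the two inclusions separately, both by contradiction and both relying on the composition rule (Corollary~\ref{cor_Composition_extended}) together with the product rule of Theorem~\ref{thm_Product_extended}.

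\textbf{First inclusion} $g(\sigma_S(T)) \subseteq \sigma_S(g(T))$: given $p \in \sigma_S(T)$, suppose $g(p) \in \rho_S(g(T))$, i.e.\ $Q_{g(p)}[g(T)]$ is boundedly invertible. Since $g$ is intrinsic, $g(p) \in \mathbb{R}^{n+1}$, so $Q_{g(p)}$ has real coefficients, and Corollary~\ref{cor_Composition_extended} places $\tilde g := Q_{g(p)}\circ g$ in $\mathcal{N}^\bnd(D_\theta)$ with $\tilde g(T) = Q_{g(p)}[g(T)]$. By construction $\tilde g(p) = 0$, so for $p \in \overline{D_\omega}\setminus\{0\}$ Lemma~\ref{lem_fT_bijective} immediately forces $p \in \rho_S(T)$, contradicting $p \in \sigma_S(T)$.

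The \textbf{main obstacle} is the case $p = 0$, where Lemma~\ref{lem_fT_bijective} does not apply. I would split it in two. If $0$ is not isolated in $\sigma_S(T)$, pick a sequence $p_n \in \sigma_S(T)\setminus\{0\}$ with $p_n \to 0$; the preceding step gives $g(p_n) \in \sigma_S(g(T))$, and the extension $g(0):=g_0$ is continuous at $0$ by the decomposition of Definition~\ref{defi_SH_bnd} together with Lemma~\ref{lem_f_convergence}, hence $g(p_n)\to g_0$, and closedness of $\sigma_S(g(T))$ yields $g_0 \in \sigma_S(g(T))$. If $0$ is isolated, so $\sigma_S(T)\cap U_r(0)=\{0\}$ for some $r>0$, I invoke the projection $E_0$ of \eqref{Eq_Spectral_projection}, which by \eqref{Eq_Spectral_projection_property} satisfies $g(T)E_0 = g_0 E_0$. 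The crucial step here is $E_0 \neq 0$: the bound $\|S_L^{-1}(s,T)\|\leq C_\varphi/|s|$ from Definition~\ref{defi_Bisectorial_operators} precludes singularities of order greater than one near $0$, so $E_0 = 0$ would force $S_L^{-1}(\,\cdot\,,T)$ to extend slice hyperholomorphically through $0$, placing $0 \in \rho_S(T)$. Any $w \in \ran(E_0)\setminus\{0\}$ then satisfies $(g(T)-g_0)w = 0$, so $Q_{g_0}[g(T)] = (g(T)-g_0)^2$ fails to be injective and $g_0 \in \sigma_S(g(T))$.

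\textbf{Second inclusion} $\sigma_S(g(T)) \subseteq g(\sigma_S(T))\cup\{g_\infty\}$: fix $q \notin g(\sigma_S(T))\cup\{g_\infty\}$; the plan is to invert $Q_q[g(T)]$ via the intrinsic function $h(s) := Q_q(g(s))^{-1}$. The difficulty is the ``bad set'' $K := \{s \in D_\theta : g(s) \in [q]\}$ where $h$ is singular, but two observations tame it. First, since $g$ is intrinsic it maps spheres to spheres, $g([s]) = [g(s)]$, and $\sigma_S(T)$ is axially symmetric, so $q \notin g(\sigma_S(T))$ gives $[q]\cap g(\sigma_S(T)) = \emptyset$, hence $K\cap\sigma_S(T) = \emptyset$. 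Second, by Lemma~\ref{lem_f_convergence} $g(s)\to g_\infty$ uniformly in angle as $|s|\to\infty$, and $g_\infty \notin [q]$ (since $g_\infty \in \mathbb{R}$ and $q \neq g_\infty$), so $K$ is bounded. Thus $K$ is compact and separated from $\sigma_S(T)$, and I can enclose $\sigma_S(T)$ in an axially symmetric open $U$ with $\overline U \cap K = \emptyset$. On a neighbourhood of $\overline U$ together with a neighbourhood of infinity, $h$ is bounded intrinsic slice hyperholomorphic with finite limit $h_\infty = |g_\infty - q|^{-2}$; defining $h(T)$ by a contour formula of the type \eqref{Eq_Unbounded_extended} (with the ``$f_\infty$'' piece replaced by $h_\infty$), the product rule of Theorem~\ref{thm_Product_extended} applied to the identity $h\cdot(Q_q\circ g)\equiv 1$ yields $h(T)\, Q_q[g(T)] = Q_q[g(T)]\, h(T) = I$, proving $q \in \rho_S(g(T))$.
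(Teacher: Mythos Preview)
Your treatment of the first inclusion is essentially the paper's argument: the same use of $Q_{g(p)}\circ g$ and Lemma~\ref{lem_fT_bijective} for $p\neq 0$, the same limit argument when $0$ is an accumulation point, and the same appeal to the projection $E_0$ and \eqref{Eq_Spectral_projection_property} when $0$ is isolated. Your justification of $E_0\neq 0$ (``$E_0=0$ would force $S_L^{-1}$ to extend through $0$'') is the right intuition but is sketchy in this noncommutative setting; the paper instead shows $Q_c[g(T)]E_0=0$, deduces $E_0=0$ from the assumed bijectivity of $Q_c[g(T)]$, and then invokes the spectral decomposition result \cite[Theorem~3.7.8\,(iii)]{FJBOOK} to reach a contradiction with $0\in\sigma_S(T)$.

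The second inclusion, however, has a genuine gap. Your candidate inverse $h(s)=Q_q(g(s))^{-1}$ has its poles on the set $K=\{s\in D_\theta:\,g(s)\in[q]\}$, and these zeros of $Q_q\circ g$ lie \emph{inside} $D_\theta$ (possibly inside $\overline{D_\omega}$, just not in $\sigma_S(T)$). Consequently $h\notin\mathcal{N}^\bnd(D_\theta)$, so Theorem~\ref{thm_Product_extended} does not apply to the pair $(h,\,Q_q\circ g)$; and the representation \eqref{Eq_Unbounded_extended} you appeal to requires the singular set to be contained in $\mathbb{R}^{n+1}\setminus\overline{D_\omega}$, which fails here. (Your compactness claim for $K$ can also fail: if $0\notin\sigma_S(T)$ nothing prevents $q=g_0$, in which case $K$ may accumulate at $0\notin D_\theta$.) There is no product rule available in the paper for an $h$ defined merely by a contour around $\sigma_S(T)$, so the step ``$h(T)\,Q_q[g(T)]=I$'' is unsupported.

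The paper's fix is the key idea you are missing: rather than inverting $Q_c\circ g$ pointwise, it first argues that the zeros of $h:=Q_c\circ g$ in $D_\theta$ are finitely many (since $h_\infty=(g_\infty-c)^2\neq 0$), then builds an explicit intrinsic rational function $r\in\mathcal{N}^\bnd(D_\theta)$ (a product of factors $\frac{s-p_i}{1+s^2}$ and $\frac{Q_{q_j}(s)}{1+s^2}$) with exactly the same zeros and multiplicities. Then $k:=r/h$ \emph{does} lie in $\mathcal{N}^\bnd(D_\theta)$ (or, in the borderline case $c=g_0$ with $0\in\rho_S(T)$, in the larger class of Remark~\ref{rem_Extended_0rho}), so Theorem~\ref{thm_Product_extended} gives $r(T)=Q_c[g(T)]\,k(T)$. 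Finally $r(T)$ is a finite product of factors $(T-p_i)(1+T^2)^{-1}$ and $Q_{q_j}[T](1+T^2)^{-1}$, each bijective because $p_i,q_j\notin\sigma_S(T)$, and commutation then forces $Q_c[g(T)]$ to be bijective.
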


\begin{proof}
For the first inclusion let $c\in g(\sigma_S(T))$, i.e., we can write $c=g(p)$ for some $p\in\sigma_S(T)$. Let us distinguish three cases. \medskip

Case 1: If $p\neq 0$, then $p\in\sigma_S(T)\setminus\{0\}\subseteq\overline{D_\omega}\setminus\{0\}\subseteq D_\theta$ and consequently by Corollary~\ref{cor_Composition_extended} there is $Q_c\circ g\in\mathcal{N}^\bnd(D_\theta)$ and
\begin{equation*}
(Q_c\circ g)(T)=Q_c[g(T)].
\end{equation*}
Let us assume that $c\in\rho_S(g(T))$. Then $(Q_c\circ g)(T)^{-1}\in\mathcal{B}(V)$ and since also
\begin{equation*}
(Q_c\circ g)(p)=g(p)^2-2c_0g(p)+|c|^2=0,
\end{equation*}
because of $c=g(p)$, it follows from Lemma~\ref{lem_fT_bijective}, that $p\in\rho_S(T)$. This is a contradiction to the choice $p\in\sigma_S(T)$, and we have proven $c\in\sigma_S(g(T))$. \medskip

Case 2: If $p=0$ and there exists a sequence $(p_n)_{n\in\mathbb{N}}\in\sigma_S(T)$ with $\lim_{n\rightarrow\infty}p_n=0$. Then we also have $\lim_{n\rightarrow\infty}g(p_n)=g_0$ and hence there is
\begin{equation*}
c=g(0)=g_0\in\overline{g(\sigma_S(T)\setminus\{0\})}\subseteq\overline{\sigma_S(g(T))}=\sigma_S(g(T)),
\end{equation*}
where we used the fact that the $S$-spectrum is closed and that $g(\sigma_S(T)\setminus\{0\})\subseteq\sigma_S(g(T))$,
which we have already proven above. \medskip

Case 3: If $p=0$ and $U_r(0)\cap\sigma_S(T)=\{0\}$ for some $r>0$, let us assume that $c\in\rho_S(g(T))$, i.e. $Q_c[g(T)]$ is boundedly invertible. Using the spectral projection $E_0$ from \eqref{Eq_Spectral_projection}, with the property $g(T)E_0=g_0E_0$ from \eqref{Eq_Spectral_projection_property}, we then get
\begin{equation*}
Q_c[g(T)]E_0=(g(T)^2-2c_0g(T)+|c|^2)E_0=(g_0^2-2c_0g_0+|c|^2)E_0=0,
\end{equation*}
where the last bracket vanishes because of $c=g(0)=g_0$. However, since $Q_c[g(T)]$ is bijective, this implies that $E_0=0$ is the zero operator, i.e., $V_0:=\ran(E_0)=\{0\}$ is the trivial vector space. Moreover, by \cite[Theorem 3.7.8 (iii)]{FJBOOK}, there is $\sigma_S(T|_{V_0})=\{0\}$. However, since $V_0=\{0\}$, the restriction $T|_{V_0}$ is the trivial operator which always has empty spectrum $\sigma_S(T|_{V_0})=\emptyset$. Hence, our assumption $c\in\rho_S(g(T))$ was wrong and we conclude $c\in\sigma_S(g(T))$. \medskip

For the second inclusion in \eqref{Eq_Spectral_mapping}, let $c\in\sigma_S(g(T))$, and distinguish four cases. \medskip

Case 1: If $c=g_\infty$, then clearly $c\in g(\sigma_S(T))\cup\{g_\infty\}$. \medskip

Case 2: If $c=g_0\neq g_\infty$ and $0\in\sigma_S(T)$, we clearly have $c=g_0=g(0)\in g(\sigma_S(T))$. \medskip

Case 3: If $c=g_0\neq g_\infty$ and $0\notin\sigma_S(T)$, we are in the situation of Remark~\ref{rem_Omega_0rho} and Remark~\ref{rem_Extended_0rho}, which allow us to define the $\omega$- and the extended $\omega$-functional calculus for a larger class of functions. Let us consider
\begin{equation}\label{Eq_Spectral_mapping_1}
h(s):=(Q_c\circ g)(s)=g(s)^2-2c_0g(s)+|c|^2,\qquad s\in D_\theta.
\end{equation}
Then $h\in\mathcal{N}^\bnd(D_\theta)$ by Theorem~\ref{thm_Product_extended}, with $h_0=(g_0-c)^2=0$ and $h_\infty=(g_\infty-c)^2\neq 0$. An interior zero of infinite order, or zeros which accumulate at an interior point, would cause $h$ to vanish identically on this connected component, but this is not possible since $\lim_{|s|\rightarrow\infty}h(s)=(g_\infty-c)^2\neq 0$. Hence, there exist finitely many points $p_1,\dots,p_n\in\mathbb{R}\setminus\{0\}$ and $q_1,\dots,q_m\in D_\theta\setminus\mathbb{R}$, such that the zeros of $h$ are given by $p_1,\dots,p_n,[q_1],\dots,[q_m]$. Let us  call the multiplicities of those zeros $k_1,\dots,k_n$ and $l_1,\dots,l_m$, respectively. Let us then consider the function
\begin{equation}\label{Eq_Spectral_mapping_2}
r(s)=\prod\limits_{i=1}^n\Big(\frac{s-p_i}{1+s^2}\Big)^{k_i}\prod\limits_{j=1}^m\Big(\frac{Q_{q_j}(s)}{1+s^2}\Big)^{l_j},\qquad s\in D_\theta.
\end{equation}
It is then clear that every factor $\frac{s-p_i}{1+s^2},\frac{Q_{q_j}(s)}{1+s^2}\in\mathcal{N}^\bnd(D_\theta)$ and hence also $r\in\mathcal{N}^\bnd(D_\theta)$. Then, by Lemma~\ref{lem_Characterization_SHbnd}, the shifted functions
\begin{equation*}
\widehat{h}(s):=h(s)-h_\infty\qquad\text{and}\qquad\widehat{r}(s):=r(s)-r_\infty,\qquad s\in D_\theta,
\end{equation*}
satisfy the integrability condition \eqref{Eq_Integrability_at_infinity}. For every $s\in D_\theta\setminus\big(\{p_1\},\dots,\{p_n\}\cup[q_1]\cup\dots\cup[q_m]\big)$, we can then define
\begin{equation*}
k(s):=\frac{r(s)}{h(s)}=\big(r_\infty+\widehat{r}(s)\big)\Big(\frac{1}{h_\infty}-\frac{\widehat{h}(s)}{h_\infty h(s)}\Big)=\frac{r_\infty}{h_\infty}+\underbrace{\frac{\widehat{r}(s)}{h_\infty}-\frac{r_\infty\widehat{h}(s)}{h_\infty h(s)}-\frac{\widehat{r}(s)\widehat{h}(s)}{h_\infty h(s)}}_{=:\widehat{k}(s)}.
\end{equation*}
This function $\widehat{k}$ then also satisfies the condition \eqref{Eq_Integrability_at_infinity}. Moreover, since $r$ and $h$ have the same zeros with the same multiplicity, $k$ extends to an intrinsic function on $D_\theta$. For this extension we can then define the functional calculus $k(T)$ as in \eqref{Eq_Extended_0rho}, and get
\begin{equation}\label{Eq_Spectral_mapping_3}
r(T)=(hk)(T)=h(T)k(T)=(Q_c\circ g)(T)k(T)=Q_c[g(T)]k(T),
\end{equation}
where in the last equation we used Corollary~\ref{cor_Composition_extended}. \medskip

Let us now assume that $c\notin g(\sigma_S(T))$. Since $g$ is intrinsic and $\sigma_S(T)$ axially symmetric, this means $g(s)\notin[c]$ for every $s\in\sigma_S(T)$. However, since $p_1,\dots,p_n,[q_1],\dots,[q_m]$ are zeros of $Q_c\circ g$, there is $g(p_1),\dots,g(p_n),g(q_1),\dots,g(q_m)\in[c]$, from which we conclude that $p_1,\dots,p_n,q_1,\dots,q_m\notin\sigma_S(T)$. Hence the operator
\begin{equation*}
r(T)=\prod\limits_{i=1}^n\big((T-p_i)(1+T^2)^{-1}\big)^{k_i}\prod\limits_{j=1}^m\big(Q_{q_j}[T](1+T^2)^{-1}\big)^{l_j},
\end{equation*}
is bijective and by \eqref{Eq_Spectral_mapping_3} also $Q_c[g(T)]k(T)$. Since moreover $Q_c[g(T)]k(T)=k(T)Q_c[g(T)]$ commute, there follows the bijectivity of $Q_c[g(T)]$. Since $Q_c[g(T)]$ is bounded, the inverse $Q_c[g(T)]^{-1}$ is bounded as well and hence $c\in\rho_S(g(T))$. This is a contradiction to $c\in\sigma_S(g(T))$, which means that the assumption $c\notin g(\sigma_S(T))$ was wrong and we have proven $c\in g(\sigma_S(T))$. \medskip

Case 4: If $c\notin\{g_0,g_\infty\}$, we again consider the function $h$ from \eqref{Eq_Spectral_mapping_1} and $r$ from \eqref{Eq_Spectral_mapping_2}. Contrary to above, here the values $h_0=(g_0-c)^2\neq 0$ and $h_\infty=(g_\infty-c)^2\neq 0$ are both nonzero. It is now straight forward to show that in the decomposition of Definition~\ref{defi_SH_bnd}, we can write for every $s\in D_\theta\setminus\big(\{p_1\},\dots,\{p_n\}\cup[q_1]\cup\dots\cup[q_m]\big)$ the function $k(s):=\frac{r(s)}{h(s)}$ as
\begin{align*}
k(s)-\frac{r_0}{h_0}&=\frac{\frac{s^2}{1+s^2}(r_\infty h_0-r_0h_\infty)+\widetilde{r}(s)h_0-r_0\widetilde{h}(s)}{h_0h(s)},\qquad\text{and} \\
k(s)-\frac{r_\infty}{h_\infty}&=\frac{\frac{1}{1+s^2}(r_0h_\infty-r_\infty h_0)+\widetilde{r}(s)h_\infty-r_\infty\widetilde{h}(s)}{h_\infty h(s)}.
\end{align*}
This form shows that there exists some $0<r<R$, such that the integrability conditions \eqref{Eq_Characterization_SHbnd} are satisfied. As above, the function $k$ extends to an intrinsic function on $D_\theta$ and from Lemma~\ref{lem_Characterization_SHbnd} we then get $k\in\mathcal{N}^\bnd(D_\theta)$. This means that $k(T)$ is defined via the extended $\omega$-functional calculus and there is
\begin{equation*}
r(T)=(hk)(T)=h(T)k(T)=(Q_c\circ g)(T)k(T)=Q_c[g(T)]k(T),
\end{equation*}
where in the last equation we used Corollary~\ref{cor_Composition_extended}. Since this is the same formula as in \eqref{Eq_Spectral_mapping_3}, we also conclude by the same argument that $c\in g(\sigma_S(T))$.
\end{proof}

\section{The $H^\infty$-functional calculus}\label{sec_Hinfty}

Taking advantage of the extended $\omega$-functional calculus from Section~\ref{sec_Extended}, we will now define the $H^\infty$-functional calculus for the so called class of regularizable functions. In particular, functions with polynomial growth at infinity will be covered by this calculus, and under the additional assumption that $T$ is injective, also a polynomial growth at zero is allowed.

\begin{defi}\label{defi_SH_reg}
For every $\theta\in(0,\frac{\pi}{2})$ let $D_\theta$ be the double sector from \eqref{Eq_Domega}. Then we define the function spaces of

\begin{enumerate}
\item[i)] $\mathcal{SH}_L^\reg(D_\theta):=\Big\{f\in\mathcal{SH}_L(D_\theta)\;\Big|\;\exists e\in\mathcal{N}^\bnd(D_\theta): ef\in\mathcal{SH}_L^\bnd(D_\theta),\,e(T)\text{ injective}\Big\}$
\item[ii)] $\mathcal{N}^\reg(D_\theta):=\Big\{g\in\mathcal{N}(D_\theta)\;\Big|\;\exists e\in\mathcal{N}^\bnd(D_\theta): eg\in\mathcal{N}^\bnd(D_\theta),\,e(T)\text{ injective}\Big\}$
\end{enumerate}
For every function $f\in\mathcal{SH}_L^\reg(D_\theta)$, the function $e$ is called a {\it regularizer} of $f$. Note that the operator $e(T)$ is well defined via the extended $\omega$-functional calculus \eqref{Eq_Extended}.
\end{defi}

\begin{prop}\label{prop_Polynomial_functions}
Let $T\in\mathcal{K}(V)$ be bisectorial of angle $\omega\in(0,\frac{\pi}{2})$. Then for every $\theta\in(\omega,\frac{\pi}{2})$, we have
\begin{equation*}
\bigg\{f\in\mathcal{SH}_L(D_\theta)\;\bigg|\;\begin{array}{l} \exists r>0,f_0\in\mathbb{R}_n: \int_0^r|f(te^{J\phi})-f_0|\frac{dt}{t}<\infty,\;J\in\mathbb{S},\phi\in I_\theta, \\ \exists k\in\mathbb{N}_0,\,C\geq 0: |f(s)|\leq C(1+|s|^k),\,s\in D_\theta. \end{array}\bigg\}\subseteq\mathcal{SH}_L^\reg(D_\theta).
\end{equation*}
If $T$ is injective, we even have
\begin{equation*}
\bigg\{f\in\mathcal{SH}_L(D_\theta)\;\bigg|\;\exists k\in\mathbb{N}_0,\,C\geq 0: |f(s)|\leq C\Big(\frac{1}{|s|^k}+|s|^k\Big),\,s\in D_\theta\bigg\}\subseteq\mathcal{SH}_L^\reg(D_\theta).
\end{equation*}
\end{prop}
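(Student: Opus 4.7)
For both inclusions the strategy is to exhibit, for each admissible $f$, an explicit intrinsic regularizer $e\in\mathcal{N}^{\bnd}(D_\theta)$ such that $ef\in\mathcal{SH}_L^{\bnd}(D_\theta)$ and $e(T)$ is injective; then $f\in\mathcal{SH}_L^{\reg}(D_\theta)$ by Definition~\ref{defi_SH_reg}. The natural candidates are rational intrinsic functions of the form $s^{2a}/(1+s^2)^b$, whose functional calculus is explicitly known through Theorem~\ref{thm_Rational_extended}, and whose polynomial zeros/poles can be arranged to cancel the growth of $f$ at $0$ and at $\infty$.

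For the first inclusion pick $N\in\mathbb{N}$ with $2N>k$ and set $e(s):=(1+s^2)^{-N}$. Since $(1+s^2)^N$ has no zeros in $\overline{D_\omega}$, Theorem~\ref{thm_Rational_extended} yields $e(T)=(1+T^2)^{-N}$, which is bijective and so injective. The bounds $|e(s)-1|=O(|s|^2)$ as $s\to 0$ and $|e(s)|=O(|s|^{-2N})$ as $|s|\to\infty$ together with Lemma~\ref{lem_Characterization_SHbnd} give $e\in\mathcal{N}^{\bnd}(D_\theta)$. For $ef$, the estimate $|ef(s)|\leq C(1+|s|^k)/|1+s^2|^N$ is bounded on $D_\theta$ since $|1+s^2|$ stays bounded away from $0$ on $D_\theta$ for $\theta<\tfrac{\pi}{2}$ and since $2N>k$. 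To verify integrability at zero one splits
\begin{equation*}
ef(s)-f_0=\frac{f(s)-f_0}{(1+s^2)^N}+f_0\Big(\frac{1}{(1+s^2)^N}-1\Big);
\end{equation*}
the first summand is integrable against $dt/t$ by the hypothesis on $f$ (with $|1+s^2|^{-N}$ bounded near $0$), while the second is $O(|s|^2)$. At infinity one has $|ef(s)|=O(|s|^{k-2N})$ with $k-2N<0$, which yields integrability against $dt/t$. Lemma~\ref{lem_Characterization_SHbnd} then gives $ef\in\mathcal{SH}_L^{\bnd}(D_\theta)$.

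For the second inclusion, where $T$ is assumed injective, choose $a=b=k+1$ and set $e(s):=s^{2a}/(1+s^2)^{a+b}$. Theorem~\ref{thm_Rational_extended} applies again and gives $e(T)=T^{2a}(1+T^2)^{-(a+b)}$. Since $T$ is injective, induction on $T\colon \ker(T^j)\subseteq\ker(T^{j-1})$ shows $T^{2a}$ is injective on $\dom(T^{2a})$; combined with the bijectivity of $(1+T^2)^{-(a+b)}$ onto $\dom(T^{2(a+b)})\subseteq\dom(T^{2a})$, this yields injectivity of $e(T)$. The membership $e\in\mathcal{N}^{\bnd}(D_\theta)$ follows from Lemma~\ref{lem_Characterization_SHbnd} with $e_0=e_\infty=0$, using $|e(s)|=O(|s|^{2a})$ at $0$ and $|e(s)|=O(|s|^{-2b})$ at $\infty$. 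For $ef$ the bounds $|ef(s)|\leq C|s|^{2a}(|s|^{-k}+|s|^k)/|1+s^2|^{a+b}$ give, with the choice $a=b=k+1$, the asymptotics $|ef(s)|=O(|s|^{k+2})$ as $s\to 0$ and $|ef(s)|=O(|s|^{-k-2})$ as $|s|\to\infty$, both of which deliver boundedness on $D_\theta$ and the required $dt/t$-integrability at $0$ and $\infty$.

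The main technical point throughout is a careful tracking of the asymptotic behavior of the rational factor versus that of $f$, and the translation of pointwise bounds into $dt/t$-integrability via Lemma~\ref{lem_Characterization_SHbnd}. The conceptual obstacle is the polynomial blow-up of $f$ at $s=0$ in the second case: it forces the regularizer to vanish to order $2a$ at the origin, which under the rational functional calculus introduces the factor $T^{2a}$. This is precisely the reason why the injectivity of $T$ has to be hypothesized in that case — without it, our candidate regularizer would fail to have an injective image under the extended $\omega$-functional calculus.
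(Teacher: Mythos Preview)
Your proof is correct and follows essentially the same approach as the paper: both use rational intrinsic regularizers of the form $(1+s^2)^{-N}$ in the first case and $s^{m}/(1+s^2)^{N}$ in the second, invoke Theorem~\ref{thm_Rational_extended} to compute $e(T)$ explicitly, and then verify the required asymptotics of $ef$ at $0$ and $\infty$. The only cosmetic differences are your choice of exponents (you take $2N>k$ and $e(s)=s^{2(k+1)}/(1+s^2)^{2(k+1)}$, whereas the paper takes $N=k+1$ and $e(s)=s^{k+1}/(1+s^2)^{k+1}$) and your appeal to Lemma~\ref{lem_Characterization_SHbnd} in place of the paper's explicit algebraic decomposition of $ef$.
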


\begin{proof}
Let $f\in\mathcal{SH}_L(D_\theta)$, such that there exists $r>0$, $f_0\in\mathbb{R}_n$, $k\in\mathbb{N}_0$, $C\geq 0$ with
\begin{equation*}
|f(s)|\leq C(1+|s|^k)\qquad\text{and}\qquad\int_0^r|f(te^{J\phi})-f_0|\frac{dt}{t}<\infty,\qquad J\in\mathbb{S},\,\phi\in I_\theta.
\end{equation*}
Then let us choose the regularizer
\begin{equation*}
e(s):=\frac{1}{(1+s^2)^{k+1}},
\end{equation*}
which is clearly an element $e\in\mathcal{N}^\bnd(D_\theta)$. The corresponding functional calculus is according to Theorem~\ref{thm_Rational_extended} given by $e(T)=(1+T^2)^{-k-1}$. Since $1+T^2$ is bijective due to $J\in\rho_S(T)$, for every $J\in\mathbb{S}$, the operator $e(T)$ is injective. Moreover, we can write the product $ef$ as
\begin{align}
e(s)f(s)&=\frac{1}{1+s^2}f_0+\frac{1}{(1+s^2)^{k+1}}\big(f(s)-(1+s^2)^kf_0\big) \notag \\
&=\frac{1}{1+s^2}f_0+\underbrace{\frac{1}{(1+s^2)^{k+1}}(f(s)-f_0)}_{=:\widetilde{g_0}(s)}-\sum\limits_{l=1}^k{k\choose l}\underbrace{\frac{s^{2l}}{(1+s^2)^{k+1}}}_{=:\widetilde{g}_l(s)}f_0. \label{Eq_Polynomial_functions_1}
\end{align}
In order to show $ef\in\mathcal{N}^\bnd(D_\theta)$, we have to verify $\widetilde{g}_l\in\mathcal{N}^0(D_\theta)$ for every $l\in\{0,\dots,k\}$. The function $\widetilde{g}_0$ can be estimated in two ways, namely
\begin{equation*}
|\widetilde{g}_0(s)|=\frac{|f(s)-f_0|}{|1+s^2|^{k+1}}\leq\Big(\frac{2}{1+\cos(2\theta)}\Big)^{\frac{k+1}{2}}\frac{|f(s)-f_0|}{(1+|s|^2)^{k+1}}\leq\Big(\frac{2}{1+\cos(2\theta)}\Big)^{\frac{k+1}{2}}\begin{cases} |f(s)-f_0|, \\ \frac{C(1+|s|^k)+|f_0|}{(1+|s|^2)^{k+1}}, \end{cases}
\end{equation*}
where we used that $\sup_{s\in D_\theta}\frac{1+|s|^2}{|1+s^2|}=\big(\frac{2}{1+\cos(2\theta)}\big)^{1/2}$, which can easily be verified. This estimate shows that the integral
\begin{align*}
\int_0^\infty&|\widetilde{g}_0(te^{J\phi})|\frac{dt}{t}=\int_0^r|\widetilde{g}_0(te^{J\phi})|\frac{dt}{t}+\int_r^\infty|\widetilde{g}_0(te^{J\phi})|\frac{dt}{t} \\
&\leq\Big(\frac{2}{1+\cos(2\theta)}\Big)^{\frac{k+1}{2}}\bigg(\int_0^r|f(te^{J\phi})-f_0|\frac{dt}{t}+\int_r^\infty\frac{C(1+|t|^k)+|f_0|}{(1+|t|^2)^{k+1}}\frac{dt}{t}\bigg)<\infty,
\end{align*}
is finite and hence $\widetilde{g}_0\in\mathcal{N}^0(D_\theta)$. Next, the functions $\widetilde{g}_l$, $l\in\{1,\dots,k\}$, can be estimated by
\begin{equation*}
|\widetilde{g}_l(s)|=\frac{|s|^{2l}}{|1+s^2|^{k+1}}\leq\Big(\frac{2}{1+\cos(2\theta)}\Big)^{\frac{k+1}{2}}\frac{|s|^{2l}}{(1+|s|^2)^{k+1}}.
\end{equation*}
This estimate shows that $\widetilde{g}_l\in\mathcal{N}^0(D_\theta)$. Altogether we have proven that the product $ef$ in the representation \eqref{Eq_Polynomial_functions_1} is an element $ef\in\mathcal{SH}_L^\bnd(D_\theta)$. This shows that $e$ is a regularizer of $f$ according to Definition~\ref{defi_SH_reg} and hence $f\in\mathcal{SH}_L^\reg(D_\theta)$. \medskip

Let us now additionally assume that $T$ is injective. Then, for every $f\in\mathcal{SH}_L(D_\theta)$, which satisfies the estimate
\begin{equation*}
|f(s)|\leq C\Big(\frac{1}{|s|^k}+|s|^k\Big),
\end{equation*}
we choose the regularizer
\begin{equation*}
e(s):=\frac{s^{k+1}}{(1+s^2)^{k+1}}.
\end{equation*}
Then clearly $e\in\mathcal{N}^0(D_\theta)\subseteq\mathcal{N}^\bnd(D_\theta)$ and by Theorem~\ref{thm_Rational_extended}, the corresponding functional calculus is given by $e(T)=T^{k+1}(1+T^2)^{-k-1}$. Since $1+T^2$ is bijective due to $J\in\rho_S(T)$ for every $J\in\mathbb{S}$, and since we also assumed that $T$ is injective, also $e(T)$ is injective. Finally, we can estimate the product $ef$ by
\begin{equation*}
|e(s)f(s)|\leq C\frac{|s|^{k+1}}{|1+s^2|^{k+1}}\Big(\frac{1}{|s|^k}+|s|^k\Big)\leq C\Big(\frac{2}{1+\cos(2\theta)}\Big)^{\frac{k+1}{2}}\frac{|s|+|s|^{2k+1}}{(1+|s|^2)^{k+1}}.
\end{equation*}
This estimate shows that $ef\in\mathcal{N}^0(D_\theta)\subseteq\mathcal{N}^\bnd(D_\theta)$. Hence $e$ is a regularizer of $f$ according to Definition~\ref{defi_SH_reg} and we conclude that $f\in\mathcal{N}^\reg(D_\theta)$.
\end{proof}

For those regularizable functions in Definition~\ref{defi_SH_reg} we now define the $H^\infty$-functional calculus.

\begin{defi}[$H^\infty$-functional calculus]
Let $T\in\mathcal{K}(V)$ be bisectorial of angle $\omega\in(0,\frac{\pi}{2})$. Then for every $f\in\mathcal{SH}_L^\reg(D_\theta)$, $\theta\in(\omega,\frac{\pi}{2})$, we define the {\it $H^\infty$-functional calculus}
\begin{equation}\label{Eq_Hinfty}
f(T):=e(T)^{-1}(ef)(T),
\end{equation}
where $e$ is a regularizer of $f$ according to Definition~\ref{defi_SH_reg}.
\end{defi}

Note, that definition \eqref{Eq_Hinfty} is independent of the choice of the regularizer $e$. This is because from the product rule \eqref{Eq_Product_extended} we obtain for any two regularizers $e_1,e_2$ the equality
\begin{align*}
e_1(T)^{-1}(e_1f)(T)&=e_1(T)^{-1}e_2(T)^{-1}e_2(T)(e_1f)(T)=(e_2e_1)(T)^{-1}(e_2e_1f)(T) \\
&=(e_1e_2)(T)^{-1}(e_1e_2f)(T)=e_2(T)^{-1}e_1(T)^{-1}e_1(T)(e_2f)(T) \\
&=e_2(T)^{-1}(e_2f)(T).
\end{align*}

\begin{rem}\label{rem_Hinfty_right}
Note that the natural choice of the $H^\infty$-functional calculus \eqref{Eq_Hinfty} for right slice  hyperholomorphic functions, would be $f(T):=(fe)(T)e(T)^{-1}$. However, this is not a good definition since $f(T)$ is defined on $\ran(e(T))$ and hence depends on the choice of the regularizer. It is an open problem how the $H^\infty$-functional calculus for right slice hyperholomorphic functions has to be defined.
\end{rem}

Let us now collect some basic properties of the $H^\infty$-functional calculus \eqref{Eq_Hinfty}.

\begin{lem}
Let $T\in\mathcal{K}(V)$ be bisectorial of angle $\omega\in(0,\frac{\pi}{2})$, and $f,g\in\mathcal{SH}_L^\reg(D_\theta)$, $\theta\in(\omega,\frac{\pi}{2})$. Then we have:

\begin{enumerate}
\item[i)] The operator $f(T)\in\mathcal{K}(V)$ is right-linear and closed, with domain
\begin{equation}\label{Eq_Domain_Hinfty}
\dom(f(T))=\big\{v\in V\;|\;(ef)(T)v\in\ran(e(T))\big\},
\end{equation}
where $e\in\mathcal{N}^\bnd(D_\theta)$ is an arbitrary regularizer of $f$.

\item[ii)] For every $a\in\mathbb{R}_n$ there holds the linearity properties
\begin{equation}\label{Eq_Linearity_Hinfty}
(f+g)(T)\supseteq f(T)+g(T)\qquad\text{and}\qquad(fa)(T)=f(T)a.
\end{equation}
\end{enumerate}
\end{lem}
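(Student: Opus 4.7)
The plan is to prove the two parts in turn, both essentially by unfolding the definition \eqref{Eq_Hinfty} and invoking the product rule of Theorem~\ref{thm_Product_extended}. For (i), I would read $f(T) := e(T)^{-1}(ef)(T)$ literally: since $(ef)(T), e(T) \in \mathcal{B}(V)$ and $e(T)$ is injective, $\dom(e(T)^{-1}) = \ran(e(T))$, so the composition is defined exactly on $\{v \in V : (ef)(T) v \in \ran(e(T))\}$, which is \eqref{Eq_Domain_Hinfty}. Closedness then follows from the standard fact that $AB$ is closed whenever $A$ is closed and $B$ is bounded: if $v_n \to v$ and $e(T)^{-1}(ef)(T) v_n \to w$, then $(ef)(T) v_n \to (ef)(T) v$ by boundedness of $(ef)(T)$, and closedness of $e(T)^{-1}$ (which is the inverse of a bounded injective, hence closed, operator) forces $(ef)(T) v \in \ran(e(T))$ and $e(T)^{-1}(ef)(T) v = w$.

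For the sum identity in (ii), the key step is to pass to a common regularizer. Given regularizers $e_f, e_g$ of $f, g$, I would set $e := e_f e_g$. Theorem~\ref{thm_Product_extended} gives $e \in \mathcal{N}^\bnd(D_\theta)$, and Corollary~\ref{cor_Commutation_extended}~iii) yields $e(T) = e_f(T) e_g(T) = e_g(T) e_f(T)$, which is injective as a product of commuting injectives. Another application of Theorem~\ref{thm_Product_extended} shows $e f = e_g (e_f f) \in \mathcal{SH}_L^\bnd(D_\theta)$ and analogously $e g \in \mathcal{SH}_L^\bnd(D_\theta)$, hence $e(f+g) \in \mathcal{SH}_L^\bnd(D_\theta)$; thus $e$ simultaneously regularizes $f$, $g$, and $f + g$. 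For $v \in \dom(f(T)) \cap \dom(g(T))$, the definition \eqref{Eq_Hinfty} gives $e_f(T) f(T) v = (e_f f)(T) v$; left-multiplying by $e_g(T)$ and applying the product rule of Theorem~\ref{thm_Product_extended} yields $e(T) f(T) v = (ef)(T) v$, and analogously $e(T) g(T) v = (eg)(T) v$. Adding these identities shows $e(T) (f(T) v + g(T) v) = (e(f+g))(T) v \in \ran(e(T))$, so $v \in \dom((f+g)(T))$ and $(f+g)(T) v = f(T) v + g(T) v$, as required.

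For the scalar identity in (ii), if $e$ regularizes $f$, then $e$ also regularizes $fa$: the decomposition of Definition~\ref{defi_SH_bnd} is manifestly stable under right multiplication by $a \in \mathbb{R}_n$, so $e(fa) = (ef)a \in \mathcal{SH}_L^\bnd(D_\theta)$, and the identity $((ef)a)(T) = (ef)(T) a$ is immediate from the defining formula \eqref{Eq_Extended} combined with \eqref{Eq_Linearity_omega}. Hence $(fa)(T) = e(T)^{-1}(ef)(T) a$, whose natural domain $\{v \in V : (ef)(T)(va) \in \ran(e(T))\}$ coincides with $\dom(f(T) a) = \{v \in V : va \in \dom(f(T))\}$, and on this common domain the two operators agree. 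The only genuinely subtle point anywhere in the argument is the intertwining identity $e(T) f(T) v = (ef)(T) v$ used in the sum step; this rests on the tautology $e_f(T) f(T) v = (e_f f)(T) v$ built into \eqref{Eq_Hinfty} together with the commutativity $e_g(T) e_f(T) = e_f(T) e_g(T)$ from Corollary~\ref{cor_Commutation_extended}~iii), after which everything else reduces to routine bookkeeping with Theorem~\ref{thm_Product_extended}.
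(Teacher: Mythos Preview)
Your proof is correct and follows essentially the same approach as the paper: part~(i) is identical, and for the sum in~(ii) you pass to the common regularizer $e=e_fe_g$ just as the paper does, only presenting the inclusion element-wise rather than via the operator identity $A(B+C)\supseteq AB+AC$. One small notational slip: in the scalar part you write $(ef)(T)(va)$, but the paper's convention \eqref{Eq_Operator_scalarmultiplication} reads $(Ts)(v)=T(sv)$, so it should be $(ef)(T)(av)$; this does not affect the argument.
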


\begin{proof}
i)\;\;The operators $e(T)$ and $(ef)(T)$ in the definition \eqref{Eq_Hinfty} are bounded, and $e(T)^{-1}$ is closed as the inverse of a bounded operator. Hence $f(T)=e(T)^{-1}(ef)(T)$ is closed as the product of a closed and a bounded operator. The explicit form of the domain \eqref{Eq_Domain_Hinfty} is exactly the one of the product of unbounded operators. \medskip

ii)\;\;If $e_1$ is a regularizer of $f$ and $e_2$ a regularizer of $g$, then it is clear that the product $e_1e_2$ is a regularizer for both functions $f$ and $g$ as well as for their sum $f+g$. Hence we can write the $H^\infty$-functional calculus of the sum as
\begin{align*}
(f+g)(T)&=(e_1e_2)^{-1}(e_1e_2(f+g))(T) \\
&=(e_1e_2)^{-1}\big((e_1e_2f)(T)+(e_1e_2g)(T)\big) \\
&\supseteq(e_1e_2)^{-1}(e_1e_2f)(T)+(e_1e_2)^{-1}(e_1e_2g)(T)=f(T)+g(T),
\end{align*}
where from the second to the third line we used the inclusion $A(B+C)\supseteq AB+AC$ of unbounded operators. \medskip

For the second equation in \eqref{Eq_Linearity_Hinfty} we note that the regularizer $e_1$ of $f$ is also a regularizer of $fa$. Hence we can write the $H^\infty$-functional calculus \eqref{Eq_Hinfty} for $fa$ as
\begin{equation*}
(fa)(T)=e_1(T)^{-1}(e_1fa)(T)=e_1(T)(e_1f)(T)a=f(T)a. \qedhere
\end{equation*}
\end{proof}

\begin{prop}
Let $T\in\mathcal{K}(V)$ be bisectorial of angle $\omega\in(0,\frac{\pi}{2})$. If $T$ is not injective, then for every $f\in\mathcal{SH}_L^\reg(D_\theta)$ there exists the limit
\begin{equation*}
f_0:=\lim\limits_{|s|\rightarrow 0^+}f(s).
\end{equation*}
Moreover, if $f\in\mathcal{N}^\reg(D_\theta)$ is intrinsic, we also have
\begin{equation*}
\ker(T)\subseteq\ker(f(T)-f_0).
\end{equation*}
\end{prop}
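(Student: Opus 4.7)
The plan is to pass through a regularizer $e \in \mathcal{N}^\bnd(D_\theta)$ for $f$, i.e., such that $ef \in \mathcal{SH}_L^\bnd(D_\theta)$ and $e(T)$ is injective. By Definition~\ref{defi_SH_bnd} (applied to $e$ and $ef$), both functions admit well-defined boundary values $e_0 \in \mathbb{R}$ and $(ef)_0 \in \mathbb{R}_n$ at the origin, with uniform convergence in direction as in Lemma~\ref{lem_f_convergence}.

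The crux of the argument is to show that $e_0 \neq 0$, and this is precisely where the non-injectivity of $T$ enters. By Proposition~\ref{prop_Kernel_extended} applied to $e$, one has $\ker(T) \subseteq \ker(e(T) - e_0)$. If it were the case that $e_0 = 0$, then $\ker(T) \subseteq \ker(e(T))$, but $\ker(T) \neq \{0\}$ by hypothesis, contradicting the injectivity of $e(T)$ built into the definition of a regularizer. Hence $e_0 \neq 0$, and in a neighbourhood of $0$ in $D_\theta$ we may write $f(s) = (ef)(s)/e(s)$, which gives the limit $f_0 := (ef)_0/e_0$ and settles the first assertion.

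For the second assertion, assume further that $f \in \mathcal{N}^\reg(D_\theta)$ is intrinsic. Then the regularized product $ef$ is intrinsic as well, so $(ef)_0 \in \mathbb{R}$ by Definition~\ref{defi_SH_bnd}, and consequently $f_0 = (ef)_0/e_0 \in \mathbb{R}$. Now fix $v \in \ker(T)$. Proposition~\ref{prop_Kernel_extended}, applied once to $e$ and once to $ef$, yields
\begin{equation*}
e(T)v = e_0 v, \qquad (ef)(T)v = (ef)_0 v = f_0 e_0 v = e(T)(f_0 v),
\end{equation*}
where the last equality uses that $f_0 \in \mathbb{R}$ commutes with $e(T)$. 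This shows $(ef)(T)v \in \ran(e(T))$, so $v \in \dom(f(T))$ by the domain description \eqref{Eq_Domain_Hinfty}, and
\begin{equation*}
f(T)v = e(T)^{-1}(ef)(T)v = e(T)^{-1}\bigl(e(T)(f_0 v)\bigr) = f_0 v,
\end{equation*}
i.e., $v \in \ker(f(T) - f_0)$.

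The main obstacle is the first step: recognizing that the required injectivity of the regularizer, combined with Proposition~\ref{prop_Kernel_extended}, forces $e_0 \neq 0$ as soon as $T$ is not injective. Everything else — existence of the limit via the quotient representation, reality of $f_0$ in the intrinsic case, and the verification on $\ker(T)$ via the domain formula \eqref{Eq_Domain_Hinfty} — is a direct consequence once that obstruction is removed.
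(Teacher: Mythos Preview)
Your proof is correct and follows essentially the same approach as the paper's own proof: both use Proposition~\ref{prop_Kernel_extended} on the regularizer $e$ to force $e_0\neq 0$ from the non-injectivity of $T$, deduce $f_0=(ef)_0/e_0$, and then apply Proposition~\ref{prop_Kernel_extended} to both $e$ and $ef$ on $\ker(T)$ to conclude $f(T)v=f_0v$. The only cosmetic difference is that you route the final computation through $(ef)(T)v=e(T)(f_0v)$ and the domain description~\eqref{Eq_Domain_Hinfty}, whereas the paper writes $e(T)^{-1}v=\frac{1}{e_0}v$ directly; the content is identical.
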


\begin{proof}
Since $f\in\mathcal{SH}_L^\reg(D_\theta)$, there exists some $e\in\mathcal{N}^\bnd(D_\theta)$, such that $ef\in\mathcal{SH}_L^\bnd(D_\theta)$ and $e(T)$ is injective. Then by Proposition~\ref{prop_Kernel_extended} there holds the inclusion
\begin{equation}\label{Eq_Kernel_Hinfty_1}
\ker(T)\subseteq\ker(e(T)-e_0).
\end{equation}
Since $e(T)$ is injective and $T$ is not injective, there has to be $e_0\neq 0$ and consequently also $e(s)\neq 0$ for $|s|$ small enough. Hence, there exists the limit
\begin{equation*}
\lim\limits_{|s|\rightarrow 0^+}f(s)=\lim\limits_{|s|\rightarrow 0^+}\frac{1}{e(s)}(ef)(s)=\frac{1}{e_0}(ef)_0.
\end{equation*}
Next, for every $v\in\ker(T)$ we have $e(T)v=e_0v$ by \eqref{Eq_Kernel_Hinfty_1} and hence $v\in\dom(e(T)^{-1})$ with
\begin{equation*}
e(T)^{-1}v=\frac{1}{e_0}v.
\end{equation*}
Moreover, since $f\in\mathcal{N}^\reg(D_\theta)$ is intrinsic, there also holds
\begin{equation*}
(ef)(T)v=(ef)_0v
\end{equation*}
by Proposition~\ref{prop_Kernel_extended}, and we obtain
\begin{equation*}
f(T)v=e(T)^{-1}(ef)(T)v=e(T)^{-1}(ef)_0v=\frac{1}{e_0}(ef)_0v=f_0v. \qedhere
\end{equation*}
\end{proof}

\begin{thm}\label{thm_Product_Hinfty}
Let $T\in\mathcal{K}(V)$ bisectorial of angle $\omega\in(0,\frac{\pi}{2})$, $g\in\mathcal{N}^\reg(D_\theta)$, $f\in\mathcal{SH}_L^\reg(D_\theta)$, $\theta\in(\omega,\frac{\pi}{2})$. Then there holds the product rule
\begin{equation}\label{Eq_Product_Hinfty}
(gf)(T)\supseteq g(T)f(T),
\end{equation}
and the domains of the both sides are connected by
\begin{equation}\label{Eq_Product_domain}
\dom(g(T)f(T))=\dom((gf)(T))\cap\dom(f(T)).
\end{equation}
\end{thm}

\begin{proof}
Let $e_1$ be a regularizer of $f$ and $e_2$ a regularizer of $g$. Then it is clear that $e_1e_2$ is a regularizer of the product $fg$. Since $g$ is intrinsic, there holds the commutation relation
\begin{equation*}
e_1(T)(e_2g)(T)=(e_2g)(T)e_1(T)
\end{equation*}
from Corollary~\ref{cor_Commutation_extended}~iii). Consequently, we get
\begin{align}
e_1(T)^{-1}(e_2g)(T)&\supseteq e_1(T)^{-1}(e_2g)(T)e_1(T)e_1(T)^{-1} \notag \\
&=e_1(T)^{-1}e_1(T)(e_2g)(T)e_1(T)^{-1}=(e_2g)(T)e_1(T)^{-1}. \label{Eq_Product_Hinfty_1}
\end{align}
Hence the $H^\infty$-functional calculus \eqref{Eq_Hinfty} of the product $(gf)(T)$ can be written as
\begin{align*}
(gf)(T)&=(e_1e_2)(T)^{-1}(e_1e_2gf)(T)=e_2(T)^{-1}e_1(T)^{-1}(e_2g)(T)(e_1f)(T) \\
&\supseteq e_2(T)^{-1}(e_2g)(T)e_1(T)^{-1}(e_1f)(T)=g(T)f(T).
\end{align*}
For the domain identity \eqref{Eq_Product_domain} we start with $v\in\dom(g(T)f(T))$. Then there is clearly $v\in\dom(f(T))$ and by the already proven product rule \eqref{Eq_Product_Hinfty} also $v\in\dom((gf)(T))$. \medskip

For the inverse inclusion in \eqref{Eq_Product_domain}, let $v\in\dom((gf)(T))\cap\dom(f(T))$. Then there is clearly $v\in\dom(f(T))$, and from the commutation relation \eqref{Eq_Product_Hinfty_1} there follows the identity
\begin{align}
(e_2g)(T)f(T)v&=(e_2g)(T)e_1(T)^{-1}(e_1f)(T)v \notag \\
&=e_1(T)^{-1}(e_2g)(T)(e_1f)(T)v=e_1(T)^{-1}(e_1e_2gf)(T)v. \label{Eq_Product_Hinfty_2}
\end{align}
Since also $v\in\dom((gf)(T))$, there is $(e_1e_2gf)(T)v\in\ran((e_1e_2)(T))$ and hence
\begin{equation}\label{Eq_Product_Hinfty_3}
e_1(T)^{-1}(e_1e_2gf)(T)v\in\ran(e_2(T)).
\end{equation}
Combining now the identity \eqref{Eq_Product_Hinfty_2} with \eqref{Eq_Product_Hinfty_3}, shows that $(e_2g)(T)f(T)v\in\ran(e_2(T))$, which means $f(T)\in\dom(g(T))$. Hence we have proven $v\in\dom(g(T)f(T))$.
\end{proof}

\begin{cor}\label{cor_Composition_Hinfty}
Let $T\in\mathcal{K}(V)$ be bisectorial of angle $\omega\in(0,\frac{\pi}{2})$, $g\in\mathcal{N}^\reg(D_\theta)$, $\theta\in(\omega,\frac{\pi}{2})$, and $p$ an intrinsic polynomial. Then there is $p\circ g\in\mathcal{N}^\reg(D_\theta)$ and
\begin{equation*}
(p\circ g)(T)\supseteq p[g(T)].
\end{equation*}
\end{cor}

\begin{proof}
Note that by the linearity property \eqref{Eq_Linearity_Hinfty} it is enough to consider monomials $p(s)=s^n$. Let $e$ be a regularizer of $g$. Then it is clear that $e^n$ is a regularizer of $g^n$. Using the relations
\begin{equation*}
(e^n)(T)=e(T)^n\qquad\text{and}\qquad(e^ng^n)(T)=(eg)(T)^n,
\end{equation*}
which follow from the product rule \eqref{Eq_Product_extended}, as well as the inclusion
\begin{equation*}
e(T)^{-1}(eg)(T)\supseteq(eg)(T)e(T)^{-1},
\end{equation*}
from \eqref{Eq_Product_Hinfty_1}, we can write the $H^\infty$-functional calculus of of $g^n$ as
\begin{equation*}
(g^n)(T)=(e^n)(T)^{-1}(e^ng^n)(T)=e(T)^{-n}(eg)(T)^n\supseteq\big(e(T)^{-1}(eg)(T)\big)^n=g(T)^n. \qedhere
\end{equation*}
\end{proof}

\begin{thm}\label{thm_Rational_Hinfty}
Let $T\in\mathcal{K}(V)$ be bisectorial of angle $\omega\in(0,\frac{\pi}{2})$ and $p,q$ be two intrinsic polynomials, such that $q$ does not admit any zeros in $\overline{D_\omega}$. Then, $q[T]$ is bijective, there exists some $\theta\in(\omega,\pi)$ such that $\frac{p}{q}\in\mathcal{N}^\reg(D_\theta)$, and we have
\begin{equation}\label{Eq_Rational_Hinfty}
\Big(\frac{p}{q}\Big)(T)=p[T]q[T]^{-1}.
\end{equation}
\end{thm}

\begin{proof}
If we denote $n:=\deg(p)$, we choose the regularizer
\begin{equation*}
e(s):=\frac{1}{(1+s^2)^n}.
\end{equation*}
Then it is clear that $e\in\mathcal{N}^\bnd(D_\theta)$. Since $q$ has no zero at $s=0$, we obtain the asymptotics
\begin{equation*}
e(s)\frac{p(s)}{q(s)}=\begin{cases} \mathcal{O}(1), & \text{as }|s|\rightarrow 0^+, \\ \mathcal{O}(\frac{1}{|s|^n}), & \text{as }|s|\rightarrow\infty, \end{cases}
\end{equation*}
which implies that $e\frac{p}{q}\in\mathcal{N}^\bnd(D_\theta)$. Since, by Theorem~\ref{thm_Rational_extended}, $e(T)=(1+T^2)^{-n}$ is also injective, $e$ is a regularizer of $\frac{p}{q}$. Using also Theorem~\ref{thm_Rational_extended}, the $H^\infty$-functional calculus of $\frac{p}{q}$ is then given by
\begin{align*}
\Big(\frac{p}{q}\Big)(T)&=e(T)^{-1}\Big(e\frac{p}{q}\Big)(T) \\
&=(1+T^2)^np[T]\big((1+s^2)^nq\big)[T]^{-1} \\
&=(1+T^2)^np[T](1+T^2)^{-n}q[T]^{-1} \\
&=p[T](1+T^2)^n(1+T^2)^{-n}q[T]^{-1}=p[T]q[T]^{-1}. \qedhere
\end{align*}
\end{proof}

\begin{prop}\label{prop_Product_Hinfty_pf}
Let $T\in\mathcal{K}(V)$ be bisectorial of angle $\omega\in(0,\frac{\pi}{2})$, $p$ an intrinsic polynomial, and $f\in\mathcal{SH}_L^\reg(D_\theta)$, $\theta\in(\omega,\frac{\pi}{2})$. Then we have
\begin{equation*}
(pf)(T)\supseteq p[T]f(T).
\end{equation*}
\end{prop}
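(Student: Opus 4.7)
The plan is to first reduce, by the linearity property \eqref{Eq_Linearity_Hinfty}, to the case of a monomial $p(s)=s^n$, since any real polynomial is a finite sum and each summand will contribute separately on the common domain $\dom(T^n)$. Fixing $n$, I would exhibit one explicit regularizer of $s^n f$ and then unwind the definition of the $H^\infty$-calculus using the product rule and the rational extended calculus from the previous section.

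Concretely, starting from an arbitrary regularizer $e\in\mathcal{N}^\bnd(D_\theta)$ of $f$, set $\tilde e(s):=e(s)/(1+s^2)^n$. Theorem~\ref{thm_Product_extended} then gives $\tilde e\in\mathcal{N}^\bnd(D_\theta)$ with $\tilde e(T)=e(T)(1+T^2)^{-n}$, and this operator is injective because $e(T)$ is injective and $(1+T^2)^{-n}$ is bijective (since $J\in\rho_S(T)$, so Theorem~\ref{thm_Rational_extended} applies). Since $\frac{s^n}{(1+s^2)^n}\in\mathcal{N}^\bnd(D_\theta)$ and $ef\in\mathcal{SH}_L^\bnd(D_\theta)$ by assumption, the product $\tilde e\cdot s^n f=\frac{s^n}{(1+s^2)^n}(ef)$ lies in $\mathcal{SH}_L^\bnd(D_\theta)$, which exhibits $\tilde e$ as a regularizer of $s^n f$. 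Consequently $s^n f\in\mathcal{SH}_L^\reg(D_\theta)$ and
\begin{equation*}
(s^n f)(T)=\tilde e(T)^{-1}\bigl(\tilde e\cdot s^n f\bigr)(T)=\tilde e(T)^{-1}\,T^n(1+T^2)^{-n}(ef)(T),
\end{equation*}
where the second equality combines Theorem~\ref{thm_Product_extended} with the identity $\bigl(\frac{s^n}{(1+s^2)^n}\bigr)(T)=T^n(1+T^2)^{-n}$ coming from Theorem~\ref{thm_Rational_extended}.

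Next I would fix $v\in\dom(T^n f(T))$, so by the definition of the $H^\infty$-calculus the vector $(ef)(T)v=e(T)f(T)v$ lies in $\ran(e(T))$ and $f(T)v\in\dom(T^n)$. The key step is to commute $T^n$ past the bounded intrinsic operators $e(T)$ and $(1+T^2)^{-n}$: applying Corollary~\ref{cor_Commutation_extended}~ii) with $g=e$ and then with $g=1/(1+s^2)^n$ shows that each of $e(T)$ and $(1+T^2)^{-n}$ preserves $\dom(T^n)$ and commutes with $T^n$ there. One then computes
\begin{equation*}
T^n(1+T^2)^{-n}e(T)f(T)v=(1+T^2)^{-n}e(T)\,T^nf(T)v=\tilde e(T)\,T^nf(T)v,
\end{equation*}
which shows that $(\tilde e\cdot s^n f)(T)v\in\ran(\tilde e(T))$. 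By the description of the domain of an $H^\infty$-calculus in \eqref{Eq_Domain_Hinfty} this yields $v\in\dom((s^n f)(T))$, and applying $\tilde e(T)^{-1}$ gives $(s^n f)(T)v=T^n f(T)v$, which is the claimed inclusion for monomials.

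The only delicate aspect will be the domain bookkeeping: Corollary~\ref{cor_Commutation_extended}~ii) is a priori only an operator inclusion, so one has to verify carefully that at each stage the particular vectors produced along the chain lie in the right domains so that the inclusions become equalities on them. Once the iterated application of that corollary is written out, no new integral estimates are required, and the statement for an arbitrary real polynomial $p$ then follows from the monomial case by invoking \eqref{Eq_Linearity_Hinfty} summand by summand.
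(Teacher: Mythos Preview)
Your argument is correct, but it is considerably more elaborate than necessary. The paper dispatches the result in one line: since $p\in\mathcal{N}^\reg(D_\theta)$ (any real polynomial is regularizable by Proposition~\ref{prop_Polynomial_functions}), the already established $H^\infty$-product rule \eqref{Eq_Product_Hinfty} gives $(pf)(T)\supseteq p(T)f(T)$, and the rational $H^\infty$-calculus \eqref{Eq_Rational_Hinfty} with $q=1$ identifies $p(T)=p[T]$. That is the whole proof.

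What you do instead is essentially re-prove the product rule in the special case $g(s)=s^n$ by hand: you build an explicit regularizer $\tilde e=e/(1+s^2)^n$ for $s^nf$, compute $(\tilde e\cdot s^nf)(T)$ via Theorems~\ref{thm_Product_extended} and~\ref{thm_Rational_extended}, and then use Corollary~\ref{cor_Commutation_extended}~ii) to commute $T^n$ past the bounded intrinsic operators $e(T)$ and $(1+T^2)^{-n}$ on vectors in the right domain. This is valid, and your remark about the domain bookkeeping is exactly on point---the operator inclusion $T^ng(T)\supseteq g(T)T^n$ is precisely what guarantees that applying $e(T)$ and $(1+T^2)^{-n}$ to a vector in $\dom(T^n)$ keeps it in $\dom(T^n)$ and commutes with $T^n$ there. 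The reduction from a general real polynomial to monomials via \eqref{Eq_Linearity_Hinfty} also goes through, since all the monomial domains $\dom(T^kf(T))$ contain $\dom(T^nf(T))=\dom(p[T]f(T))$ for $k\leq n$. So nothing is wrong; you have simply unpacked by hand machinery that the paper has already packaged into Theorem~\ref{thm_Product_omega} through \eqref{Eq_Product_Hinfty} and \eqref{Eq_Rational_Hinfty}.
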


\begin{proof}
If we use the product rule \eqref{Eq_Product_Hinfty} and the rational functional calculus \eqref{Eq_Rational_Hinfty} with $q=1$, we immediately get
\begin{equation*}
(pf)(T)\supseteq p(T)f(T)=p[T]f(T). \qedhere
\end{equation*}
\end{proof}

\begin{cor}\label{cor_Commutation_Hinfty}
Let $T\in\mathcal{K}(V)$ be bisectorial of angle $\omega\in(0,\frac{\pi}{2})$, $g\in\mathcal{N}^\reg(D_\theta)$, $\theta\in(\omega,\frac{\pi}{2})$. Then we have the following commutation relations: \medskip

\begin{enumerate}
\item[i)] For every $B\in\mathcal{B}(V)$ with $TB\supseteq BT$, we have
\begin{equation*}
Bg(T)\subseteq g(T)B.
\end{equation*}

\item[ii)] For every $f\in\mathcal{N}^\bnd(D_\theta)$, we have
\begin{equation*}
f(T)g(T)\subseteq g(T)f(T).
\end{equation*}
\end{enumerate}
\end{cor}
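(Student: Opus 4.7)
The strategy is to reduce both statements to manipulations with the regularizer $e\in\mathcal{N}^\bnd(D_\theta)$ of $g$, exploiting that $e(T)$ and $(eg)(T)$ are \emph{bounded} operators to which the already-established commutation results of Corollary~\ref{cor_Commutation_extended} apply.

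For part (i), I would start by invoking Corollary~\ref{cor_Commutation_extended}~i) to obtain the two bounded commutations
\begin{equation*}
Be(T)=e(T)B\qquad\text{and}\qquad B(eg)(T)=(eg)(T)B.
\end{equation*}
The first of these implies, by a one-line argument on $\ran(e(T))$, the commutation of $B$ with the (unbounded) inverse, namely $Be(T)^{-1}=e(T)^{-1}B$ on $\dom(e(T)^{-1})=\ran(e(T))$, and moreover that $B$ maps $\ran(e(T))$ into itself. The next step is to verify that $B$ preserves $\dom(g(T))$: if $v\in\dom(g(T))$, then $(eg)(T)v\in\ran(e(T))$ by \eqref{Eq_Domain_Hinfty}, hence $(eg)(T)Bv=B(eg)(T)v\in B\,\ran(e(T))\subseteq\ran(e(T))$, so $Bv\in\dom(g(T))$. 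On such $v$ the definition \eqref{Eq_Hinfty} then yields
\begin{equation*}
g(T)Bv=e(T)^{-1}(eg)(T)Bv=e(T)^{-1}B(eg)(T)v=Be(T)^{-1}(eg)(T)v=Bg(T)v,
\end{equation*}
which is exactly the desired inclusion $Bg(T)\subseteq g(T)B$.

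Part (ii) will follow from part (i) applied to $B:=f(T)$. Indeed, since $f\in\mathcal{N}^\bnd(D_\theta)$, the extended $\omega$-functional calculus produces a bounded operator $f(T)\in\mathcal{B}(V)$, and Corollary~\ref{cor_Commutation_extended}~ii) (with the polynomial $p(s)=s$) gives the required commutation $Tf(T)\supseteq f(T)T$. Thus the hypotheses of part (i) are satisfied, and applying the inclusion just proved yields $f(T)g(T)\subseteq g(T)f(T)$.

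The only delicate point, and the one I would double-check carefully, is the passage from $Be(T)=e(T)B$ to $Be(T)^{-1}=e(T)^{-1}B$ on $\ran(e(T))$ together with the invariance $B\,\ran(e(T))\subseteq\ran(e(T))$, since here $e(T)^{-1}$ is genuinely unbounded. Everything else is straightforward bookkeeping with the domain description \eqref{Eq_Domain_Hinfty} and the bounded commutations from Corollary~\ref{cor_Commutation_extended}.
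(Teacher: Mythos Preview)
Your proof is correct and follows essentially the same approach as the paper: both use the bounded commutations $Be(T)=e(T)B$ and $B(eg)(T)=(eg)(T)B$ from Corollary~\ref{cor_Commutation_extended}~i), then pass to the inclusion $e(T)^{-1}B\supseteq Be(T)^{-1}$ (the paper writes this as the operator-inclusion chain $e(T)^{-1}B\supseteq e(T)^{-1}Be(T)e(T)^{-1}=Be(T)^{-1}$, which is exactly your ``delicate point'' in compressed form). The only organizational difference is in part~(ii): you reduce it to part~(i) by checking $Tf(T)\supseteq f(T)T$ via Corollary~\ref{cor_Commutation_extended}~ii), whereas the paper redoes the computation directly using $(eg)(T)f(T)=f(T)(eg)(T)$ from Corollary~\ref{cor_Commutation_extended}~iii); your reduction is arguably cleaner.
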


\begin{proof}
Let $e$ be a regularizer of $g$. \medskip

i)\;\;By Corollary~\ref{cor_Commutation_extended} i) we have $B(eg)(T)=(eg)(T)B$ and $Be(T)=e(T)B$. Hence, we get
\begin{equation*}
e(T)^{-1}B\supseteq e(T)^{-1}Be(T)e(T)^{-1}=e(T)^{-1}e(T)Be(T)^{-1}=Be(T)^{-1},
\end{equation*}
from which we conclude
\begin{equation*}
g(T)B=e(T)^{-1}(eg)(T)B=e(T)^{-1}B(eg)(T)\supseteq Be(T)^{-1}(eg)(T)=Bg(T).
\end{equation*}
ii)\;\;From the operator inclusion \eqref{Eq_Product_Hinfty_1}, we obtain
\begin{align*}
g(T)f(T)&=e(T)^{-1}(eg)(T)f(T)=e(T)^{-1}f(T)(eg)(T) \\
&\supseteq f(T)e(T)^{-1}(eg)(T)=f(T)g(T). \qedhere
\end{align*}
\end{proof}


\begin{thebibliography}{99}

\bibitem{ACK} D. Alpay, F. Colombo, D.P. Kimsey: {\it The spectral theorem for quaternionic unbounded normal operators based on the $S$-spectrum}. J. Math. Phys. {\bf 57} (2016) 023503.

\bibitem{ACQS2016} D. Alpay, F. Colombo, T. Qian, I. Sabadini: {\it The $H^\infty$ functional calculus based on the $S$-spectrum for quaternionic operators and for $n$-tuples of noncommuting operators}. J. Funct. Anal. {\bf 271}(6) (2016) 1544--1584.

\bibitem{ACS2016} D. Alpay, F. Colombo, I. Sabadini: {\it Slice hyperholomorphic Schur analysis}. Operator Theory: Advances and Applications {\bf 256}. Basel, Birkhäuser/Springer (2016).

\bibitem{AlpayColSab2020} D. Alpay, F. Colombo, I. Sabadini: {\it Quaternionic de Branges spaces and characteristic operator function}. SpringerBriefs in Mathematics, Springer, Cham (2020).

\bibitem{MC10} P. Auscher, A. Axelsson, A. McIntosh: {\it On a quadratic estimate related to the Kato conjecture and boundary value problems}. Cont. Math. {\bf 505} (2010) 105--129.

\bibitem{MC97} P. Auscher, A. McIntosh, A. Nahmod: {\it Holomorphic functional calculi of operators, quadratic estimates and interpolation}. Indiana Univ. Math. J. {\bf 46}(2) (1997) 375--403.

\bibitem{MC06} A. Axelsson, S. Keith, A. McIntosh: {\it Quadratic estimates and functional calculi of perturbed Dirac operators}. Invent. Math. {\bf 163} (2006) 455--497.

\bibitem{BERG_INTER} J. Bergh, J. Löfström: {\it Interpolation spaces. An introduction}. Grundlehren der Mathematischen Wissenschaften, No. 223, Springer-Verlag, Berlin-New York (1976).

\bibitem{BF} G. Birkhoff, J. von Neumann: {\it The logic of quantum mechanics}. Ann. of Math. (2) {\bf 37}(4) (1936) 823--843.

\bibitem{BRUDNYI_INTER} Y. Brudnyi, N. Krugljak: {\it Interpolation functors and interpolation spaces. Vol. I}. Translated from the Russian by Natalie Wadhwa. With a preface by Jaak Peetre. North-Holland Mathematical Library {\bf 47}, North-Holland Publishing Co., Amsterdam (1991).

\bibitem{CDPS1} F. Colombo, A. De Martino, S. Pinton, I. Sabadini. {\it Axially harmonic functions and the harmonic functional calculus on the $S$-spectrum}, J. Geom. Anal. {\bf 33}(1) (2023) 54 pp.

\bibitem{Fivedim} F. Colombo, A. De Martino, S. Pinton, I. Sabadini: {\it The fine structure of the spectral theory on the $S$-spectrum in dimension five}. J. Geom. Anal. {\bf 33} (9) (2023) 73 pp.

\bibitem{CGdiffusion2018} F. Colombo, J. Gantner: {\it An application of the $S$-functional calculus to fractional diffusion processes}. Milan J. Math. {\bf 86}(2) (2018) 225--303.

\bibitem{CG18} F. Colombo, J. Gantner: {\it Fractional powers of quaternionic operators and Kato's formula using slice hyperholomorphicity}. Trans. Amer. Math. Soc. {\bf 370}(2) (2018) 1045--1100.

\bibitem{FJBOOK} F. Colombo, J. Gantner: {\it Quaternionic closed operators, fractional powers and fractional diffusion processes}. Operator Theory: Advances and Applications {\bf 274}. Birkhäuser/Springer, Cham (2019).

\bibitem{CGK} F. Colombo, J. Gantner, D.P. Kimsey: {\it Spectral theory on the $S$-spectrum for quaternionic operators}. Operator Theory: Advances and Applications {\bf 270}. Birkhäuser/Springer, Cham (2018).

\bibitem{ADVCGKS} F. Colombo, J. Gantner, D. P. Kimsey, I. Sabadini: {\it Universality property of the $S$-functional calculus, noncommuting matrix variables and Clifford operators}. Adv. in Math. {\bf 410} (2022) 108719.

\bibitem{ColKim} F. Colombo, D.P. Kimsey: {\it The spectral theorem for normal operators on a Clifford module}. Anal. Math. Phys. {\bf 12}(25) (2022) 92 pp.

\bibitem{CMS24} F. Colombo, F. Mantovani, P. Schlosser: {\it Spectral properties of the gradient operator with nonconstant coefficients}. Analysis and Mathematical Physics {\bf 14} (2024) Art. No. 108.

\bibitem{MILANJPETER} F. Colombo, S. Pinton, P. Schlosser: {The $H^\infty$-functional calculi for the quaternionic fine structures of Dirac type}. Milan J. Math. {\bf 92}(1) (2024) 73--122.

\bibitem{ColSabStrupSce} F. Colombo, I. Sabadini, D.C. Struppa: {\it Michele Sce's works in hypercomplex analysis---a translation with commentaries}. Birkhäuser/Springer, Cham (2020).

\bibitem{ColomboSabadiniStruppa2011} F. Colombo, I. Sabadini, D.C. Struppa: {\it Noncommutative functional calculus: Theory and applications of slice hyperholomorphic functions}. Progress in Mathematics {\bf 289}. Birkhäuser/Springer Basel AG, Basel (2011).

\bibitem{ColSab2006} F. Colombo, I. Sabadini, D.C Struppa: {\it A new functional calculus for noncommuting operators}. J. Funct. Anal. {\bf 254}(8) (2008) 2255--2274.

\bibitem{COLSCH} F. Colombo, P. Schlosser: {\it Interpolation between domains of powers of operators in quaternionic Banach spaces}. Proc. Amer. Math. Soc. {\bf 153} (2025) 625--639.

\bibitem{Polyf1} A. De Martino, S.~Pinton: {A polyanalytic functional calculus of order 2 on the $S$-spectrum}. Proc. Amer. Math. Soc. {\bf 151}(6) (2023) 2471--2488.

\bibitem{Polyf2} A. De Martino, S. Pinton: {Properties of a polyanalytic functional calculus on the $S$-spectrum}. Math. Nachr. {\bf 296}(11) (2023) 5190--5226.

\bibitem{MPS23} A. De Martino, S. Pinton, P. Schlosser: {The harmonic $H^\infty$-functional calculus based on the $S$-spectrum}. J. Spectr. Th. {\bf 14}(1) (2024) 121--162.

\bibitem{DSS} R. Delanghe, F. Sommen, V. Sou\v cek: {\it Clifford algebra and spinor-valued functions: A function theory for the Dirac operator}. Mathematics and its Applications {\bf 53}, Kluwer Academic Publishers Group, Dordrecht (1992).

\bibitem{RD} N. Dunford, J. Schwartz: {\it Linear operators, part I: general theory}. J. Wiley and Sons (1988).

\bibitem{MC98} E. Franks, A. McIntosh. {\it Discrete quadratic estimates and holomorphic functional calculi in Banach spaces}. Bull. Austral. Math. Soc. {\bf 58}(2) (1998) 271--290.

\bibitem{Fueter} R. Fueter: {\it Die Funktionentheorie der Differentialgleichungen $\Delta u=0$ und $\Delta\Delta u=0$ mit vier reellen Variablen}. Comment. Math. Helv. {\bf 7}(1) (1934) 307--330.

\bibitem{JONAMEM} J. Gantner: {\it Operator theory on one-sided quaternion linear spaces: intrinsic S-functional calculus and spectral operators}. Amer. Math. Soc. {\bf 267}(1297) (2020).

\bibitem{JONADIRECT} J. Gantner: {\it A direct approach to the S-functional calculus for closed operators}. J. Op. Th. {\bf 77}(2) (2017) 287--331.

\bibitem{JBOOK} B. Jefferies: {\it Spectral properties of noncommuting operators}. Lecture Notes in Mathematics {\bf 1843}, Springer-Verlag, Berlin (2004).

\bibitem{Haase} M. Haase: {\it The functional calculus for sectorial operators}. Operator Theory: Advances and Applications {\bf 169}, Birkhäuser Verlag, Basel (2006).

\bibitem{H18} M. Haase: {\it Lecture notes on functional calculus}. 21st International internet seminar, Kiel (2018).

\bibitem{HP57} E. Hille, R.S. Phillips: {\it Functional Analysis and Semi-Groups}. Amer. Math. Soc. Coll. Publ. {\bf 31}, Providence, RI (1957).

\bibitem{HYTONBOOK1} T. Hytönen, J. van Neerven, M. Veraar, L. Weis: {\it Analysis in Banach spaces. Vol. I. Martingales and Littlewood-Paley theory}. Ergebnisse der Mathematik und ihrer Grenzgebiete, 3. Folge. A Series of Modern Surveys in Mathematics {\bf 63}. Springer, Cham (2016).

\bibitem{HYTONBOOK2} T. Hytönen, J. van Neerven, M. Veraar, L. Weis: {\it Analysis in Banach spaces. Vol. II. Probabilistic methods and operator theory}. Ergebnisse der Mathematik und ihrer Grenzgebiete, 3. Folge. A Series of Modern Surveys in Mathematics {\bf 67}. Springer, Cham (2017).

\bibitem{JM} B. Jefferies, A. McIntosh, J. Picton-Warlow: {\it The monogenic functional calculus}. Studia Mathematica {\bf 136}(2) (1999) 99-119.

\bibitem{Ale_INTER} A. Lunardi: {\it Interpolation theory}. Scuola Normale Superiore Pisa {\bf 16}, 3rd Edition, Edizioni della Normale, Pisa (2018).

\bibitem{TAOBOOK} T. Qian, P. Li: {\it Singular integrals and Fourier theory on Lipschitz boundaries}. Science Press Beijing, Beijing, Springer, Singapore (2019).

\bibitem{McI1} A. McIntosh: {\it Operators which have an $H^\infty$ functional calculus}. Miniconference on operator theory and partial differential equations {\bf 14}, Proc. Centre Math. Anal. Austral. Nat. Univ., Canberra, (1986) 210--231.

\bibitem{TaoQian1} T. Qian: {\it Generalization of Fueter's result to $\mathbb{R}^{n+1}$}. Atti Accad. Naz. Lincei. Cl. Sci. Fis. Mat. Natur. Rend. Lincei {\bf 8}(2) (1997) 111--117.

\bibitem{Sce} M. Sce: {\it Osservazioni sulle serie di potenze nei moduli quadratici}. Atti Accad. Naz. Lincei Rend. Cl. Sci. Fis. Mat. Natur. {\bf 23}(8) (1957) 220--225.
	
\bibitem{TRIEBEL} H. Triebel: {\it Interpolation theory, function spaces, differential operators}. Johann Ambrosius Barth. 2nd edition, Heidelberg (1995).

\end{thebibliography}
\end{document}